\newtheorem{Theorem}{Theorem}[section]
\newtheorem{Corollary}[Theorem]{Corollary}
\newtheorem{Example}[Theorem]{Example}
\newtheorem{Lemma}[Theorem]{Lemma}
\newtheorem{Proposition}[Theorem]{Proposition}
\newtheorem{Remark}[Theorem]{Remark}
\newtheorem{Definition}[Theorem]{Definition}
\numberwithin{equation}{section}
\DeclareMathOperator{\D}{D}
\begin{document}

	\title[Tame maximal weights, relative types and valuations]
{Tame maximal weights, relative types and valuations}

\author{Shijie Bao}
\address{Shijie Bao: Institute of Mathematics, Academy of Mathematics
	and Systems Science, Chinese Academy of Sciences, Beijing 100190, China.}
\email{bsjie@amss.ac.cn}

\author{Qi'an Guan}
\address{Qi'an Guan: School of Mathematical Sciences,
	Peking University, Beijing, 100871, China.}
\email{guanqian@math.pku.edu.cn}

\author{Zhitong Mi}
\address{Zhitong Mi: School of Mathematics and Statistics, Beijing Jiaotong University, Beijing,
	100044, China.
}
\email{zhitongmi@amss.ac.cn}

\author{Zheng Yuan}
\address{Zheng Yuan: Institute of Mathematics, Academy of Mathematics
	and Systems Science, Chinese Academy of Sciences, Beijing 100190, China.}
\email{yuanzheng@amss.ac.cn}

\thanks{}

\subjclass[2020]{32U25, 14B05, 32S15, 32U35, 13A18}

\keywords{Lelong number, strong openness property, tame mximal weight, valuation, multiplier ideal sheaf, plurisubharmonic function}

\date{\today}

\dedicatory{}

\commby{}
%%% ----------------------------------------------------------------------

%%% ----------------------------------------------------------------------
\maketitle
%%% ----------------------------------------------------------------------

\begin{abstract}
	In this article, we obtain a class of tame maximal weights (Zhou weights). Using Tian functions (the function of jumping numbers with respect to the exponents of a holomorphic function or the multiples of a plurisubharmonic function) as a main tool, we establish an expression of relative types (Zhou numbers) to these tame maximal weights in integral form, which shows that the relative types satisfy tropical multiplicativity and tropical additivity. Thus, the relative types to Zhou weights are valuations (Zhou valuations) on the ring of germs of holomorphic functions. 
	We use Tian functions and  Zhou numbers to measure the singularities of plurisubharmonic functions, involving jumping numbers and multiplier ideal sheaves.
	Especially, the relative types to Zhou weights characterize the division relations of the ring of germs of holomorphic functions. Finally, we consider a global version of Zhou weights on domains in $\mathbb{C}^n$, which is a generalization of the pluricomplex Green functions, and we obtain some properties of them, including continuity and some approximation results.
\end{abstract}

%%% ----------------------------------------------------------------------
\maketitle

\tableofcontents
%%% ----------------------------------------------------------------------

\section{Introduction}
The concept of \emph{Lelong number} is an analytic analogue of the algebraic notion of multiplicity (see \cite{demailly2010}), which is a fundamental invariant in several complex variables and was first introduced by Lelong in \cite{lelong57}.
Let $u$ be a plurisubharmonic function (see \cite{demailly-book,Si,siu74}) near the origin $o$ in $\mathbb{C}^n$. The Lelong number was defined by $$\nu(u,o):=\sup\big\{c\ge0:u(z)\le c\log|z|+O(1)\text{ near }o\big\},$$
which was used to measure the singularity of $u$ at $o$ (see \cite{demailly-book}). Some further developments can be seen in \cite{lelong69,siu74,lelong77,demailly82,demailly85,demailly87}.   

To give more accurate insights into the singularities of plurisubharmonic functions, some generalized Lelong numbers were studied.
Let $a=(a_1,\ldots,a_n)\in\mathbb{R}_{>0}^n$. Replacing $\log|z|$ by $\log\max_{1\le j\le n}|z_j|^{\frac{1}{a_j}}$ in the definition of Lelong numbers, the directional Lelong numbers $\nu_{a}(u,o)$ (see \cite{demailly-book}, also known as \emph{Kiselman numbers}) were introduced by Kiselman, which gave more detailed information on the singularity of $u$ at $o$. 
Note that, $\nu_{a}(\cdot,o)$ is  well suited for the tropical structure of the cone of plurisubharmonic functions (see \cite{Rash06,demailly-book}):
$$\nu_{a}(u+v,o)=\nu_a(u,o)+\nu_a(v,o)\text{   (\emph{tropical multiplicativity}) }$$
and
$$\nu_a\big(\max\{u,v\},o\big)=\min\big\{\nu_a(u,o),\nu_a(v,o)\big\}\text{  (\emph{tropical additivity}). }$$
These properties play an important role, for example, in the investigation of valuations on the ring of germs of holomorphic functions at $o$ (see \cite{FM05i, Rash06}).

With respect to a more general plurisubharmonic functions $\varphi$, an important generalized Lelong number 
\begin{equation}
	\label{eq:240608a}
	\nu_{\varphi}(u):=\lim_{t\rightarrow+\infty}\int_{\{\varphi<-t\}}dd^cu\wedge(dd^c\varphi)^{n-1}
\end{equation}
was introduced and studied by Demailly (see \cite{demailly82,demailly87,demailly-book}), and is called \emph{Demailly's generalized Lelong numbers} or \emph{Lelong-Demailly numbers}. Taking $\varphi=\log|z|$ or $\log\max_{1\le j\le n}|z_j|^{\frac{1}{a_j}}$, Demailly's generalized Lelong numbers degenerate to classical Lelong numbers and directional Lelong numbers respectively (see \cite{demailly-book}).  Demailly's generalized Lelong numbers are tropically multiplicative. However the tropical additivity may not hold for general plurisubharmonic functions $\varphi$. 
Motivated by the definitions of classical and directional Lelong numbers, Rashkovskii \cite{Rash06} introduced the \emph{relative types}
$$\sigma(u,\varphi):=\sup\big\{c\ge0:u\le c\varphi+O(1)\text{ near }o\big\}$$
of plurisubharmonic functions $u$ to maximal weights $\varphi$ with an isolated singularity at $o$, which are also generalizations of Lelong numbers. It is clear that the relative types are tropically additive. However,  the tropical multiplicativity of relative types is not satisfied generally. Some relations between Demailly's generalized Lelong numbers and relative types can be seen in \cite{Rash10}.

In \cite{BFJ08}, Boucksom-Favre-Jonsson established a fundamental result of the relations between  Lelong numbers,  multiplier ideals sheaves,  relative types, and Demailly's generalized Lelong numbers.

\begin{Theorem}[\cite{BFJ08}]
	\label{thm:BFJ}For any two plurisubharmonic functions $u$ and $v$ near $o$, consider the following four statements:
	
	$(1)$ for any proper modifications $\pi:X_{\pi}\rightarrow\mathbb{C}^n$ above $o$ and any points $p\in\pi^{-1}(o)$, the Lelong numbers $\nu(u\circ\pi,p)=\nu(v\circ\pi,p)$;
	
	$(2)$ for any $t>0$, we have $\mathcal{I}_+(tu)_o=\mathcal{I}_+(tv)_o$, where $\mathcal{I}_+(u):=\bigcup_{p>1}\mathcal{I}(pu)$;
	
	$(3)$ for any tame maximal weight $\varphi$, the relative types $\sigma(u,\varphi)=\sigma(v,\varphi)$;
	
	$(4)$ for any tame  weight $\varphi$,  Demailly's generalized Lelong numbers $\nu_{\varphi}(u)=\nu_{\varphi}(v)$.
	
	Then $(1)$, $(2)$ and $(3)$ are equivalent, and imply $(4)$.
\end{Theorem}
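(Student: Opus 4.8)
The plan is to route all four invariants through the family of \emph{divisorial valuations} centred at $o$. For a prime divisor $E$ on a smooth proper modification $\pi\colon X_\pi\to\mathbb C^n$ with $\pi(E)=\{o\}$, write $b_E:=\operatorname{ord}_E$ for the corresponding divisorial valuation, $A(E):=1+\operatorname{ord}_E(\operatorname{Jac}\pi)$ for its log discrepancy, and, for a plurisubharmonic germ $w$, let $v_E(w)$ denote the generic Lelong number of $w\circ\pi$ along $E$ (equivalently, $\nu(w\circ\pi,p)$ for a very general $p\in E$). The first step is to reduce $(1)$ to the statement $(1')$: $v_E(u)=v_E(v)$ for every prime divisor $E$ over $o$. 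Here $(1)\Rightarrow(1')$ is immediate, and for $(1')\Rightarrow(1)$ one blows up a given point $p\in\pi^{-1}(o)$ and uses that the generic Lelong number of the pullback along the exceptional divisor $E_p$ of a point blow-up equals $\nu(u\circ\pi,p)$, so that $\nu(u\circ\pi,p)=v_{E_p}(u)=v_{E_p}(v)=\nu(v\circ\pi,p)$. With this reduction I would then establish $(1')\Leftrightarrow(2)$, $(1')\Leftrightarrow(3)$ and $(1')\Rightarrow(4)$.

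For $(1')\Leftrightarrow(2)$, the crux is the valuative description of asymptotic multiplier ideals,
\[ \mathcal I_+(tu)_o=\bigl\{\,f\in\mathcal O_o\ :\ b_E(f)+A(E)>t\,v_E(u)\ \text{ for every prime divisor }E\text{ over }o\,\bigr\}\qquad(t>0). \]
Granting this, the right-hand side depends only on $t$ and the function $E\mapsto v_E(u)$, so $(1')\Rightarrow(2)$; conversely each $v_E(u)$ is the asymptotic order along $E$ of the graded family $\{\mathcal I_+(tu)_o\}_{t>0}$, i.e.\ $v_E(u)=\lim_{t\to+\infty}\tfrac1t\,b_E(\mathcal I_+(tu)_o)$ with $b_E(\mathcal I):=\min_{f\in\mathcal I}b_E(f)$ (equivalently, one pulls $\mathcal I_+(tu)_o$ back to $X_\pi$ via the transformation rule for multiplier ideals and reads off the Lelong number of $u\circ\pi$ along $E$), so $(2)\Rightarrow(1')$. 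The inclusion ``$f\in\mathcal I_+(tu)_o\Rightarrow b_E(f)+A(E)>t\,v_E(u)$'' is the easy one: the change of variables for $\int|f|^2e^{-2tu}$ under $\pi$ forces the stated inequality on orders along $E$. The reverse inclusion --- that the valuative inequalities for all $E$ force $|f|^2e^{-2(t+\varepsilon)u}\in L^1_{\mathrm{loc}}$ near $o$ --- is the analytic heart: one approximates $u$ from above by functions $u_m$ with analytic singularities on log resolutions (Demailly's regularization \cite{demailly82,demailly-book}), for which the multiplier ideal is detected by the finitely many prime divisors of the resolution, checks that the valuative inequalities are inherited by the $u_m$, and passes to the limit using the strong openness property of multiplier ideals; the ``$+$'' enlargement is precisely what makes this limit well behaved and removes the integral-closure obstruction present for $\mathcal I(tu)$ itself.

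For $(1')\Leftrightarrow(3)$ I would use the valuative formula for the relative type to a tame maximal weight $\varphi$,
\[ \sigma(u,\varphi)=\inf_E\frac{v_E(u)}{v_E(\varphi)}, \]
the infimum over all prime divisors $E$ over $o$ (proved first in the model case of analytic singularities along an $\mathfrak m_o$-primary ideal, via a log resolution, and in general by Demailly approximation, where the tameness hypothesis supplies the required uniform control). This exhibits $\sigma(u,\varphi)$ as a function of $(v_E(u))_E$ alone, giving $(1')\Rightarrow(3)$. For $(3)\Rightarrow(1')$ one needs that every divisorial valuation over $o$ is computed by a tame maximal weight: the Kiselman weights $\log\max_{1\le j\le n}|z_j|^{1/a_j}$ realize the monomial valuations $a$ (their relative types being exactly the directional Lelong numbers $\nu_a$), a general $\operatorname{ord}_{E_0}$ becomes monomial after a suitable modification, and pushing the corresponding weight down to a neighbourhood of $o$ --- equivalently, taking the limit of $\tfrac1N\log|\pi_*\mathcal O_{X_\pi}(-NE_0)|$, an $\mathfrak m_o$-primary analytic weight --- produces a tame maximal weight $\varphi_{E_0}$ with $\sigma(\,\cdot\,,\varphi_{E_0})=v_{E_0}(\,\cdot\,)$ (these are the forerunners of the Zhou weights studied in the present paper). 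Then $(3)$ forces $v_{E_0}(u)=v_{E_0}(v)$ for all $E_0$, i.e.\ $(1')$.

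Finally, for $(1')\Rightarrow(4)$: Demailly's generalized Lelong number is a birational invariant, $\nu_\varphi(u)=\nu_{\varphi\circ\pi}(u\circ\pi)$, and on a modification $\pi$ on which $\varphi\circ\pi$ has analytic singularities along a simple normal crossing divisor $D=\sum_E a_E E$, the intersection-theoretic evaluation of $\nu_\varphi$ (see \cite{demailly82,demailly87}, and \cite{Rash10} for the precise link with relative types) expresses $\nu_\varphi(u)$ as a finite sum $\sum_E c_E(\varphi)\,v_E(u)$ with coefficients $c_E(\varphi)\ge0$ depending only on $\varphi$; hence $(1')$ yields $\nu_\varphi(u)=\nu_\varphi(v)$, while the occurrence of these mixed coefficients --- rather than a single minimum --- is exactly why tropical additivity, and thus the converse implication, generally fails. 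The main obstacle in the whole scheme is the reverse inclusion in the valuative description of $\mathcal I_+(tu)$ above: controlling the limit of the multiplier ideals of the Demailly approximants and invoking strong openness is the delicate step, and everything else is organisational bookkeeping around it.
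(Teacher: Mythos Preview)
The paper does not prove Theorem~\ref{thm:BFJ}; it is quoted from \cite{BFJ08} as background, so there is no ``paper's own proof'' to compare against. Your outline is essentially the original Boucksom--Favre--Jonsson argument: reduce to divisorial valuations $(1')$, obtain the valuative description of $\mathcal I_+(tu)_o$ via Demailly approximation on log resolutions, and realise each $\operatorname{ord}_E$ by a tame maximal weight. One small confusion: you invoke strong openness for the reverse inclusion in the valuative description of $\mathcal I_+(tu)_o$, but the point of working with $\mathcal I_+$ rather than $\mathcal I$ is precisely that strong openness is \emph{not} needed there (\cite{BFJ08} predates its proof); Demailly approximation together with the subadditivity and Noetherian properties already suffices.

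What the present paper actually contributes is a parallel result, Theorem~\ref{thm:multi-valua}, with ``tame maximal weight'' replaced by ``local Zhou weight'', and the proof is purely analytic---no modifications, no divisorial valuations, no formula $\sigma(u,\varphi)=\inf_E v_E(u)/v_E(\varphi)$. The direction $(1)\Rightarrow(3)$ (Step~1 of the proof of Theorem~\ref{thm:multi-valua}) uses only Demailly's approximation Lemma~\ref{l:appro-Berg} and the jumping-number bound of Theorem~\ref{thm:valu-jump}; Remark~\ref{r:1-3} explicitly notes that this step works verbatim for any tame maximal weight, recovering $(2)\Rightarrow(3)$ of Theorem~\ref{thm:BFJ}. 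The direction $(3)\Rightarrow(2)$ (Step~2) replaces the construction of a weight realising a prescribed divisorial valuation by an existence argument for Zhou weights (Remark~\ref{rem:max_existence}, Lemma~\ref{l:0921-4}) combined with a limiting lemma (Lemma~\ref{l:0921-3}). So where BFJ's route is algebro-geometric, the paper's route stays entirely on the analytic side through the integral expression of Theorem~\ref{thm:main_value} and the maximality built into Definition~\ref{def:max_relat}.
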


The definitions of \emph{maximal weights} and \emph{tame weights} can be seen in Section \ref{sec:1.2} (see also \cite{BFJ08}). For tame maximal weights, the tropical multiplicativity of relative types is not satisfied generally. 

The \emph{multiplier ideal sheaf} $\mathcal{I}(u)$ (see \cite{demailly2010}) related to a plurisubharmonic function $u$ is defined as follows, which plays an important role in complex geometry and algebraic geometry, which was widely discussed (see e.g. \cite{tian87,Nadel90,siu96,DEL00,D-K01,demailly-note2000,D-P03,Laz04,siu05,siu09,demailly2010}):

\emph{Let $u$ be a plurisubharmonic function on a complex manifold. The multiplier ideal sheaf $\mathcal{I}(u)$ is the sheaf of germs of holomorphic functions $f$ such that $|f|^{2}e^{-2u}$ is local integrable.}

After Demailly's \emph{strong openness conjecture} i.e. $\mathcal{I}(u)=\mathcal{I}_+(u)$ (see \cite{demailly-note2000,demailly2010}) was solved \cite{GZopen-c},
Professor Xiangyu Zhou suggested the second author to explore the analytic counterparts of the items in algebraic geometry, such as valuations. 

In this article, we obtain a class of tame maximal weights $\Phi_{o,\max}$ related to the strong openness property of multiplier ideal sheaves, and we call them \emph{Zhou weights} (see Section \ref{sec:1.1}). We establish an expression  of the relative types $\sigma(u,\Phi_{o,\max})$ of plurisubharmonic functions $u$ to Zhou weights $\Phi_{o,\max}$ in integral form by using \emph{Tian functions} as a main tool (see Section \ref{sec:tian function}), which shows that $\sigma(\cdot,\Phi_{o,\max})$ are tropically multiplicative. Especially, noting that $\log|f|$ is plurisubharmonic for any holomorphic function $f$,  we obtain a class of valuations on the ring  $\mathcal{O}_o$ of germs of holomorphic functions, denoted by $\nu(\cdot,\Phi_{o,\max})$, and we call them \emph{Zhou valuations}.

Then, we use $\sigma(\cdot,\Phi_{o,\max})$ and Tian functions to measure the singularities of plurisubharmonic functions near $o$, and we obtain some relations between  $\sigma(\cdot,\Phi_{o,\max})$ (or $\nu(\cdot,\Phi_{o,\max})$), Tian functions, jumping numbers and multiplier ideal sheaves: for any Zhou weights $\Phi_{o,\max}$, we prove that $c_o^{f}(\Phi_{o,\max})$ (jumping number) and $\nu(f,\Phi_{o,\max})$ for all $(f,o)\in\mathcal{O}_o$ are linearly controlled by each other; we give a characterization for inclusion relations of multiplier ideal sheaves by using Tian functions and $\sigma(\cdot,\Phi_{o,\max})$ to all Zhou weights $\Phi_{o,\max}$, which shows that Theorem \ref{thm:BFJ} also holds if replacing tame maximal weights by Zhou weights; as an application, the valuations $\nu(\cdot,\Phi_{o,\max})$ of $\mathcal{O}_o$  characterize the division relations in $\mathcal{O}_o$; we present an equality about jumping numbers and Zhou numbers, which shows that the Zhou numbers can characterize multiplier ideal sheaves.
Finally, we discuss \emph{global Zhou weights} on domains in $\mathbb{C}^n$, and present some properties of them, including continuity and some approximation results.

\subsection{Valuations related to local Zhou weights}
\label{sec:1.1}

Let $f_{0}=(f_{0,1},\cdots,f_{0,m})$ be a vector,
where $f_{0,1},\cdots,f_{0,m}$ are holomorphic functions near $o$.
Denote $|f_{0}|^{2}=|f_{0,1}|^{2}+\cdots+|f_{0,m}|^{2}$.
Let $\varphi_{0}$ be a plurisubharmonic function near $o$,
such that $|f_{0}|^{2}e^{-2\varphi_{0}}$ is integrable near $o$.

\begin{Definition}
	\label{def:max_relat}
	We call that $\Phi^{f_0,\varphi_0}_{o,\max}$ ($\Phi_{o,\max}$ for short) is a \textbf{local Zhou weight related to $|f_{0}|^{2}e^{-2\varphi_{0}}$ near $o$},
	if the following three statements hold
	
	$(1)$ $|f_{0}|^{2}e^{-2\varphi_{0}}|z|^{2N_{0}}e^{-2\Phi_{o,\max}}$ is integrable near $o$
	for large enough $N_{0}\gg0$;
	
	$(2)$ $|f_{0}|^{2}e^{-2\varphi_{0}}e^{-2\Phi_{o,\max}}$
	is not integrable near $o$;
	
	$(3)$ for any plurisubharmonic function $\varphi'\geq\Phi_{o,\max}+O(1)$ near $o$
	such that $|f_{0}|^{2}e^{-2\varphi_{0}}e^{-2\varphi'}$
	is not integrable near $o$,
	$\varphi'=\Phi_{o,\max}+O(1)$ holds.
\end{Definition}

Let $\varphi$ be a plurisubharmonic function near $o$. The existence of local Zhou weights follows from the strong openness property of multiplier ideal sheaves (see Theorem \ref{thm:SOC}, see also \cite{GZopen-c}).

\begin{Remark}
	\label{rem:max_existence}
	Assume that $|f_{0}|^{2}e^{-2\varphi_{0}}|z|^{2N_{0}}e^{-2\varphi}$ is integrable near $o$
	for large enough $N_{0}\gg0$,
	and $(f_{0},o)\not\in\mathcal{I}(\varphi+\varphi_{0})_o$ holds.
	
	Then there exists a local Zhou weight $\Phi_{o,\max}$ related to $|f_{0}|^{2}e^{-2\varphi_{0}}$ near $o$
	such that $\Phi_{o,\max}\geq\varphi$.
	
	Moreover, $\Phi_{o,\max}\geq N\log|z|+O(1)$ near $o$ for some $N\gg0$. We prove this remark in Section \ref{sec:proofs of remark1.2,1.3}.
\end{Remark}

A plurisubharmonic function $\varphi$ on a neighborhood of $o$ is said a \emph{maximal weight}, if $u(o)=-\infty$, $u$ is locally bounded on  $U\backslash\{o\}$ and $(dd^c \varphi)^n=0$ on $U\backslash\{o\}$ (see \cite{Rash06,BFJ08}), where $U$ is a neighborhood of $o$.
Let $\psi$ be any plurisubharmonic function near $o$, and $\varphi$ be a maximal weight. Denote 
$$\sigma(\psi,\varphi):=\sup\big\{b : \psi\leq b\varphi+O(1)\text{ near $o$}\big\},$$
which is called the \emph{relative type} of $\psi$ with respect to $\varphi$ (see \cite{Rash06,BFJ08}). Note that for any local Zhou weights $\Phi_{o,\max}$ near $o$, there exists a maximal weights $\varphi$ on a small enough neighborhood  of $o$ such that $\varphi=\Phi_{o,\max}+O(1)$ near $o$ (see Lemma \ref{l:local-glabal} and Proposition \ref{l:max2}). Then, we call the relative types $\sigma(\psi,\Phi_{o,\max})$ of $\psi$ to Zhou weights $\Phi_{o,\max}$  \textbf{Zhou numbers of $\psi$ to $\Phi_{o,\max}$}.

Note that for any $b<\sigma(\psi,\Phi_{o,\max})$,
$|f_{0}|^{2}e^{-2\varphi_{0}}e^{-2\max\{\Phi_{o,\max},\frac{1}{b}\psi\}}$ is not integrable near $o$.
Then it follows from the strong openness property of multiplier ideal sheaves (Theorem \ref{thm:SOC})
that $|f_{0}|^{2}e^{-2\varphi_{0}}e^{-2\max\big\{\Phi_{o,\max},\frac{1}{\sigma(\psi,\Phi_{o,\max})}\psi\big\}}$
is not integrable near $o$.
Note that
$$\max\left\{\Phi_{o,\max},\frac{1}{\sigma(\psi,\Phi_{o,\max})}\psi\right\}\geq\Phi_{o,\max}.$$
Then
$$\max\left\{\Phi_{o,\max},\frac{1}{\sigma(\psi,\Phi_{o,\max})}\psi\right\}=\Phi_{o,\max}+O(1),$$
which implies that
$$\Phi_{o,\max}\geq\frac{1}{\sigma(\psi,\Phi_{o,\max})}\psi+O(1),$$
i.e.,
\begin{equation*}
	\psi\leq \sigma(\psi,\Phi_{o,\max})\Phi_{o,\max}+O(1).
\end{equation*}
In fact, for any maximal weights $\varphi$, Rashkovskii proved $\psi\leq \sigma(\psi,\varphi)\varphi+O(1)$ near $o$ (see \cite{Rash06}).

\begin{Remark}
	\label{rem:sharp_bound}
	Let $\Phi_{o,\max}$ be a local Zhou weight related to $|f_{0}|^{2}e^{-2\varphi_{0}}$ near $o$.
	Then for any small enough neighborhood $U\ni o$,
	there exists a (unique) negative plurisubharmonic function $\Phi^U_{o,\max}=\Phi_{o,\max}+O(1)$ on $U$,
	such that for any negative plurisubharmonic function $\psi$ on $U$,
	the inequality
	\begin{equation}
		\label{equ:xiaomage}
		\psi\leq \sigma(\psi,\Phi_{o,\max})\Phi^U_{o,\max}
	\end{equation}
	holds on $U$.
	
	Especially, if plurisubharmonic functions $\psi_{j}\to\psi$ in $L^{1}_{\mathrm{loc}}$ when $j\rightarrow+\infty$,
	then
	\begin{equation}
		\label{equ:jianhao}
		\limsup_{j\rightarrow+\infty} \sigma(\psi_{j},\Phi_{o,\max})\leq \sigma(\psi,\Phi_{o,\max}).
	\end{equation}
	We prove this remark in Section \ref{sec:proofs of remark1.2,1.3}.
\end{Remark}

We give some examples of local Zhou weights, which show that  Zhou number is a generalization of the notion of directional Lelong number.
\begin{Example}\label{exam1}
	Let $\varphi=\log\max_{1\le j\le n}|z_j|^{a_j}$ on $\Delta^n\subset\mathbb{C}^n$, where $a_j>0$ for any $j$ satisfying that  $\sum_{1\le j\le n}\frac{1}{a_j}=1$. It is clear that $\mathcal{I}(\varphi)_o$ is the maximal ideal of $\mathcal{O}_o$. For any plurisubharmonic function $\tilde\varphi$ near $o$ satisfying that $\tilde\varphi\ge\varphi$ near $o$ and $e^{-2\tilde{\varphi}}$ is not integrable near $o$, $\tilde\varphi=\varphi+O(1)$ near $o$ (see \cite{guan-remapprox}). Thus, $\varphi$ is a local Zhou weight related to $1$ near $o$.
\end{Example}

For Lelong numbers and Kiselman numbers, there are expressions in integral form (see equality \eqref{eq:240608a}). Now, we give an expression  of Zhou numbers in integral form. 

\begin{Theorem}
	\label{thm:main_value}
	Let $\Phi_{o,\max}$ be a local Zhou weight related to $|f_{0}|^{2}e^{-2\varphi_{0}}$ near $o$.
	Then for any plurisubharmonic function $\psi$ near $o$,
	\begin{equation}\nonumber
		\begin{split}
			\sigma(\psi,\Phi_{o,\max})
			&=
			\lim_{t\to+\infty}
			\frac{\int_{\{\Phi_{o,\max}<-t\}}|f_{0}|^{2}e^{-2\varphi_{0}}(-\psi)}
			{t\int_{\{\Phi_{o,\max}<-t\}}|f_{0}|^{2}e^{-2\varphi_{0}}}.
		\end{split}
	\end{equation}
\end{Theorem}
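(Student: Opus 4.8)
Set $\sigma:=\sigma(\psi,\Phi_{o,\max})$. The plan is to prove the two inequalities separately: $\liminf_t\frac{N(t)}{tF(t)}\ge\sigma$ elementarily, and $\limsup_t\frac{N(t)}{tF(t)}\le\sigma$ by means of the Tian function attached to $\psi$. By the inequality recalled before the statement, $\psi\le\sigma\Phi_{o,\max}+O(1)$ near $o$; subtracting a constant (which changes neither side of the asserted identity) we may assume $\psi\le\sigma\Phi_{o,\max}\le0$. I would first record that $c_o(\Phi_{o,\max}):=\sup\{c\ge0:|f_0|^2e^{-2\varphi_0}e^{-2c\Phi_{o,\max}}\in L^1\text{ near }o\}=1$: the bound $\le1$ is Definition \ref{def:max_relat}(2), while for $c<1$ the function $c\Phi_{o,\max}$ is plurisubharmonic, $\ge\Phi_{o,\max}$, and not $\Phi_{o,\max}+O(1)$, so Definition \ref{def:max_relat}(3) forces $|f_0|^2e^{-2\varphi_0}e^{-2c\Phi_{o,\max}}\in L^1$. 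Hence $|f_0|^2e^{-2\varphi_0}(-\Phi_{o,\max})$, and a fortiori $|f_0|^2e^{-2\varphi_0}(-\psi)$, is integrable near $o$, so $F(t):=\int_{\{\Phi_{o,\max}<-t\}}|f_0|^2e^{-2\varphi_0}$ and $N(t):=\int_{\{\Phi_{o,\max}<-t\}}|f_0|^2e^{-2\varphi_0}(-\psi)$ are finite. Since $-\psi\ge-\sigma\Phi_{o,\max}>\sigma t$ on $\{\Phi_{o,\max}<-t\}$, we get $N(t)\ge\sigma tF(t)$, so $\frac{N(t)}{tF(t)}\ge\sigma$ for every $t$; only the upper bound is at issue.

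The tool is the Tian function $G_\psi(s):=\sup\{c\ge0:|f_0|^2e^{-2\varphi_0}e^{-2s\psi-2c\Phi_{o,\max}}\in L^1\text{ near }o\}$, $s\ge0$, together with $F_s(t):=\int_{\{\Phi_{o,\max}<-t\}}|f_0|^2e^{-2\varphi_0}e^{-2s\psi}$, for which $F_0=F$ and $\frac{d}{ds}\big|_{s=0^+}F_s(t)=2N(t)$. Hölder's inequality makes $\{(s,c):|f_0|^2e^{-2\varphi_0}e^{-2s\psi-2c\Phi_{o,\max}}\in L^1\}$ convex, so $G_\psi$ is concave; moreover $G_\psi(0)=c_o(\Phi_{o,\max})=1$, and $\psi\le\sigma\Phi_{o,\max}$ gives $s\psi+c\Phi_{o,\max}\le(s\sigma+c)\Phi_{o,\max}$, whence $G_\psi(s)\le1-\sigma s$; thus the increasing quotient $\frac{1-G_\psi(s)}{s}$ decreases to $-G_\psi'(0^+)\ge\sigma$ as $s\downarrow0$. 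A dyadic decomposition $\int|f_0|^2e^{-2\varphi_0}e^{-2s\psi-2c\Phi_{o,\max}}\asymp\sum_k e^{2ck}\big(F_s(k)-F_s(k+1)\big)$ identifies $G_\psi(s)$ with $\tfrac12\lim_t\frac{-\log F_s(t)}{t}$ once this limit is known to exist, and then convexity of $s\mapsto\log F_s(t)$ gives $\frac{N(t)}{tF(t)}=\frac{1}{2t}\partial_s\big|_{0^+}\log F_s(t)\le\frac{1}{2s_0}\cdot\frac{\log F_{s_0}(t)-\log F(t)}{t}$ for each $s_0>0$; letting $t\to\infty$ and then $s_0\downarrow0$ yields $\limsup_t\frac{N(t)}{tF(t)}\le-G_\psi'(0^+)$. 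So the theorem reduces to: (i) $\lim_t\frac{\log F_s(t)}{t}=-2G_\psi(s)$ for $s$ near $0$, and (ii) $-G_\psi'(0^+)\le\sigma$ (so that $-G_\psi'(0^+)=\sigma$).

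I would prove (ii) — equivalently, the estimate $\limsup_t\frac{N(t)}{tF(t)}\le\sigma$ — directly. Fix $\epsilon>0$. Because $\sigma+\epsilon>\sigma$, the plurisubharmonic function $\varphi'_\epsilon:=\max\{\Phi_{o,\max},\tfrac{1}{\sigma+\epsilon}\psi\}$ dominates $\Phi_{o,\max}$ but is not $\Phi_{o,\max}+O(1)$, so by Definition \ref{def:max_relat}(3) $|f_0|^2e^{-2\varphi_0}e^{-2\varphi'_\epsilon}\in L^1$ near $o$, and by the strong openness property (Theorem \ref{thm:SOC}) even $|f_0|^2e^{-2\varphi_0}e^{-2(1+\eta)\varphi'_\epsilon}\in L^1$ for some $\eta=\eta(\epsilon)>0$; in particular $\int_{\{\varphi'_\epsilon<-\lambda\}}|f_0|^2e^{-2\varphi_0}\le Ce^{-2(1+\eta)\lambda}$. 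Splitting $N(t)$ over $\{\psi\ge(\sigma+\epsilon)\Phi_{o,\max}\}$ and its complement, on the first set $-\psi\le(\sigma+\epsilon)(-\Phi_{o,\max})$, so that piece is at most $(\sigma+\epsilon)\int_{\{\Phi_{o,\max}<-t\}}|f_0|^2e^{-2\varphi_0}(-\Phi_{o,\max})=(\sigma+\epsilon)\big(tF(t)+\int_t^\infty F(\lambda)\,d\lambda\big)=(\sigma+\epsilon+o(1))\,tF(t)$ by (i); on the complement $\psi<(\sigma+\epsilon)\Phi_{o,\max}$ forces $\psi\le(\sigma+\epsilon)\varphi'_\epsilon$ and $\{\psi<-\lambda\}\cap\{\Phi_{o,\max}<-t\}\subseteq\{\varphi'_\epsilon<-\lambda/(\sigma+\epsilon)\}$, so a layer–cake estimate bounds that piece by $(\sigma+\epsilon)t\int_{\{\varphi'_\epsilon<-t\}}|f_0|^2e^{-2\varphi_0}+\int_{(\sigma+\epsilon)t}^\infty\int_{\{\varphi'_\epsilon<-\lambda/(\sigma+\epsilon)\}}|f_0|^2e^{-2\varphi_0}\,d\lambda\le C'e^{-2(1+\eta)t}=o\big(tF(t)\big)$, using the lower bound $F(t)\ge ce^{-2(1+\eta')t}$ with $\eta'<\eta$ (again from (i)). Hence $\limsup_t\frac{N(t)}{tF(t)}\le\sigma+\epsilon$, and $\epsilon\downarrow0$ finishes the proof.

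The one genuinely nontrivial input is ingredient (i): that $\lim_t\frac{\log F_s(t)}{t}$ exists (equivalently, that the sublevel sets $\{\Phi_{o,\max}<-t\}$ contract at a uniform geometric rate, so that $t\mapsto-\log F_s(t)$ is convex up to bounded error with asymptotic slope $2G_\psi(s)$). This is exactly where one must use that $\Phi_{o,\max}$ is a \emph{maximal weight} — $\big(dd^c\Phi_{o,\max}\big)^n=0$ off $o$, cf. Lemma \ref{l:local-glabal} and Proposition \ref{l:max2} — presumably through a concavity property of the minimal $L^2$ integrals attached to the exhaustion $\{\Phi_{o,\max}<-t\}$; granting this, the remaining steps are elementary estimation resting only on the defining properties of Zhou weights and on strong openness. (The case $s=0$, $\psi=\Phi_{o,\max}$ of the theorem, i.e. $\int_{\{\Phi_{o,\max}<-t\}}|f_0|^2e^{-2\varphi_0}(-\Phi_{o,\max})\sim tF(t)$, is itself the basic instance of (i) used above.)
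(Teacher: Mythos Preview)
Your convexity reduction (equivalently, Jensen) and the trivial lower bound match the paper's Proposition~\ref{p:sidediv}. Two points need correction.

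On (i): this is \emph{not} a consequence of the Monge--Amp\`ere maximality $(dd^c\Phi_{o,\max})^n=0$. The paper establishes the required asymptotic $\lim_t\frac{-\log F(t)}{2t}=1$ as Lemma~\ref{lem:jump_asyp_C}, by combining the $L^2$ effectiveness estimate of Guan--Zhou (Lemma~\ref{lem:JM}, giving the sharp lower bound $e^{2t}F(t)\ge C>0$ via an Ohsawa--Takegoshi-type argument) with strong openness (for the upper bound); no concavity of minimal $L^2$ integrals and no maximality is invoked. Note also that only the case $s=0$ of (i) is needed for your secant inequality: for $s_0>0$ the bound $\limsup_t t^{-1}\log F_{s_0}(t)\le-2G_\psi(s_0)$ is immediate from the integrability defining $G_\psi(s_0)$, while for $F=F_0$ one only needs $\liminf_t t^{-1}\log F(t)\ge-2$, which is exactly Lemma~\ref{lem:JM}.

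On (ii): the claimed containment $\{\psi<-\lambda\}\cap\{\Phi_{o,\max}<-t\}\subseteq\{\varphi'_\epsilon<-\lambda/(\sigma+\epsilon)\}$ is false. The inequality $\psi\le(\sigma+\epsilon)\varphi'_\epsilon$ (which indeed holds globally, since $\varphi'_\epsilon\ge\psi/(\sigma+\epsilon)$) is a \emph{lower} bound on $\varphi'_\epsilon$; $\psi<-\lambda$ yields no upper bound on $\varphi'_\epsilon$. Concretely, on $A^c$ one has $\varphi'_\epsilon=\Phi_{o,\max}$, so a point with $\Phi_{o,\max}$ just below $-t$ and $\psi$ arbitrarily negative lies in your set for every large $\lambda$ while $\varphi'_\epsilon\approx-t$. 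Your layer-cake bound on $N_2(t)$ therefore collapses for $\lambda>(\sigma+\epsilon)t$. (The strategy is salvageable: $A^c\cap\{\Phi_{o,\max}<-t\}\subseteq\{\varphi'_\epsilon<-t\}$, and H\"older against $\int|f_0|^2e^{-2\varphi_0}(-\psi)^q<\infty$ --- finite by strong openness --- gives $N_2(t)\le C e^{-2(1+\eta)t/p}=o(tF(t))$ for $1<p<1+\eta$, using $F(t)\ge Ce^{-2t}$ from Lemma~\ref{lem:JM}.) The paper takes a different route here: instead of estimating $N(t)$, it proves directly that your $-G_\psi'(0^+)$ equals $\sigma$ (Proposition~\ref{prop:concave_A}), via the purely real-analytic Lemmas~\ref{lem:jialidun0131}--\ref{lem:hanjiangxue3}, which identify the left derivative of the concave Tian function with the threshold $b$ at which $|f_0|^2e^{-2\varphi_0}e^{-2\max\{\Phi_{o,\max},\psi/b\}}$ ceases to be integrable --- and by Definition~\ref{def:max_relat}(3) that threshold is $\sigma$.
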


It follows from Theorem \ref{thm:main_value} and the definition of Zhou numbers that Zhou numbers are tropically multiplicative and tropically additive.		

\begin{Corollary}
	\label{coro:main}Let $\psi_{1}$ and $\psi_{2}$ be plurisubharmonic functions near $o$.
	The following statements hold
	
	$(1)$ for any $c_{1}\geq 0$ and $c_{2}\geq 0$, $\sigma(c_{1}\psi_{1}+c_{2}\psi_{2},\Phi_{o,\max})=c_{1}\sigma(\psi_{1},\Phi_{o,\max})+c_{2}\sigma(\psi_{2},\Phi_{o,\max})$;
	
	$(2)$ $\sigma(\log|f_{1}+f_{2}|,\Phi_{o,\max})\geq\min\big\{\sigma(\log|f_{1}|,\Phi_{o,\max}),\sigma(\log|f_{2}|,\Phi_{o,\max})\big\}$,
	where $f_{1}$ and $f_{2}$ are holomorphic functions near $o$;
	
	$(3)$ $\sigma(\max\{\psi_{1},\psi_{2}\},\Phi_{o,\max})=\min\big\{\sigma(\psi_{1},\Phi_{o,\max}),\sigma(\psi_{2},\Phi_{o,\max})\big\}$.
\end{Corollary}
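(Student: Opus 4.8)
The plan is to derive the three items from Theorem~\ref{thm:main_value} together with the elementary properties of relative types recalled in Section~\ref{sec:1.1}; the real content is already in that theorem, so this is a short deduction. For $(1)$, observe that $c_1\psi_1+c_2\psi_2$ is plurisubharmonic near $o$ because $c_1,c_2\ge 0$, so Theorem~\ref{thm:main_value} applies to $\psi_1$, $\psi_2$ and to $c_1\psi_1+c_2\psi_2$ with the \emph{same} weight $\Phi_{o,\max}$. Fix a neighborhood $U\ni o$ on which $|f_0|^2e^{-2\varphi_0}$ is integrable and each $\psi_i$ is bounded above; then $|f_0|^2e^{-2\varphi_0}(-\psi_i)$ is bounded below on $U$ by a fixed integrable function, so $\int_{\{\Phi_{o,\max}<-t\}}|f_0|^2e^{-2\varphi_0}(-\psi_i)$ is well defined in $(-\infty,+\infty]$ and additive in $-\psi_i$; also, for $t$ large the set $\{\Phi_{o,\max}<-t\}\subset U$ has positive $|f_0|^2e^{-2\varphi_0}$-mass, since otherwise $|f_0|^2e^{-2\varphi_0}e^{-2\Phi_{o,\max}}$ would be integrable, contradicting Definition~\ref{def:max_relat}$(2)$, so the common denominator $t\int_{\{\Phi_{o,\max}<-t\}}|f_0|^2e^{-2\varphi_0}$ is positive. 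Splitting the quotient of Theorem~\ref{thm:main_value} for $c_1\psi_1+c_2\psi_2$ as $c_1$ times the quotient for $\psi_1$ plus $c_2$ times the quotient for $\psi_2$ and letting $t\to+\infty$ gives $(1)$; if a numerator is $+\infty$ for all large $t$, both sides of $(1)$ are $+\infty$.

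For $(3)$ I would argue directly from the definition of $\sigma$. Since $\psi_i\le\max\{\psi_1,\psi_2\}$, any $b$ with $\max\{\psi_1,\psi_2\}\le b\Phi_{o,\max}+O(1)$ satisfies $\psi_i\le b\Phi_{o,\max}+O(1)$, so $\sigma(\max\{\psi_1,\psi_2\},\Phi_{o,\max})\le\min\{\sigma(\psi_1,\Phi_{o,\max}),\sigma(\psi_2,\Phi_{o,\max})\}$. For the reverse inequality, we may assume $\Phi_{o,\max}\le 0$ near $o$ (use the negative representative $\Phi^U_{o,\max}$ of Remark~\ref{rem:sharp_bound}; this does not change relative types). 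Set $m:=\min\{\sigma(\psi_1,\Phi_{o,\max}),\sigma(\psi_2,\Phi_{o,\max})\}$. By the inequality $\psi\le\sigma(\psi,\Phi_{o,\max})\Phi_{o,\max}+O(1)$ recalled in Section~\ref{sec:1.1} (due to Rashkovskii, and also a consequence of the strong openness property), and using $\Phi_{o,\max}\le 0$ with $\sigma(\psi_i,\Phi_{o,\max})\ge m$, one gets $\psi_i\le m\Phi_{o,\max}+O(1)$ for $i=1,2$, hence $\max\{\psi_1,\psi_2\}\le m\Phi_{o,\max}+O(1)$ and $\sigma(\max\{\psi_1,\psi_2\},\Phi_{o,\max})\ge m$. (When $\sigma(\psi_i,\Phi_{o,\max})=+\infty$ the inequality $\psi_i\le b\Phi_{o,\max}+O(1)$ holds for every finite $b$, so the argument goes through with $b=m$ when $m<+\infty$, and the case $m=+\infty$ is trivial.)

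For $(2)$, combine $(3)$ with monotonicity of $\sigma(\cdot,\Phi_{o,\max})$ in its first argument: from $|f_1+f_2|\le 2\max\{|f_1|,|f_2|\}$ we get $\log|f_1+f_2|\le\max\{\log|f_1|,\log|f_2|\}+O(1)$, and since $\psi\le\psi'+O(1)$ forces $\sigma(\psi,\Phi_{o,\max})\ge\sigma(\psi',\Phi_{o,\max})$ (the admissible set of $b$ only enlarges), applying $(3)$ to $\log|f_1|$ and $\log|f_2|$ gives $\sigma(\log|f_1+f_2|,\Phi_{o,\max})\ge\sigma(\max\{\log|f_1|,\log|f_2|\},\Phi_{o,\max})=\min\{\sigma(\log|f_1|,\Phi_{o,\max}),\sigma(\log|f_2|,\Phi_{o,\max})\}$. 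I expect the only delicate point to be in $(1)$: one must be sure the numerators are honestly additive, which is why I emphasize that each $-\psi_i$ is bounded below (so no $\infty-\infty$ arises) and that the denominator is eventually positive. It is precisely the requirement that $c_1\psi_1+c_2\psi_2$ be plurisubharmonic --- needed to invoke Theorem~\ref{thm:main_value} for the combination --- that forces $c_1,c_2\ge 0$ and explains why this tropical multiplicativity fails for arbitrary real coefficients.
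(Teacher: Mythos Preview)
Your proof is correct and follows essentially the same approach as the paper: statement $(1)$ is deduced from the integral formula of Theorem~\ref{thm:main_value} by linearity of the numerator, and statements $(2)$ and $(3)$ both rest on the key inequality $\psi\le\sigma(\psi,\Phi_{o,\max})\Phi_{o,\max}+O(1)$ recalled in Section~\ref{sec:1.1}. The only cosmetic difference is that you derive $(2)$ from $(3)$ via $\log|f_1+f_2|\le\max\{\log|f_1|,\log|f_2|\}+O(1)$, whereas the paper proves $(2)$ directly from $\log|f_i|\le a_i\Phi_{o,\max}+O(1)$ without passing through $(3)$; your extra care about the well-definedness of the integrals in $(1)$ and the $+\infty$ case in $(3)$ is justified but not spelled out in the paper.
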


For general maximal weights, the relative types to them may not be tropically multiplicative. Thus, the expression in integral form does not hold for all maximal weights.
\begin{Example}
	Let $\varphi=\log(|z_1|^{10}+|z_1^2-z_2^2|)$ on $\mathbb{C}^2$. Note that any plurisubharmonic function $\psi$ near $o$ with analytic singularity is a maximal weight if $\psi^{-1}(-\infty)=\{o\}$ (see \cite{Rash13}). Choosing $u_1=\log|z_1-z_2|$ and $u_2=\log|z_1+z_2|$, it is clear that $\sigma(u_1+u_2,\varphi)\ge1$, $\sigma(u_1,\varphi)<\frac{1}{2}$ and $\sigma(u_2,\varphi)<\frac{1}{2}$, then $\sigma(u_1+u_2,\varphi)\not=\sigma(u_1,\varphi)+\sigma(u_2,\varphi)$. 
\end{Example}

Denote  
$$\nu(f,\Phi_{o,\max}):=\sigma(\log|f|,\Phi_{o,\max})$$ 
for any $(f,o)\in\mathcal{O}_o$. Corollary \ref{coro:main} shows that  Zhou numbers are  well suited for the tropical structure of the cone of plurisubharmonic functions. Thus, $\nu(\cdot,\Phi_{o,\max})$ is a \emph{valuation} of $\mathcal{O}_o$ for any local Zhou weight $\Phi_{o,\max}$, and we call it \textbf{Zhou valuation}.
\begin{Corollary}
	\label{coro:main2}For any local Zhou weight $\Phi_{o,\max}$ near $o$, $\nu(\cdot,\Phi_{o,\max}):\mathcal{O}_o\rightarrow\mathbb{R}_{\ge0}$ satisfies the following:
	
	$(1)$ $\nu(fg,\Phi_{o,\max})=\nu(f,\Phi_{o,\max})+\nu(g,\Phi_{o,\max})$;
	
	$(2)$ $\nu(f+g,\Phi_{o,\max})\ge\min\big\{\nu(f,\Phi_{o,\max}),\nu(g,\Phi_{o,\max})\big\}$;
	
	$(3)$ $\nu(f,\Phi_{o,\max})=0$ for any $f(o)\not=0$.
\end{Corollary}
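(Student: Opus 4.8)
The plan is to obtain all three items as essentially immediate consequences of Corollary \ref{coro:main}. The only observation needed to make that corollary applicable is that for any germ of a holomorphic function $f$ at $o$ the function $\log|f|$ is plurisubharmonic near $o$, so the statements about relative types $\sigma(\cdot,\Phi_{o,\max})$ of plurisubharmonic functions specialize verbatim to $\nu(f,\Phi_{o,\max})=\sigma(\log|f|,\Phi_{o,\max})$.

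For item $(1)$, I would write $\log|fg|=\log|f|+\log|g|$ near $o$ and apply Corollary \ref{coro:main}$(1)$ with $\psi_1=\log|f|$, $\psi_2=\log|g|$ and $c_1=c_2=1$; this gives $\nu(fg,\Phi_{o,\max})=\sigma(\log|f|,\Phi_{o,\max})+\sigma(\log|g|,\Phi_{o,\max})=\nu(f,\Phi_{o,\max})+\nu(g,\Phi_{o,\max})$. Item $(2)$ is literally Corollary \ref{coro:main}$(2)$ transcribed into the notation $\nu(\cdot,\Phi_{o,\max})$, applied to $f_1=f$ and $f_2=g$.

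For item $(3)$, I would first record that $\nu(\cdot,\Phi_{o,\max})\ge 0$ always: any plurisubharmonic $\psi$ is locally bounded above near $o$, so $\psi\le 0\cdot\Phi_{o,\max}+O(1)$ holds and $b=0$ is admissible in the supremum defining $\sigma(\psi,\Phi_{o,\max})$. Next I would show $\nu(1,\Phi_{o,\max})=\sigma(0,\Phi_{o,\max})=0$: since $\Phi_{o,\max}$ is a maximal weight it has a genuine pole at $o$, i.e. $\Phi_{o,\max}(o)=-\infty$ (equivalently, this is forced by the non-integrability in Definition \ref{def:max_relat}$(2)$, and Remark \ref{rem:max_existence} even gives $\Phi_{o,\max}\ge N\log|z|+O(1)$), so for every $b>0$ the function $b\Phi_{o,\max}+O(1)$ is unbounded below near $o$ while $0$ is not; hence no $b>0$ satisfies $0\le b\Phi_{o,\max}+O(1)$, and combined with $\sigma\ge 0$ this yields $\sigma(0,\Phi_{o,\max})=0$. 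Finally, if $f(o)\ne 0$ then $1/f$ is holomorphic near $o$ and $f\cdot(1/f)=1$, so item $(1)$ gives $0=\nu(1,\Phi_{o,\max})=\nu(f,\Phi_{o,\max})+\nu(1/f,\Phi_{o,\max})$ with both summands nonnegative, whence $\nu(f,\Phi_{o,\max})=0$. (Alternatively one argues directly: $\log|f|=O(1)$ near $o$, so for $b>0$ the inequality $\log|f|\le b\Phi_{o,\max}+O(1)$ fails as the right-hand side tends to $-\infty$, and $\sigma\ge 0$ then forces $\nu(f,\Phi_{o,\max})=0$.)

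There is no real obstacle here, as this is a corollary of the integral formula in Theorem \ref{thm:main_value} via Corollary \ref{coro:main}; the only point deserving a line of care is the normalization in $(3)$, namely that $\Phi_{o,\max}$ genuinely has a logarithmic singularity at $o$ — which is exactly the non-integrability requirement of Definition \ref{def:max_relat}$(2)$, since a locally bounded weight would make $|f_0|^2e^{-2\varphi_0}e^{-2\Phi_{o,\max}}$ integrable whenever $|f_0|^2e^{-2\varphi_0}$ is.
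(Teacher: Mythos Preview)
Your proposal is correct and matches the paper's approach: the paper states Corollary~\ref{coro:main2} as an immediate consequence of Corollary~\ref{coro:main} without further detail, and your derivation of items $(1)$--$(3)$ from Corollary~\ref{coro:main} (together with the observation that $\Phi_{o,\max}$ genuinely has $-\infty$ at $o$, forcing $\sigma(\log|f|,\Phi_{o,\max})=0$ when $f(o)\neq 0$) is exactly what is needed.
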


\subsection{Singularities of plurisubharmonic functions}
\label{sec:1.2}
In this section, we use  Zhou numbers and  Tian functions to measure the singularities of plurisubharmonic functions near $o$.

Let $G$	be a holomorphic function near $o$.  Recall  the definition of \emph{jumping number} 
$$c^G_o(\Phi_{o,\max}):=\sup\big\{c:|G|^{2}e^{-2c\Phi_{o,\max}}\text{ is integrable near }o\big\}$$ (see \cite{JON-Mus2012,JON-Mus2014}). Denote  $c_o(\Phi_{o,\max}):=c^1_o(\Phi_{o,\max})$ (see \cite{tian87,demailly2010}). 

Given a local Zhou weight $\Phi_{o,\max}$ near $o$, the following theorem shows the jumping number $c_o^{G}(\Phi_{o,\max})$ and the Zhou valuation $\nu(G,\Phi_{o,\max})$ for any $(G,o)\in\mathcal{O}_o$ are linearly controlled by each other.

\begin{Theorem}
	\label{thm:valu-jump}
	For any holomorphic function $G$ near $o$,	
	we have the following relation between the jumping number $c^G_o(\Phi_{o,\max})$ and the Zhou valuation $\sigma(\cdot,\Phi_{o,\max})$,
	\begin{equation*}
		\begin{split}
			\nu(G,\Phi_{o,\max})+c_o(\Phi_{o,\max})
			\le& c^G_o(\Phi_{o,\max})\\
			\le& \nu(G,\Phi_{o,\max})-\sigma(\log|f_0|,\Phi_{o,\max})+1+\sigma(\varphi_0,\Phi_{o,\max}).
		\end{split}
	\end{equation*}
	Especially, if $|f_0|^2e^{-2\varphi_0}\equiv1$, we have 
	$$\nu(G,\Phi_{o,\max})+1=c^G_o(\Phi_{o,\max}).$$
\end{Theorem}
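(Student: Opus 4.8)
The plan is to prove the two inequalities of Theorem \ref{thm:valu-jump} separately, then specialize to the case $|f_0|^2e^{-2\varphi_0}\equiv 1$. Throughout I would use the integral expression for Zhou numbers from Theorem \ref{thm:main_value}, applied to the plurisubharmonic functions $\psi=\log|G|$, $\psi=\varphi_0$, and $\psi=\log|f_0|$, together with the definition of the jumping number as a supremum of integrability exponents and the strong openness property (Theorem \ref{thm:SOC}).

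For the lower bound $\nu(G,\Phi_{o,\max})+c_o(\Phi_{o,\max})\le c^G_o(\Phi_{o,\max})$, I would argue that whenever $c<c_o(\Phi_{o,\max})$ and $b<\nu(G,\Phi_{o,\max})$, we have $\log|G|\le b\,\Phi_{o,\max}+O(1)$ near $o$ (this is the inequality $\psi\le\sigma(\psi,\Phi_{o,\max})\Phi_{o,\max}+O(1)$ established in the text just before Remark \ref{rem:sharp_bound}), so $|G|^2e^{-2(b+c)\Phi_{o,\max}}\le Ce^{2b\Phi_{o,\max}}e^{-2(b+c)\Phi_{o,\max}}=Ce^{-2c\Phi_{o,\max}}$, which is integrable near $o$ since $c<c_o(\Phi_{o,\max})$. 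Hence $b+c\le c^G_o(\Phi_{o,\max})$, and letting $b\to\nu(G,\Phi_{o,\max})$, $c\to c_o(\Phi_{o,\max})$ gives the claim. This direction is essentially soft.

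For the upper bound, set $c:=c^G_o(\Phi_{o,\max})$. By strong openness, $|G|^2e^{-2c\Phi_{o,\max}}$ need not be integrable, but for every $\varepsilon>0$, $|G|^2e^{-2(c-\varepsilon)\Phi_{o,\max}}$ is. The idea is to compare this with the defining non-integrability property $(2)$ of the Zhou weight: $|f_0|^2e^{-2\varphi_0}e^{-2\Phi_{o,\max}}$ is not integrable. Writing $|G|^2e^{-2(c-\varepsilon)\Phi_{o,\max}}=|G|^2e^{-2\varphi_0}|f_0|^{-2}\cdot |f_0|^2e^{-2\varphi_0}e^{-2(c-\varepsilon)\Phi_{o,\max}}$ suggests that $\log|G|-\log|f_0|-\varphi_0-(c-\varepsilon-1)\Phi_{o,\max}$, when nonpositive up to $O(1)$, would force non-integrability and contradict $(1)$ after adding $N_0\log|z|$. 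More precisely, I expect to combine the estimate $\log|G|\le \nu(G,\Phi_{o,\max})\Phi_{o,\max}+O(1)$ with a lower bound $\varphi_0\ge\sigma(\varphi_0,\Phi_{o,\max})\Phi_{o,\max}+O(1)$ — note this lower bound direction is where one must be careful, since the relative-type inequality goes the other way — using instead that $|f_0|^2e^{-2\varphi_0}|z|^{2N_0}e^{-2\Phi_{o,\max}}$ is integrable together with the integral formula of Theorem \ref{thm:main_value} applied to $\varphi_0$ and to $\log|f_0|$ to control the relevant masses. Putting these together with the definition of $c$ as a supremum should yield $c\le \nu(G,\Phi_{o,\max})-\sigma(\log|f_0|,\Phi_{o,\max})+1+\sigma(\varphi_0,\Phi_{o,\max})$.

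The main obstacle I anticipate is the upper bound: getting from pointwise comparisons of weights to a clean inequality between the jumping number and the sum of Zhou numbers requires handling the weight $|f_0|^2e^{-2\varphi_0}$ as a measure rather than discarding it, and controlling $\varphi_0$ and $\log|f_0|$ from below in terms of $\Phi_{o,\max}$ is delicate because the fundamental relative-type inequality only gives upper bounds $\psi\le\sigma(\psi,\Phi_{o,\max})\Phi_{o,\max}+O(1)$. I expect the resolution to come from applying Theorem \ref{thm:main_value} directly to the integrals $\int_{\{\Phi_{o,\max}<-t\}}|f_0|^2e^{-2\varphi_0}(-\psi)$ for $\psi\in\{\varphi_0,\log|f_0|,\log|G|\}$ and reading off the asymptotics, so that the measure $|f_0|^2e^{-2\varphi_0}$ is built into the comparison from the start. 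Finally, for the special case $|f_0|^2e^{-2\varphi_0}\equiv 1$ we have $\varphi_0\equiv 0$, $f_0\equiv 1$, hence $\sigma(\log|f_0|,\Phi_{o,\max})=\sigma(\varphi_0,\Phi_{o,\max})=0$ and $c_o(\Phi_{o,\max})=1$ by property $(2)$ together with strong openness, so both inequalities collapse to the single equality $\nu(G,\Phi_{o,\max})+1=c^G_o(\Phi_{o,\max})$.
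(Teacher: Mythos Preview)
Your lower bound argument is correct and matches the paper's. The special case at the end is also essentially right, though your justification that $c_o(\Phi_{o,\max})=1$ follows ``by property $(2)$ together with strong openness'' is incomplete: property $(2)$ gives only $c_o(\Phi_{o,\max})\le 1$, and for $\ge 1$ you need the maximality property $(3)$ (if $e^{-2c\Phi_{o,\max}}$ were not integrable for some $c<1$, then $\varphi'=c\Phi_{o,\max}\ge\Phi_{o,\max}$ would violate $(3)$).

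The genuine gap is in the upper bound. You correctly identify the obstacle: $c^G_o(\Phi_{o,\max})$ concerns the \emph{unweighted} integrability of $|G|^2e^{-2c\Phi_{o,\max}}$, whereas everything tied to the Zhou weight---including the integral formula of Theorem~\ref{thm:main_value}---has the measure $|f_0|^2e^{-2\varphi_0}$ built in. Your proposed resolution, ``reading off the asymptotics'' from Theorem~\ref{thm:main_value} applied to $\psi\in\{\varphi_0,\log|f_0|,\log|G|\}$, does not bridge this: those asymptotics again compute Zhou numbers relative to the weighted measure, and passing to the unweighted jumping number still requires exactly the lower bounds on $\log|f_0|$ and upper bounds on $\varphi_0$ that you note are unavailable.

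The paper's mechanism is different and is the missing idea. After reducing to $\varphi_0\equiv 0$ via Proposition~\ref{thm:main_value1}(2), the key point is that $(1+mk_1)\Phi_{o,\max}$ is itself a local Zhou weight related to $|f_0|^{2m+2}$ (Proposition~\ref{thm:main_value1}(1)), where $k_1=\sigma(\log|f_0|,\Phi_{o,\max})$. Applying the linearity of the Tian function (Proposition~\ref{prop:concave_B}) at this new Zhou weight gives the \emph{exact} value $\mathrm{Tn}_3(s,m+1):=\sup\{c:|G|^{2s}|f_0|^{2m+2}e^{-2c\Phi_{o,\max}}\text{ integrable}\}=1+mk_1+k_2 s$ for $s\ge 0$. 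Now compare $\mathrm{Tn}_2(n+1)=\mathrm{Tn}_3(n+1,0)$ with $\mathrm{Tn}_3(n+1,1)$ using only the ``correct-direction'' inequality $\log|f_0|\le k_1\Phi_{o,\max}+O(1)$: this yields $\mathrm{Tn}_3(n+1,1)\ge \mathrm{Tn}_2(n+1)+k_1$, and a contradiction argument then forces $\mathrm{Tn}_2(t)\le k_2 t-k_1+1$. The trick is that the unavailable lower bound on $\log|f_0|$ is replaced by an exact identity coming from the fact that rescaled Zhou weights are again Zhou weights.
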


Let $f$ be a holomorphic function  near $o$, and let $v$ and $\varphi_{0}$ be plurisubharmonic functions near $o$. Let us consider a function defined by jumping numbers: 
$$\mathrm{Tn}(t;f,v,\varphi_{0}):=\sup\big\{c\ge0:|f|^{2t}e^{-2cv-2\varphi_{0}}\text{ is integrable near }o\big\}$$
for any $t\in\mathbb{R}$.
The complex singularity exponent was firstly studied by Tian (see \cite{tian87,tian90,demailly2010}), and jumping number is a generalization of the notion of complex singularity exponent. Thus, we call the function $\mathrm{Tn}(t;f,v,\varphi_{0})$ \textbf{Tian function}.

The following theorem gives a relation between  Zhou numbers, Tian functions and multiplier ideal sheaves.

\begin{Theorem}
	\label{thm:multi-valua}
	Let $u,$ $v$ be two plurisubharmonic functions near $o$. Then the following  statements are equivalent:
	
	$(1)$ there exists a plurisubharmonic function $\varphi_{0}$ near $o$ ($\not\equiv-\infty$) and two sequences of numbers  $\{t_{i,j}\}_{j\in\mathbb{Z}_{\ge0}}$ ($t_{i,j}\rightarrow+\infty$ when $j\rightarrow+\infty$, $i=1,2$) such that $\lim_{j\rightarrow+\infty}\frac{t_{1,j}}{t_{2,j}}=1$ and $$\mathcal{I}(\varphi_{0}+t_{1,j}v)_o\subset\mathcal{I}(\varphi_{0}+t_{2,j}u)_o$$ for any $j$;
	
	$(2)$ for any plurisubharmonic function $\varphi_0$ near $o$ and any $t>0$, we have $$\mathcal{I}(\varphi_0+tv)_o\subset\mathcal{I}(\varphi_0+tu)_o;$$

	$(3)$ for any local Zhou weight $\Phi_{o,\max}$ near $o$, we have $$\sigma(u,\Phi_{o,\max})\le\sigma(v,\Phi_{o,\max});$$
	
	$(4)$ there exists a plurisubharmonic function $\varphi_{0}$ near $o$ ($\not\equiv-\infty$) such that 
	$$\mathrm{Tn}(t;f,v,\varphi_{0})\le\mathrm{Tn}(t;f,u,\varphi_{0})$$
for any $t\ge0$ and any holomorphic function $f$ near $o$;

	$(5)$ for any plurisubharmonic function $\varphi_{0}$ near $o$, any $t\ge0$ and any holomorphic function $f$ near $o$, we have
	$$\mathrm{Tn}(t;f,v,\varphi_{0})\le\mathrm{Tn}(t;f,u,\varphi_{0}).$$
\end{Theorem}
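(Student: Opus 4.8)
The plan is to establish the cyclic chain of implications $(2)\Rightarrow(5)\Rightarrow(4)\Rightarrow(1)\Rightarrow(3)\Rightarrow(2)$. Of these, $(5)\Rightarrow(4)$ is immediate (take $\varphi_{0}\equiv 0$), and $(2)\Rightarrow(5)$, $(4)\Rightarrow(1)$, $(3)\Rightarrow(2)$ are comparatively short once one uses the strong openness property (Theorem \ref{thm:SOC}), Remark \ref{rem:max_existence}, and the defining maximality of Zhou weights; the substantive step, which I expect to be the main obstacle, is $(1)\Rightarrow(3)$.

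For $(2)\Rightarrow(5)$: fix a plurisubharmonic $\varphi_{0}$, $t\ge 0$, and a holomorphic $f\not\equiv 0$ near $o$. Since $\lceil t\rceil-t\in[0,1)$, the function $\varphi_{0}':=\varphi_{0}+(\lceil t\rceil-t)\log|f|$ is plurisubharmonic, and $|f^{\lceil t\rceil}|^{2}e^{-2cw-2\varphi_{0}'}=|f|^{2t}e^{-2cw-2\varphi_{0}}$ for every $c$ and $w\in\{u,v\}$; hence $\mathrm{Tn}(t;f,w,\varphi_{0})=\sup\{c\ge 0:(f^{\lceil t\rceil},o)\in\mathcal{I}(\varphi_{0}'+cw)_{o}\}$, and applying $(2)$ with weight $\varphi_{0}'$ and exponent $c$ gives $\mathrm{Tn}(t;f,v,\varphi_{0})\le\mathrm{Tn}(t;f,u,\varphi_{0})$. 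For $(4)\Rightarrow(1)$: with $\varphi_{0}$ as in $(4)$ (and, after subtracting constants, $u,v\le 0$ near $o$), each set $\{c\ge 0:|g|^{2}e^{-2cw-2\varphi_{0}}\in L^{1}\ \text{near}\ o\}$ is an interval with left endpoint $0$, so $(g,o)\in\mathcal{I}(\varphi_{0}+(j+1)v)_{o}$ forces $j+1\le\mathrm{Tn}(1;g,v,\varphi_{0})\le\mathrm{Tn}(1;g,u,\varphi_{0})$ and hence $(g,o)\in\mathcal{I}(\varphi_{0}+ju)_{o}$; thus $(1)$ holds with $t_{1,j}=j+1$, $t_{2,j}=j$. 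For $(3)\Rightarrow(2)$: contrapositively, from $(g,o)\in\mathcal{I}(\varphi_{0}+tv)_{o}\setminus\mathcal{I}(\varphi_{0}+tu)_{o}$ one produces via Remark \ref{rem:max_existence} (after the reduction needed to meet the integrability hypothesis of that remark) a Zhou weight $\Phi_{o,\max}\ge tu$ near $o$ related to $|g|^{2}e^{-2\varphi_{0}}$, so that $\sigma(u,\Phi_{o,\max})\ge 1/t$; since $v\le\tfrac1t\Phi_{o,\max}+O(1)$ would force $|g|^{2}e^{-2\varphi_{0}-2\Phi_{o,\max}}\le C|g|^{2}e^{-2\varphi_{0}-2tv}\in L^{1}$ near $o$, contradicting Definition \ref{def:max_relat}$(2)$, we conclude $\sigma(v,\Phi_{o,\max})<1/t\le\sigma(u,\Phi_{o,\max})$, contradicting $(3)$.

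For the decisive implication $(1)\Rightarrow(3)$, assume $(1)$ with an auxiliary weight $\varphi_{0}$ and sequences $t_{i,j}\to+\infty$, $t_{1,j}/t_{2,j}\to1$, $\mathcal{I}(\varphi_{0}+t_{1,j}v)_{o}\subset\mathcal{I}(\varphi_{0}+t_{2,j}u)_{o}$, and let $\Phi_{o,\max}$ be an arbitrary Zhou weight, related to some $|\tilde f_{0}|^{2}e^{-2\tilde\varphi_{0}}$. The strategy is to reduce the comparison $\sigma(u,\Phi_{o,\max})\le\sigma(v,\Phi_{o,\max})$ to the integral formula of Theorem \ref{thm:main_value}: writing $a=\sigma(u,\Phi_{o,\max})$ (which we may assume positive) and using $u\le a\Phi_{o,\max}+O(1)$ together with Definition \ref{def:max_relat}$(2)$, one first obtains $(\tilde f_{0},o)\notin\mathcal{I}(\tilde\varphi_{0}+\tfrac1b u)_{o}$ for every $b<a$. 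One then converts $(1)$, with the help of the concavity in the first variable of the Tian functions and the scale-preserving condition $t_{1,j}/t_{2,j}\to1$, into the universal multiplier-ideal comparison $\mathcal{I}(\tilde\varphi_{0}+sv)_{o}\subset\mathcal{I}(\tilde\varphi_{0}+su)_{o}$ for all $s>0$ (essentially statement $(2)$), which yields $(\tilde f_{0},o)\notin\mathcal{I}(\tilde\varphi_{0}+\tfrac1b v)_{o}$ for every $b<a$; finally one re-enters the definition of $\Phi_{o,\max}$ — via Definition \ref{def:max_relat}$(3)$ applied to $\max\{\Phi_{o,\max},\tfrac1b v\}$ and the strong openness property, as in the discussion preceding Remark \ref{rem:sharp_bound} — to conclude $v\le a\Phi_{o,\max}+O(1)$, i.e. $\sigma(v,\Phi_{o,\max})\ge a$.

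The main obstacle is the passage from the asymptotic, single-weight datum $(1)$ to a comparison that is uniform over all Zhou weights: one must localize away from the particular $\varphi_{0}$ in $(1)$ and over to the weight $\tilde\varphi_{0}$ underlying an arbitrary Zhou weight, and then re-enter the maximality property of that Zhou weight, and it is precisely here that the integral expression of Theorem \ref{thm:main_value}, the strong openness property, and the maximality and uniqueness of Zhou weights (Definition \ref{def:max_relat}, Remark \ref{rem:sharp_bound}) have to be used essentially. One must also be careful, at each appeal to Remark \ref{rem:max_existence}, that its integrability hypothesis holds — which in general forces one first to replace $u$ and $v$ near $o$ by auxiliary weights with isolated singularity at $o$. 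I expect this to absorb the bulk of the work.
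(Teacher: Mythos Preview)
Your sketches of $(2)\Rightarrow(5)$, $(5)\Rightarrow(4)$, and $(4)\Rightarrow(1)$ are essentially the paper's, and your outline for $(3)\Rightarrow(2)$ is in the right spirit (the paper carries it out via Lemma~\ref{l:0921-4} together with the truncation $\max\{\varphi,N\log|z|\}$ and Lemma~\ref{l:0921-3}, the latter proved by Hironaka desingularization; you allude to but do not supply this reduction).

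The real gap is $(1)\Rightarrow(3)$. Your plan is to ``convert $(1)$ \ldots\ into the universal multiplier-ideal comparison $\mathcal{I}(\tilde\varphi_{0}+sv)_{o}\subset\mathcal{I}(\tilde\varphi_{0}+su)_{o}$ for all $s>0$ (essentially statement $(2)$)'' using only concavity of Tian functions and $t_{1,j}/t_{2,j}\to 1$. But this is precisely the hard direction: passing from the \emph{single} auxiliary weight $\varphi_{0}$ in $(1)$ to an \emph{arbitrary} $\tilde\varphi_{0}$ is statement $(2)$ itself, and concavity (which is in the $t$-variable for fixed $\varphi_{0}$) gives no mechanism for changing the weight. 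Your final step is also unjustified: from $(\tilde f_{0},o)\notin\mathcal{I}(\tilde\varphi_{0}+\tfrac1b v)_{o}$ you cannot conclude that $|\tilde f_{0}|^{2}e^{-2\tilde\varphi_{0}-2\max\{\Phi_{o,\max},\tfrac1b v\}}$ is non-integrable (the max is \emph{larger} than each argument, so the exponential is \emph{smaller}), which is what Definition~\ref{def:max_relat}(3) needs; the discussion before Remark~\ref{rem:sharp_bound} works because there one already knows $\tfrac1b\psi\le\Phi_{o,\max}+O(1)$, exactly the conclusion you are after.

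The paper's route for $(1)\Rightarrow(3)$ is different and supplies the missing bridge via holomorphic functions rather than by switching weights. Fix a Zhou weight $\Phi_{o,\max}$. Choose generators $\{f_{j,l}\}$ of $\mathcal{I}(\varphi_{0}+t_{1,j}v)_{o}$; Demailly's approximation (Lemma~\ref{l:appro-Berg}) gives
\[
v+\tfrac{\varphi_{0}}{t_{1,j}}\le \tfrac{1}{2t_{1,j}}\log\sum_{l}|f_{j,l}|^{2}+O(1).
\]
By the inclusion in $(1)$, $\sum_{l}|f_{j,l}|^{2}e^{-2t_{2,j}u-2\varphi_{0}}$ is integrable, hence so is $\sum_{l}|f_{j,l}|^{2}e^{-2t_{2,j}\sigma(u,\Phi_{o,\max})\Phi_{o,\max}-2\varphi_{0}}$. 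Now the \emph{tameness} of Zhou weights, i.e.\ Theorem~\ref{thm:valu-jump}, converts this integrability into the Zhou-number bound $\sigma\big(\log\sum_{l}|f_{j,l}|,\Phi_{o,\max}\big)\ge t_{2,j}\sigma(u,\Phi_{o,\max})-C$ with $C$ independent of $j$. Substituting back and using Corollary~\ref{coro:main} yields $\sigma(v,\Phi_{o,\max})+\tfrac{1}{t_{1,j}}\sigma(\varphi_{0},\Phi_{o,\max})\ge\tfrac{t_{2,j}}{t_{1,j}}\sigma(u,\Phi_{o,\max})-\tfrac{C}{t_{1,j}}$; letting $j\to\infty$ gives $(3)$. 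The two ingredients your proposal lacks are exactly Demailly's approximation and Theorem~\ref{thm:valu-jump}; Theorem~\ref{thm:main_value} (the integral formula) is not used here.
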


The above theorem shows that one can find a ``maximal" plurisubharmonic function when fixing the multiplier ideal sheaves.
\begin{Remark}
	For any negative plurisubharmonic function $\psi$ on a domain $D\subset\mathbb{C}^n$ containing $o$, let 
	$$\psi_{\mathcal{I}}:=\sup\bigg\{\psi_1\in\text{PSH}(D):\psi\le\psi_1<0\,\&\,\mathcal{I}(t\psi)_o=\mathcal{I}(t\psi_{1})_o,\,\,\forall t>0\bigg\}.$$
	Then $\psi_{\mathcal{I}}$ is a negative plurisubharmonic function on $D$ and $\mathcal{I}(t\psi)_o=\mathcal{I}(t\psi_{1})_o$ for all $t>0$. In fact, it follows from Choquet's lemma (see \cite{demailly-book}, see also Lemma \ref{lem:Choquet}) and Theorem \ref{thm:multi-valua} that there is a sequence of increasing negative plurisubharmonic functions $\{u_j\}$ satisfying that $(\sup_j u_j)^*=(\psi_{\mathcal{I}})^*$ and $\mathcal{I}(t\psi)_o=\mathcal{I}(tu_j)_o$ for any $t>0$ and any $j$. Using Remark \ref{rem:sharp_bound}, we have $\mathcal{I}(t\psi)_o=\mathcal{I}(t(\psi_{\mathcal{I}})^*)_o$ for any $t$, which shows that $(\psi_{\mathcal{I}})^*=\psi_{\mathcal{I}}$.
\end{Remark}

We call a plurisubharmonic function $\varphi$ near $o$ a \emph{tame weight} (see \cite{BFJ08}), if $\varphi$ has an isolated singularity at $o$, $e^{\varphi}$ is continuous and there exists a constant $C>0$ such that for any $t>0$ and every $(f,o)\in\mathcal{I}(t\varphi)_o$, 
$$\log|f|\le (t-C)\varphi+O(1)$$ near $o$.

For any local Zhou weights $\Phi_{o,\max}$, there exists a plurisubharmonic function $\varphi$ on a neighborhood of $o$ such that $\varphi$ has an isolated singularity at $o$, $e^{\varphi}$ is continuous and 
$$\varphi=\Phi_{o,\max}+O(1)$$ near $o$ (using Lemma \ref{l:local-glabal}, Proposition \ref{l:max2} and Proposition \ref{thm:contin}), and Theorem \ref{thm:valu-jump} shows that  there exists a constant $C>0$ such that for any $t>0$ and every $(f,o)\in\mathcal{I}(t\Phi_{o,\max})_o$, 
$$\log|f|\le (t-C)\Phi_{o,\max}+O(1)$$ near $o$. Thus, local Zhou weights are all tame weights (difference by  bounded functions).

Theorem \ref{thm:multi-valua} shows that the maximal tame weights in Theorem \ref{thm:BFJ} can be replaced by local Zhou weights, as seen in the following remark. The proof of Theorem \ref{thm:multi-valua} is purely analytic and independent of Boucksom-Favre-Jonsson's result. 

\begin{Remark}
	Theorem \ref{thm:multi-valua} shows that the following  statements are equivalent:
	
	$(1)$ there exist two sequences of numbers  $\{t_{i,j}\}_{j\in\mathbb{Z}_{\ge0}}$ ($t_{i,j}\rightarrow+\infty$ when $j\rightarrow+\infty$, $i=1,2$) such that $\lim_{j\rightarrow+\infty}\frac{t_{1,j}}{t_{2,j}}=1$ and $\mathcal{I}(t_{1,j}v)_o=\mathcal{I}(t_{2,j}u)_o$ for any $j$;
	
	$(2)$ for any plurisubharmonic function $\varphi_0$ near $o$ and any $t>0$, $\mathcal{I}(\varphi_0+tv)_o=\mathcal{I}(\varphi_0+tu)_o;$

	$(3)$ for any local Zhou weight $\Phi_{o,\max}$ near $o$, $\sigma(u,\Phi_{o,\max})=\sigma(v,\Phi_{o,\max})$;
	
	$(4)$ there exists a plurisubharmonic function $\varphi_{0}$ near $o$ ($\not\equiv-\infty$) such that 
	$\mathrm{Tn}(t;f,v,\varphi_{0})=\mathrm{Tn}(t;f,u,\varphi_{0})$
	for any $t\ge0$ and any holomorphic function $f$ near $o$;
	
	$(5)$ for any plurisubharmonic function $\varphi_{0}$ near $o$, any $t\ge0$ and any holomorphic function $f$ near $o$, we have
	$\mathrm{Tn}(t;f,v,\varphi_{0})=\mathrm{Tn}(t;f,u,\varphi_{0}).$
\end{Remark}

Recall that a plurisubharmonic function $u$ is said to have \emph{analytic singularities} near $o$ if 
$$u=c\log\sum_{1\le j\le N}|f_j|^2+O(1)$$
near $o$, where $c\in\mathbb{R}^+$ and $\{f_j\}_{1\le j\le N}$ are holomorphic functions near $o$.

Theorem \ref{thm:multi-valua} implies the following corollary, which shows that the valuations $\nu(\cdot,\Phi_{o,\max})$  characterize the division relations in $\mathcal{O}_o$.

\begin{Corollary}
	\label{coro:multiplier-valuation}
	Let $u,$ $v$ be two plurisubharmonic functions near $o$. Assume that $u$ has  analytic singularities near $o$, then the following three statements are equivalent:
	
	$(1)$ $v\le u+O(1)$ near $o$;
	
	$(2)$ there exist two sequences of numbers  $\{t_{i,j}\}_{j\in\mathbb{Z}_{\ge0}}$ ($t_{i,j}\rightarrow+\infty$ when $j\rightarrow+\infty$, $i=1,2$) such that $\lim_{j\rightarrow+\infty}\frac{t_{1,j}}{t_{2,j}}=1$ and $\mathcal{I}(t_{1,j}v)_o\subset\mathcal{I}(t_{2,j}u)_o$ for any $j$;
	
	$(3)$ $\sigma(u,\Phi_{o,\max})\le\sigma(v,\Phi_{o,\max})$ holds for any local Zhou weight $\Phi_{o,\max}$ near $o$.
	
	Especially, for any two holomorphic functions $f$ and $g$ near $o$,  the following two statements are equivalent:
	
	$(1)$ $f=gh$ near $o$, where $h$ is a holomorphic function near $o$;
	
	$(2)$  $\nu(f,\Phi_{o,\max})\ge\nu(g,\Phi_{o,\max})$ holds for any local Zhou weight $\Phi_{o,\max}$ near $o$. 
\end{Corollary}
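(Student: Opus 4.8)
The plan is to prove the three equivalent conditions by the cycle $(1)\Rightarrow(2)\Rightarrow(3)\Rightarrow(1)$, and then to read off the statement on holomorphic functions as the case $u=\log|g|$, $v=\log|f|$.

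\textbf{The two easy implications.} The implication $(1)\Rightarrow(2)$ uses nothing about $u$: if $v\le u+O(1)$ near $o$, then for every plurisubharmonic $\varphi_0$ near $o$ and every $t>0$ we have $\varphi_0+tu\ge\varphi_0+tv-C$ for some constant $C$, so $|f|^{2}e^{-2(\varphi_0+tu)}\le e^{2C}|f|^{2}e^{-2(\varphi_0+tv)}$ and hence $\mathcal I(\varphi_0+tv)_o\subset\mathcal I(\varphi_0+tu)_o$; specializing to $\varphi_0\equiv0$ and $t_{1,j}=t_{2,j}=j$ gives $(2)$. For $(2)\Leftrightarrow(3)$ I would simply invoke Theorem \ref{thm:multi-valua}: statement $(2)$ above is statement $(1)$ there with $\varphi_0$ the zero function, statement $(3)$ above is statement $(3)$ there, and $(1)\Leftrightarrow(3)$ in that theorem; moreover, via statement $(2)$ of Theorem \ref{thm:multi-valua}, all of these are equivalent to $\mathcal I(\varphi_0+tv)_o\subset\mathcal I(\varphi_0+tu)_o$ holding for \emph{all} plurisubharmonic $\varphi_0$ and \emph{all} $t>0$.

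\textbf{The implication $(3)\Rightarrow(1)$.} This is the substantial part, and it is where the analytic singularities of $u$ enter. By Remark \ref{rem:sharp_bound}, after passing to a common small neighborhood $U\ni o$ we get, for every local Zhou weight $\Phi_{o,\max}$,
\[
v\;\le\;\sigma(v,\Phi_{o,\max})\,\Phi^U_{o,\max}\;\le\;\sigma(u,\Phi_{o,\max})\,\Phi^U_{o,\max}\qquad\text{on }U,
\]
the second inequality being $(3)$, i.e.\ $\sigma(v,\Phi_{o,\max})\ge\sigma(u,\Phi_{o,\max})\ge0$, together with $\Phi^U_{o,\max}\le0$. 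Thus $v$ lies below the lower envelope $\inf_{\Phi_{o,\max}}\sigma(u,\Phi_{o,\max})\Phi^U_{o,\max}$, the infimum over all local Zhou weights, and it suffices to show that for $u$ with analytic singularities this envelope equals $u$ up to a bounded term near $o$. To prove this, write $u=c\log\big(\sum_{1\le j\le N}|g_j|^{2}\big)+O(1)$ and fix a log resolution $\pi\colon X\to\Omega$ of the ideal $\mathfrak a=(g_1,\dots,g_N)$, after which $u\circ\pi$ has, near each point of $\pi^{-1}(o)$, the toric shape $c\sum_i a_i\log|w_i|^{2}+O(1)$ in local coordinates; for each such local model one constructs, using Remark \ref{rem:max_existence} with data $(f_0,\varphi_0)$ transported from $X$ and a starting weight of the type in Example \ref{exam1}, a local Zhou weight whose canonical representative satisfies $\sigma(u,\Phi_{o,\max})\Phi^U_{o,\max}=u+O(1)$ on the corresponding region, and letting the models (and all further blow-ups, so as to exhaust the divisorial valuations centred at $o$) vary shows that the envelope recovers $u$ up to $O(1)$. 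Granting this, $v\le u+O(1)$ near $o$, which is $(1)$, closing the cycle. I expect this last construction to be the main obstacle: one must manufacture, out of the analytic-singularities data of $u$, a family of local Zhou weights fine enough to detect every ``direction'' of the singularity of $u$ — in valuative language the content is the criterion that a plurisubharmonic function with analytic singularities is determined up to $O(1)$ by its generic Lelong numbers along all divisors over $o$, whose proof, after the resolution $\pi$, reduces to a statement about plurisubharmonic functions on a polydisc dominated by a monomial weight. (Equivalently, one could start from $\mathcal I(tv)_o\subset\mathcal I(tu)_o$ for all $t>0$, read off the divisorial orders on $X$ by letting $t\to+\infty$, and apply the same valuative criterion.)

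\textbf{The case of holomorphic functions.} Apply the equivalence $(1)\Leftrightarrow(3)$ just established to $u:=\log|g|=\frac12\log|g|^{2}$, which has analytic singularities, and $v:=\log|f|$ (we may assume $g\not\equiv0$, the case $g\equiv0$ being trivial). Statement $(1)$ becomes $\log|f|\le\log|g|+O(1)$ near $o$, i.e.\ $|f|\le C|g|$ on a neighborhood of $o$; then $h:=f/g$ is holomorphic and bounded on the complement of the analytic set $\{g=0\}$, hence extends holomorphically across it by the Riemann extension theorem, so $f=gh$ near $o$, and conversely $f=gh$ gives $|f|\le(\sup|h|)\,|g|$, i.e.\ $\log|f|\le\log|g|+O(1)$. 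Statement $(3)$ becomes $\sigma(\log|g|,\Phi_{o,\max})\le\sigma(\log|f|,\Phi_{o,\max})$ for every local Zhou weight, which by $\nu(\cdot,\Phi_{o,\max})=\sigma(\log|\cdot|,\Phi_{o,\max})$ is exactly $\nu(f,\Phi_{o,\max})\ge\nu(g,\Phi_{o,\max})$ for every local Zhou weight near $o$. This gives the asserted equivalence for holomorphic $f$ and $g$.
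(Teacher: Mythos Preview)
Your cycle $(1)\Rightarrow(2)$ and the reduction $(2)\Leftrightarrow(3)$ via Theorem~\ref{thm:multi-valua} are fine, and your treatment of the holomorphic-function statement via Riemann extension is correct.

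The gap is in $(3)\Rightarrow(1)$. Your envelope strategy requires the identity
\[
\inf_{\Phi_{o,\max}}\sigma(u,\Phi_{o,\max})\Phi^U_{o,\max}=u+O(1)\quad\text{near }o
\]
for $u$ with analytic singularities, and you yourself flag the construction of the needed family of Zhou weights as ``the main obstacle''. It is: Zhou weights live downstairs in $\mathbb{C}^n$, not on a resolution, and even in the simplest case $u=\log|z_1|$ on $\mathbb{C}^2$ one sees that the toric weight realizing the infimum at a point $(z_1,z_2)$ must be chosen depending on $|z_1|/|z_2|$, so a genuine argument is required to get a \emph{uniform} $O(1)$. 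Your sketch (pull back to toric charts, build Zhou weights from Example~\ref{exam1}, vary over divisors) does not address this uniformity and does not explain how to descend the construction from the resolution; as written it is not a proof.

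The paper avoids the envelope entirely. From $(3)$ it first passes, via Theorem~\ref{thm:multi-valua}, to $\mathcal I(tv)_o\subset\mathcal I(tu)_o$ for all $t>0$. Taking an orthonormal basis $(\sigma_{m,k})$ of $A^2(D,2mu)$ and setting $u_m=\frac{1}{2m}\log\sum_k|\sigma_{m,k}|^2$, Demailly's approximation (Lemma~\ref{l:appro-Berg}) together with $\mathcal I(mv)_o\subset\mathcal I(mu)_o$ gives $v\le u_m+O(1)$ near $o$. Using that $e^u$ is (after normalization of the analytic-singularity data) Lipschitz, the upper bound in Lemma~\ref{l:appro-Berg} with $r=e^{u(z)}$ yields $u_m\le\big(1-\frac{n}{m}\big)u+O(1)$, hence $v\le(1-\epsilon)u+O(1)$ for every $\epsilon>0$. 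The passage from this to $v\le u+O(1)$ is then isolated as a separate lemma (Lemma~\ref{l:sigma-analytic}), proved by Hironaka resolution and Siu's decomposition theorem. Your parenthetical alternative (``start from $\mathcal I(tv)_o\subset\mathcal I(tu)_o$ and read off divisorial orders'') is in spirit this route, but you would still need the analytic input that $v\le(1-\epsilon)u+O(1)$ for all $\epsilon$ implies $v\le u+O(1)$, which is exactly Lemma~\ref{l:sigma-analytic}.
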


In \cite{kim}, Kim proved that the statement $(1)$ in Theorem \ref{thm:BFJ} implies $v\le u+O(1)$ near $o$ with the assumption that $u$ has analytic singularities near $o$.
Theorem \ref{thm:BFJ} and Theorem \ref{thm:multi-valua} shows that the statement $(1)$ in Theorem \ref{thm:BFJ} is equivalent to $\sigma(u,\Phi_{o,\max})=\sigma(v,\Phi_{o,\max})$ for any local Zhou weight $\Phi_{o,\max}$ near $o$.

 Given two ideal $I_1$ and $I_2$ of $\mathcal{O}_o$, $\bar{I}_1\subset \bar{I}_2$ if and only if $\log|I_1|\le\log|I_2|+O(1)$ near $o$ (see \cite{demailly2010}), where $\bar{I}_i$ is the integral closure of $I_i$ for $i=1,2.$ According to   Corollary \ref{coro:multiplier-valuation}, one can give a valuative characterization of the integral closures of ideals in $\mathcal{O}_o$ by Zhou valuations.
 
\begin{Remark}
	 $\nu(I_1,\Phi_{o,\max})\ge\nu(I_2,\Phi_{o,\max})$ holds for any local Zhou weight $\Phi_{o,\max}$ near $o$ if and only if  $\bar{I}_1\subset \bar{I}_2$. Especially, for any $(f,o)\in\mathcal{O}_o$ and ideal $I$ of $\mathcal{O}_o$,, $f\in\bar{I}$ if and only if $\nu(f,\Phi_{o,\max})\ge \nu(I,\Phi_{o,\max})$ for any local Zhou weight $\Phi_{o,\max}$.
\end{Remark}

Let $I\not=\{0\}$ be an ideal of $\mathcal{O}_o$, and denote the set containing  all local Zhou weights related to $|I|^2$ by  $V_I$. We present an equality about jumping numbers and Zhou numbers.

\begin{Theorem}\label{thm:A}
	Let $\varphi$ be any plurisubharmonic function near $o$. Then 
	$$c_o^{I}(\varphi)=\frac{1}{\sup_{\Phi_{o,\max}\in V_I}\sigma(\varphi,\Phi_{o,\max})},$$
	where $c_o^{I}(\varphi):=\sup\big\{c\ge0:|I|^{2}e^{-2c\varphi}\text{ is integrable near }o\big\}$.
\end{Theorem}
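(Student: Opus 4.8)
The plan is to relate the jumping number $c_o^I(\varphi)$ directly to the supremum of Zhou numbers $\sigma(\varphi,\Phi_{o,\max})$ over $V_I$, using the defining properties of local Zhou weights together with the characterization of multiplier ideal sheaves via Zhou numbers (Theorem \ref{thm:multi-valua}). First I would observe that one inequality is essentially formal. Fix any $\Phi_{o,\max}\in V_I$; by definition it is a local Zhou weight related to $|I|^2$ (i.e. to $|f_0|^2e^{-2\varphi_0}$ with $|f_0|^2 = |I|^2$ and $\varphi_0 \equiv 0$). From the inequality $\psi \le \sigma(\psi,\Phi_{o,\max})\Phi_{o,\max} + O(1)$ (valid near $o$, as recalled after Definition \ref{def:max_relat}), applied with $\psi = c\varphi$ for $c < c_o^I(\varphi)$, one gets $c\varphi \le c\,\sigma(\varphi,\Phi_{o,\max})\Phi_{o,\max} + O(1)$, hence $|I|^2 e^{-2c\varphi} \ge |I|^2 e^{-2c\sigma(\varphi,\Phi_{o,\max})\Phi_{o,\max}}\cdot e^{-O(1)}$; if $c\,\sigma(\varphi,\Phi_{o,\max}) \ge 1$ this would force $|I|^2 e^{-2\Phi_{o,\max}}$ integrable, contradicting property $(2)$ of Definition \ref{def:max_relat}. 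Therefore $c\,\sigma(\varphi,\Phi_{o,\max}) < 1$ for all $c < c_o^I(\varphi)$ and all $\Phi_{o,\max}\in V_I$, giving $c_o^I(\varphi) \le 1/\sup_{\Phi_{o,\max}\in V_I}\sigma(\varphi,\Phi_{o,\max})$.

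For the reverse inequality I would argue by constructing, for $\varphi$, a local Zhou weight that nearly attains the bound. Set $c := c_o^I(\varphi)$; I would like a Zhou weight $\Phi_{o,\max}\in V_I$ with $\sigma(\varphi,\Phi_{o,\max}) \ge 1/c$. Consider the weight $\tfrac{1}{c}\varphi$ (or $\tfrac{1}{c'}\varphi$ for $c' > c$ slightly larger, to handle the boundary case of strong openness). By the definition of the jumping number, $|I|^2 e^{-2 \cdot \frac{1}{c'}\varphi}$ is integrable near $o$ for $c' > c$, while by the strong openness property (Theorem \ref{thm:SOC}) $|I|^2 e^{-2\cdot\frac{1}{c}\varphi}$ is \emph{not} integrable; more precisely $(f_0, o) = $ the generators of $I$ are not in $\mathcal{I}(\tfrac{1}{c}\varphi)_o$ while $|I|^2 e^{-2\cdot\frac{1}{c}\varphi}|z|^{2N_0}$ is integrable for $N_0 \gg 0$ — here I need to invoke that $c$ is finite and that adding $N_0\log|z|$ strictly increases the jumping number past $1$, which follows since $|I|^2$ already sits below the maximal ideal structure appropriately; some care is needed if $c = 0$ or $c = +\infty$, which I would treat as degenerate cases separately. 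Then Remark \ref{rem:max_existence} (existence of Zhou weights) applied with $\varphi$ replaced by $\tfrac{1}{c}\varphi$ produces a local Zhou weight $\Phi_{o,\max}$ related to $|I|^2$ with $\Phi_{o,\max} \ge \tfrac{1}{c}\varphi$ near $o$. This $\Phi_{o,\max}$ lies in $V_I$, and $\Phi_{o,\max} \ge \tfrac{1}{c}\varphi + O(1)$ gives immediately $\sigma(\varphi,\Phi_{o,\max}) \ge 1/c$ by the definition of relative type, hence $\sup_{\Phi_{o,\max}\in V_I}\sigma(\varphi,\Phi_{o,\max}) \ge 1/c = 1/c_o^I(\varphi)$, i.e. $c_o^I(\varphi) \ge 1/\sup_{\Phi_{o,\max}\in V_I}\sigma(\varphi,\Phi_{o,\max})$. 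Combining the two inequalities yields the equality.

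The main obstacle I anticipate is the careful verification of the hypotheses of Remark \ref{rem:max_existence} in the reverse direction — specifically, ensuring that $|I|^2|z|^{2N_0}e^{-2\cdot\frac{1}{c}\varphi}$ is integrable for large $N_0$ while $(f_0,o) \notin \mathcal{I}(\tfrac{1}{c}\varphi)_o$ (so that $\varphi_0 \equiv 0$ works). The second condition is exactly the statement, via strong openness, that $c_o^I(\varphi) = c$ is the exact jumping number (so $\tfrac{1}{c}$ is the borderline exponent and integrability fails there); the first condition requires a uniform Skoda-type estimate showing that multiplication by a high power of the maximal ideal restores integrability, which should hold because $\varphi$, being plurisubharmonic, has finite Lelong number and hence $c_o^I(\varphi + N_0 \log|z|) \to +\infty$ as $N_0 \to +\infty$. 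I would also need to address the edge cases $c_o^I(\varphi) = 0$ (where the claimed right-hand side should be interpreted as $1/(+\infty) = 0$, i.e. $\sup \sigma = +\infty$, which follows from taking $\varphi$ itself comparable to some Zhou weight from below with arbitrarily large coefficient) and $c_o^I(\varphi) = +\infty$ (where $\varphi$ is locally bounded near $o$, so every $\sigma(\varphi,\Phi_{o,\max}) = 0$); these are routine once the main argument is in place. A final bookkeeping point is that $V_I$ depends only on $I$ through $|I|^2$, and the argument above only ever uses Zhou weights related to $|I|^2$ with trivial $\varphi_0$, so no compatibility issue arises between the $\varphi_0$ in the general definition and the $\varphi_0 \equiv 0$ used here.
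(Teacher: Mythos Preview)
Your first inequality is fine and matches the paper's argument. The gap is in the reverse inequality, precisely at the point you flagged as an ``obstacle'': the hypothesis of Remark~\ref{rem:max_existence} that $|I|^2|z|^{2N_0}e^{-2\varphi/c}$ be integrable for some large $N_0$ \emph{need not hold} for a general plurisubharmonic $\varphi$. Finite Lelong number does not imply this --- the Lelong number only measures the pole along the worst direction, while the singular set of $\varphi$ can be positive-dimensional. Concretely, take $n=2$, $I=\mathcal{O}_o$, and $\varphi=\log|z_1|$. Then $c_o^I(\varphi)=1$, but $|z|^{2N_0}e^{-2\varphi}=|z|^{2N_0}|z_1|^{-2}$ is never integrable near $o$, no matter how large $N_0$ is, since $|z|^{2N_0}$ does not vanish along $\{z_1=0\}\setminus\{o\}$. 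So Remark~\ref{rem:max_existence} cannot be applied to $\tfrac{1}{c}\varphi$ directly, and your construction of the extremal Zhou weight breaks down.

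The paper repairs this with a truncation step. One replaces $\varphi$ by $\max\{\varphi,N\log|z|\}$, which \emph{does} satisfy $\ge N\log|z|$ and hence meets the integrability hypothesis of Remark~\ref{rem:max_existence}. The cost is that the jumping number changes; the key additional ingredient is Lemma~\ref{l:0921-3}, which shows $\lim_{N\to\infty}c_o^I(\max\{\varphi,N\log|z|\})=c_o^I(\varphi)$. Arguing by contradiction, if $c_o^I(\varphi)\cdot\sup_{V_I}\sigma(\varphi,\cdot)<p<1$, one picks $N$ so that $c_o^I(\max\{\varphi,N\log|z|\})\le c_o^I(\varphi)/p$, applies Remark~\ref{rem:max_existence} to $c_o^I(\max\{\varphi,N\log|z|\})\max\{\varphi,N\log|z|\}$ to get $\Phi_{o,\max}\in V_I$ dominating it, and then $\varphi\le\max\{\varphi,N\log|z|\}\le\frac{p}{c_o^I(\varphi)}\Phi_{o,\max}+O(1)$ gives $\sigma(\varphi,\Phi_{o,\max})\ge p/c_o^I(\varphi)$, a contradiction. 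Your overall strategy is the right one; it just needs this truncation--plus--Lemma~\ref{l:0921-3} detour to make the existence step legitimate.
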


Combining Theorem \ref{thm:A} and the strong openness property of multiplier ideal sheaves \cite{GZopen-c}, we obtain that Zhou numbers can characterize multiplier ideal sheaves as follows:
\begin{Corollary}
	For any ideal $I$ of $\mathcal{O}_o$ and plurisubharmonic function $\varphi$ near $o$,  $I\subset\mathcal{I}(\varphi)_o$ if and only if there exists $p<1$ such that $\sigma(\varphi,\Phi_{o,\max})<p$ for any local Zhou weight $\Phi_{o,\max}$ related to $|I|^2$.
	
	Especially, $\mathcal{I}(\varphi)_o=\mathcal{O}_o$ if and only if there exists $p<1$ such that $\sigma(\varphi,\Phi_{o,\max})<p$ for any local Zhou weight $\Phi_{o,\max}$ related to $1$.
\end{Corollary}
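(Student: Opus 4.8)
The plan is to deduce the statement directly from Theorem \ref{thm:A} together with the strong openness property of multiplier ideal sheaves. First I would observe that for an ideal $I$ of $\mathcal{O}_o$ (finitely generated, say by $f_{0,1},\dots,f_{0,m}$), the condition $I\subset\mathcal{I}(\varphi)_o$ means precisely that $|f_{0,k}|^2e^{-2\varphi}$ is integrable near $o$ for each $k$, equivalently $|I|^2e^{-2\varphi}$ is integrable near $o$, which is exactly the statement $c_o^I(\varphi)\ge 1$. By the strong openness property (Theorem \ref{thm:SOC}), $|I|^2e^{-2\varphi}$ integrable near $o$ implies $|I|^2e^{-2(1+\varepsilon)\varphi}$ is integrable near $o$ for some $\varepsilon>0$, i.e. $c_o^I(\varphi)>1$ whenever $c_o^I(\varphi)\ge 1$; in other words $I\subset\mathcal{I}(\varphi)_o$ if and only if $c_o^I(\varphi)>1$.

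Next I would invoke Theorem \ref{thm:A}, which gives
\begin{equation*}
	c_o^I(\varphi)=\frac{1}{\sup_{\Phi_{o,\max}\in V_I}\sigma(\varphi,\Phi_{o,\max})}.
\end{equation*}
Hence $c_o^I(\varphi)>1$ is equivalent to $\sup_{\Phi_{o,\max}\in V_I}\sigma(\varphi,\Phi_{o,\max})<1$. The only remaining point is to pass from ``the supremum is $<1$'' to ``there exists $p<1$ with $\sigma(\varphi,\Phi_{o,\max})<p$ for all $\Phi_{o,\max}\in V_I$'': if the supremum equals some number $s<1$ we simply take any $p$ with $s\le p<1$, and conversely the existence of a uniform bound $p<1$ forces the supremum to be $\le p<1$. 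This is the step that requires a tiny bit of care, since a priori the supremum over $V_I$ could fail to be attained; but the argument above only uses the value of the supremum, not attainment, so there is no real obstacle. The special case $I=\mathcal{O}_o$ (generated by $1$, so $|I|^2\equiv 1$) then reads: $\mathcal{I}(\varphi)_o=\mathcal{O}_o$ if and only if there exists $p<1$ with $\sigma(\varphi,\Phi_{o,\max})<p$ for every local Zhou weight $\Phi_{o,\max}$ related to $1$.

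I do not expect a genuine obstacle here, as the corollary is essentially a reformulation of Theorem \ref{thm:A} modulo the strong openness property; the only subtlety is the handling of the supremum and the choice of the threshold $p$, together with making sure the set $V_I$ is nonempty (which follows from Remark \ref{rem:max_existence}, applied with $\varphi$ replaced by $N\log|z|$ for suitable large $N$, so that a local Zhou weight related to $|I|^2$ exists whenever $I\ne\{0\}$). I would write the proof in three short lines: reduce $I\subset\mathcal{I}(\varphi)_o$ to $c_o^I(\varphi)>1$ via strong openness, rewrite $c_o^I(\varphi)$ via Theorem \ref{thm:A}, and translate the inequality on the reciprocal into the existence of $p<1$.
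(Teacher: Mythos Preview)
Your proposal is correct and matches the paper's own approach exactly: the paper states the corollary as an immediate consequence of ``combining Theorem \ref{thm:A} and the strong openness property of multiplier ideal sheaves'' and gives no further argument, which is precisely the three-line reduction you outline. The only cosmetic remark is that Theorem \ref{thm:A} is stated for $I\neq\{0\}$, so the corollary should be read with this implicit restriction (the case $I=\{0\}$ being vacuous anyway).
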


\subsection{Global Zhou weights}
In this section, we consider  global Zhou weights, and discuss some properties about the global Zhou weights.

Let $D$ be a  domain in $\mathbb{C}^n$, such that the origin $o\in D$.	Let $f_{0}=(f_{0,1},\cdots,f_{0,m})$ be a vector,
where $f_{0,1},\cdots,f_{0,m}$ are holomorphic functions near $o$.
Denote $|f_{0}|^{2}=|f_{0,1}|^{2}+\cdots+|f_{0,m}|^{2}$.

Let $\varphi_{0}$ be a plurisubharmonic function near $o$,
such that $|f_{0}|^{2}e^{-2\varphi_{0}}$ is integrable near $o$. 
\begin{Definition}
	We call a negative plurisubharmonic function $\Phi^{f_0,\varphi_0,D}_{o,\max}$ ($\Phi^{D}_{o,\max}$ for short) on $D$ \textbf{a global Zhou weight related to $|f_0|^2e^{-2\varphi_0}$} if the following statements hold:
	
	$(1)$ $|f_0|^2e^{-2\varphi_0}|z|^{2N_0}e^{-2\Phi^{D}_{o,\max}}$ is integrable near $o$ for large enough $N_0$;
	
	$(2)$ $|f_0|^2e^{-2\varphi_0-2\Phi^{D}_{o,\max}}$ is not integrable near $o$;
	
	$(3)$ for any negative plurisubharmonic function $\tilde\varphi$ on $D$ satisfying that $\tilde\varphi\ge\Phi^{D}_{o,\max}$ on $D$ and $|f_0|^2e^{-2\varphi_0-2\tilde\varphi}$ is not integrable near $o$, $\tilde\varphi=\Phi^{D}_{o,\max}$ holds on $D$.
\end{Definition}

The existence of global Zhou weights can be referred to Remark \ref{r:exists}.

For any $w\in D$, denote  
$$L_w:=\big\{u\in \mathrm{PSH}(D):u<0 \ \& \ \limsup_{z\rightarrow w}(u(z)-\log|z-w|)<+\infty\big\},$$
where $\mathrm{PSH}(D)$ denotes the set of  plurisubharmonic functions on $D$.
If $L_w\not=\emptyset$, the \emph{pluricomplex Green function} of $D$ with a pole at $w$ was defined as follows (see \cite{Blo-note,Kli85}):
$$G_D(w,\cdot):=\sup\{u:u\in L_w\}.$$
When $D$ is a bounded domain,
$$\limsup_{z\rightarrow o}\big|G(o,z)-\log|z|\big|<+\infty.$$
Then $G(o,\cdot)$ is a global Zhou weight with respect to $e^{-2(n-1)\log|z|}$ on $D$ near $o$ (by Example \ref{exam1}, Lemma \ref{l:local-glabal} and the definition of $G_D(o,\cdot)$). For any global Zhou weight $\Phi_{o,\max}^D$, we have
$$\Phi_{o,\max}^D\ge NG_D(o,\cdot)$$
on $D$ for large enough $N\gg0$ (see Lemma \ref{l:max>green}). 

Denote  
$$\tilde L_o:=\big\{u\in L_o: u\in L^{\infty}_{\mathrm{loc}}(U\backslash\{o\})\text{ for some neighborhood $U$ of $o$}\big\}.$$
If $D$ is bounded or hyperconvex,  $\tilde L_o \not=\emptyset$ (see Remark \ref{r:8.4}). 
We recall the definitions of \emph{hyperconvex domain} and \emph{strictly hyperconvex domain} as follows.

\begin{Definition}[see \cite{Ni95}]\label{def-strhpconvex}
	A domain $D\subset\mathbb{C}^n$ is said to be hyperconvex if there exists a continuous plurisubharmonic exhausted function $\varrho : D\to (-\infty,0)$.
	
	A bounded domain $D\subset\mathbb{C}^n$ is said to be strictly hyperconvex if there exists a bounded domain $\Omega$ and a function $\varrho : \Omega\to (-\infty,1)$ such that $\varrho\in C(\Omega)\cap \mathrm{PSH}(\Omega)$, $D=\{z\in\Omega : \varrho(z)<0\}$, $\varrho$ is exhaustive for $\Omega$ and for any real number $c\in [0,1]$, the open set $\{z\in\Omega : \varrho(z)<c\}$ is connected. 
\end{Definition}

It follows from Lemma \ref{l:local-glabal}, Remark \ref{r:L tildeL} and Lemma \ref{l:max-loc} that if $\tilde L_o\not=\emptyset$, there is a one-to-one correspondence between global Zhou weights $\Phi_{o,\max}^D$ and local Zhou weights $\Phi_{o,\max}$, which is given as follows:
\begin{equation}
	\nonumber
	\begin{split}
		\Phi_{o,\max}^D(z)=\sup\big\{u(z):u\in &\mathrm{PSH}^-(D),\,(f_0,o)\not\in\mathcal{I}(\varphi_o+u)_o\\
		& \ \& \ u\ge\Phi_{o,\max}+O(1)\text{ near $o$}\big\},
	\end{split}
\end{equation} 
where $\mathrm{PSH}^-(D)$ denotes the set of negative plurisubharmonic functions on $D$.

We give some examples of global Zhou weights.
\begin{Example}\label{exam}
	Let $\varphi=\log\max_{1\le j\le n}|z_j|^{a_j}$ on $\Delta^n\subset\mathbb{C}^n$, where $a_j>0$ for any $j$ satisfying that  $\sum_{1\le j\le n}\frac{1}{a_j}=1$. We know that $\varphi$ is a global Zhou weight with respect to $1$ near $o$ on $\Delta^n$, thus $\varphi$ is a local Zhou weight with respect to $1$ near $o$. In fact, for any negative plurisubharmonic function $\tilde\varphi\ge\varphi$ on $\Delta^n$ satisfying that $e^{-2\tilde\varphi}$ is not integrable near $o$, we have 
	\begin{equation}
		\nonumber
		\tilde G(t):=\inf\left\{\int_{\{\tilde\varphi<-t\}}|\tilde f|^2:\tilde f\in\mathcal{O}(\{\tilde\varphi<-t\}) \ \& \ (\tilde f-f_0,o)\in\mathcal{I}(\tilde\varphi)_o\right\}\\
		\le \int_{\{\tilde\varphi<-t\}}1.
	\end{equation} 
	By a direct calculation, we have
	\begin{equation}
		\nonumber
		G(t):=\inf\left\{\int_{\{\varphi<-t\}}| \tilde f|^2:\tilde f\in\mathcal{O}(\{\varphi<-t\})\ \& \ (\tilde f-f_0,o)\in\mathcal{I}(\varphi)_o\right\}=\int_{\{\varphi<-t\}}1.
	\end{equation}
	Note that $\tilde G(0)=G(0)$, $\tilde G(-\log r)$ is concave on $(0,1)$ (see \cite{guan-effect}) and $G(-\log r)$ is linear on $(0,1)$ (see \cite{guan-remapprox,GY-concavity4}). Then $\tilde G(t)=G(t)$ and $\int_{\{\tilde\varphi<-t\}}1=\int_{\{\varphi<-t\}}1$ for any $t\ge0$, which implies that $\tilde\varphi\equiv\varphi$. Thus,  $\varphi$ is a global Zhou weight with respect to $1$ near $o$ on $\Delta^n$.
\end{Example}

Now, we give two properties of global Zhou weights.

\begin{Proposition}\label{l:max2}Assume that $\tilde L_o\not=\emptyset$.
	Let  $\Phi^{D}_{o,\max}$ be a global Zhou weight related to $|f_0|^2e^{-2\varphi_0}$ on $D$, then 
	$$\Phi^{D}_{o,\max}\in \mathrm{PSH}(D)\cap L_{\mathrm{loc}}^{\infty}(D\backslash\{o\})$$
	and 
	\[\big(dd^c\Phi^{D}_{o,\max}\big)^n=0 \ \text{on} \ D\backslash\{o\}.\]
\end{Proposition}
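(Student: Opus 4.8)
The plan is to reduce the global statement to the local theory already developed. First I would establish that $\Phi^{D}_{o,\max}$ is locally bounded on $D\setminus\{o\}$. By the one-to-one correspondence recalled before Example \ref{exam} (coming from Lemma \ref{l:local-glabal}, Remark \ref{r:L tildeL} and Lemma \ref{l:max-loc}), $\Phi^{D}_{o,\max}$ agrees near $o$ with a local Zhou weight $\Phi_{o,\max}$ up to $O(1)$, so by Remark \ref{rem:max_existence} we have $\Phi^{D}_{o,\max}\ge N\log|z|+O(1)$ near $o$ for some $N\gg0$; being negative plurisubharmonic, it is then bounded (above by $0$, below by $N\log|z|+O(1)$) on a punctured neighborhood of $o$. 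Away from $o$, I would argue that the singular set $\{\Phi^{D}_{o,\max}=-\infty\}$ cannot meet $D\setminus\{o\}$: if it did, one could add a small negative multiple of a plurisubharmonic function with that pole — or more simply exploit the maximality in condition $(3)$ of the definition together with the fact that such a modification keeps the non-integrability near $o$ untouched — to produce $\tilde\varphi\ge\Phi^{D}_{o,\max}$, $\tilde\varphi\ne\Phi^{D}_{o,\max}$, contradicting extremality. Once the $-\infty$ locus is exactly $\{o\}$, upper semicontinuity plus plurisubharmonicity give local boundedness from above, and a standard Hartogs/sub-mean-value argument gives the lower bound on compact subsets of $D\setminus\{o\}$.

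Next I would prove the homogeneous Monge–Ampère equation $(dd^c\Phi^{D}_{o,\max})^n=0$ on $D\setminus\{o\}$, where the operator is well-defined by the local boundedness just obtained (Bedford–Taylor theory). The key step is the balayage/envelope characterization: $\Phi^{D}_{o,\max}$ is, up to $O(1)$ near $o$, the supremum of the family of negative plurisubharmonic functions $u$ on $D$ with $u\ge\Phi_{o,\max}+O(1)$ near $o$ and $(f_0,o)\notin\mathcal{I}(\varphi_0+u)_o$, as displayed before Example \ref{exam}. Fix a small ball $B\Subset D\setminus\{o\}$. I would solve the Dirichlet problem for the homogeneous complex Monge–Ampère equation on $B$ with boundary data $\Phi^{D}_{o,\max}|_{\partial B}$, obtaining a maximal plurisubharmonic function $h$ on $B$ with $h\ge\Phi^{D}_{o,\max}$ on $B$ and $h=\Phi^{D}_{o,\max}$ on $\partial B$. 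Gluing $h$ on $B$ with $\Phi^{D}_{o,\max}$ on $D\setminus B$ produces a plurisubharmonic function $\tilde\varphi$ on $D$ with $\tilde\varphi\ge\Phi^{D}_{o,\max}$, $\tilde\varphi<0$, and $\tilde\varphi=\Phi^{D}_{o,\max}$ near $o$ — so in particular $|f_0|^2e^{-2\varphi_0-2\tilde\varphi}$ is still non-integrable near $o$. Condition $(3)$ of the definition of global Zhou weight then forces $\tilde\varphi=\Phi^{D}_{o,\max}$, hence $\Phi^{D}_{o,\max}=h$ on $B$, so $(dd^c\Phi^{D}_{o,\max})^n=(dd^ch)^n=0$ on $B$. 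Since $B$ was an arbitrary small ball in $D\setminus\{o\}$, the equation holds on all of $D\setminus\{o\}$.

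I expect the main obstacle to be the careful verification that the glued function $\tilde\varphi$ genuinely satisfies all three hypotheses of condition $(3)$ simultaneously — in particular that the Dirichlet solution $h$ on $B$ stays strictly negative (which follows from the maximum principle since the boundary data is negative and bounded away from $0$ on $\partial B$, $B$ being compactly contained in $D\setminus\{o\}$ where $\Phi^{D}_{o,\max}$ is bounded) and that $\tilde\varphi$ is plurisubharmonic across $\partial B$ (standard, since $h\ge\Phi^{D}_{o,\max}$ on $B$ and $h=\Phi^{D}_{o,\max}$ on $\partial B$, so the gluing is the maximum of two plurisubharmonic functions near $\partial B$). The non-integrability near $o$ is immediate because $\tilde\varphi$ coincides with $\Phi^{D}_{o,\max}$ on a neighborhood of $o$ disjoint from $B$. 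The remaining points — well-definedness of the Monge–Ampère measure, solvability of the homogeneous Dirichlet problem on a ball, and the maximality property of its solution — are classical Bedford–Taylor results and I would simply cite them.
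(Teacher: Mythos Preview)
Your Monge--Amp\`ere argument via balayage on a ball $B\Subset D\setminus\{o\}$ is correct and is precisely what is packaged into the paper's Lemma~\ref{l:max1} (the Bedford--Taylor/B\l ocki maximality criterion). The paper simply cites that lemma; you have unpacked its proof. One minor point: you do not yet know that $\Phi^D_{o,\max}$ is continuous on $\partial B$ (continuity is only proved later, in Proposition~\ref{thm:contin}, and under the additional hypothesis that $D$ is hyperconvex), so you should take $h$ to be the Perron envelope of negative plurisubharmonic functions on $B$ dominated by $\Phi^D_{o,\max}$ near $\partial B$, rather than the solution to a Dirichlet problem with possibly discontinuous data. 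This works with merely bounded boundary data and gives the same conclusion.

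There is, however, a genuine gap in your local-boundedness argument. You propose to first show that the $-\infty$ locus of $\Phi^D_{o,\max}$ is exactly $\{o\}$, and then deduce $L^\infty_{\mathrm{loc}}(D\setminus\{o\})$ from ``a standard Hartogs/sub-mean-value argument.'' The second step is false: a plurisubharmonic function that is finite at every point of an open set need not be locally bounded below there. (One can build, for instance, subharmonic functions on the disk of the form $\sum_k k^{-2}\log\bigl(|z-1/k|^2+e^{-k^3}\bigr)+C$ that are finite everywhere yet take values near $-m$ at $z=1/m$.) Sub-mean-value inequalities give $L^1_{\mathrm{loc}}$, not $L^\infty_{\mathrm{loc}}$.

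The paper bypasses this issue entirely: once you know $\Phi^D_{o,\max}\ge N\log|z|+O(1)$ near $o$, pick a small $r$ so that $C:=\inf_{|z|=r}\Phi^D_{o,\max}>-\infty$, and set $\tilde\varphi=\Phi^D_{o,\max}$ on $\{|z|<r\}$ and $\tilde\varphi=\max\{\Phi^D_{o,\max},\,C-1\}$ on $D\setminus\{|z|<r\}$. This $\tilde\varphi$ is a competitor in condition~(3), so $\tilde\varphi=\Phi^D_{o,\max}$, which immediately gives $\Phi^D_{o,\max}\ge C-1$ on all of $D\setminus\{|z|<r\}$ --- a direct global lower bound, with no detour through the $-\infty$ set. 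You should replace your two-step argument by this one-step construction.
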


The following proposition gives the continuity of  $\Phi_{o,\max}^D$.

\begin{Proposition}\label{thm:contin}
	Assume that $D$ is a bounded hyperconvex domain.
	Let  $\Phi^{D}_{o,\max}$ be a global Zhou weight related to $|f_0|^2e^{-2\varphi_0}$ on $D$, then $e^{\Phi^{D}_{o,\max}}$ is continuous on $D$ and $\Phi^{D}_{o,\max}(z)\rightarrow0$ when $z\rightarrow\partial D$.
\end{Proposition}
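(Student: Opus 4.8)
The plan is to show $e^{\Phi^D_{o,\max}}$ is lower semicontinuous at every point of $D$; upper semicontinuity is free, since $\Phi^D_{o,\max}$ is plurisubharmonic. I would split into three cases: points near $\partial D$ (which also gives the second assertion), the point $o$, and points of $D\setminus\{o\}$. Only the last is hard.

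\textbf{Boundary behaviour and behaviour at $o$.} Since $D$ is bounded, $L_o\neq\emptyset$, and the barrier $\log|z|-\log(\operatorname{diam}D)\in L_o$ already gives $G_D(o,z)\ge\log|z|+O(1)$ on $D$; since $D$ is moreover hyperconvex, $G_D(o,\cdot)$ extends continuously to $\overline D\setminus\{o\}$ with $G_D(o,\cdot)\equiv0$ on $\partial D$ (see \cite{demailly87}). By Lemma \ref{l:max>green}, $NG_D(o,\cdot)\le\Phi^D_{o,\max}<0$ on $D$ for some $N\gg0$: the right inequality gives $\Phi^D_{o,\max}(z)\to0$ as $z\to\partial D$ (the second assertion), and the left one gives $N\log|z|+O(1)\le\Phi^D_{o,\max}<0$ near $o$. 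Combining this with the maximality and local boundedness of $\Phi^D_{o,\max}$ on $D\setminus\{o\}$ (Proposition \ref{l:max2}) and property $(2)$ of the global Zhou weight together with the integrability of $|f_0|^2e^{-2\varphi_0}$, one checks $\Phi^D_{o,\max}(z)\to-\infty$ as $z\to o$ (otherwise $\Phi^D_{o,\max}$ would be bounded on a punctured neighbourhood of $o$ and hence, being plurisubharmonic there, bounded near $o$, contradicting property $(2)$); so $e^{\Phi^D_{o,\max}}$ is continuous at $o$ with value $0$.

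\textbf{Interior continuity.} The substantive step is continuity at an arbitrary $z_0\in D\setminus\{o\}$. Since $\tilde L_o\neq\emptyset$ (Remark \ref{r:8.4}), $\Phi^D_{o,\max}$ is the envelope
$$\Phi^D_{o,\max}(z)=\sup\big\{u(z):u\in\mathrm{PSH}^-(D),\ (f_0,o)\not\in\mathcal I(\varphi_0+u)_o,\ u\ge\Phi_{o,\max}+O(1)\ \text{near}\ o\big\}.$$
Hence it suffices, given $\delta>0$, to produce an admissible $u=u_\delta$ that is continuous near $z_0$ and satisfies $u_\delta(z_0)>\Phi^D_{o,\max}(z_0)-\delta$: being admissible, $u_\delta\le\Phi^D_{o,\max}$ on $D$, so $\liminf_{z\to z_0}\Phi^D_{o,\max}(z)\ge\liminf_{z\to z_0}u_\delta(z)=u_\delta(z_0)>\Phi^D_{o,\max}(z_0)-\delta$, and letting $\delta\to0$ finishes. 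I would take $u_\delta=\tfrac1m\log|F|+c_m$ for a large integer $m$ and a holomorphic $F$ on $D$ built from the Ohsawa--Takegoshi $L^2$ extension theorem and the strong openness property (Theorem \ref{thm:SOC}), chosen so that: (i) $|F|^2e^{-2m\Phi^D_{o,\max}}$ is integrable near $o$, whence, by the tameness of local Zhou weights established above, $\tfrac1{m-C}\log|F|\le\Phi^D_{o,\max}+O(1/m)$ near $o$; (ii) $F$ is not too singular at $o$, so $\tfrac1m\log|F|\ge\Phi^D_{o,\max}-O(1)$ near $o$ and, after choosing $c_m$ so that $u_\delta<0$ on all of $D$ (using $\Phi^D_{o,\max}\ge NG_D(o,\cdot)$ and boundedness of $D$), $u_\delta$ is admissible; (iii) $|F(z_0)|$ is as large as the extremal Ohsawa--Takegoshi estimate at $z_0$ permits, so $\tfrac1m\log|F(z_0)|+c_m\to\Phi^D_{o,\max}(z_0)$ as $m\to\infty$, and a suitable $m$ delivers $u_\delta$. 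Equivalently, one may run Demailly's approximation of $\Phi^D_{o,\max}$ by $\tfrac1{2m}\log$ of Bergman kernels of weighted spaces of holomorphic functions on $D$: the approximants are continuous and plurisubharmonic, and the tameness bound controls them from above near $o$ uniformly in $m$, which upgrades the convergence to locally uniform and forces $\Phi^D_{o,\max}$ to be continuous.

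\textbf{Main obstacle.} The hard part is exactly the construction above: the holomorphic datum $F$ must be calibrated so that near $o$ it is two-sidedly squeezed by $\Phi^D_{o,\max}$ (to remain an admissible weight), so that $\tfrac1m\log|F|$ stays negative on all of $D$, and so that at $z_0$ it is nearly extremal --- all simultaneously. A secondary technical nuisance is that $\varphi_0$, hence $|f_0|^2e^{-2\varphi_0}$, is only defined near $o$, so the weighted $L^2$ integrals must be localized to a fixed neighbourhood of $o$, the behaviour on the rest of $D$ being controlled through Proposition \ref{l:max2} (maximality, hence local boundedness, of $\Phi^D_{o,\max}$ on $D\setminus\{o\}$).
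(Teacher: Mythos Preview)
Your approach is essentially the paper's: Demailly's Bergman--kernel approximation together with the tameness bound of Theorem~\ref{thm:valu-jump}. The paper squeezes $\Phi^D_{o,\max}$ between $\varphi_m+c_1/m$ (from Lemma~\ref{l:appro-Berg}) and $(1+c_3/m)(\Phi^D_{o,\max}+t)$ on each sublevel set $\{\Phi^D_{o,\max}<-t\}$ (via Lemma~\ref{l:subset}, Lemma~\ref{r:value-inequa}, and Theorem~\ref{thm:valu-jump}), and concludes since $e^{\varphi_m}$ is smooth; the boundary behaviour is obtained by a direct gluing with the hyperconvex exhaustion rather than through the Green function and Lemma~\ref{l:max>green}, though your route works just as well.

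One correction to your single-$F$ formulation: the competitor $u_\delta=\tfrac1m\log|F|+c_m$ can never be admissible for the envelope, because $\log|F|=-\infty$ on the zero set of $F$ (and $F(o)=0$ forces this set to be nonempty near $o$), so $u_\delta\ge\Phi_{o,\max}+O(1)$ fails. What you actually need is not admissibility but only the global inequality $u_\delta\le\sigma(u_\delta,\Phi^D_{o,\max})\Phi^D_{o,\max}$ of Lemma~\ref{r:value-inequa}, applied (as the paper does) on a sublevel set where $|F|<1$; then tameness gives $\sigma\ge1-C/m$ and the squeeze goes through. Your ``equivalently'' via the full Bergman kernel $\varphi_m=\tfrac1{2m}\log\sum_k|\sigma_{m,k}|^2$ avoids this issue altogether and is precisely the paper's argument.
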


Next, we discuss some approximations of global Zhou weights.

For any $m\in\mathbb{N}_+$, we define two compact subsets of $\mathcal{O}(D)$ as follows:
\[\mathscr{E}_m(D):=\big\{f\in\mathcal{O}(D) : \sup_{z\in D}|f(z)|\le 1, (f,o)\in\mathcal{I}(m\Phi^D_{o,\max})_o\big\},\]
\[\mathscr{A}^2_m(D):=\big\{f\in\mathcal{O}(D) : \|f\|_D\le 1, (f,o)\in\mathcal{I}(m\Phi^D_{o,\max})_o\big\},\] 
where $\|f\|_D^2:=\int_D|f|^2$. We also define two plurisubharmonic functions $\phi_m$ and $\varphi_m$ for any $m$ by:
\begin{equation}\nonumber
	\phi_m(z):=\sup_{f\in\mathscr{E}_m(D)} \frac{1}{m}\log |f(z)|, \ \forall z\in D,
\end{equation}

\begin{equation}\nonumber
	\varphi_m(z):=\sup_{f\in\mathscr{A}_m^2(D)}\frac{1}{m}\log|f(z)|, \ \forall z\in D.
\end{equation}

We obtain an approximation theorem for global Zhou weights $\Phi_{o,\max}^D$.

\begin{Theorem}\label{thm-approximation}
	If $D$ is a bounded strictly hyperconvex domain, then
	
	$(1)$
	\[\lim_{m\to\infty}\phi_m(z)=\lim_{m\to\infty}\varphi_m(z)=\Phi^D_{o,\max}(z),\ \forall z\in D.\]
	
	$(2)$ There exists a constant $\mathsf{C}$ independent of $m$, such that for any $m\in\mathbb{N}_+$,
	\[1-\frac{\mathsf{C}}{m}\le\sigma(\phi_m,\Phi^D_{o,\max})\le 1,\]
	and
	\[1-\frac{\mathsf{C}}{m}\le\sigma(\varphi_m,\Phi^D_{o,\max})\le 1.\]
\end{Theorem}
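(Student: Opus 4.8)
\textbf{Proof proposal for Theorem \ref{thm-approximation}.}

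The plan is to prove the two parts in tandem, relating the Hilbert-space objects $\varphi_m$ and the sup-norm objects $\phi_m$ to $\Phi^D_{o,\max}$ through the tame-weight estimate (Theorem \ref{thm:valu-jump}) and the continuity of $\Phi^D_{o,\max}$ (Proposition \ref{thm:contin}). First I would record the two easy directions. For the lower bound on $\varphi_m$ (hence on $\phi_m$, after comparing sup-norm and $L^2$-norm on a fixed strictly hyperconvex $D$, which costs only a dimensional constant in $m$), I would solve a $\bar\partial$-equation or invoke an Ohsawa--Takegoshi-type extension on $D$ with weight $2m\Phi^D_{o,\max}$ to produce, for each $z_0\in D$, a function $f\in\mathscr{A}^2_m(D)$ with $\frac1m\log|f(z_0)|\ge\Phi^D_{o,\max}(z_0)-\frac{\mathsf C}{m}$; here one uses that $(dd^c\Phi^D_{o,\max})^n=0$ off $o$ (Proposition \ref{l:max2}) and that $\Phi^D_{o,\max}$ has a logarithmic pole, so the relevant integrals converge. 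For the upper bound, every $f\in\mathscr{E}_m(D)$ or $\mathscr{A}^2_m(D)$ satisfies $(f,o)\in\mathcal{I}(m\Phi^D_{o,\max})_o$, and since a (global) Zhou weight is a tame weight (the discussion following Theorem \ref{thm:multi-valua}, via Theorem \ref{thm:valu-jump}), we get $\log|f|\le (m-C)\Phi^D_{o,\max}+O(1)$ near $o$ for a constant $C$ independent of $m$; dividing by $m$ and letting $z\to o$ controls $\phi_m,\varphi_m$ from above by $\Phi^D_{o,\max}$ near the pole, and the global normalization $\sup_D|f|\le 1$ (resp. the bound $\Phi^D_{o,\max}\le 0$) gives $\phi_m\le 0=\Phi^D_{o,\max}$ on $\partial D$ after using $\Phi^D_{o,\max}(z)\to 0$ as $z\to\partial D$. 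Combining, $\phi_m\le\Phi^D_{o,\max}+\frac{C'}{m}$ and $\varphi_m\le\Phi^D_{o,\max}+\frac{C'}{m}$ on all of $D$ by the maximum principle applied to the plurisubharmonic difference on $D\setminus\{o\}$.

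Once both two-sided bounds $\Phi^D_{o,\max}-\frac{\mathsf C}{m}\le\varphi_m\le\phi_m+\tfrac{\text{const}}{m}\le\Phi^D_{o,\max}+\frac{\mathsf C}{m}$ are in place on $D$ (using $\varphi_m\le\phi_m$ up to the norm-comparison constant and the reverse comparison, since an $L^2$-normalized section can be rescaled to be sup-norm bounded), part $(1)$ is immediate: $\phi_m\to\Phi^D_{o,\max}$ and $\varphi_m\to\Phi^D_{o,\max}$ pointwise (indeed uniformly) on $D$. For part $(2)$, the relative-type estimate $\sigma(\phi_m,\Phi^D_{o,\max})\le 1$ follows from $\phi_m\le\Phi^D_{o,\max}+O(1)$ near $o$ together with the definition of $\sigma$; the lower bound $\sigma(\phi_m,\Phi^D_{o,\max})\ge 1-\frac{\mathsf C}{m}$ follows from $\phi_m\ge (1-\frac{\mathsf C}{m})\Phi^D_{o,\max}+O(1)$ near $o$, which is a consequence of the pointwise lower bound $\phi_m\ge\Phi^D_{o,\max}-\frac{\mathsf C}{m}$ combined with $\Phi^D_{o,\max}\le N\log|z|+O(1)$ being a genuine logarithmic pole (so that an additive $O(1)$ term can be absorbed into a multiplicative factor $1-\frac{\mathsf C}{m}$ near $o$, where $\Phi^D_{o,\max}\to-\infty$). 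The same argument applies verbatim to $\varphi_m$.

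The main obstacle I expect is the quantitative lower bound in the first paragraph: one must extract, uniformly in $m$, a holomorphic function $f$ with prescribed growth that is $L^2$ (or sup-norm) bounded globally on $D$ \emph{and} satisfies $(f,o)\in\mathcal{I}(m\Phi^D_{o,\max})_o$, with loss only $\frac{\mathsf C}{m}$ in the exponent. This is where strict hyperconvexity of $D$ is genuinely used: it provides a bounded strictly plurisubharmonic exhaustion with connected sublevel sets, hence a good weight for the Hörmander--Ohsawa--Takegoshi machinery with constants not blowing up, and it guarantees $\Phi^D_{o,\max}(z)\to 0$ at the boundary so the global normalization is compatible with the local pole behaviour. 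A subtlety is that $m\Phi^D_{o,\max}$ is not smooth and its unbounded locus is exactly $\{o\}$; one handles this by working on $D\setminus\{o\}$, using that $(dd^c\Phi^D_{o,\max})^n=0$ there (Proposition \ref{l:max2}) so that the curvature term in the $L^2$ estimate can be supplied entirely by a fixed strictly plurisubharmonic function on $D$, and then controlling the behaviour at $o$ by the integrability statement $(1)$ in the definition of a global Zhou weight. Assembling these estimates with uniform constants, and then passing from the $L^2$ bound to the continuity/sup-norm statement via the subaveraging inequality and Proposition \ref{thm:contin}, completes the proof.
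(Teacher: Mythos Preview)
Your overall strategy—sandwich $\phi_m,\varphi_m$ between multiples of $\Phi^D_{o,\max}$ using Demailly's approximation from below and the tame-weight bound from above—is essentially the paper's, but the execution has two genuine gaps. First, the sup-norm/$L^2$-norm comparison on $D$ only goes one way: $\sup_D|f|\le 1$ gives $\|f\|_D\le\lambda(D)^{1/2}$, so $\phi_m\le\varphi_m+\tfrac{1}{2m}\log\lambda(D)$, but an $L^2$ bound on $D$ gives no sup bound on $D$ itself. Thus your ``hence on $\phi_m$'' in the lower bound is unjustified, and likewise your path to $\limsup\varphi_m\le\Phi^D_{o,\max}$ is missing. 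This is precisely where the paper uses strict hyperconvexity: one takes an \emph{outer} exhaustion $D_j\supset D$ (from the defining function $\varrho$), applies the mean value inequality on $D_j$ to get a sup bound on $\overline D$ from an $L^2$ bound on $D_j$, and uses that the Zhou weights $\Phi^{D_j}_{o,\max}$ converge uniformly to $\Phi^D_{o,\max}$; dually, an \emph{inner} exhaustion $D_k\subset D$ handles $\limsup\varphi_m$. Your proposed $\bar\partial$/maximum-principle argument on $D$ alone does not see either exhaustion.

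Second, in your upper bound the $O(1)$ in $\log|f|\le(m-C)\Phi^D_{o,\max}+O(1)$ depends on $f$, so it does not survive the supremum defining $\phi_m$; and the ``plurisubharmonic difference'' you want to apply the maximum principle to is not plurisubharmonic. The paper avoids both issues: since $\log|f|<0$ on $D$ for $f\in\mathscr{E}_m(D)$, the global Zhou-weight inequality (Lemma~\ref{r:value-inequa}) gives $\log|f|\le\sigma(\log|f|,\Phi^D_{o,\max})\,\Phi^D_{o,\max}$ on all of $D$ with \emph{no} additive constant, and Theorem~\ref{thm:valu-jump} gives $\sigma(\log|f|,\Phi^D_{o,\max})\ge m-\mathsf C$ uniformly; together, $\phi_m\le\tfrac{m-\mathsf C}{m}\Phi^D_{o,\max}$ on $D$. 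Finally, note that you have the implications in part~(2) reversed: $\sigma(\phi_m,\Phi^D_{o,\max})\le 1$ comes from $\phi_m\ge\Phi^D_{o,\max}+O(1)$ near $o$ (the Demailly lower bound), while $\sigma\ge 1-\tfrac{\mathsf C}{m}$ comes from $\phi_m\le(1-\tfrac{\mathsf C}{m})\Phi^D_{o,\max}$ (the tame upper bound); since $\Phi^D_{o,\max}<0$, larger $\sigma$ corresponds to \emph{smaller} $\phi_m$.
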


Theorem \ref{thm-approximation} shows that we can reconstruct the global Zhou weights $\Phi^D_{o,\max}$ on $D$ by the data of the multiplier ideals $\mathcal{I}(m\Phi^D_{o,\max})_o$. A similar method to approximate the pluricomplex Green functions by $\phi_{m}$ can be referred to \cite{Ni95}.

We give some corollaries of Theorem \ref{thm-approximation} below.
\begin{Corollary}\label{cor-approximation}
	If $D$ is a bounded strictly hyperconvex domain, and $\Phi^D_{o,\max}$ is a global Zhou weight related to some $|f_0|^2e^{-2\varphi_0}$   on $D$ near $o$, then for any $w\in D$, we have
	\[\Phi^D_{o,\max}(w)=\sup\left\{\frac{\log|f(w)|}{\sigma(\log|f|, \Phi^D_{o,\max})} : f\in \mathcal{O}(D), \ \sup_D|f|\le 1, \ f(o)=0, \  f\not\equiv 0\right\}.\]
\end{Corollary}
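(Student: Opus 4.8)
\textbf{Proof proposal for Corollary \ref{cor-approximation}.}

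The plan is to derive the formula from the approximation Theorem \ref{thm-approximation} together with the sharpness inequality \eqref{equ:xiaomage} in Remark \ref{rem:sharp_bound}. First I would prove the inequality ``$\ge$''. Fix $w\in D$ and take any $f\in\mathcal{O}(D)$ with $\sup_D|f|\le 1$, $f(o)=0$ and $f\not\equiv 0$. Since $f(o)=0$ we have $\sigma(\log|f|,\Phi^D_{o,\max})>0$ (indeed it is at least $c_o(\Phi^D_{o,\max})^{-1}$ times the vanishing order, but positivity is all that is needed), so the quotient is well defined. Applying \eqref{equ:xiaomage} to the negative plurisubharmonic function $\psi=\log|f|$ (negative because $\sup_D|f|\le 1$, and after shrinking so that $\Phi^U_{o,\max}=\Phi^D_{o,\max}$; here one uses Proposition \ref{l:max2} to know $\Phi^D_{o,\max}$ is already the canonical negative representative on $D$) gives
\[
\log|f(w)|\le \sigma(\log|f|,\Phi^D_{o,\max})\,\Phi^D_{o,\max}(w),
\]
hence $\dfrac{\log|f(w)|}{\sigma(\log|f|,\Phi^D_{o,\max})}\le \Phi^D_{o,\max}(w)$, and taking the supremum over all such $f$ yields the ``$\ge$'' direction.

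Next I would prove ``$\le$'' using Theorem \ref{thm-approximation}(1): $\varphi_m(w)\to\Phi^D_{o,\max}(w)$, where $\varphi_m(w)=\sup_{f\in\mathscr{A}^2_m(D)}\frac1m\log|f(w)|$. Fix $\varepsilon>0$; for $m$ large there is $f=f_m\in\mathscr{A}^2_m(D)$ with $\frac1m\log|f_m(w)|\ge \Phi^D_{o,\max}(w)-\varepsilon$. Such an $f_m$ satisfies $\|f_m\|_D\le 1$ and $(f_m,o)\in\mathcal{I}(m\Phi^D_{o,\max})_o$; the latter forces $f_m(o)=0$ once $m\ge 1$ (because $\Phi^D_{o,\max}$ has a genuine singularity at $o$, e.g.\ $\Phi^D_{o,\max}\le N\log|z-o|+O(1)$ fails only in the trivial case; more directly, $\mathcal{I}(m\Phi^D_{o,\max})_o$ is a proper ideal by Definition \ref{def:max_relat}(2) applied with the defining data, hence contained in the maximal ideal), and of course $f_m\not\equiv 0$. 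The membership $(f_m,o)\in\mathcal{I}(m\Phi^D_{o,\max})_o$ means $|f_m|^2 e^{-2m\Phi^D_{o,\max}}$ is integrable near $o$, i.e.\ $\log|f_m|\le m\Phi^D_{o,\max}+O(1)$ is \emph{not} what integrability gives directly, but it does give $\sigma(\log|f_m|,\Phi^D_{o,\max})\ge m\, c_o(\Phi^D_{o,\max})$; the clean estimate I actually want is $\sigma(\log|f_m|,\Phi^D_{o,\max})\ge m$, which follows from Theorem \ref{thm:valu-jump} (or directly: $(f_m,o)\in\mathcal{I}(m\Phi^D_{o,\max})_o$ together with the strong openness property and the argument preceding Remark \ref{rem:sharp_bound} shows $\log|f_m|\le m\Phi^D_{o,\max}+O(1)$ up to passing to $\sigma$). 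Normalizing, set $g_m:=f_m/\|f_m\|_D$ so $\sup_D|g_m|$ may not be $\le 1$; to land in the stated supremum one should instead work with $\mathscr{E}_m(D)$ and $\phi_m$, which already imposes $\sup_D|f|\le 1$, and use the other half of Theorem \ref{thm-approximation}(1), namely $\phi_m(w)\to\Phi^D_{o,\max}(w)$. Then with $f=f_m\in\mathscr{E}_m(D)$ one has $\sup_D|f_m|\le 1$, $f_m(o)=0$, $f_m\not\equiv0$, and
\[
\frac{\log|f_m(w)|}{\sigma(\log|f_m|,\Phi^D_{o,\max})}\ \ge\ \frac{\log|f_m(w)|}{m}\ \xrightarrow[m\to\infty]{}\ \Phi^D_{o,\max}(w)
\]
provided $\sigma(\log|f_m|,\Phi^D_{o,\max})\le m$ (since $\log|f_m(w)|\le 0$, the inequality flips correctly only if the denominator bound is in the right direction — so I must be careful: I need $\sigma(\log|f_m|,\Phi^D_{o,\max})\ge m$ to get $\frac{\log|f_m(w)|}{\sigma}\ge\frac{\log|f_m(w)|}{m}$). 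This gives $\sup(\cdots)\ge\Phi^D_{o,\max}(w)-\varepsilon$ for every $\varepsilon$, completing ``$\le$''.

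The main obstacle is the bookkeeping around the denominator $\sigma(\log|f_m|,\Phi^D_{o,\max})$: since $\log|f(w)|\le 0$, dividing by a larger number makes the fraction \emph{smaller}, so to push the lower bound through I need a clean two-sided control, $m\le \sigma(\log|f_m|,\Phi^D_{o,\max})$ from $(f_m,o)\in\mathcal{I}(m\Phi^D_{o,\max})_o$ (via Theorem \ref{thm:valu-jump}/strong openness), and this is exactly what makes $\frac{1}{\sigma}\log|f_m(w)|\ge\frac1m\log|f_m(w)|$. Combined with $\phi_m(w)=\sup_{\mathscr{E}_m}\frac1m\log|f(w)|\to\Phi^D_{o,\max}(w)$ from Theorem \ref{thm-approximation}(1), the two inequalities match and the formula follows. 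A minor point to handle is the reduction, via Proposition \ref{l:max2} and Remark \ref{rem:sharp_bound}, that $\Phi^D_{o,\max}$ is the canonical negative representative on $D$ so that \eqref{equ:xiaomage} applies with $\Phi^U_{o,\max}$ replaced by $\Phi^D_{o,\max}$ on all of $D$, which uses that $D$ is strictly hyperconvex (hence the hypotheses of Propositions \ref{l:max2} and \ref{thm:contin} hold).
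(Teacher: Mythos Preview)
Your overall strategy matches the paper's: the ``$\ge$'' direction comes from the global sharpness inequality for Zhou weights, and the ``$\le$'' direction comes from the approximation $\phi_m\to\Phi^D_{o,\max}$ of Theorem~\ref{thm-approximation} applied to a near-maximizer $F_{m,w}\in\mathscr{E}_m(D)$. Two points deserve correction.

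First, for ``$\ge$'' you do not need the local Remark~\ref{rem:sharp_bound} and the reduction $\Phi^U_{o,\max}=\Phi^D_{o,\max}$. The paper has a direct global statement (Lemma~\ref{r:value-inequa}): for any negative $\psi\in\mathrm{PSH}(D)$ one has $\psi\le\sigma(\psi,\Phi^D_{o,\max})\Phi^D_{o,\max}$ on all of $D$. Applying this to $\psi=\log|f|$ gives the inequality in one line.

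Second, and more importantly, your key claim $\sigma(\log|f_m|,\Phi^D_{o,\max})\ge m$ is not what Theorem~\ref{thm:valu-jump} (or strong openness) gives. From $(f_m,o)\in\mathcal{I}(m\Phi^D_{o,\max})_o$ you get $c_o^{f_m}(\Phi^D_{o,\max})\ge m$, and then Theorem~\ref{thm:valu-jump} yields only
\[
\sigma(\log|f_m|,\Phi^D_{o,\max})\ \ge\ c_o^{f_m}(\Phi^D_{o,\max})-\mathsf{C}\ \ge\ m-\mathsf{C}
\]
for a constant $\mathsf{C}$ independent of $m$. The alternative you suggest (``the argument preceding Remark~\ref{rem:sharp_bound}'') proves $\psi\le\sigma(\psi,\Phi_{o,\max})\Phi_{o,\max}+O(1)$, not that membership in $\mathcal{I}(m\Phi^D_{o,\max})_o$ forces $\log|f_m|\le m\Phi^D_{o,\max}+O(1)$; the latter is false in general. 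Fortunately $m-\mathsf{C}$ is enough: since $\log|F_{m,w}(w)|\le 0$ and $\sigma\ge m-\mathsf{C}>0$ for large $m$,
\[
\frac{\log|F_{m,w}(w)|}{\sigma(\log|F_{m,w}|,\Phi^D_{o,\max})}\ \ge\ \frac{\log|F_{m,w}(w)|}{m-\mathsf{C}}\ =\ \frac{m}{m-\mathsf{C}}\,\phi_m(w)\ \xrightarrow[m\to\infty]{}\ \Phi^D_{o,\max}(w),
\]
which is exactly how the paper concludes. With these two adjustments your argument is the paper's proof.
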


Note that for any non-constant holomorphic function $f$ with $f(o)=0$ near $o$, we have $\sigma(\log|f|,\Phi^D_{o,\max})>0$. In fact, since $\Phi_{o,\max}^D(z)\ge N\log|z|+O(1)$ near $o$ for some $N>0$, we only need to show that there exists $N'>0$ such that $\log|f|\le N'\log|z|+O(1)$ near $o$. Since $f(o)=0$ and $f\not\equiv 0$, the Lelong number $\nu(\log|f|,o)\in (0,+\infty)$, which implies the existence of $N'$.

\begin{Corollary}\label{cor-approximation2}
	Let $D$ be a bounded strictly hyperconvex domain, and $\Phi^D_{o,\max}$ be a global Zhou weight related to some $|f_0|^2e^{-2\varphi_0}$   on $D$ near $o$. Then for any $z\in D$, we have
	\[\Phi_{o,\max}^D(z)=\sup\big\{\phi(z) : \phi\in\mathrm{PSH}^-(D), \ \phi\sim_{\mathcal{I}}\Phi^D_{o,\max} \ \text{at} \ o\big\},\]
	where we write `\emph{$\phi\sim_{\mathcal{I}}\Phi^D_{o,\max}$ at $o$}' if there exists real numbers $\alpha\ge \beta$ independent of $m$, such that for $m\gg1$,
	\[\mathcal{I}\big((m+\alpha)\Phi_{o,\max}^D\big)_o\subset\mathcal{I}(m\phi)_o\subset\mathcal{I}\big((m+\beta)\Phi_{o,\max}^D\big)_o.\]
	In particular, we have
	\[\Phi_{o,\max}^D(z)=\sup\big\{\phi(z) : \phi\in\mathrm{PSH}^-(D), \ \mathcal{I}(m\phi)_o=\mathcal{I}\big(m\Phi_{o,\max}^D\big)_o, \ \forall m\in\mathbb{N}_+\big\}.\]
\end{Corollary}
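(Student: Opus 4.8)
The plan is to derive this corollary from Theorem \ref{thm-approximation} together with the correspondence between global Zhou weights and multiplier ideals. Write $S(z)$ for the supremum on the right-hand side of the first displayed equality. One inequality is essentially tautological: I first check that $\Phi^D_{o,\max}$ itself qualifies as a competitor $\phi$ in the supremum defining $S$, so $S(z)\ge \Phi^D_{o,\max}(z)$ for every $z\in D$. Indeed, taking $\phi=\Phi^D_{o,\max}$ and $\alpha=\beta=0$ gives $\mathcal{I}(m\Phi^D_{o,\max})_o\subset\mathcal{I}(m\phi)_o\subset\mathcal{I}(m\Phi^D_{o,\max})_o$ trivially, so $\Phi^D_{o,\max}\sim_{\mathcal I}\Phi^D_{o,\max}$ at $o$. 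For the particular (last) displayed formula, the analogous observation is that the condition $\mathcal{I}(m\phi)_o=\mathcal{I}(m\Phi^D_{o,\max})_o$ for all $m$ is a stronger requirement than $\phi\sim_{\mathcal I}\Phi^D_{o,\max}$, so its supremum is $\le S(z)$; and again $\phi=\Phi^D_{o,\max}$ is a competitor, giving $\ge$.

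The substantive direction is $S(z)\le \Phi^D_{o,\max}(z)$. Fix a competitor $\phi\in\mathrm{PSH}^-(D)$ with $\phi\sim_{\mathcal I}\Phi^D_{o,\max}$ at $o$, witnessed by $\alpha\ge\beta$. The key step is to translate the two-sided ideal inclusion into a two-sided comparison of relative types via Theorem \ref{thm:multi-valua} (more precisely, its local content applied after passing to germs at $o$): the inclusion $\mathcal{I}((m+\alpha)\Phi^D_{o,\max})_o\subset\mathcal{I}(m\phi)_o$ for $m\gg1$ should force $\sigma(\phi,\Psi)\ge\sigma(\Phi^D_{o,\max},\Psi)$ up to a controlled error for every local Zhou weight $\Psi$, and symmetrically the other inclusion gives the reverse up to a controlled error; letting $m\to\infty$ the errors wash out and one gets $\sigma(\phi,\Phi^D_{o,\max})=1$ (using that $\sigma(\Phi^D_{o,\max},\Phi^D_{o,\max})=1$, which holds because $\Phi^D_{o,\max}$ is its own associated maximal weight up to $O(1)$). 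Then the sharp bound \eqref{equ:xiaomage} in Remark \ref{rem:sharp_bound} applied on $D$ gives $\phi\le\sigma(\phi,\Phi^D_{o,\max})\,\Phi^D_{o,\max}=\Phi^D_{o,\max}$ pointwise on $D$; taking the supremum over all such $\phi$ yields $S(z)\le\Phi^D_{o,\max}(z)$, and combined with the first paragraph this closes both equalities.

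I expect the main obstacle to be making the passage from a finite-order inclusion of multiplier ideals at $o$ (holding only for $m\gg1$, with the parameter shift $\alpha$) to the clean statement $\sigma(\phi,\Phi^D_{o,\max})=1$ rigorous: Theorem \ref{thm:multi-valua} as stated compares $\sigma(u,\cdot)$ and $\sigma(v,\cdot)$ under an inclusion of the form $\mathcal{I}(\varphi_0+t_{1,j}v)_o\subset\mathcal{I}(\varphi_0+t_{2,j}u)_o$ with $t_{1,j}/t_{2,j}\to1$, so I need to match the shifts: writing $m+\alpha = t_{1,j}$ and $m = t_{2,j}$ (with $v=\Phi^D_{o,\max}$, $u=\phi$, and an auxiliary $\varphi_0$, or simply $\varphi_0\equiv 0$) the ratio indeed tends to $1$, and likewise for the other inclusion with the roles reversed. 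The second potential subtlety is ensuring that the relative types computed against local Zhou weights near $o$ transfer to the global statement on $D$; this is handled by Lemma \ref{l:local-glabal}, Proposition \ref{l:max2}, and the inequality \eqref{equ:xiaomage}, which are available. Granting these, the argument is a short assembly of already-established facts rather than a new computation.
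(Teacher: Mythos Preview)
Your argument is correct, and it takes a genuinely different route from the paper's own proof. In fact, the paper explicitly anticipates your approach: immediately after stating Corollary~\ref{cor-approximation2} it remarks that ``one can also obtain Corollary~\ref{cor-approximation2} by Theorem~\ref{thm:multi-valua}, which will not be shown in this paper.'' You have supplied exactly that alternative proof.

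To compare: the paper's proof stays close to Theorem~\ref{thm-approximation}. For a competitor $\varphi\in\mathcal S(\Phi^D_{o,\max})$ it forms the auxiliary functions
\[
\tilde\varphi_m(w)=\sup\Bigl\{\tfrac1m\log|f(w)|:\|f\|_D\le1,\ (f,o)\in\mathcal I(m\varphi)_o\Bigr\},
\]
observes that the hypothesis $\varphi\sim_{\mathcal I}\Phi^D_{o,\max}$ forces $\tilde\varphi_m\to\Phi^D_{o,\max}$ pointwise (this is Theorem~\ref{thm-approximation}, since only the multiplier ideals at $o$ enter), and combines this with the Demailly lower bound $\tilde\varphi_m\ge h_m\ge\varphi-C_1/m$ (Lemma~\ref{l:appro-Berg}) to conclude $\varphi\le\Phi^D_{o,\max}$. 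Your route instead feeds the ideal inclusions directly into Theorem~\ref{thm:multi-valua} (with $t_{1,j}/t_{2,j}\to1$) to obtain $\sigma(\phi,\Phi^D_{o,\max})\ge1$, and then applies the global sharp inequality of Lemma~\ref{r:value-inequa} (the $D$-version of \eqref{equ:xiaomage}, which is the precise reference you want rather than Remark~\ref{rem:sharp_bound} itself) to get $\phi\le\Phi^D_{o,\max}$ on all of $D$. Your approach is shorter and more conceptual once Theorem~\ref{thm:multi-valua} is in hand; the paper's approach has the virtue of being self-contained within the circle of approximation results in Section~\ref{Approximation} and not invoking the heavier machinery of Theorem~\ref{thm:multi-valua}. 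Note also that for the inequality $\phi\le\Phi^D_{o,\max}$ you only need the lower bound $\sigma(\phi,\Phi^D_{o,\max})\ge1$, hence only the right-hand inclusion $\mathcal I(m\phi)_o\subset\mathcal I((m+\beta)\Phi^D_{o,\max})_o$; the other inclusion is not actually used.
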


One can also obtain Corollary \ref{cor-approximation2} by Theorem \ref{thm:multi-valua}, which will not be shown in this paper.

Note that the pluricomplex Green function $G_D(o,\cdot)$ is a global Zhou weight related to $e^{-2(n-1)\log|z|}$ (with $f_0\equiv 1$ and $\varphi_0=(n-1)\log|z|$)  on $D$ near $o$. Then Theorem \ref{thm-approximation} recovers the following result in \cite{Ni95}.
\begin{Corollary}[\cite{Ni95}]\label{coro:ni95}
	Let $D$ be a bounded strictly hyperconvex domain containing $o$, then
	\[\lim_{m\to\infty}g_m(z)=G_D(o,z), \ \forall z\in D,\]
	where
	\[g_m(z)=\sup_{f\in \mathcal{E}_m(D)}\frac{1}{m}\log|f(z)|, \ \forall z\in D,\]
	and
	\[\mathcal{E}_m(D):=\left\{f\in\mathcal{O}(D) : \sup_{w\in D}|f(w)|\le 1, \ D^{(\nu)}f(o)=0, \ \forall \nu\in\mathbb{N}^n \ \text{with} \ |\nu|\le m-1\right\}.\]
\end{Corollary}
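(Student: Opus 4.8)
The goal is to recover Corollary \ref{coro:ni95} from Theorem \ref{thm-approximation}. The plan is to identify $G_D(o,\cdot)$ as a particular global Zhou weight and then match the sets $\mathcal{E}_m(D)$ appearing in Ni's result with the sets $\mathscr{E}_m(D)$ from Theorem \ref{thm-approximation}, so that the functions $g_m$ and $\phi_m$ coincide (or differ in a controlled way), whence the convergence $\phi_m\to\Phi^D_{o,\max}$ yields $g_m\to G_D(o,\cdot)$.

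First I would record that, as already observed in the text preceding the corollary, $G_D(o,\cdot)$ is a global Zhou weight related to $|f_0|^2 e^{-2\varphi_0}$ with $f_0\equiv 1$ and $\varphi_0=(n-1)\log|z|$: indeed $\varphi_0$ is plurisubharmonic and $e^{-2\varphi_0}=|z|^{-2(n-1)}$ is locally integrable near $o$ in $\mathbb{C}^n$, so the data are admissible, and by Example \ref{exam1}, Lemma \ref{l:local-glabal} and the defining sup-property of the pluricomplex Green function one checks the three conditions in the definition of a global Zhou weight (condition (1) holds since $|z|^{2N_0}|z|^{-2(n-1)}e^{-2G_D(o,\cdot)}\sim|z|^{2N_0-2(n-1)}|z|^{-2}$ is integrable for $N_0\gg0$ as $G_D(o,z)=\log|z|+O(1)$; condition (2) is the non-integrability of $|z|^{-2n}$-type singularity; condition (3) is precisely the maximality built into $G_D(o,\cdot)=\sup\{u:u\in L_o\}$). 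Since $D$ is a bounded strictly hyperconvex domain, it is in particular bounded and hyperconvex, so $\tilde L_o\neq\emptyset$ and Theorem \ref{thm-approximation} applies to $\Phi^D_{o,\max}=G_D(o,\cdot)$.

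Next I would unwind the multiplier ideal condition $(f,o)\in\mathcal{I}(m\,G_D(o,\cdot))_o$. Because $G_D(o,z)=\log|z|+O(1)$ near $o$, we have $\mathcal{I}(m\,G_D(o,\cdot))_o=\mathcal{I}(m\log|z|)_o=\mathfrak{m}_o^{\lfloor m\rfloor - n +1}$ or, more precisely, $\mathcal{I}(m\log|z|)_o$ is the ideal generated by all monomials $z^\nu$ with $|\nu|\ge m-n+1$; in any case, for integer $m$, the condition $(f,o)\in\mathcal{I}(m\log|z|)_o$ is equivalent to $D^{(\nu)}f(o)=0$ for all $\nu\in\mathbb{N}^n$ with $|\nu|\le m-n$. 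This does not literally match the vanishing condition $|\nu|\le m-1$ defining $\mathcal{E}_m(D)$, but the discrepancy is a fixed shift of the index by $n-1$. Hence $\mathcal{E}_m(D)=\mathscr{E}_{m+n-1}(D)$ (with $\Phi^D_{o,\max}=G_D(o,\cdot)$), and therefore $g_m=\frac{1}{m}\log\sup_{f\in\mathscr{E}_{m+n-1}(D)}|f|=\frac{m+n-1}{m}\,\phi_{m+n-1}$, where $\phi_k$ is as in Theorem \ref{thm-approximation}. Taking $m\to\infty$, the factor $\frac{m+n-1}{m}\to 1$ and $\phi_{m+n-1}(z)\to\Phi^D_{o,\max}(z)=G_D(o,z)$ pointwise on $D$ by part (1) of Theorem \ref{thm-approximation}; since the limit function is locally bounded on $D\setminus\{o\}$ and the $\phi_k$ are uniformly bounded (they are negative and bounded below near any compact set, as follows from $\phi_k\ge$ a fixed scalar multiple of $G_D(o,\cdot)$), multiplying by a sequence tending to $1$ does not affect the pointwise limit, so $g_m(z)\to G_D(o,z)$ for every $z\in D$.

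The main obstacle is the bookkeeping of the index shift between the two vanishing orders — one must be careful that $\mathcal{I}(m\log|z|)_o$ corresponds to vanishing to order $m-n$ rather than $m$ (the classical ``$n$ off'' phenomenon for multiplier ideals of $\log|z|$) and that $G_D(o,\cdot)$ agrees with $\log|z|$ only up to an $O(1)$ term, which a priori could perturb $\mathcal{I}(m\,G_D(o,\cdot))_o$; however, since $|G_D(o,z)-\log|z||$ is bounded near $o$, one has $\mathcal{I}(m\,G_D(o,\cdot))_o=\mathcal{I}(m\log|z|)_o$ exactly, so this causes no trouble. A secondary point is to justify interchanging the limit with the multiplication by $\frac{m+n-1}{m}$ and with the supremum over the compact families $\mathscr{E}_k(D)$; this is routine given the uniform bounds on $\phi_k$ supplied implicitly by Theorem \ref{thm-approximation} and Remark \ref{rem:sharp_bound}. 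Once these points are settled, the corollary follows immediately.
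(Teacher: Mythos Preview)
Your proposal is correct and follows essentially the same approach as the paper, which simply observes that $G_D(o,\cdot)$ is a global Zhou weight related to $e^{-2(n-1)\log|z|}$ and invokes Theorem \ref{thm-approximation}; you have supplied the bookkeeping for the index shift $\mathcal{E}_m(D)=\mathscr{E}_{m+n-1}(D)$ that the paper leaves implicit. One minor remark: the ``secondary point'' about justifying the interchange of the limit with the factor $\frac{m+n-1}{m}$ is overthinking it---for each fixed $z\neq o$, Theorem \ref{thm-approximation} gives $\phi_{m+n-1}(z)\to G_D(o,z)\in\mathbb{R}$, and the product of two convergent real sequences converges to the product of the limits; no uniform bounds are needed.
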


\section{Preliminaries}

In this section, we give some preliminaries.

\subsection{Recall: $L^{2}$ method}

In this section, we recall the following lemma,
whose various forms already appeared in \cite{guan-zhou13p,guan-zhou13ap,guan-effect} etc.:

\begin{Lemma} [see \cite{guan-effect}]\label{lem:GZ_sharp}
	Let $B\in(0,+\infty)$ and $t_{0}\geq T$ be arbitrarily given.
	Let $D\subset\subset\mathbb{C}^{n}$ be a pseudoconvex domain.
	Let $\psi<-T$ be a plurisubharmonic function
	on $D$.
	Let $\varphi$ be a plurisubharmonic function on $D$.
	Let $F$ be a holomorphic function on $\{\psi<-t_{0}\}$,
	such that
	\begin{equation*}
		\int_{K\cap\{\psi<-t_{0}\}}|F|^{2}<+\infty
	\end{equation*}
	for any compact subset $K$ of $D$,
	and
	\begin{equation*}
		\int_{D}\frac{1}{B}\mathbb{I}_{\{-t_{0}-B<\psi<-t_{0}\}}|F|^{2}e^{-\varphi}<+\infty.
	\end{equation*}
	Then there exists a
	holomorphic function $\tilde{F}$ on $D$, such that,
	\begin{equation*}
		\begin{split}
			&\int_{D}|\tilde{F}_{t_{0}}-(1-b(\psi))F|^{2}e^{-\varphi+v(\psi)}\\
			\leq& \left(e^{-T}-e^{-t_{0}-B}\right)\int_{D}\frac{1}{B}\mathbb{I}_{\{-t_{0}-B<\psi<-t_{0}\}}|F|^{2}e^{-\varphi}
		\end{split}
	\end{equation*}
	where
	$b(t)=\int_{-\infty}^{t}\frac{1}{B}\mathbb{I}_{\{-t_{0}-B< s<-t_{0}\}}ds$,
	$v(t)=\int_{-t_0}^{t}b(s)ds-t_0$.
\end{Lemma}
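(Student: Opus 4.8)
The plan is to prove Lemma \ref{lem:GZ_sharp} by the twisted Bochner--Kodaira--Nakano method with an optimally tuned pair of auxiliary functions, following the circle of ideas around the Guan--Zhou optimal $L^2$ extension theorem. First I would reduce to a regular situation: exhaust $D$ by smoothly bounded strongly pseudoconvex subdomains $D_j\subset\subset D$, replace $\psi$ (after a truncation from below keeping $\psi<-T$) and $\varphi$ by decreasing sequences of smooth strictly plurisubharmonic functions on $D_j$, replace $b$ by a smooth cutoff with the same profile, and prove the estimate in this smooth, bounded, strongly pseudoconvex setting. Since the right-hand side $\bigl(e^{-T}-e^{-t_0-B}\bigr)\int_D\frac1B\mathbb{I}_{\{-t_0-B<\psi<-t_0\}}|F|^2e^{-\varphi}$ is stable (or improves) under these approximations, the general statement follows by a normal-families argument at the end.

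In the smooth setting, observe that the cutoff satisfies $b\equiv0$ on $\{\psi\le-t_0-B\}$ and $b\equiv1$ on $\{\psi\ge-t_0\}$, so the function $(1-b(\psi))F$ --- set equal to $0$ on $\{\psi\ge-t_0\}$, where $F$ need not be defined --- is a globally defined smooth function on $D$, and its $\bar\partial$ equals $-F\,b'(\psi)\,\bar\partial\psi$, which is supported in the shell $\{-t_0-B<\psi<-t_0\}\subset\{\psi<-T\}$. I would then seek the extension in the form $\tilde F=(1-b(\psi))F-u$ with $\bar\partial u=-F\,b'(\psi)\,\bar\partial\psi$; such a $u$ automatically makes $\tilde F$ holomorphic, and because the source term is supported in $\{\psi<-t_0\}$ one gets $\tilde F-(1-b(\psi))F=-u$ together with the required local $L^2$ integrability of $\tilde F$.

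The core is the $L^2$ bound for $u$. Here I would apply the twisted $\bar\partial$ a priori inequality on $D$ with weight $e^{-\varphi+v(\psi)}$ and a twisting factor that is a second prescribed function of $\psi$; the two functions are coupled by an ODE chosen precisely so that (i) the curvature contribution --- built from $i\partial\bar\partial\varphi\ge0$, the plurisubharmonicity of $\psi$, and the strong pseudoconvexity used in the approximation --- is non-negative and can be dropped, and (ii) the remaining term carrying $b'(\psi)^2$ telescopes to $\int_D\frac1B\mathbb{I}_{\{-t_0-B<\psi<-t_0\}}|F|^2e^{-\varphi}$ times exactly the constant $e^{-T}-e^{-t_0-B}$, using the support of $b'$ and $\psi<-T$. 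The function $v(t)=\int_{-t_0}^t b(s)\,ds-t_0$ (so $v'=b$, with $v(t)=t$ for $t\ge-t_0$) is exactly the antiderivative that makes $e^{v}$ pair correctly with $b'$ in this computation, and the pointwise inequality driving the estimate is then checked by a direct calculation with $b,b',v$. Taking the minimal-$L^2$-norm solution of the $\bar\partial$-equation against this weight yields the stated bound.

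The step I expect to be the main obstacle is extracting the \emph{sharp} constant: this hinges on the exact coupling between the twisting function and $v$, on the verification of the pointwise algebraic identity among $b$, $b'$, $v$, $e^{v}$, and on a careful accounting of the boundary and curvature terms in the twisted identity so that nothing is conceded. A secondary technical issue is making the final limit rigorous --- extracting from the uniform $L^2$ bounds a subsequence of the approximate extensions converging locally uniformly to a holomorphic $\tilde F$ on $D$ that still satisfies the estimate with the same constant, for which one uses Montel's theorem together with the lower semicontinuity of $L^2$ norms under weak limits and the stability of the right-hand side along the approximation.
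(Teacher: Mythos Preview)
The paper does not prove this lemma at all; it is simply quoted from \cite{guan-effect} as a black box and then applied in the proof of Lemma~\ref{lem:JM}. Your outline is the standard route by which such optimal $L^2$ extension estimates are established in \cite{guan-zhou13p,guan-zhou13ap,guan-effect}: approximation by smooth strictly plurisubharmonic data on strongly pseudoconvex exhaustions, the twisted $\bar\partial$ identity with a pair of auxiliary functions of $\psi$ coupled by an ODE, and a limiting argument. So there is nothing to compare against in this paper, and your plan matches the method of the cited source.
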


It is clear that $\mathbb{I}_{(-t_{0},+\infty)}\leq b(t)\leq\mathbb{I}_{(-t_{0}-B,+\infty)}$ and $\max\{t,-t_{0}-B\}\leq v(t) \leq\max\{t,-t_{0}\}$.

Let $\varphi$ and $\varphi_{0}$ be plurisubharmonic functions on $\Delta^{n}\subset\mathbb{C}^{n}$,
and $f_{0}$ be a holomorphic function on $\Delta^{n}$. 

\begin{Lemma}[see \cite{GZopen-effect}]
	\label{lem:JM}
	Assume that $|f_{0}|^{2}e^{-2(\varphi+\varphi_{0})}$ is not integrable near $o$,
	and $|f_{0}|^{2}e^{-2\varphi_{0}}$ is integrable near $o$.
	Then for any small enough neighborhood $U\subset\subset\Delta^{n}$ of $o$,
	there exists $C>0$ such that
	$$e^{2t}\int_{\{\varphi<-t\}\cap U}|f_{0}|^{2}e^{-2\varphi_{0}}>C$$
	for any $t\ge0$.
\end{Lemma}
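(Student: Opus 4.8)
The plan is to prove Lemma \ref{lem:JM} by combining the $L^2$ extension estimate of Lemma \ref{lem:GZ_sharp} with the strong openness property of multiplier ideal sheaves. The hypothesis $|f_0|^2 e^{-2\varphi_0}$ integrable near $o$ says $(f_0,o)\in\mathcal I(\varphi_0)_o$, while $|f_0|^2 e^{-2(\varphi+\varphi_0)}$ not integrable near $o$ says $(f_0,o)\notin\mathcal I(\varphi+\varphi_0)_o$. The quantity to bound from below is $e^{2t}\int_{\{\varphi<-t\}\cap U}|f_0|^2 e^{-2\varphi_0}$, so it is natural to introduce the function
\[
G(t):=\inf\Bigl\{\int_{\{2\varphi<-t\}\cap U}|F|^2 e^{-2\varphi_0}: F\in\mathcal O(\{2\varphi<-t\}\cap U),\ (F-f_0,o)\in\mathcal I(2\varphi+2\varphi_0)_o\Bigr\},
\]
or its analogue with weight $\psi=2\varphi$; the point is that the obstruction to integrability forces $G(t)>0$ for all $t$, and one wants to show $G(t)$ does not decay faster than $e^{-t}$ up to a constant. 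Since $\int_{\{\varphi<-t\}\cap U}|f_0|^2 e^{-2\varphi_0}\ge G(t)$ (the trivial competitor is $F=f_0$), a lower bound $G(t)\ge C e^{-2t}$ finishes the proof.

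First I would fix a small pseudoconvex neighborhood $U\subset\subset\Delta^n$ of $o$ on which both $\varphi$ and $\varphi_0$ are defined and $\varphi<0$ (after shrinking and subtracting a constant), and set $\psi:=2\varphi$, which is plurisubharmonic and negative on $U$. Next I would run Lemma \ref{lem:GZ_sharp} on $D=U$ with this $\psi$, weight $\varphi$ replaced by $2\varphi_0$, and $F=f_0$ restricted to $\{\psi<-t_0\}$: for each $t_0\ge 0$ and a fixed choice of $B$ (say $B=1$), the lemma produces $\tilde F=\tilde F_{t_0}\in\mathcal O(U)$ with $(\tilde F-f_0,o)\in\mathcal I(2\varphi_0+\psi)_o$ roughly (from the $(1-b(\psi))$ correction vanishing near $o$) and with the $L^2$ estimate controlling $\int_U |\tilde F|^2 e^{-2\varphi_0}$ in terms of $(e^{-T}-e^{-t_0-B})\cdot\frac{1}{B}\int_{\{-t_0-B<\psi<-t_0\}}|f_0|^2 e^{-2\varphi_0}$. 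The key observation is that $\frac1B\int_{\{-t_0-1<\psi<-t_0\}}|f_0|^2 e^{-2\varphi_0}\le e^{t_0+1}\int_{\{\psi<-t_0\}}|f_0|^2 e^{-2\varphi_0}$ after multiplying by the right exponential, so that if $\int_{\{\psi<-t_0\}}|f_0|^2 e^{-2\varphi_0}=\int_{\{\varphi<-t_0/2\}}|f_0|^2 e^{-2\varphi_0}$ were too small for some large $t_0$, then $\tilde F_{t_0}$ would be an $L^2(e^{-2\varphi_0})$-small holomorphic extension, giving $(f_0,o)\in\mathcal I(2\varphi_0+\psi)_o$ for that scale.

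The mechanism to convert this into the desired uniform bound is the strong openness property (Theorem \ref{thm:SOC}) together with a monotonicity/exhaustion argument: if the inequality $e^{2t}\int_{\{\varphi<-t\}\cap U}|f_0|^2 e^{-2\varphi_0}>C$ failed for every $C>0$, one could extract a sequence $t_j\to+\infty$ (or use that $t\mapsto e^{2t}\int_{\{\varphi<-t\}}|f_0|^2 e^{-2\varphi_0}$, which is essentially the derivative data of a concave function of $e^{-t}$, tends to $0$) along which the extension estimate yields $\tilde F_{t_j}$ converging in $L^2_{loc}(e^{-2\varphi_0})$ to a holomorphic $F_\infty$ with $(F_\infty-f_0,o)\in\mathcal I(2\varphi_0+2\varphi)_o$ and $\int_U|F_\infty|^2 e^{-2\varphi_0}<+\infty$; but then $F_\infty$ certifies $(f_0,o)\in\mathcal I(2\varphi_0+2\varphi)_o\subset\mathcal I_+(\varphi_0+\varphi)_o=\mathcal I(\varphi_0+\varphi)_o$ by strong openness, contradicting the non-integrability hypothesis. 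Alternatively, and more cleanly, I would invoke the concavity of $t\mapsto G(-\log r)$-type functions (as recorded in \cite{guan-effect,guan-remapprox}) to deduce that $e^{2t}G(t)$ is nondecreasing in $t$, so its infimum over $t\ge0$ is attained at $t=0$ and equals $G(0)>0$, which is exactly the uniform constant $C$.

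The main obstacle I anticipate is bookkeeping the passage between $\psi=2\varphi$ and $\varphi$, the factor-of-two discrepancies in the exponents, and verifying that the correction term $(1-b(\psi))f_0$ in Lemma \ref{lem:GZ_sharp} genuinely places $\tilde F_{t_0}$ in the right multiplier ideal germ at $o$ (this needs $b(\psi)=1$ on a neighborhood of $o$, which holds since $\psi\to-\infty$ at $o$); secondarily, making the limiting/monotonicity argument rigorous — i.e., either establishing the concavity statement in the precise form needed, or controlling the $L^2_{loc}$ limit — is where the real work lies, though both routes are standard in this circle of ideas.
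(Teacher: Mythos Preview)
Your overall plan---apply Lemma \ref{lem:GZ_sharp} with $\psi=2\varphi$ and $F=f_0$, then argue by contradiction via a compactness/limit argument---is exactly the paper's approach. However, two specific choices in your sketch would not work as written.

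First, the weight: you propose replacing the $\varphi$ of Lemma \ref{lem:GZ_sharp} by $2\varphi_0$, but the paper replaces it by $2(\varphi+\varphi_0)$. This matters. With your weight the estimate only gives $\int_U|\tilde F_{t_0}-(1-b(2\varphi))f_0|^2e^{-2\varphi_0+v(2\varphi)}<\infty$; near $o$ one has $v(2\varphi)=-t_0-B/2$ (a constant), so this merely says $(\tilde F_{t_0}-f_0,o)\in\mathcal I(\varphi_0)_o$, which is vacuous since already $(f_0,o)\in\mathcal I(\varphi_0)_o$. You need the weight $e^{-2(\varphi+\varphi_0)}$ so that finiteness of the left-hand side, together with $1-b(2\varphi)=1$ on $\{2\varphi<-t_0-B\}$, forces $(\tilde F_{t_0}-f_0,o)\in\mathcal I(\varphi+\varphi_0)_o$. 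The right-hand side remains controllable because on the annulus $\{-t_0-B<2\varphi<-t_0\}$ one has $e^{-2\varphi}\le e^{t_0+B}$.

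Second, the contradiction mechanism: strong openness is not the tool here. Once $(\tilde F_{t_0}-f_0,o)\in\mathcal I(\varphi+\varphi_0)_o$ and the estimate forces $\int_U|\tilde F_{t_0}|^2\to0$ along a subsequence, Montel gives $\tilde F_{t_0}\to0$ compactly, hence $\tilde F_{t_0}-f_0\to -f_0$ compactly; the paper then invokes the \emph{closedness of coherent sheaves} (Grauert--Remmert, \cite{Grau-Rem84}) to conclude $(-f_0,o)\in\mathcal I(\varphi+\varphi_0)_o$, contradicting the hypothesis. Your statement ``$F_\infty$ certifies $(f_0,o)\in\mathcal I(2\varphi_0+2\varphi)_o$'' only makes sense once you know $F_\infty=0$, and the passage of the germ condition to the limit is precisely this closedness, not strong openness.

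Your alternative concavity route (via $G(-\log r)$ concave from \cite{guan-effect}) is a legitimate and cleaner argument, but it is not the one the paper uses here.
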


\begin{proof}
	Note that
	for small enough $U$,
	
	(1) $\sup_{U}\varphi\leq 0$;
	
	(2) $\sup_{U}\varphi_{0}\leq 0$;
	
	(3) $\int_{U}|f_{0}|^{2}e^{-2\varphi_{0}}<+\infty$
	\
	implies that
	\begin{equation*}
		\int_{U}|f_{0}|^{2}\leq e^{2\sup_{U}\varphi_{0}}\int_{U}|f_{0}|^{2}e^{-2\varphi_{0}}<+\infty,
	\end{equation*}
	and
	\begin{equation*}
		\int_{U}\mathbb{I}_{\{-2t_{0}-1<2\varphi<-2t_{0}\}}|f_{0}|^{2}e^{-2\varphi_{0}-2\varphi}
		\leq\int_{U}|f_{0}|^{2}e^{-2\varphi_{0}}e^{2t_{0}+1} <+\infty
	\end{equation*}
	hold for any $t_{0}\geq 0$.
	Then it follows from Lemma \ref{lem:GZ_sharp} $(T\sim 0,$ $t_{0}\sim 2t_{0},$
	$B\sim 1$, $\psi\sim2\varphi,$ $\varphi\sim\varphi+\varphi_{0}$, here `$\sim$' means that the former is replaced by the latter, and the notation will be used throughout the paper$)$ that
	there exists a
	holomorphic function $\tilde{F}_{t_{0}}$ on $U$ such that
	\begin{equation}
		\label{equ:IAS2018a}
		\begin{split}
			&\int_{U}|\tilde{F}_{t_{0}}-\big(1-b(2\varphi)\big)f_{0}|^{2}e^{-2(\varphi+\varphi_{0})+v(2\varphi)}\\
			\leq& \left(1-e^{-(2t_{0}+1)}\right)\int_{U}\mathbb{I}_{\{-2t_{0}-1<2\varphi<-2t_{0}\}}|f_{0}|^{2}e^{-2(\varphi+\varphi_{0})}\\
			\leq& e^{2t_{0}+1}\int_{U}\mathbb{I}_{\{-2t_{0}-1<2\varphi<-2t_{0}\}}|f_{0}|^{2}e^{-2\varphi_{0}}\\
			\leq& e^{2t_{0}+1}\int_{U}\mathbb{I}_{\{2\varphi<-2t_{0}\}}|f_{0}|^{2}e^{-2\varphi_{0}}\\
			=& e^{2t_{0}+1}\int_{U}\mathbb{I}_{\{\varphi<-t_{0}\}}|f_{0}|^{2}e^{-2\varphi_{0}},
		\end{split}
	\end{equation}
	where
	$b(t)=\int_{0}^{t}\mathbb{I}_{\{-t_{0}-1< s<-t_{0}\}}ds$,
	$v(t)=\int_{0}^{t}b(s)ds$.

	Inequality \eqref{equ:IAS2018a} shows that
	\[\int_{U}|\tilde{F}_{t_{0}}-\big(1-b(2\varphi)\big)f_{0}|^{2}e^{-2(\varphi+\varphi_{0})+v(2\varphi)}<+\infty.\]
	Note that $v(2\varphi)|_{U}\geq -(2t_{0}+1)$,
	then
	\[\int_{U}|\tilde{F}_{t_{0}}-\big(1-b(2\varphi)\big)f_{0}|^{2}e^{-2(\varphi+\varphi_{0})}<+\infty.\]
	Note that $(1-b(2\varphi))=1$ on $\{2\varphi<-2t_{0}-1\}$,
	then $(\tilde{F}_{t_{0}}-f_{0},o)\in\mathcal{I}(\varphi+\varphi_{0})_{o}$.
	As $(f_{0},o)\not\in\mathcal{I}(\varphi+\varphi_{0})_{o}$,
	$(\tilde{F}_{t_{0}},o)\not\in\mathcal{I}(\varphi+\varphi_{0})_{o}$,
	which implies that
	\begin{equation}
		\label{equ:IAS2018d}
		\liminf_{t_{0}\to+\infty}\int_{U}|\tilde{F}_{t_{0}}|^{2}>0.
	\end{equation}
	Inequality \eqref{equ:IAS2018d} could be proved by contradiction:
	if not, there exists a subsequence of $\{\tilde{F}_{t_{0}}\}_{t_{0}}$ compactly converging to $0$ $(t_{0}\to+\infty)$,
	which implies that there exists a subsequence of $\tilde{F}_{t_{0}}-f_{0}$ compactly converging to $-f_{0}$,
	which contradicts the closedness of coherent sheaves (see \cite{Grau-Rem84})
	by $(\tilde{F}_{t_{0}}-f_{0},o)\in\mathcal{I}(\varphi+\varphi_{0})_{o}$
	and $(-f_{0},o)\not\in\mathcal{I}(\varphi+\varphi_{0})_{o}$.
	
	Note that $\mathrm{Supp}\big(1-b(t)\big)\subset (-\infty,-2t_{0}]$, $v(t)\geq t$,
	\begin{equation}
		\label{equ:IAS2018b}
		\begin{split}
			&\left(\int_{U}\big|\tilde{F}_{t_{0}}-\big(1-b(2\varphi)\big)f_{0}\big|^{2}e^{-2(\varphi+\varphi_{0})+v(2\varphi)}\right)^{1/2}\\
			\geq&
			\left(\int_{U}\big|\tilde{F}_{t_{0}}-\big(1-b(2\varphi)\big)f_{0}\big|^{2}e^{-2\varphi_{0}}\right)^{1/2}
			\\\geq&
			\left(\int_{U}|\tilde{F}_{t_{0}}|^{2}e^{-2\varphi_{0}}\right)^{1/2}-\left(\int_{U}\big|\big(1-b(2\varphi)\big)f_{0}\big|^{2}e^{-2\varphi_{0}}\right)^{1/2}
			\\\geq&
			\left(\int_{U}|\tilde{F}_{t_{0}}|^{2}e^{-2\varphi_{0}}\right)^{1/2}-\left(\int_{U\cap\{2\varphi<-2t_{0}\}}|f_{0}|^{2}e^{-2\varphi_{0}}\right)^{1/2},
			\\\geq&
			\left(\int_{U}|\tilde{F}_{t_{0}}|^{2}\right)^{1/2}-\left(\int_{U\cap\{2\varphi<-2t_{0}\}}|f_{0}|^{2}e^{-2\varphi_{0}}\right)^{1/2},
		\end{split}
	\end{equation}
	and
	\begin{equation}
		\label{equ:IAS2018c}
		\begin{split}
			\lim_{t_{0}\to+\infty}\int_{U\cap\{2\varphi<-2t_{0}\}}|f_{0}|^{2}e^{-2\varphi_{0}}=0,
		\end{split}
	\end{equation}
	then
	the combination of inequality \eqref{equ:IAS2018a}, equality \eqref{equ:IAS2018b},
	inequality \eqref{equ:IAS2018c} and inequality \eqref{equ:IAS2018d}
	proves the present Lemma.
\end{proof}

\subsection{Some asymptotic properties related to integrability}

Let $\varphi$ and $\varphi_{0}$ be plurisubharmonic functions on $\Delta^{n}\subset\mathbb{C}^{n}$
such that $|f_{0}|^{2}e^{-2\varphi_{0}}$ is integrable near $o$.

\begin{Lemma}
	\label{lem:jump_asyp_A}
	Assume that $(f_{0},o)\not\in\mathcal{I}(\varphi+\varphi_{0})_{o}$.
	Then for any neighborhood $U\subset\subset\Delta^{n}$ of $o$,
	$$\limsup_{t\to+\infty}\frac{-\log \int_{\{\varphi<-t\}\cap U}|f_{0}|^{2}e^{-2\varphi_{0}}}{2t}\leq 1.$$
\end{Lemma}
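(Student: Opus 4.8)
Under the hypothesis $(f_0,o) \notin \mathcal{I}(\varphi + \varphi_0)_o$ and $|f_0|^2 e^{-2\varphi_0}$ integrable near $o$, we want: for any $U \Subset \Delta^n$ containing $o$,
$$\limsup_{t \to +\infty} \frac{-\log \int_{\{\varphi < -t\} \cap U} |f_0|^2 e^{-2\varphi_0}}{2t} \le 1.$$

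Equivalently, writing $m(t) := \int_{\{\varphi < -t\} \cap U} |f_0|^2 e^{-2\varphi_0}$, I want $m(t) \ge e^{-2t(1+\varepsilon)}$ for $t$ large, for every $\varepsilon > 0$; in fact Lemma 2.4 already gives the much stronger $m(t) \ge C e^{-2t}$.

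**Plan.** The result is essentially an immediate corollary of Lemma 2.4 (the displayed Lemma right before this one, `lem:JM`), which under *exactly these hypotheses* asserts the existence of $C > 0$ with $e^{2t} m(t) > C$ for all $t \ge 0$, i.e. $m(t) > C e^{-2t}$. Taking $-\log$ of both sides gives $-\log m(t) < 2t - \log C$, hence $\frac{-\log m(t)}{2t} < 1 - \frac{\log C}{2t} \to 1$ as $t \to +\infty$, which yields the claimed $\limsup \le 1$. So the proof is a one-line deduction from Lemma 2.4.

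The only subtlety is matching the hypotheses: Lemma 2.4 requires that $|f_0|^2 e^{-2(\varphi+\varphi_0)}$ be *not* integrable near $o$, while here we are given $(f_0,o) \notin \mathcal{I}(\varphi+\varphi_0)_o$; but these are literally the same statement by the definition of the multiplier ideal sheaf ($|f_0|^2 e^{-2(\varphi+\varphi_0)}$ integrable near $o$ $\iff$ $(f_0,o) \in \mathcal{I}(\varphi+\varphi_0)_o$). And Lemma 2.4 also needs $|f_0|^2 e^{-2\varphi_0}$ integrable near $o$, which is our standing assumption on $f_0, \varphi_0$. The neighborhood $U$ in Lemma 2.4 is "any small enough neighborhood"; for a general $U \Subset \Delta^n$ one shrinks to $U' \subset U$ small enough that Lemma 2.4 applies on $U'$, and then $m_U(t) \ge m_{U'}(t) > C e^{-2t}$ since the integrand is nonnegative, so the bound on $U$ follows from the bound on $U'$. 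There is no real obstacle here; the lemma is a packaging of Lemma 2.4 into the asymptotic form that will be convenient later.

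**Proof.** Since $|f_0|^2 e^{-2\varphi_0}$ is integrable near $o$ and $(f_0,o) \notin \mathcal{I}(\varphi+\varphi_0)_o$ — that is, $|f_0|^2 e^{-2(\varphi+\varphi_0)}$ is not integrable near $o$ — Lemma \ref{lem:JM} applies: there is a neighborhood $U' \Subset U$ of $o$ and a constant $C > 0$ such that
$$e^{2t} \int_{\{\varphi < -t\} \cap U'} |f_0|^2 e^{-2\varphi_0} > C \quad \text{for all } t \ge 0.$$
As the integrand is nonnegative and $U' \subset U$, we get $\int_{\{\varphi<-t\}\cap U} |f_0|^2 e^{-2\varphi_0} \ge \int_{\{\varphi<-t\}\cap U'} |f_0|^2 e^{-2\varphi_0} > C e^{-2t}$ for all $t \ge 0$, hence
$$-\log \int_{\{\varphi < -t\} \cap U} |f_0|^2 e^{-2\varphi_0} < 2t - \log C.$$
Dividing by $2t > 0$ and letting $t \to +\infty$ yields
$$\limsup_{t \to +\infty} \frac{-\log \int_{\{\varphi < -t\} \cap U} |f_0|^2 e^{-2\varphi_0}}{2t} \le \limsup_{t\to+\infty}\left(1 - \frac{\log C}{2t}\right) = 1,$$
which is the desired inequality. $\qed$
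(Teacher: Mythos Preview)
Your proof is correct and follows exactly the paper's approach: invoke Lemma~\ref{lem:JM} to get a uniform lower bound $e^{2t}\int_{\{\varphi<-t\}\cap U}|f_0|^2e^{-2\varphi_0}>C>0$, then take logarithms and divide by $2t$. You even spell out the passage from ``small enough $U'$'' to arbitrary $U\Subset\Delta^n$ via monotonicity of the integral, which the paper leaves implicit.
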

\begin{proof}
	As $(f_{0},o)\not\in\mathcal{I}(\varphi+\varphi_{0})_{o}$, we have 
	$$\liminf_{t\rightarrow+\infty}e^{2t}\int_{\{\varphi<-t\}\cap U}|f_0|^2e^{-2\varphi_0}>0$$
	(see Lemma \ref{lem:JM}), which implies that $$\limsup_{t\to+\infty}\frac{-\log \int_{\{\varphi<-t\}\cap U}|f_{0}|^{2}e^{-2\varphi_{0}}}{2t}\leq 1.$$
\end{proof}

Assume that $\varphi\geq N\log|z|+O(1)$ for large enough $N\gg0$.

\begin{Lemma}
	Assume that $(f_{0},o)\in\mathcal{I}(\varphi+\varphi_{0})_{o}$.
	Then for any neighborhood $U\subset\subset\Delta^{n}$ of $o$,
	$$\lim_{t\to+\infty}e^{2t}\int_{\{\varphi<-t\}\cap U}|f_{0}|^{2}e^{-2\varphi_{0}}=0.$$
\end{Lemma}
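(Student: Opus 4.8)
The statement to prove is: if $(f_0,o)\in\mathcal{I}(\varphi+\varphi_0)_o$, then $\lim_{t\to+\infty}e^{2t}\int_{\{\varphi<-t\}\cap U}|f_0|^2e^{-2\varphi_0}=0$, where $\varphi\ge N\log|z|+O(1)$.

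\textbf{Proof proposal.}

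The plan is to exploit the hypothesis $(f_0,o)\in\mathcal{I}(\varphi+\varphi_0)_o$, which by definition means $|f_0|^2e^{-2\varphi_0}e^{-2\varphi}$ is integrable on some neighborhood $U\subset\subset\Delta^n$ of $o$. The key observation is that on the sublevel set $\{\varphi<-t\}$ we have $e^{-2\varphi}>e^{2t}$, hence
\begin{equation*}
	e^{2t}\int_{\{\varphi<-t\}\cap U}|f_0|^2e^{-2\varphi_0}\le\int_{\{\varphi<-t\}\cap U}|f_0|^2e^{-2\varphi_0}e^{-2\varphi}.
\end{equation*}
So it suffices to show that the right-hand side tends to $0$ as $t\to+\infty$. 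Since $|f_0|^2e^{-2\varphi_0}e^{-2\varphi}$ is integrable on $U$ and the sets $\{\varphi<-t\}\cap U$ decrease to $\bigcap_{t>0}\{\varphi<-t\}\cap U=\{\varphi=-\infty\}\cap U$, which is a pluripolar set and hence has Lebesgue measure zero, the dominated convergence theorem (or monotone convergence applied to the tails of a convergent integral) gives that this tail integral goes to $0$. This is the whole argument; the role of the hypothesis $\varphi\ge N\log|z|+O(1)$ is merely to guarantee that $\{\varphi<-t\}\cap U$ is relatively compact in $U$ for $t$ large (so all integrals are genuinely over a fixed neighborhood and there is no issue at $\partial U$), though in fact the monotone/dominated convergence argument does not even need this.

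The main (and only) technical point to be careful about is the measure-zero claim for $\{\varphi=-\infty\}$: a plurisubharmonic function that is not identically $-\infty$ has its $-\infty$ locus contained in a pluripolar set, which has zero Lebesgue measure; this is standard (see \cite{demailly-book}). Combined with the fact that $\int_U|f_0|^2e^{-2\varphi_0}e^{-2\varphi}<+\infty$, we conclude $\int_{\{\varphi<-t\}\cap U}|f_0|^2e^{-2\varphi_0}e^{-2\varphi}\to 0$, and therefore $e^{2t}\int_{\{\varphi<-t\}\cap U}|f_0|^2e^{-2\varphi_0}\to 0$. I expect no real obstacle here; the lemma is essentially a one-line consequence of integrability plus the trivial pointwise bound $e^{2t}\le e^{-2\varphi}$ on the sublevel set.
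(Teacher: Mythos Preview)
Your proof is correct and follows essentially the same route as the paper: bound $e^{2t}\le e^{-2\varphi}$ on $\{\varphi<-t\}$, then use integrability of $|f_0|^2e^{-2\varphi_0-2\varphi}$ near $o$ together with the fact that $\{\varphi<-t\}\cap U$ shrinks to a null set to conclude the tail integral vanishes. One small clarification: the standing hypothesis $\varphi\ge N\log|z|+O(1)$ is what makes the statement hold for \emph{every} neighborhood $U\subset\subset\Delta^n$, since integrability of $|f_0|^2e^{-2\varphi_0-2\varphi}$ is a priori only guaranteed on some possibly smaller neighborhood $V$; the lower bound forces $\{\varphi<-t\}\subset V$ for $t$ large, so the integral over $\{\varphi<-t\}\cap U$ eventually coincides with the one over $\{\varphi<-t\}\cap V$ --- this is the paper's use of the hypothesis, and your parenthetical remark slightly understates its role.
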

\begin{proof}As $\varphi\geq N\log|z|+O(1)$ for large enough $N\gg0$ and $(f_0,o)\in\mathcal{I}(\varphi+\varphi_0)_o$, we have  
	$$\limsup_{t\to+\infty}\int_{\{\varphi<-t\}\cap U}|f_{0}|^{2}e^{-2\varphi_{0}-2\varphi}=0,$$
	which implies that
	\begin{equation}
		\nonumber\begin{split}
			&\limsup_{t\to+\infty}e^{2t}\int_{\{\varphi<-t\}\cap U}|f_{0}|^{2}e^{-2\varphi_{0}}\\
			\le& \limsup_{t\to+\infty}\int_{\{\varphi<-t\}\cap U}|f_{0}|^{2}e^{-2\varphi_{0}-2\varphi}\\
			=&0.
		\end{split}
	\end{equation}
\end{proof}

The strong openness strong property (\cite{GZopen-c}, see also Theorem \ref{thm:SOC}) shows that $(f_{0},o)\in\mathcal{I}(\varphi+\varphi_{0})_{o}$ implies that there exists small enough $\varepsilon>0$ such that $(f_{0},o)\in\mathcal{I}\big((1+\varepsilon)\varphi+\varphi_{0}\big)_{o}$.

\begin{Lemma}
	\label{lem:jump_asyp_B}
	Assume that $(f_{0},o)\in\mathcal{I}(\varphi+\varphi_{0})_{o}$.
	Then for some neighborhood $U\subset\subset\Delta^{n}$ of $o$
	there exists small enough $\varepsilon>0$ such that
	$$\lim_{t\to+\infty}e^{2t(1+\varepsilon)}\int_{\{\varphi<-t\}\cap U}|f_{0}|^{2}e^{-2\varphi_{0}}=0.$$
\end{Lemma}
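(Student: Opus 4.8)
The statement to be proved is Lemma \ref{lem:jump_asyp_B}: under the hypothesis $(f_0,o)\in\mathcal{I}(\varphi+\varphi_0)_o$ (together with the standing assumption $\varphi\ge N\log|z|+O(1)$), for some neighborhood $U\subset\subset\Delta^n$ of $o$ there exists a small $\varepsilon>0$ with $\lim_{t\to+\infty}e^{2t(1+\varepsilon)}\int_{\{\varphi<-t\}\cap U}|f_0|^2e^{-2\varphi_0}=0$. The plan is to mimic the proof of the preceding lemma, but using the strong openness property to gain the extra exponent $\varepsilon$. By the strong openness property (Theorem \ref{thm:SOC}, \cite{GZopen-c}), $(f_0,o)\in\mathcal{I}(\varphi+\varphi_0)_o$ yields a small $\varepsilon>0$ with $(f_0,o)\in\mathcal{I}\big((1+\varepsilon)\varphi+\varphi_0\big)_o$; this is exactly the remark stated just before the lemma. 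So we may fix such an $\varepsilon$ and a neighborhood $U\subset\subset\Delta^n$ of $o$ on which $\int_U|f_0|^2e^{-2\varphi_0-2(1+\varepsilon)\varphi}<+\infty$.

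\textbf{Key steps.} First, shrink $U$ if necessary so that $\sup_U\varphi\le 0$ and $\int_U|f_0|^2e^{-2\varphi_0-2(1+\varepsilon)\varphi}<+\infty$. Second, on the set $\{\varphi<-t\}\cap U$ we have $e^{-2(1+\varepsilon)\varphi}>e^{2(1+\varepsilon)t}$, hence
\begin{equation}\nonumber
	e^{2t(1+\varepsilon)}\int_{\{\varphi<-t\}\cap U}|f_0|^2e^{-2\varphi_0}\le\int_{\{\varphi<-t\}\cap U}|f_0|^2e^{-2\varphi_0-2(1+\varepsilon)\varphi}.
\end{equation}
Third, since the integrand $|f_0|^2e^{-2\varphi_0-2(1+\varepsilon)\varphi}$ is integrable on $U$ and the sets $\{\varphi<-t\}\cap U$ decrease to the pluripolar (hence Lebesgue-null) set $\{\varphi=-\infty\}\cap U$ as $t\to+\infty$, dominated convergence gives
\begin{equation}\nonumber
	\lim_{t\to+\infty}\int_{\{\varphi<-t\}\cap U}|f_0|^2e^{-2\varphi_0-2(1+\varepsilon)\varphi}=0.
\end{equation}
Combining the last two displays yields the claim. (The role of the assumption $\varphi\ge N\log|z|+O(1)$ is only to ensure, via strong openness applied to $\varphi+\varphi_0$, that the perturbed weight still produces a genuine multiplier; it guarantees the hypothesis of Theorem \ref{thm:SOC} is meaningfully applied near $o$ where $\varphi$ has its only singularity. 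If it is not needed for the dominated-convergence argument itself, one simply invokes the preceding remark verbatim.)

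\textbf{Main obstacle.} There is essentially no obstacle: the only substantive input is the strong openness property, which is quoted as Theorem \ref{thm:SOC} and already used in the remark immediately preceding the lemma to produce $\varepsilon$; the rest is the elementary estimate $e^{-2(1+\varepsilon)\varphi}>e^{2(1+\varepsilon)t}$ on $\{\varphi<-t\}$ followed by dominated convergence. The one point requiring a word of care is to make sure the same neighborhood $U$ works both for $\sup_U\varphi\le 0$ and for the integrability of the perturbed density; both can be arranged by taking $U$ small, since local integrability near $o$ is preserved under shrinking. Thus the proof is a direct repetition of the argument for the previous lemma with $\varphi$ replaced by $(1+\varepsilon)\varphi$ inside the exponential.
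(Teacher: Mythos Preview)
Your proof is correct and follows exactly the approach the paper intends: the paper states this lemma without a detailed proof, relying on the remark just before it (strong openness gives the $\varepsilon$) together with the same dominated-convergence estimate used in the preceding lemma. Your write-up simply fills in these implicit details.
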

Lemma \ref{lem:jump_asyp_B} implies the following result.
\begin{Lemma}\label{l:918}
	Assume that $(f_{0},o)\in\mathcal{I}(\varphi+\varphi_{0})_{o}$.
	Then for some neighborhood $U$ of $o$,
	$$\liminf_{t\to+\infty}\frac{-\log \int_{\{\varphi<-t\}\cap U}|f_{0}|^{2}e^{-2\varphi_{0}}}{2t}>1.$$
\end{Lemma}

It follows from Lemma \ref{lem:jump_asyp_A} and Lemma \ref{l:918} that the following lemma holds.
\begin{Lemma}
	\label{lem:jump_asyp_C}
	Assume that $(f_{0},o)\not\in\mathcal{I}(\varphi+\varphi_{0})_{o}$,
	and $(f_{0},o)\in\mathcal{I}\big((1-\varepsilon)\varphi+\varphi_{0}\big)_{o}$ for any $\varepsilon\in(0,1)$.
	Then
	for some neighborhood $U\subset\subset\Delta^{n}$ of $o$,
	$$\lim_{t\to+\infty}\frac{-\log\int_{\{\varphi<-t\}\cap U}|f_{0}|^{2}e^{-2\varphi_{0}}}{2t}=1.$$
\end{Lemma}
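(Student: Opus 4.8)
The plan is to derive Lemma \ref{lem:jump_asyp_C} by combining the two one-sided asymptotic estimates already established, namely Lemma \ref{lem:jump_asyp_A} and Lemma \ref{l:918}, under the hypotheses $(f_0,o)\notin\mathcal{I}(\varphi+\varphi_0)_o$ and $(f_0,o)\in\mathcal{I}\big((1-\varepsilon)\varphi+\varphi_0\big)_o$ for every $\varepsilon\in(0,1)$. The strategy is to show the $\limsup$ is $\le 1$ and the $\liminf$ is $\ge 1$, so that the limit exists and equals $1$.

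First I would invoke Lemma \ref{lem:jump_asyp_A}: since $(f_0,o)\notin\mathcal{I}(\varphi+\varphi_0)_o$, for any $U\subset\subset\Delta^n$ containing $o$ we get
\[
\limsup_{t\to+\infty}\frac{-\log\int_{\{\varphi<-t\}\cap U}|f_0|^2 e^{-2\varphi_0}}{2t}\le 1.
\]
Next I would fix an arbitrary $\varepsilon\in(0,1)$ and apply Lemma \ref{l:918} with $\varphi$ replaced by $(1-\varepsilon)\varphi$ (which still satisfies $(1-\varepsilon)\varphi\ge N'\log|z|+O(1)$ for large $N'$, as required by that lemma, since $\varphi\ge N\log|z|+O(1)$) and using the hypothesis $(f_0,o)\in\mathcal{I}\big((1-\varepsilon)\varphi+\varphi_0\big)_o$. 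This yields, for some neighborhood $U_\varepsilon$ of $o$,
\[
\liminf_{t\to+\infty}\frac{-\log\int_{\{(1-\varepsilon)\varphi<-t\}\cap U_\varepsilon}|f_0|^2 e^{-2\varphi_0}}{2t}>1.
\]
Rewriting $\{(1-\varepsilon)\varphi<-t\}=\{\varphi<-\frac{t}{1-\varepsilon}\}$ and substituting $s=\frac{t}{1-\varepsilon}$, this becomes
\[
\liminf_{s\to+\infty}\frac{-\log\int_{\{\varphi<-s\}\cap U_\varepsilon}|f_0|^2 e^{-2\varphi_0}}{2s}\ge\frac{1}{1-\varepsilon}>1.
\]

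Then I would reconcile the neighborhoods: the quantity $\liminf_{s\to+\infty}\frac{-\log\int_{\{\varphi<-s\}\cap U}|f_0|^2 e^{-2\varphi_0}}{2s}$ is monotone in $U$ in a controlled way, and in fact one can check it is independent of the choice of sufficiently small $U$, because shrinking $U$ changes the integral only by a comparison of the form $\int_{\{\varphi<-s\}\cap U'}\le\int_{\{\varphi<-s\}\cap U}$, while enlarging within $\Delta^n$ adds a contribution bounded below away from zero on the complement of a fixed ball — so the exponential decay rate is unaffected. Hence the $\liminf$ over any fixed small $U$ is $\ge\frac{1}{1-\varepsilon}$, and letting $\varepsilon\to 0^+$ gives $\liminf\ge 1$. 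Combined with the $\limsup\le 1$ from the first step, the limit exists and equals $1$ for any fixed small enough $U$, which is exactly the assertion.

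The main obstacle I anticipate is the bookkeeping with neighborhoods: Lemma \ref{lem:jump_asyp_A} holds for \emph{any} $U$, but Lemma \ref{l:918} only gives \emph{some} $U$, and this $U=U_\varepsilon$ may a priori depend on $\varepsilon$; moreover the $\varepsilon\to 0$ limit must be taken carefully since each application produces its own neighborhood. The clean way around this is to observe that the decay exponent $\liminf_{t\to\infty}\frac{-\log\int_{\{\varphi<-t\}\cap U}|f_0|^2e^{-2\varphi_0}}{2t}$ stabilizes as $U$ shrinks (a standard localization argument using that $|f_0|^2e^{-2\varphi_0}\in L^1_{\mathrm{loc}}$ and $\varphi$ is bounded above on $U$), so all these neighborhoods can be replaced by a single fixed small $U$ before letting $\varepsilon\to 0$. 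Everything else is the elementary substitution $s=t/(1-\varepsilon)$ and taking suprema/infima, which I would not spell out in detail.
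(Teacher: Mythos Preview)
Your approach is essentially the one the paper has in mind: it states only that the lemma ``follows from Lemma~\ref{lem:jump_asyp_A} and Lemma~\ref{l:918}'', and your two-step argument (apply Lemma~\ref{lem:jump_asyp_A} for the $\limsup$, apply Lemma~\ref{l:918} to $(1-\varepsilon)\varphi$ and rescale for the $\liminf$) is exactly how one would unpack that.

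There is, however, a small arithmetic slip. After substituting $s=t/(1-\varepsilon)$ you have $t=(1-\varepsilon)s$, so the denominator $2t$ becomes $2(1-\varepsilon)s$, and the conclusion of Lemma~\ref{l:918} rescales to
\[
\liminf_{s\to+\infty}\frac{-\log\int_{\{\varphi<-s\}\cap U_\varepsilon}|f_0|^2 e^{-2\varphi_0}}{2s}>1-\varepsilon,
\]
not $\ge\frac{1}{1-\varepsilon}$. This does not damage the argument---you still get $\liminf\ge 1$ after sending $\varepsilon\to 0$---but it means your ``obstacle'' about the $\varepsilon$-dependent neighborhoods is genuinely needed (with the incorrect bound a single $\varepsilon$ would have sufficed).

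On that obstacle: the clean resolution is simpler than what you sketch. Under the standing hypothesis $\varphi\ge N\log|z|+O(1)$ (in force since before Lemma~\ref{lem:jump_asyp_B}), the sublevel set $\{\varphi<-t\}$ is contained in $\{|z|<Ce^{-t/N}\}$, hence in \emph{any} fixed neighborhood of $o$ once $t$ is large. Thus for $t\gg 0$ the integral $\int_{\{\varphi<-t\}\cap U}|f_0|^2e^{-2\varphi_0}$ does not depend on the choice of $U$ at all, and one may replace every $U_\varepsilon$ by a single fixed small $U$ before letting $\varepsilon\to 0$. With this observation and the corrected bound, your argument goes through.
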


\subsection{Strong openness property and related results}

Recall that the \emph{strong openness property} of the multiplier ideal sheaves is that
$$\mathcal{I}(u)=\mathcal{I}_{+}(u):=\bigcup_{p>1}\mathcal{I}(pu)$$
for any plurisubharmonic function $u$, which was
conjectured by Demailly  in \cite{demailly-note2000}  and \cite{demailly2010} (called \emph{strong openness conjecture}).
When $\mathcal{I}(u)=\mathcal{O}$, Demailly's strong openness conjecture degenerates to the \emph{openness conjecture}, which was posed by Demailly-Koll\'{a}r in \cite{D-K01}.

Favre-Jonsson \cite{FM05j} proved the 2-dimensional case of the openness conjecture by using valuation theory. By studying asymptotic jumping numbers for graded sequences of ideals in valuation theory, Jonsson-Musta\c{t}\u{a} \cite{JON-Mus2012} proved the 2-dimensional case of the strong openness conjecture. For the higher dimension, the connection between valuation theory and the openness conjecture has been highlighted by the higher-dimensional framework in \cite{BFJ08}.

In \cite{berndtsson13}, Berndtsson proved the openness conjecture by using a complex variant of the Brunn-Minkowski inequality. After that, Guan-Zhou \cite{GZopen-c} proved the strong openness conjecture by movably using the famous  Ohsawa-Takegoshi $L^2$ extension theorem \cite{OT87}.

\begin{Theorem}[\cite{GZopen-c}, see also \cite{Lempert14} and \cite{Hiep2014}]
	\label{thm:SOC}
	Let $(\varphi_{j})$ be a sequence of plurisubharmonic functions, which is increasingly convergent to a plurisubharmonic function $\varphi$. Then $\mathcal{I}(\varphi)=\bigcup_{j}\mathcal{I}(\varphi_{j})$.
\end{Theorem}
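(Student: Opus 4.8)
The plan is to prove $\mathcal{I}(\varphi)=\bigcup_{j}\mathcal{I}(\varphi_{j})$ as two inclusions. One is immediate: since $\varphi_{j}\le\varphi$ we have $e^{-2\varphi}\le e^{-2\varphi_{j}}$ pointwise, so local integrability of $|f|^{2}e^{-2\varphi_{j}}$ forces that of $|f|^{2}e^{-2\varphi}$; hence $\mathcal{I}(\varphi_{j})\subseteq\mathcal{I}(\varphi)$ for all $j$, and $\bigcup_{j}\mathcal{I}(\varphi_{j})\subseteq\mathcal{I}(\varphi)$. The content is entirely in the reverse inclusion, and since the statement is local and of sheaf type it suffices to fix a point, say $o$, and show: if $(f,o)\in\mathcal{I}(\varphi)_{o}$, then $(f,o)\in\mathcal{I}(\varphi_{j})_{o}$ for some $j$.

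I would prove this by contradiction, along the lines of the proof of Lemma \ref{lem:JM}. Assume $(f,o)\notin\mathcal{I}(\varphi_{j})_{o}$ for every $j$. Shrink to a pseudoconvex $D\subset\subset\Delta^{n}$ with $\int_{D}|f|^{2}e^{-2\varphi}<+\infty$ and, subtracting a constant (harmless for the multiplier ideals and compatible with $\varphi_{j}\uparrow\varphi$), arrange $\sup_{D}\varphi\le-T<0$. For each fixed $j$ and each cutting level $t\gg0$, invoke the sharp Ohsawa--Takegoshi estimate of Lemma \ref{lem:GZ_sharp} with a suitably chosen ambient weight assembled from $\varphi_{j}$ (together, if necessary, with an auxiliary $\log|z|$ term), a suitably chosen plurisubharmonic cut-off $\psi$ controlling the sublevel sets contracting to $o$, and band width $B=1$; the finiteness of $\int_{D}|f|^{2}e^{-2\varphi}$ makes the source integral over the thin band $\{-t-1<\psi<-t\}$ finite. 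The estimate returns a holomorphic function $\tilde F_{t,j}$ on $D$ with $(\tilde F_{t,j}-f,o)\in\mathcal{I}(\varphi_{j})_{o}$ — because $1-b$ equals $1$ on the deep sublevel set — and with an explicit bound on the weighted $L^{2}$ norm of $\tilde F_{t,j}-(1-b)f$ by that source integral.

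The contradiction would then be extracted exactly as in Lemma \ref{lem:JM}: on the one hand, $(f,o)\notin\mathcal{I}(\varphi_{j})_{o}$, $(\tilde F_{t,j}-f,o)\in\mathcal{I}(\varphi_{j})_{o}$, and the closedness of the coherent sheaf $\mathcal{I}(\varphi_{j})$ (see \cite{Grau-Rem84}) force $\int_{D}|\tilde F_{t,j}|^{2}$ to stay bounded below by a positive constant as $t\to+\infty$; on the other hand, discarding the exponential weight using $v\ge\psi$, splitting off $(1-b)f$ (whose support has vanishing measure as $t\to+\infty$), and feeding in the bound on the source integral should force $\int_{D}|\tilde F_{t,j}|^{2}$ to be small once the two parameters $t$ and $j$ are coupled — here the increasing convergence $\varphi_{j}\uparrow\varphi$ is used decisively, allowing the $\varphi_{j}$-density to be traded for the integrable $\varphi$-density on the band. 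Comparing the two forces the desired conclusion.

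The step I expect to be the real obstacle is precisely this comparison of limits: one must choose the cut-off $\psi$, the ambient weight, and the diagonal sequence $t=t(j)\to+\infty$ so that the coherence lower bound and the Ohsawa--Takegoshi upper bound are driven past one another — and the rate at which the lower bound may degenerate as $j$ grows against the rate at which the upper bound decays is a genuinely delicate matter, which is why the sharpness of the constant $e^{-T}-e^{-t_{0}-B}$ in Lemma \ref{lem:GZ_sharp}, rather than a crude $L^{2}$ estimate, is needed. This diagonalization is the technical heart of Guan--Zhou's proof \cite{GZopen-c}. An alternative route I would keep in reserve, following Hiep \cite{Hiep2014} and Lempert \cite{Lempert14}, is to first establish the unweighted openness statement ($f\equiv1$) — for instance by $L^{2}$ extension from a generic hyperplane together with an induction on dimension — and then bootstrap to the weighted version; the estimates to be tracked there are of a similar nature.
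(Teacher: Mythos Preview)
The paper does not prove Theorem \ref{thm:SOC}; it is stated as a cited result from \cite{GZopen-c} (with alternative proofs in \cite{Lempert14} and \cite{Hiep2014}) and used as a black box throughout. So there is no proof in the paper to compare against.

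That said, your sketch is a faithful outline of the Guan--Zhou strategy: the trivial inclusion is correct, and the nontrivial direction is indeed obtained by iterating the sharp $L^{2}$ extension estimate (Lemma \ref{lem:GZ_sharp}) in a scheme modeled on Lemma \ref{lem:JM}, extracting a contradiction from the closedness of coherent sheaves. You are also right to flag the diagonalization of $t$ and $j$ as the crux, and right that this is exactly where the sharp constant in Lemma \ref{lem:GZ_sharp} is essential rather than cosmetic. What you have written is an honest plan rather than a proof: the step ``the two parameters can be coupled so that the upper bound beats the lower bound'' is asserted but not carried out, and carrying it out is the entire content of \cite{GZopen-c}. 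If you want to complete this, you should consult that paper directly; the argument does not fit in a paragraph, and the version of Lemma \ref{lem:JM} proved here (which already assumes the limit $\varphi$ is given and only establishes a lower volume bound) is a consequence of, not a substitute for, the full strong openness argument.
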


Recently, Xu \cite{xu2019} completed the algebraic approach of solving the openness conjecture, which was conjectured by Jonsson-Musta\c{t}\u{a} \cite{JON-Mus2012,JON-Mus2014}.

Let $\{\phi_{m}\}_{m\in\mathbb{N}^{+}}$ be a sequence of negative plurisubharmonic functions on $\Delta^{n}$,
which is convergent to a negative Lebesgue measurable function $\phi$ on $\Delta^{n}$ in Lebesgue measure.

Let $f$ be a holomorphic function near $o$, and let $I$ be an ideal of $\mathcal{O}_{o}$.
We denote 
\[C_{f,I}(U):=\inf\left\{\int_{U} |\tilde{f}|^{2} :(\tilde{f}-f,o)\in I \ \& \ \tilde{f}\in\mathcal{O}(U)\right\},\]
where $U\subseteq \Delta^{n}$ is a domain with $o\in U$.
Especially, if $I=\mathcal{I}(\phi)_{o}$, we denote $C_{f,\phi}(U):=C_{f,I}(U)$.

In \cite{GZopen-effect}, Guan-Zhou presented the following lower semicontinuity property of
plurisubharmonic functions with a multiplier.

\begin{Proposition}[\cite{GZopen-effect}]
	\label{p:effect_GZ}
	Let $f$ be a holomorphic function near $o$.
	Assume that for any small enough neighborhood $U$ of $o$,
	the pairs $(f,\phi_{m})$ $(m\in\mathbb{N}^+)$ satisfies
	\begin{equation}
		\label{equ:A}
		\inf_{m}C_{f,\varphi_0+\phi_{m}}(U)>0.
	\end{equation}
	Then $|f_{0}|^{2}e^{-2\varphi_{0}}e^{-2\phi}$ is not integrable near $o$.
\end{Proposition}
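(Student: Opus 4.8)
The plan is to establish the contrapositive: assuming that $|f|^{2}e^{-2\varphi_{0}}e^{-2\phi}$ is integrable on some neighborhood of $o$, I will exhibit a small ball $U\ni o$ for which $\inf_{m}C_{f,\varphi_{0}+\phi_{m}}(U)=0$. The first step is a string of harmless reductions. After shrinking $U$ and subtracting a suitable constant from $\varphi_{0}$ (which alters $e^{-2\varphi_{0}}$ only by a constant factor, hence changes neither integrability nor the relevant multiplier ideals), I may assume $U$ is a ball, $\overline{U}$ lies in the unit ball, $\sup_{\overline{U}}\varphi_{0}<0$, $\int_{U}|f|^{2}<+\infty$, and $\int_{U}|f|^{2}e^{-2\varphi_{0}-2\phi}<+\infty$; in particular $f\not\equiv0$ (otherwise the hypothesis of the proposition already fails) and $\varphi_{0},\phi$ are finite almost everywhere on $U$, so that $\int_{\{\varphi_{0}+\phi\le c\}\cap U}|f|^{2}\to0$ and $\int_{\{\varphi_{0}+\phi<c\}\cap U}|f|^{2}e^{-2\varphi_{0}-2\phi}\to0$ as $c\to-\infty$. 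Replacing each $\phi_{m}$ by $\phi_{m}+\tfrac1m\log|z|$ (still denoted $\phi_{m}$) affects neither the convergence in measure nor the conclusion, since it only shrinks $\mathcal{I}(\varphi_{0}+\phi_{m})_{o}$ and hence only enlarges $C_{f,\varphi_{0}+\phi_{m}}(U)$; so I may also assume $(\varphi_{0}+\phi_{m})(o)=-\infty$, whence every $\{\varphi_{0}+\phi_{m}<c\}$ is a neighborhood of $o$. Finally, passing to a subsequence, I may assume $\phi_{m}\to\phi$ almost everywhere, so that $|E_{m}|\to0$, where $E_{m}:=\{z\in U:|\phi_{m}(z)-\phi(z)|>1\}$.

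The core of the argument is to apply, for each large $m$, the sharp $L^{2}$-extension Lemma \ref{lem:GZ_sharp} on $D=U$ with $F=f$, $B=1$, $\psi=\varphi:=2(\varphi_{0}+\phi_{m})$ (which satisfies $\psi<-T$ on $U$ for some fixed $T>0$ by the reduction), and a level $t_{0}=t_{0,m}\to+\infty$ to be chosen. Its hypotheses hold, since on the annulus $\{-t_{0,m}-1<\psi<-t_{0,m}\}$ one has $e^{-\psi}\le e^{t_{0,m}+1}$, so the right-hand integral is finite. The lemma produces $\tilde f_{m}\in\mathcal{O}(U)$; on the neighborhood $\{\psi<-t_{0,m}-1\}$ of $o$ one has $b(\psi)\equiv0$ and $v(\psi)$ bounded, hence $\tilde f_{m}-f=\tilde f_{m}-(1-b(\psi))f$ there and $\int_{\{\psi<-t_{0,m}-1\}}|\tilde f_{m}-f|^{2}e^{-2(\varphi_{0}+\phi_{m})}<+\infty$, so $(\tilde f_{m}-f,o)\in\mathcal{I}(\varphi_{0}+\phi_{m})_{o}$ and $\tilde f_{m}$ competes in $C_{f,\varphi_{0}+\phi_{m}}(U)$. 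Writing $\tilde f_{m}=\bigl(\tilde f_{m}-(1-b(\psi))f\bigr)+(1-b(\psi))f$, estimating the first summand by the lemma (its output weight $e^{-2(\varphi_{0}+\phi_{m})+v(\psi)}\ge1$ because $v(\psi)\ge\psi$) and the second by $\int_{\{\psi\le-t_{0,m}\}}|f|^{2}$, I would obtain
\begin{equation*}
C_{f,\varphi_{0}+\phi_{m}}(U)\le\int_{U}|\tilde f_{m}|^{2}\le 2\int_{\{\varphi_{0}+\phi_{m}\le-t_{0,m}/2\}}|f|^{2}+2e^{t_{0,m}+1}\int_{\{-(t_{0,m}+1)/2<\varphi_{0}+\phi_{m}<-t_{0,m}/2\}}|f|^{2}.
\end{equation*}

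The last step is to force the right-hand side to $0$ by a good choice of $t_{0,m}$. Outside $E_{m}$ one has $|(\varphi_{0}+\phi_{m})-(\varphi_{0}+\phi)|\le1$, so both integration domains above, intersected with $U\setminus E_{m}$, lie in $\{\varphi_{0}+\phi\le-t_{0,m}/2+1\}$; hence the first term is at most $2\int_{\{\varphi_{0}+\phi\le-t_{0,m}/2+1\}}|f|^{2}+2(\sup_{U}|f|^{2})|E_{m}|$, and — using $e^{t_{0,m}}\le e^{2}e^{-2(\varphi_{0}+\phi)}$ on $\{\varphi_{0}+\phi<-t_{0,m}/2+1\}$ — the second term is at most $2e^{3}\int_{\{\varphi_{0}+\phi<-t_{0,m}/2+1\}}|f|^{2}e^{-2(\varphi_{0}+\phi)}+2e\,(\sup_{U}|f|^{2})\,e^{t_{0,m}}|E_{m}|$. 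The pure sublevel parts tend to $0$ as $t_{0,m}\to+\infty$ by the integrability observations of the first paragraph, and the terms carrying $|E_{m}|$ tend to $0$ provided $t_{0,m}\to+\infty$ and $e^{t_{0,m}}|E_{m}|\to0$, which both hold for $t_{0,m}:=\min\{m,\tfrac12\log(1/|E_{m}|)\}$ (then $e^{t_{0,m}}|E_{m}|\le|E_{m}|^{1/2}\to0$). This yields $C_{f,\varphi_{0}+\phi_{m}}(U)\to0$, contradicting the hypothesis $\inf_{m}C_{f,\varphi_{0}+\phi_{m}}(U)>0$; the vector-valued case of $f_{0}$ is handled componentwise.

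The main obstacle — and the reason a crude cut-off plus H\"ormander's $L^{2}$ estimate will not suffice — is the region near $o$ on which $\phi_{m}$ is far more singular than $\phi$: it can accumulate at $o$, and there $e^{-2\phi_{m}}$ is entirely uncontrolled, so the a.e.\ convergence $\phi_{m}\to\phi$ is not by itself enough. The remedy is the interplay of two ingredients: the \emph{level-set} form of the Ohsawa--Takegoshi extension (Lemma \ref{lem:GZ_sharp}) with $\psi=2(\varphi_{0}+\phi_{m})$, which confines the troublesome contribution to the thin annulus $\{-t_{0,m}-1<2(\varphi_{0}+\phi_{m})<-t_{0,m}\}$ at a cost of only the factor $e^{t_{0,m}}$ (rather than the divergent $r^{-2}$ produced by a Euclidean cut-off); and convergence in measure, which allows replacing $\phi_{m}$ by $\phi$ away from the small set $E_{m}$ and then absorbing $e^{t_{0,m}}$ into the decay of $|E_{m}|$ by letting $t_{0,m}\to+\infty$ slowly enough. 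Balancing the ``interior'' sublevel mass (which wants $t_{0,m}$ large) against the annular mass weighted by $e^{t_{0,m}}$ (which wants $t_{0,m}$ small relative to $|E_{m}|$) is the one genuine computation in the proof.
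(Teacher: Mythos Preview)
Your argument is correct. The paper itself does not supply a proof of this proposition---it is quoted from \cite{GZopen-effect}---so there is no in-paper proof to compare against directly; but your approach is exactly the one used in that reference and mirrors the present paper's proof of Lemma~\ref{lem:JM}: apply the level-set Ohsawa--Takegoshi extension (Lemma~\ref{lem:GZ_sharp}) with $\psi=\varphi=2(\varphi_0+\phi_m)$ to manufacture a competitor $\tilde f_m$ in $C_{f,\varphi_0+\phi_m}(U)$, and then drive $\int_U|\tilde f_m|^2\to0$ by splitting into the sublevel part and the annular part, trading $\phi_m$ for $\phi$ off the small exceptional set $E_m$ and balancing $t_{0,m}$ against $|E_m|$.

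A few minor remarks. Your reduction ``replace $\phi_m$ by $\phi_m+\tfrac1m\log|z|$'' is in the right direction (it enlarges $C_{f,\varphi_0+\phi_m}(U)$, so proving the modified $C\to0$ is the stronger statement), and your observation that passing to a subsequence suffices is justified since $\inf_m$ over the full sequence is bounded above by the limit along any subsequence. The apparent $f$ versus $f_0$ mismatch in the statement is a typo in the paper; your componentwise comment handles the vector case correctly.
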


The Noetherian property of multiplier ideal sheaves (see \cite{demailly-book}) shows that
\begin{Remark}
	\label{rem:effect_GZ}
	Assume that
	
	$(1)$ $\phi_{m+1}\geq\phi_{m}$ holds for any $m$;
	
	$(2)$ $|f_{0}|^{2}e^{-2\varphi_{0}}e^{-2\phi_{m}}$ is not integrable near $o$ for any $m$.
	
	Then inequality \eqref{equ:A} holds.
\end{Remark}

\subsection{Proofs of Remark \ref{rem:max_existence} and Remark \ref{rem:sharp_bound}}\label{sec:proofs of remark1.2,1.3}

In this section, we give the proofs of Remark \ref{rem:max_existence} and Remark \ref{rem:sharp_bound}.

Firstly, we recall two useful results. 

\begin{Lemma}[\emph{Choquet's lemma}, see \cite{demailly-book}]
	\label{lem:Choquet} Every family $(u_{\alpha})$ of uppersemicontinuous functions has a countable subfamily $(v_{j})=(u_{\alpha(j)})$,
	such that its upper envelope $v=\sup_{j}v_{j}$ satisfies $v\leq u\leq u^{*} = v^{*}$,
	where $u=\sup_{\alpha}u_{\alpha}$, $u^{*}(z):=\lim_{\varepsilon\to0}\sup_{\mathbb{B}^{n}(z,\varepsilon)}u$
	and $v^{*}(z):=\lim_{\varepsilon\to0}\sup_{\mathbb{B}^{n}(z,\varepsilon)}v$ are the regularizations of $u$ and $v$.
\end{Lemma}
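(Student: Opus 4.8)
The plan is to exploit the second countability of the underlying domain $\Omega\subseteq\mathbb{C}^n$ on which the $u_\alpha$ are defined. First I would fix a countable basis $\{B_k\}_{k\in\mathbb{N}}$ of open sets for $\Omega$ (for instance, balls with rational centres and rational radii). For each pair $(k,q)$ with $k\in\mathbb{N}$ and $q\in\mathbb{Q}$ for which $\sup_{B_k}u>q$, I would use the identity $\sup_{B_k}u=\sup_\alpha\sup_{B_k}u_\alpha$ to choose an index $\alpha(k,q)$ with $\sup_{B_k}u_{\alpha(k,q)}>q$. The resulting collection $\{u_{\alpha(k,q)}\}$ is countable; enumerating it as $(v_j)$ and setting $v=\sup_j v_j$ gives the desired subfamily.

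The inequalities $v\le u\le u^*$ are then immediate: each $v_j$ equals some $u_\alpha$, so $v\le u$, and $u\le u^*$ is the defining property of the upper regularization. In particular $v^*\le u^*$, and the only substantive point left is to prove $u^*\le v^*$. For that, fix $z_0\in\Omega$; if $u^*(z_0)=-\infty$ there is nothing to show, so pick any rational $q<u^*(z_0)$ and aim to prove $v^*(z_0)\ge q$, after which letting $q\uparrow u^*(z_0)$ finishes. Given $\delta>0$, since $u^*(z_0)=\lim_{\delta'\to0}\sup_{\mathbb{B}^n(z_0,\delta')}u>q$, for all sufficiently small $\delta'$ we have $\sup_{\mathbb{B}^n(z_0,\delta')}u>q$, hence there is a point $w$ with $|w-z_0|$ arbitrarily small and $u(w)>q$. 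Choosing a basis element $B_k$ with $z_0,w\in B_k\subseteq\mathbb{B}^n(z_0,\delta)$ (possible once $\delta'$ is small enough) yields $\sup_{B_k}u\ge u(w)>q$, so $\alpha(k,q)$ is defined and $\sup_{B_k}v\ge\sup_{B_k}u_{\alpha(k,q)}>q$. Therefore $\sup_{\mathbb{B}^n(z_0,\delta)}v>q$ for every $\delta>0$, which is exactly $v^*(z_0)\ge q$; combined with $v^*\le u^*$ this gives $u^*=v^*$.

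The main obstacle is the geometric bookkeeping in the final step: one must match the value $u^*(z_0)$, defined as a limit of suprema over shrinking Euclidean balls, against suprema over a \emph{fixed} countable family of basis sets, and this forces the careful choice of a basis element that is small enough to witness $v^*(z_0)$ yet still large enough to contain a point where $u$ nearly attains $u^*(z_0)$. Once that selection is arranged, everything else is formal. (The same argument also covers the case $u^*(z_0)=+\infty$, since then $v^*(z_0)\ge q$ holds for every rational $q$.)
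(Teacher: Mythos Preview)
The paper does not prove this lemma; it is merely quoted from \cite{demailly-book} as a known result, so there is no ``paper's own proof'' to compare against. Your argument is correct and is essentially the standard proof of Choquet's lemma: select, for each basis set $B_k$ and each rational level $q$ below $\sup_{B_k}u$, a witness $u_{\alpha(k,q)}$, and then verify $u^*=v^*$ by showing that any rational $q<u^*(z_0)$ forces $\sup_{\mathbb{B}^n(z_0,\delta)}v>q$ for every $\delta>0$. Two minor remarks: (i) since $\varepsilon\mapsto\sup_{\mathbb{B}^n(z_0,\varepsilon)}u$ is nondecreasing, the inequality $\sup_{\mathbb{B}^n(z_0,\delta')}u>q$ actually holds for \emph{all} $\delta'>0$, not just sufficiently small ones; (ii) you never need $z_0\in B_k$, only $w\in B_k\subseteq\mathbb{B}^n(z_0,\delta)$, so the bookkeeping is slightly simpler than you suggest. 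Neither point affects the validity of the proof.
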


\begin{Proposition}[see Proposition (4.24) in \cite{demailly-book}]
	\label{pro:Demailly}
	If all $(u_{\alpha})$ are subharmonic, the upper regularization $u^{*}$ is subharmonic
	and equals almost everywhere to $u$.
\end{Proposition}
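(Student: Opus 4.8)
This is a classical fact of potential theory (as the reference to Demailly's book signals), and the plan is to deduce it from the monotonicity of solid ball averages of subharmonic functions together with the Lebesgue differentiation theorem. First I would apply Choquet's lemma (Lemma \ref{lem:Choquet}) to replace the possibly uncountable family $(u_\alpha)$ by a countable subfamily $(v_j)=(u_{\alpha(j)})$ with $v:=\sup_j v_j$ satisfying $v\le u\le u^*=v^*$; after replacing $v_j$ by $\max\{v_1,\dots,v_j\}$, which is again subharmonic, I may assume $v_j\uparrow v$ pointwise. Since the whole family is locally uniformly bounded above, so is $v$. Writing $M_r(x)$ for the average of $v$ over the ball $\mathbb{B}^n(x,r)$, the sub-mean-value inequality for each $v_j$ together with monotone convergence shows that $v\in L^1_{\mathrm{loc}}$ and that $v(x)\le M_r(x)$ for all small $r>0$.

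Next I would analyse the function $r\mapsto M_r(x)$. For each subharmonic $v_j$ the corresponding solid average is nondecreasing in $r$, hence so is $M_r(x)$ by monotone convergence; therefore $\tilde v(x):=\lim_{r\to 0^+}M_r(x)=\inf_{r>0}M_r(x)$ exists. Because $v\in L^1_{\mathrm{loc}}$, each $M_r(\cdot)$ is continuous, so $\tilde v$, being an infimum of continuous functions, is upper semicontinuous; and from $v(x)\le M_r(x)\le \sup_{\mathbb{B}^n(x,r)}v$ we obtain, letting $r\to 0$, that $v(x)\le \tilde v(x)\le v^*(x)$ at every point. Thus $\tilde v\ge v$ and $\tilde v$ is u.s.c., so $\tilde v\ge v^*$; combined with $\tilde v\le v^*$ this gives $\tilde v=v^*$. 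On the other hand the Lebesgue differentiation theorem gives $M_r(x)\to v(x)$ for a.e.\ $x$, while $M_r(x)\to\tilde v(x)$ for every $x$; hence $v^*=\tilde v=v$ almost everywhere.

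Finally I would check that $v^*$ is subharmonic: it is u.s.c., locally bounded above, and locally integrable (it agrees a.e.\ with $v$), and for every ball $\mathbb{B}^n(x,r)$ one has $v^*(x)=\inf_{\rho>0}M_\rho(x)\le M_r(x)$, which is also the average of $v^*$ over $\mathbb{B}^n(x,r)$ since $v=v^*$ a.e.; thus $v^*$ satisfies the sub-mean-value property and is therefore subharmonic. Returning to the original data, $u^*=v^*$ is subharmonic, and from $v\le u\le u^*=v^*$ with $v=v^*$ a.e.\ we conclude $u=u^*$ a.e., as desired. The only step requiring genuine care is the identification $v^*=\tilde v$, i.e.\ that the naive $\limsup$-regularization of $v$ coincides with the pointwise limit of its ball averages, since it combines the one-sided inequality coming from the sub-mean-value property with the upper semicontinuity of $\inf_{r>0}M_r$; once this is established, the remaining steps are just monotone convergence and the Lebesgue differentiation theorem.
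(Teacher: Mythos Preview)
Your argument is correct and is essentially the standard proof of this classical fact. Note, however, that the paper does not give its own proof of this proposition: it is merely recalled from \cite{demailly-book} (Proposition (4.24) there) as a known preliminary result, so there is no ``paper's proof'' to compare against. Your write-up would serve perfectly well as a self-contained justification should one be desired.
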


Then, we prove Remark \ref{rem:max_existence}.
\begin{proof}
	[Proof of Remark \ref{rem:max_existence}]
	Theorem \ref{thm:SOC} shows that there exists $\varepsilon>0$,
	such that
	$$|f_{0}|^{2}e^{-2\varphi_{0}}|z|^{2N_{0}}e^{-2(1+\varepsilon)\varphi}$$
	is integrable near $o$.
	We have
	\begin{equation}
		\nonumber\begin{split}
			&\int_U|f_{0}|^{2}e^{-2\varphi_{0}}e^{-2\varphi}-|f_{0}|^{2}e^{-2\varphi_{0}}e^{-2\max\big\{\varphi,\frac{N_{0}}{\varepsilon}\log|z|\big\}}\\
			\le&\int_{U\cap\big\{\frac{N_{0}}{\varepsilon}\log|z|\ge\varphi\big\}}|f_{0}|^{2}e^{-2\varphi_{0}}e^{-2\varphi}\\
			\le&\int_{U\cap\big\{\frac{N_{0}}{\varepsilon}\log|z|\ge\varphi\big\}}|f_{0}|^{2}e^{-2\varphi_{0}}|z|^{2N_0}e^{-2(1+\varepsilon)\varphi}\\
			<&+\infty,
		\end{split}
	\end{equation}
	where $U$ is a neighborhood of 
	$o$.
	Then 	it suffices to consider that there exists $N$ large enough such that 
	$$N\log|z|\leq \varphi$$
	near $o$.
	
	Let $V$ be a small neighborhood of $o$ such that $\mathrm{Supp}\big((\mathcal{O}/\mathcal{I}(\varphi))|_{V}\big)=\{o\}$
	and let $(u_{\alpha})_{\alpha}$ be the negative plurisubharmonic functions on $V$
	such that $u_{\alpha}\geq \varphi+O(1)$ near $o$ and $|f_0|^2e^{-2\varphi_0}e^{-2u_{\alpha}}$ is not integrable near $o$.
	
	\emph{Zorn's Lemma} shows that
	there exists $\Gamma$ which is the maximal set such that for any $\alpha,\alpha'\in\Gamma$,
	$u_{\alpha}\leq u_{\alpha'}+O(1)$ or $u_{\alpha'}\leq u_{\alpha}+O(1)$ holds near $o$,
	where $(u_{\alpha})$ are negative plurisubharmonic functions on $V$.
	
	Let $u(z):=\sup_{\alpha\in\Gamma}u_{\alpha}(z)$ on $V$,
	and let $u^{*}(z)=\lim_{\varepsilon\to0}\sup_{\mathbb{B}^{n}(z,\varepsilon)}u$.
	Lemma \ref{lem:Choquet} shows that there exists subsequence $(v_{j})$ of $(u_{\alpha})$ such that
	$(\max_{j}v_{j})^{*}=u^{*}$.
	Moreover one can choose $v_{j}(:=\sup_{j'\leq j}v_{j})$ increasing with respect to $j$
	such that $|f_0|^2e^{-2\varphi_0}e^{-2v_j}$ is not integrable near $o$.
	
	Proposition \ref{pro:Demailly} shows that
	$(v_{j})$ is convergent to $v^{*}$ with respect to $j$ almost everywhere with respect to Lebesgue measure,
	and $v^{*}$ is a plurisubharmonic function on $V$.
	Proposition \ref{p:effect_GZ} (and Remark \ref{rem:effect_GZ})
	shows that $|f_0|^2e^{-2\varphi_0}e^{-2v^*}$ is not integrable near $o$.
	In fact, the definition of $u$ implies that $u=v^{*}=u^{*}$.
	
	In the following part,
	we prove that $v^{*}$ is a local Zhou weight  related to $|f_0|^2e^{-2\varphi_0}$ near $o$ by contradiction.
	If not, then there exists a plurisubharmonic function $\tilde{v}$ near $o$ such that $\tilde{v}\geq v^{*}$,
	$|f_0|^2e^{-2\varphi_0}e^{-2\tilde v}$ is not integrable near $o$, and
	\[\limsup_{z\to o}\big(\tilde{v}(z)-v^{*}(z)\big)=+\infty.\]
	
	Note that $v^{*}\geq\varphi\geq N\log|z|$.
	Then for small ball $\mathbb{B}(o,\varepsilon)$,
	there exists $M\ll 0$ such that $\tilde{v}+M< N\log|z|<v^{*}$ near the boundary of $\mathbb{B}(o,\varepsilon)$,
	which implies that $\max\{\tilde{v}+M,v^{*}\}=v^{*}$ near the boundary of $\mathbb{B}(o,\varepsilon)$.
	Let 
	\begin{equation*}
		\tilde{\varphi}:=\left\{
		\begin{array}{ll}
			\max\{\tilde{v}+M,v^{*}\} & \ \text{on} \ \mathbb{B}(o,\varepsilon), \\
			v^{*} & \ \text{on} \ V\setminus\mathbb{B}(o,\varepsilon).
		\end{array}
		\right.
	\end{equation*}
	$\tilde v\ge v^*$ implies that $\tilde\varphi=\tilde v+O(1)$ near $o$. Then $\tilde{\varphi}$ is a plurisubharmonic function on $V$ such that
	\[\limsup_{z\to o}(\tilde{\varphi}(z)-u^{*}(z))=\limsup_{z\to o}(\tilde{\varphi}(z)-v^{*}(z))\geq \limsup_{z\to o}(\tilde{v}(z)+M-v^{*}(z))=+\infty,\]
	and $|f_0|^2e^{-2\varphi_0}e^{-2\tilde\varphi}$ is not integrable near $o$, which contradicts the definition of $u^{*}$.
	
	This proves Remark \ref{rem:max_existence}.
\end{proof}

Finally, we prove Remark \ref{rem:sharp_bound}.
\begin{proof}
	[Proof of Remark \ref{rem:sharp_bound}]
	For $U\ni o$ small enough, there exists a negative $\Phi_{o,\max}$ on $U$. The uniqueness of $\Phi_{o,\max}^U$ is just follows from inequality \eqref{equ:xiaomage}, then it suffices to prove the existence.
	
	For any negative plurisubharmonic functions $\varphi_{1}\geq\Phi_{o,\max}$ and $\varphi_{2}\geq\Phi_{o,\max}$ on $U$,
	such that $|f_{0}|^{2}e^{-2\varphi_{0}}e^{-2\varphi_{1}}$
	and $|f_{0}|^{2}e^{-2\varphi_{0}}e^{-2\varphi_{2}}$ are not integrable near $o$,
	it follows from the definition of $\Phi_{o,\max}$ that
	$\varphi_{1}=\Phi_{o,\max}+O(1)$ and $\varphi_{2}=\Phi_{o,\max}+O(1)$ near $o$,
	which implies that
	$\max\{\varphi_{1},\varphi_{2}\}=\Phi_{o,\max}+O(1)$ near $o$.
	
	We consider the upper-envelop of 
	\[\sup\big\{\varphi(z) : \varphi\in\mathrm{PSH}^-(U) \ \& \ (f_0,o)\notin\mathcal{I}(\varphi_0+\varphi)_o \ \& \ \varphi\geq\Phi_{o,\max}+O(1)\big\},\]
	denoted by $\Phi^U_{o,\max}$.
	
	Lemma \ref{lem:Choquet} shows that there exist negative plurisubharmonic functions $\{\varphi_{j}\}_{j}$ on $U$
	such that
	
	(1) $|f_{0}|^{2}e^{-2\varphi_{0}}e^{-2\varphi_{j}}$ is not integrable near $o$;
	
	(2) $\varphi_{j}\geq\Phi_{o,\max}+O(1)$;
	
	(3) the upper-envelop $(\sup_{j}\{\varphi_{j}\})^{*}=\Phi^U_{o,\max}$.
	
	Note that for any $t_{0}>0$, $\sup_{j<t_{0}}\{\varphi_{j}\}=\Phi_{o,\max}+O(1)$,
	and $\sup_{j<t_{0}}\{\varphi_{j}\}$ is increasingly convergent to $\Phi^U_{o,\max}$ almost everywhere on $U$
	when $t_{0}\to+\infty$,
	then it follows from Proposition \ref{p:effect_GZ} (Remark \ref{rem:effect_GZ})
	that
	
	(4) $|f_{0}|^{2}e^{-2\varphi_{0}}e^{-2\Phi^U_{o,\max}}$ is not integrable near $o$;
	
	(5) $\Phi^U_{o,\max}\geq\Phi_{o,\max}+O(1)$, which implies that $\Phi^U_{o,\max}=\Phi_{o,\max}+O(1)$.
	
	(5) shows that for any negative plurisubharmonic function $\psi$ on $U$,
	\[\psi\leq \sigma(\psi,\Phi_{o,\max})\Phi^U_{o,\max}+O(1),\]
	which implies that
	$|f_{0}|^{2}e^{-\varphi_{0}}e^{-2\max\big\{\Phi^U_{o,\max},\frac{1}{\sigma(\psi,\Phi_{o,\max})}\psi\big\}}$ is not integrable near $o$.
	Note that $\max\big\{\Phi^U_{o,\max},\frac{1}{\sigma(\psi,\Phi_{o,\max})}\psi\big\}<0$ on $U$,
	then the definition of $\Phi^U_{o,\max}$ shows that
	$$\Phi^U_{o,\max}\geq \max\bigg\{\Phi^U_{o,\max},\frac{1}{\sigma(\psi,\Phi_{o,\max})}\psi\bigg\},$$
	which implies that
	$$\frac{1}{\sigma(\psi,\Phi_{o,\max})}\psi\leq\Phi^U_{o,\max}.$$
	This proves inequality \eqref{equ:xiaomage}.
	
	In the following part, we prove inequality \eqref{equ:jianhao}.
	
	Note that for small enough neighborhood $U\ni o$, the following statements hold
	
	(1) $\psi<0$ on $U$;
	
	(2) there exists subsequence $\psi_{j_{k}}$ of $\psi_{j}$ convergent to $\psi$  in the sense of $L^{1}_{\mathrm{loc}}$ and almost everywhere with respect to the Lebesgue measure at the same time when $k\to+\infty$;
	
	(3)  $\psi_{j_{k}}<0$ on $U$ for any $k$.
	$\\$
	Then inequality (\ref{equ:xiaomage}) shows that
	$\psi_{j_{k}}\leq \sigma(\psi_{j_{k}},\Phi_{o,\max})\Phi^U_{o,\max}$ for any $k$.
	Without loss of generality,
	we can assume that
	$$\lim_{k\to+\infty}\sigma(\psi_{j_{k}},\Phi_{o,\max})=\limsup_{j\to+\infty}\sigma(\psi_{j},\Phi_{o,\max})$$
	on $U$.
	
	Note that
	$$\sup_{k\geq k_{0}}\{\psi_{j_{k}}\}\leq\sup_{k\geq k_{0}}\left(\sigma(\psi_{j_{k}},\Phi_{o,\max})\Phi^U_{o,\max}\right)$$
	on $U$,
	then
	\begin{equation}
		\label{equ:wushuang}
		\begin{split}
			&\lim_{k_{0}\to+\infty}\sup_{k\geq k_{0}}\{\psi_{j_{k}}\}
			\\\leq&
			\lim_{k_{0}\to+\infty}\sup_{k\geq k_{0}}\left(\sigma(\psi_{j_{k}},\Phi_{o,\max})\Phi^U_{o,\max}\right)
			\\=&\lim_{k\to+\infty}\sigma(\psi_{j_{k}},\Phi_{o,\max})\Phi^U_{o,\max}
			\\=&\left(\limsup_{j\to+\infty}\sigma(\psi_{j},\Phi_{o,\max})\right)\Phi^U_{o,\max}
		\end{split}
	\end{equation}
	on $U$.
	Lemma \ref{lem:Choquet} shows that
	$$\lim_{k_{0}\to+\infty}\big(\sup_{k\geq k_{0}}\{\psi_{j_{k}}\}\big)^{*}$$
	is a plurisubharmonic function on $U$
	and
	$$\lim_{k_{0}\to+\infty}\big(\sup_{k\geq k_{0}}\{\psi_{j_{k}}\}\big)^{*}=\lim_{k_{0}\to+\infty}\big(\sup_{k\geq k_{0}}\{\psi_{j_{k}}\}\big)$$
	on $U$ almost everywhere with respect to the Lebesgue measure. Note that
	$$\psi=\lim_{k_{0}\to+\infty}\sup_{k\geq k_{0}}\{\psi_{j_{k}}\}$$
	almost everywhere with respect to the Lebesgue measure on $U$,
	then
	$$\psi=\lim_{k_{0}\to+\infty}(\sup_{k\geq k_{0}}\{\psi_{j_{k}}\})^{*}$$
	on $U$.
	Combining with inequality \eqref{equ:wushuang},
	we obtain
	$$\psi\leq\limsup_{j\to+\infty}\sigma(\psi_{j},\Phi_{o,\max})\Phi^U_{o,\max}$$
	on $U$, which is inequality \eqref{equ:jianhao}.
	This proves Remark \ref{rem:sharp_bound}.
\end{proof}

\section{Concavity: real analysis}

Let $D$ be a bounded domain in $\mathbb{C}^{n}$, and the origin $o\in D$. Let $u$ and $v$ be Lebesgue measurable functions on $D$ with upper-bound near $o$. Let $g$ be a nonnegative Lebesgue measurable function on $D$.

\begin{Lemma}[see \cite{demailly2010}]
	\label{lem:G_key}
	Assume that $g^{2}e^{2(l_{1}v-(1+l_{2})u)}$ is integrable near $o$, where $l_{1},l_{2}>0$.
	Then
	$g^{2}e^{-2u}-g^{2}e^{-2\max\big\{u,\frac{l_{1}}{l_{2}}v\big\}}$ is integrable
	on a small enough neighborhood $V_{o}$ of $o$.
\end{Lemma}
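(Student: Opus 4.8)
The plan is to reduce the statement to one elementary pointwise inequality and then dominate by the hypothesis. First I would fix a neighborhood $V_{o}\subset\subset D$ of $o$ on which $u$ and $v$ are bounded above and on which $g^{2}e^{2(l_{1}v-(1+l_{2})u)}$ is integrable; both are possible by assumption. Put $w:=\max\{u,\tfrac{l_{1}}{l_{2}}v\}$ on $V_{o}$, which is Lebesgue measurable. Since $w\ge u$, we have $0\le g^{2}e^{-2u}-g^{2}e^{-2w}$ a.e. on $V_{o}$, and on the set $B:=\{u\ge\tfrac{l_{1}}{l_{2}}v\}$ one has $w=u$, so the difference vanishes identically there. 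Thus everything is concentrated on $A:=\{u<\tfrac{l_{1}}{l_{2}}v\}\cap V_{o}$.

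Next I would carry out the pointwise estimate on $A$. There $w=\tfrac{l_{1}}{l_{2}}v$, hence $0\le g^{2}e^{-2u}-g^{2}e^{-2w}\le g^{2}e^{-2u}$. Moreover on $A$ we have $l_{2}u<l_{1}v$, i.e. $2l_{1}v-2l_{2}u>0$, so
\[2\big(l_{1}v-(1+l_{2})u\big)=\big(2l_{1}v-2l_{2}u\big)-2u>-2u,\]
and therefore, using monotonicity of the exponential and $g\ge 0$,
\[g^{2}e^{-2u}\le g^{2}e^{2(l_{1}v-(1+l_{2})u)}\quad\text{pointwise on }A.\]
(Here $v<+\infty$ near $o$ forces $u<+\infty$ on $A$; in the remaining degenerate case $u=-\infty$ both sides are $+\infty$ where $g>0$ and $0$ where $g=0$, so the inequality still holds, and integrability of the right-hand side then forces $g=0$ a.e. on that exceptional set.) Combining the two cases,
\[0\le g^{2}e^{-2u}-g^{2}e^{-2\max\{u,\frac{l_{1}}{l_{2}}v\}}\le g^{2}e^{2(l_{1}v-(1+l_{2})u)}\,\mathbb{I}_{A}\]
a.e. on $V_{o}$, and the right-hand side is integrable on $V_{o}$ by hypothesis; this proves the lemma.

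I do not expect any genuine obstacle: the whole argument rests on the single inequality $2(l_{1}v-(1+l_{2})u)>-2u$ valid on $\{l_{2}u<l_{1}v\}$, after which nonnegativity of $g$, monotonicity of $t\mapsto e^{t}$, and comparison of integrals conclude. The only point requiring a little care is the measure-theoretic bookkeeping at $-\infty$, which is handled by first shrinking to a neighborhood where $u$ and $v$ are bounded above and by observing that integrability of the dominating function kills $g$ a.e. on the candidate bad set.
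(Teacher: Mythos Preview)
Your argument is correct and follows essentially the same route as the paper: both restrict to the set $\{l_{2}u<l_{1}v\}$ where the difference is nonzero, bound it above by $g^{2}e^{-2u}$, and then use $l_{2}u<l_{1}v$ to dominate this by $g^{2}e^{2(l_{1}v-(1+l_{2})u)}$. The paper presents the chain of inequalities more tersely and does not pause over the $u=-\infty$ bookkeeping, but the substance is identical.
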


\begin{proof}
	Recall (see \cite{demailly2010}) that
	\begin{equation*}
		\begin{split}
			&\int_{V_{o}}g^{2}\left(e^{-2u}-e^{-2\max\big\{u,\frac{l_{1}}{l_{2}}v\big\}}\right)
			\\=&
			\int_{V_{o}\cap\{l_{2}u<l_{1}v\}}g^{2}\left(e^{-2u}-e^{-2\max\big\{u,\frac{l_{1}}{l_{2}}v\big\}}\right)
			\\ \leq&
			\int_{V_{o}\cap\{l_{2}u<l_{1}v\}}g^{2}e^{-2u}
			\\\leq&
			\int_{V_{o}\cap\{l_{2}u<l_{1}v\}}g^{2}\left(e^{-2(l_{2}u-l_{1}v)}e^{-2u}\right)
			\\\leq&
			\int_{V_{o}}g^{2}e^{-2(l_{2}u-l_{1}v)-2u}
			\\\leq&
			\int_{V_{o}}g^{2}e^{2l_{1}v-2(1+l_{2})u}.
		\end{split}
	\end{equation*}
	This proves Lemma \ref{lem:G_key} for $V_{o}$ small enough.
\end{proof}

Denote that
\[A_{u,v}(t):=\sup\big\{c:g^{2}e^{2(tv-cu)} \ \text{is integrable near} \ o\big\}.\]
Note that $u$ and $v$ are local upper-bounded near $o$, then $A_{u,v}(t)$ is increasing with respect to $t$ (maybe $+\infty$ or $-\infty$).
Assume that $A_{u,v}(t)\in(0,+\infty]$ on $(t_{0}-\delta,t_{0}+\delta)$.
It follows from H\"{o}lder inequality that $A_{u,v}(t)$ is concave on $(t_{0}-\delta,t_{0}+\delta)$.

\begin{Lemma}
	\label{lem:hanjiangxue1}
	Assume that $A_{u,v}(t)$ is strictly increasing on $(t_{0}-\delta,t_{0}+\delta)$.
	Then
	\begin{equation*}
		A_{\max\{u,\frac{1}{b}v\},v}(t_{0})=A_{u,v}(t_{0})
	\end{equation*}
	holds for any $b\in\left(0,\lim_{\Delta t\to0+0}\frac{A_{u,v}(t_{0}+\Delta t)-A_{u,v}(t_{0})}{\Delta t}\right)$.
\end{Lemma}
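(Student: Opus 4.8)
The plan is to prove the two inequalities $A_{\max\{u,\frac{1}{b}v\},v}(t_{0})\geq A_{u,v}(t_{0})$ and $A_{\max\{u,\frac{1}{b}v\},v}(t_{0})\leq A_{u,v}(t_{0})$ separately. The second inequality is immediate and has nothing to do with the hypotheses: since $\max\{u,\frac{1}{b}v\}\geq u$ near $o$, for any $c$ we have $g^{2}e^{2(t_{0}v-c\max\{u,\frac{1}{b}v\})}\leq g^{2}e^{2(t_{0}v-cu)}$ near $o$ (taking $c>0$, which is legitimate since we are on a range where $A_{u,v}>0$), so integrability for $u$ forces integrability for $\max\{u,\frac{1}{b}v\}$, giving $A_{\max\{u,\frac{1}{b}v\},v}(t_{0})\geq A_{u,v}(t_{0})$ — wait, the direction here needs care, so let me restate: the inequality $\max\{u,\frac1b v\}\ge u$ makes the exponent smaller, hence the integrand smaller, hence $A_{\max\{u,\frac1b v\},v}(t_0)\ge A_{u,v}(t_0)$. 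So in fact the \emph{easy} direction is ``$\geq$''. The real content is the reverse inequality $A_{\max\{u,\frac{1}{b}v\},v}(t_{0})\leq A_{u,v}(t_{0})$.

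For the nontrivial direction, I would argue by contradiction: suppose $c:=A_{\max\{u,\frac{1}{b}v\},v}(t_{0})>A_{u,v}(t_{0})$, so there is some $c'$ with $A_{u,v}(t_0)<c'<c$ (and $c'>0$) for which $g^{2}e^{2(t_{0}v-c'\max\{u,\frac{1}{b}v\})}$ is integrable near $o$. The strategy is to feed this into Lemma 3.1 (Lemma \ref{lem:G_key}). Apply that lemma with the substitution $u\sim c'u$, $v\sim t_0 v$ and a suitable choice of $l_1,l_2>0$: Lemma \ref{lem:G_key} says that if $g^{2}e^{2(l_{1}(t_0 v)-(1+l_{2})(c'u))}$ is integrable near $o$, then $g^{2}e^{-2c'u}-g^{2}e^{-2\max\{c'u,\frac{l_{1}}{l_{2}}t_0 v\}}$ is integrable near $o$. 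I want $\frac{l_1}{l_2}t_0=\frac{c'}{b}$, i.e. to match the max with $c'\max\{u,\frac1b v\}=\max\{c'u,\frac{c'}{b}v\}$; this fixes the ratio $l_1/l_2 = c'/(bt_0)$. Then the hypothesis of Lemma \ref{lem:G_key} becomes integrability of $g^{2}e^{2((1+l_2)(\frac{l_1}{1+l_2}t_0 v - c'u))}$, and since $(1+l_2)$ can absorb a constant this is an integrability statement computing the value $A_{u,v}$ at the point $\frac{l_1 t_0}{(1+l_2)c'}$. So the key computation is: choose $l_1,l_2$ (respecting $l_1/l_2=c'/(bt_0)$) so that this point is some $t_0+\Delta t$ with $\Delta t>0$ small, and so that the required value $c'$ (appropriately scaled) is strictly less than $A_{u,v}(t_0+\Delta t)$. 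This is exactly where the hypothesis $b<\lim_{\Delta t\to0+}\frac{A_{u,v}(t_0+\Delta t)-A_{u,v}(t_0)}{\Delta t}$ (the right-derivative of the concave increasing function $A_{u,v}$ at $t_0$) enters: it guarantees that $A_{u,v}$ grows fast enough to the right of $t_0$ that one can pick up enough room to exceed $c'$ after the scaling dictated by $b$.

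Once Lemma \ref{lem:G_key} applies, we conclude that $g^{2}e^{-2c'u}$ differs from $g^{2}e^{-2\max\{c'u,\frac{c'}{b}v\}}=g^{2}e^{-2c'\max\{u,\frac1b v\}}$ by an integrable function near $o$; since the latter is integrable near $o$ (that was our assumption from $c'<c$), so is $g^{2}e^{-2c'u}=g^{2}e^{2(0\cdot v - c'u)}$. But then, combining with integrability coming from the nearby value via Hölder/interpolation — or more directly, since $g^2 e^{2(t_0 v - c' u)} \le (\sup_{V_o} e^{v})^{2t_0} g^2 e^{-2c'u}$ on a small neighborhood where $v$ is bounded above — we get $g^{2}e^{2(t_0 v-c'u)}$ integrable near $o$, contradicting $c'>A_{u,v}(t_0)$. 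The main obstacle I anticipate is purely bookkeeping: getting the two free parameters $l_1,l_2$ of Lemma \ref{lem:G_key} adjusted so that simultaneously (i) the max in its conclusion is literally $c'\max\{u,\frac1b v\}$, and (ii) the integrability hypothesis it demands is implied by $A_{u,v}(t_0+\Delta t)>c'$ for an admissible small $\Delta t>0$ — and verifying that the constraint ``$b$ below the right-derivative'' is exactly what makes such a choice possible. The strict monotonicity of $A_{u,v}$ is what ensures $\Delta t$ can be taken strictly positive (so that the point we evaluate $A_{u,v}$ at lies strictly to the right of $t_0$, where its value has strictly increased).
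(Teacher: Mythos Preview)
Your easy direction $A_{\max\{u,\frac1b v\},v}(t_0)\ge A_{u,v}(t_0)$ is correct and is exactly what the paper does at the very end. The hard direction, however, has a real gap, and it is not ``purely bookkeeping''.

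The problem is your substitution $v\sim t_0 v$ in Lemma~\ref{lem:G_key}. With that choice the conclusion of the lemma is that
\[
g^{2}e^{-2c'u}-g^{2}e^{-2c'\max\{u,\tfrac{1}{b}v\}}
\]
is integrable near $o$, \emph{without} any $e^{2t_0 v}$ weight. To exploit this you then assert that $g^{2}e^{-2c'\max\{u,\frac1b v\}}$ is integrable ``from $c'<c$''. But $c'<c=A_{\max\{u,\frac1b v\},v}(t_0)$ only gives integrability of $g^{2}e^{2t_0 v}e^{-2c'\max\{u,\frac1b v\}}$. Since $v$ is merely bounded \emph{above}, the factor $e^{2t_0 v}$ can be arbitrarily small and you cannot strip it off; these two integrability statements are genuinely different (unless $t_0=0$). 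So the chain breaks right there. A second issue is that with your substitution the hypothesis of Lemma~\ref{lem:G_key} reads $A_{u,v}(l_1 t_0)>(1+l_2)c'$, and with the constraint $l_1/l_2=c'/(bt_0)$ this becomes $A_{u,v}(s)>c'+bs$ for some $s=l_1 t_0$; there is no reason this is solvable inside $(t_0-\delta,t_0+\delta)$ once $c'>A_{u,v}(t_0)$ and $bt_0$ is not small.

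The paper's remedy is to put the weight where it belongs: absorb $e^{2(t_0-\varepsilon)v}$ into $g^2$ and keep $v\sim v$. Then both the hypothesis and the conclusion of Lemma~\ref{lem:G_key} carry the same $e^{2(t_0-\varepsilon)v}$ factor and match up. The paper also avoids the contradiction setup altogether: it uses strict monotonicity in a stronger way, namely that $g^{2}e^{2(t_0-\varepsilon)v}e^{-2A_{u,v}(t_0)u}$ is \emph{not} integrable (because $A_{u,v}(t_0-\varepsilon)<A_{u,v}(t_0)$), and then Lemma~\ref{lem:G_key} forces $g^{2}e^{2(t_0-\varepsilon)v}e^{-2A_{u,v}(t_0)\max\{u,\frac1b v\}}$ to be non-integrable as well, giving $A_{\max\{u,\frac1b v\},v}(t_0-\varepsilon)\le A_{u,v}(t_0)$; finally $\varepsilon\to 0$ and continuity finish. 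If you insist on a contradiction argument, it can be made to work \emph{provided} you absorb $e^{2t_0 v}$ into $g^2$ and take $c'$ sufficiently close to $A_{u,v}(t_0)$; but as written your substitution is the wrong one.
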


\begin{proof}
	As $A_{u,v}(t)$ is strictly increasing,
	then $g^{2}e^{2(t_{0}+\Delta t+\varepsilon)v}e^{-2A_{u,v}(t_{0}+\Delta t)u}$ is integrable for any pair $(\Delta t,\varepsilon)$ satisfying $\Delta t,$ $\varepsilon>0$ and $\Delta t+\varepsilon<+\delta$.
	
	Note that
	\begin{equation*}
		\begin{split}
			&g^{2}e^{2(t_{0}+\Delta t+\varepsilon)v}e^{-2A_{u,v}(t_{0}+\Delta t)u}
			\\=
			&g^{2}e^{2(t_{0}-\varepsilon)v}e^{2(\Delta t+2\varepsilon)v}e^{-2\left(\frac{A_{u,v}(t_{0}+\Delta t)-A_{u,v}(t_{0})}{A_{u,v}(t_{0})}+1\right)A_{u,v}(t_{0})u}.
		\end{split}
	\end{equation*}
	Then it follows from Lemma \ref{lem:G_key} 
	\[\left(g^{2}\sim g^{2}e^{2(t_{0}-\varepsilon)v}, \ l_{1}\sim \Delta t+2\varepsilon, \ l_{2}\sim\frac{A_{u,v}(t_{0}+\Delta t)-A_{u,v}(t_{0})}{A_{u,v}(t_{0})}\right)\]
	that
	\begin{equation*}
		\begin{split}
			&
			g^{2}e^{2(t_{0}-\varepsilon)v}e^{-2A_{u,v}(t_{0})u}-g^{2}e^{2(t_{0}-\varepsilon)v}e^{-2\max\big\{A_{u,v}(t_{0})u,\frac{\Delta t+2\varepsilon}{\frac{A_{u,v}(t_{0}+\Delta t)-A_{u,v}(t_{0})}{A_{u,v}(t_{0})}}v\big\}}
			\\=&
			g^{2}e^{2(t_{0}-\varepsilon)v}e^{-2A_{u,v}(t_{0})u}-g^{2}e^{2(t_{0}-\varepsilon)v}e^{-2A_{u,v}(t_{0})\max\big\{u,\frac{1}{\frac{A_{u,v}(t_{0}+\Delta t)-A_{u,v}(t_{0})}{\Delta t+2\varepsilon}}v\big\}}
		\end{split}
	\end{equation*}
	is integrable near $o$.
	
	Note that $A_{u,v}$ is strictly increasing on $(t_{0}-\delta,t_{0}+\delta)$,
	then
	$$g^{2}e^{2(t_{0}-\varepsilon)v}e^{-2A_{u,v}(t_{0})u}$$
	is not integrable near $o$,
	which implies that
	$$g^{2}e^{2(t_{0}-\varepsilon)v}e^{-2A_{u,v}(t_{0})\max\big\{u,\frac{1}{\frac{A(t_{0}+\Delta t)-A(t_{0})}{\Delta t+2\varepsilon}}v\big\}}$$
	is not integrable near $o$.
	
	Note that for any $b\in\left(0,\lim_{\Delta t\to0+0}\frac{A_{u,v}(t_{0}+\Delta t)-A_{u,v}(t_{0})}{\Delta t}\right)$,
	there exists small enough $\delta_{1},\delta_{2}>0$ such that
	for any $\Delta t\in(0,\delta_{1})$ and $\varepsilon\in(0,\delta_{2}\Delta t)$,
	$$\frac{A(t_{0}+\Delta t)-A(t_{0})}{\Delta t+2\varepsilon}>b,$$
	which implies that
	$$g^{2}e^{2(t_{0}-\varepsilon)v}e^{-2A_{u,v}(t_{0})\max\{u,\frac{1}{b}v\}}\left(\geq g^{2}e^{2(t_{0}-\varepsilon)v}e^{-2A_{u,v}(t_{0})\max\big\{u,\frac{1}{\frac{A(t_{0}+\Delta t)-A(t_{0})}{\Delta t+2\varepsilon}}v\big\}}\right)$$
	is not integrable near $o$.
	It is clear that
	$A_{\max\{u,\frac{1}{b}v\},v}(t_{0}-\varepsilon)\leq A_{u,v}(t_{0})$.
	For the arbitrariness of $\varepsilon\big(\in(0,\delta_{2}\Delta t)\big)$,
	and the continuity of $A_{\max\{u,\frac{1}{b}v\},v}(t)$ near $t_{0}$,
	it is clear that
	$A_{\max\{u,\frac{1}{b}v\},v}(t_{0})\leq A_{u,v}(t_{0})$.
	Note that $\max\{u,\frac{1}{b}v\}\geq u$,
	then it follows that $A_{\max\{u,\frac{1}{b}v\},v}(t_{0})\geq A_{u,v}(t_{0})$,
	which implies that
	$A_{\max\{u,\frac{1}{b}v\},v}(t_{0})=A_{u,v}(t_{0})$.
\end{proof}

\begin{Lemma}
	\label{lem:hanjiangxue2}
	Assume that $A_{u,v}(t)$ is strictly increasing on $(t_{0}-\delta,t_{0}+\delta)$.
	Then
	\begin{equation*}
		A_{\max\{u,\frac{1}{b}v\},v}(t_{0})=A_{u,v}(t_{0})
	\end{equation*}
	holds for any $b\in\left(0,\lim_{\Delta t\to0-0}\frac{A_{u,v}(t_{0}+\Delta t)-A_{u,v}(t_{0})}{\Delta t}\right)$.
\end{Lemma}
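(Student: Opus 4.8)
The plan is to deduce Lemma \ref{lem:hanjiangxue2} from Lemma \ref{lem:hanjiangxue1} by a one-sided-derivative comparison together with the continuity of $A_{u,v}$ and $A_{\max\{u,\frac{1}{b}v\},v}$ near $t_{0}$ (the same continuity that is invoked in the proof of Lemma \ref{lem:hanjiangxue1}). The point is that, $A_{u,v}$ being concave near $t_{0}$, its left derivative $D^{-}A_{u,v}(t_{0})=\lim_{\Delta t\to 0-0}\frac{A_{u,v}(t_{0}+\Delta t)-A_{u,v}(t_{0})}{\Delta t}$ is the limit, as $s\uparrow t_{0}$, of the right derivatives $D^{+}A_{u,v}(s)$; so a value $b$ admissible for Lemma \ref{lem:hanjiangxue2} at $t_{0}$ is admissible for Lemma \ref{lem:hanjiangxue1} at every $s$ slightly to the left of $t_{0}$, and one then passes to the limit.

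First I would record the regularity. Since $A_{u,v}$ is strictly increasing on $(t_{0}-\delta,t_{0}+\delta)$ it is finite there (an increasing function taking the value $+\infty$ at a point is $\equiv+\infty$ to the right of that point), hence, being concave, continuous there; writing $w:=\max\{u,\frac{1}{b}v\}\geq u$ we have $A_{w,v}\geq A_{u,v}>0$ on $(t_{0}-\delta,t_{0}+\delta)$, so $A_{w,v}$ is also concave there, and (as used in the proof of Lemma \ref{lem:hanjiangxue1}) continuous near $t_{0}$. Next, for concave $A_{u,v}$ and any $s<t_{0}$ the difference-quotient monotonicity gives $D^{+}A_{u,v}(s)\geq\frac{A_{u,v}(t_{0})-A_{u,v}(s)}{t_{0}-s}$, while $\frac{A_{u,v}(t_{0})-A_{u,v}(s)}{t_{0}-s}\uparrow D^{-}A_{u,v}(t_{0})$ as $s\uparrow t_{0}$. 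Hence, given $b\in\bigl(0,D^{-}A_{u,v}(t_{0})\bigr)$, there is $\eta>0$ with $b<D^{+}A_{u,v}(s)$ for all $s\in(t_{0}-\eta,t_{0})$. For each such $s$, $A_{u,v}$ is strictly increasing on a neighbourhood of $s$, so Lemma \ref{lem:hanjiangxue1} applies at the point $s$ (in place of $t_{0}$) and yields
\[
A_{w,v}(s)=A_{u,v}(s)\qquad\text{for all }s\in(t_{0}-\eta,t_{0}).
\]
Letting $s\uparrow t_{0}$ and using the continuity of $A_{w,v}$ and $A_{u,v}$ at $t_{0}$ gives $A_{\max\{u,\frac{1}{b}v\},v}(t_{0})=A_{w,v}(t_{0})=A_{u,v}(t_{0})$, which is the assertion.

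I do not anticipate a genuine obstacle: the content is entirely in the elementary one-sided-derivative facts for concave functions and in the continuity statements already used above, while the key inequality $b<D^{+}A_{u,v}(s)$ on a left-neighbourhood of $t_{0}$ is forced by the hypothesis $b<D^{-}A_{u,v}(t_{0})$. It is worth remarking why one cannot simply repeat the proof of Lemma \ref{lem:hanjiangxue1} with $\Delta t<0$: that would force the parameter $l_{2}$ handed to Lemma \ref{lem:G_key} to be negative, so the argument must instead route through Lemma \ref{lem:hanjiangxue1} itself together with a left-limit, as above.
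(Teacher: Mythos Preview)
Your proof is correct and follows essentially the same route as the paper's: both use concavity to show that any $b<D^{-}A_{u,v}(t_{0})$ satisfies $b<D^{+}A_{u,v}(s)$ for $s$ just left of $t_{0}$, apply Lemma \ref{lem:hanjiangxue1} at each such $s$, and then pass to the limit by continuity. The paper obtains the derivative comparison slightly more directly (using that $D^{+}A_{u,v}(t)\geq D^{-}A_{u,v}(t_{0})$ for all $t<t_{0}$ by monotonicity of one-sided derivatives of a concave function, so the equality holds on all of $(t_{0}-\delta,t_{0})$), but your version via the increasing difference quotients is equally valid.
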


\begin{proof}
	The concavity of $A_{u,v}$ shows that for any $t\in(t_{0}-\delta,t_{0})$,
	$$\lim_{\Delta t\to0+0}\frac{A_{u,v}(t+\Delta t)-A_{u,v}(t)}{\Delta t})\geq\lim_{\Delta t\to0-0}\frac{A_{u,v}(t_{0}+\Delta t)-A_{u,v}(t_{0})}{\Delta t}.$$
	Lemma \ref{lem:hanjiangxue1} shows that
	for any
	$$b\in\left(0,\lim_{\Delta t\to0-0}\frac{A_{u,v}(t_{0}+\Delta t)-A_{u,v}(t_{0})}{\Delta t}\right)\subseteq\left(0,\lim_{\Delta t\to0+0}\frac{A_{u,v}(t+\Delta t)-A_{u,v}(t)}{\Delta t}\right),$$
	we have that
	$$A_{\max\{u,\frac{1}{b}v\},v}(t)=A_{u,v}(t)$$
	holds for any $t\in(t_{0}-\delta,t_{0})$.
	For the continuity of $A_{\max\{u,\frac{1}{b}v\},v}$ and $A_{u,v}$ at $t_{0}$,
	the proof is done.
\end{proof}

\begin{Lemma}
	\label{lem:jialidun0131}
	Assume that $A_{u,v}(t)$ is strictly increasing on $(t_{0}-\delta,t_{0}+\delta)$.
	Then
	\begin{equation}
		\label{equ:jialidun1}
		A_{\max\{u,\frac{1}{b}v\},v}(t_{0})=A_{u,v}(t_{0})
	\end{equation}
	holds for any $b\in\left(0,\lim_{\Delta t\to0-0}\frac{A_{u,v}(t_{0}+\Delta t)-A_{u,v}(t_{0})}{\Delta t}\right]$.
\end{Lemma}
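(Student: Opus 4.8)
The plan is to upgrade Lemma~\ref{lem:hanjiangxue2} from the open interval $\left(0,\lim_{\Delta t\to0-0}\frac{A_{u,v}(t_{0}+\Delta t)-A_{u,v}(t_{0})}{\Delta t}\right)$ to the half-open interval that includes the left-derivative endpoint $b_0:=\lim_{\Delta t\to0-0}\frac{A_{u,v}(t_{0}+\Delta t)-A_{u,v}(t_{0})}{\Delta t}$ itself. For $b<b_0$ the equality \eqref{equ:jialidun1} is exactly Lemma~\ref{lem:hanjiangxue2}, so only the endpoint case $b=b_0$ requires a new argument. The inequality $A_{\max\{u,\frac1{b_0}v\},v}(t_0)\geq A_{u,v}(t_0)$ is free, since $\max\{u,\frac1{b_0}v\}\geq u$ and $A_{\cdot,v}$ is monotone in its first slot; the content is the reverse inequality.

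First I would record the nesting $\max\{u,\tfrac1{b_0}v\}\leq\max\{u,\tfrac1b v\}$ for every $b<b_0$, hence
\[
A_{\max\{u,\frac1{b_0}v\},v}(t_0)\;\leq\;A_{\max\{u,\frac1{b}v\},v}(t_0)\;=\;A_{u,v}(t_0)
\]
by Lemma~\ref{lem:hanjiangxue2}, where the last equality holds because $b\in\left(0,b_0\right)$. This already gives $A_{\max\{u,\frac1{b_0}v\},v}(t_0)\leq A_{u,v}(t_0)$ directly, and combined with the trivial reverse inequality yields \eqref{equ:jialidun1} at $b=b_0$. So in fact the proof is a one-line reduction: for $b=b_0$ dominate $\max\{u,\tfrac1{b_0}v\}$ by $\max\{u,\tfrac1b v\}$ with any $b\in(0,b_0)$ and invoke the previous lemma; for $b<b_0$ quote the previous lemma verbatim.

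The only point needing a word of care is the degenerate situation $b_0=+\infty$ (possible a priori since $A_{u,v}$ is merely concave and strictly increasing, so its left derivative could be unbounded — though in that case the interval in the statement is $(0,+\infty]$ and one simply applies the monotonicity-in-$b$ reduction for every finite $b$). I do not expect a genuine obstacle here; the substance was already done in Lemmas~\ref{lem:hanjiangxue1} and~\ref{lem:hanjiangxue2}, and this lemma is the ``closed endpoint'' corollary obtained purely from the monotonicity of $b\mapsto A_{\max\{u,\frac1b v\},v}(t_0)$ together with squeezing. If a cleaner self-contained argument is wanted instead, one can also rerun the proof of Lemma~\ref{lem:hanjiangxue1} taking $\Delta t\to0-0$ and noting that for $\Delta t<0$ and small $|\varepsilon|$ the quotient $\frac{A(t_0+\Delta t)-A(t_0)}{\Delta t+2\varepsilon}$ can be made $\geq b_0-o(1)$, but the squeezing argument above is shorter and avoids reopening the $L^2$ estimates.
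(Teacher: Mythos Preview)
Your nesting claim $\max\{u,\tfrac1{b_0}v\}\leq\max\{u,\tfrac1b v\}$ for $b<b_0$ is backwards. In this setting $u$ and $v$ are bounded above near $o$, so after subtracting constants one works with $u<0$ and $v<0$; then for $0<b<b_0$ we have $\tfrac1b>\tfrac1{b_0}$ and hence $\tfrac1b v<\tfrac1{b_0}v$, which gives $\max\{u,\tfrac1{b_0}v\}\geq\max\{u,\tfrac1b v\}$. Feeding this into the monotonicity of $A_{\cdot,v}(t_0)$ only reproduces the trivial inequality $A_{\max\{u,\frac1{b_0}v\},v}(t_0)\geq A_{u,v}(t_0)$, not the one you need. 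So the ``one-line reduction'' does not close the argument, and the endpoint case genuinely requires more than monotonicity in $b$.

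The paper's fix is to compare not with $\max\{u,\tfrac1b v\}$ but with a rescaled version: since $u<0$ and $b/b_0<1$ one has $u\leq\tfrac{b}{b_0}u$, hence
\[
\max\Big\{u,\tfrac1{b_0}v\Big\}\;\leq\;\max\Big\{\tfrac{b}{b_0}u,\tfrac1{b_0}v\Big\}\;=\;\tfrac{b}{b_0}\,\max\Big\{u,\tfrac1b v\Big\}.
\]
Using the scaling $A_{\lambda w,v}(t_0)=\lambda^{-1}A_{w,v}(t_0)$ together with Lemma~\ref{lem:hanjiangxue2} yields
\[
A_{u,v}(t_0)\;\leq\;A_{\max\{u,\frac1{b_0}v\},v}(t_0)\;\leq\;\tfrac{b_0}{b}\,A_{\max\{u,\frac1b v\},v}(t_0)\;=\;\tfrac{b_0}{b}\,A_{u,v}(t_0),
\]
and letting $b\to b_0^-$ gives the equality at the endpoint.
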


\begin{proof}
	Let
	\[b_{0}=\lim_{\Delta t\to0-0}\frac{A_{u,v}(t_{0}+\Delta t)-A_{u,v}(t_{0})}{\Delta t}.\]
	Lemma \ref{lem:hanjiangxue2} shows that equality \eqref{equ:jialidun1} holds for any
	$b\in(0,b_{0})$.
	
	As $u$ and $v$ are local bounded above near $o$,
	we assume that $u<0$ and $v<0$. Note that for any $b\in(0,b_{0})$, 
	\[u\leq\max\left\{u,\frac{1}{b_{0}}v\right\}\leq\max\left\{\frac{b}{b_{0}}u,\frac{1}{b_{0}}v\right\}\leq\frac{b}{b_{0}}\max\left\{u,\frac{1}{b}v\right\},\]
	then it is clear that
	\begin{equation*}
		\begin{split}
			A_{u,v}(t_{0})\leq
			&A_{\max\{u,\frac{1}{b_{0}}v\},v}(t_{0})
			\\\leq
			&A_{\max\{\frac{b}{b_{0}}u,\frac{1}{b_{0}}v\},v}(t_{0})
			\leq A_{\frac{b}{b_{0}}\max\{u,\frac{1}{b}v\},v}(t_{0})
			\\=&\frac{b}{b_{0}}A_{\max\{u,\frac{1}{b}v\},v}(t_{0})
			=\frac{b}{b_{0}}A_{u,v}(t_{0}).
		\end{split}
	\end{equation*}
	Letting $b\to b_{0}$, we obtain equality \eqref{equ:jialidun1}.
\end{proof}

\begin{Lemma}
	\label{lem:hanjiangxue3}
	Assume that $A_{u,v}(t)$ is strictly increasing on $(t_{0}-\delta,t_{0}+\delta)$.
	Then
	\begin{equation*}
		A_{\max\{u,\frac{1}{b}v\},v}(t_{0})=A_{u,v}(t_{0})
	\end{equation*}
	does not hold for any $b>\lim_{\Delta t\to0-0}\frac{A_{u,v}(t_{0}+\Delta t)-A_{u,v}(t_{0})}{\Delta t}$.
\end{Lemma}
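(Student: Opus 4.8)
The plan is to show that increasing $b$ past the left-derivative of $A_{u,v}$ at $t_0$ \emph{strictly} increases $A_{u,v}(t_0)$, so equality fails. Write $b_0=\lim_{\Delta t\to 0-0}\frac{A_{u,v}(t_0+\Delta t)-A_{u,v}(t_0)}{\Delta t}$, and recall from Lemma \ref{lem:jialidun0131} that $A_{\max\{u,\frac{1}{b_0}v\},v}(t_0)=A_{u,v}(t_0)$. Fix $b>b_0$; since $u,v$ are locally bounded above near $o$ we may assume $u<0$ and $v<0$. The first step is the elementary pointwise inequality
\[
\max\left\{u,\tfrac{1}{b}v\right\}\ \le\ \tfrac{b_0}{b}\max\left\{u,\tfrac{1}{b_0}v\right\},
\]
which holds because $\frac{b_0}{b}<1$, $u<0$, $v<0$, and $\frac{b_0}{b}\max\{u,\frac{1}{b_0}v\}=\max\{\frac{b_0}{b}u,\frac{1}{b}v\}\ge\max\{u,\frac{1}{b}v\}$. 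Combined with the scaling identity $A_{c w,v}(t_0)=\frac{1}{c}A_{w,v}(t_0)$ for $c>0$ and the monotonicity of $A_{\cdot,v}(t_0)$ in the first slot (larger weight gives larger $A$, since $g^2e^{2(t_0 v - c\,(\cdot))}$ is more integrable), this yields
\[
A_{\max\{u,\frac{1}{b}v\},v}(t_0)\ \ge\ A_{\frac{b_0}{b}\max\{u,\frac{1}{b_0}v\},v}(t_0)\ =\ \frac{b}{b_0}\,A_{\max\{u,\frac{1}{b_0}v\},v}(t_0)\ =\ \frac{b}{b_0}\,A_{u,v}(t_0).
\]
Since $\frac{b}{b_0}>1$ and $A_{u,v}(t_0)\in(0,+\infty]$, the right-hand side is strictly larger than $A_{u,v}(t_0)$ (or $+\infty$ if $A_{u,v}(t_0)=+\infty$, but then strict increase must be interpreted via the fact that $A_{u,v}$ is finite near $t_0$ by hypothesis and concavity forces $A_{u,v}(t_0)<+\infty$ on the interval where it is strictly increasing). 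Hence $A_{\max\{u,\frac{1}{b}v\},v}(t_0)\neq A_{u,v}(t_0)$, which is the claim.

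The main obstacle is establishing the pointwise inequality cleanly and justifying the monotonicity/scaling properties of $A_{\cdot,v}(t_0)$ rigorously; these are the same two facts already used in the proof of Lemma \ref{lem:jialidun0131}, so no new analytic input is needed. A secondary subtlety is the degenerate case $A_{u,v}(t_0)=+\infty$: one should note that the hypothesis "$A_{u,v}$ strictly increasing on $(t_0-\delta,t_0+\delta)$" together with $A_{u,v}(t)\in(0,+\infty]$ and concavity prevents $A_{u,v}$ from being identically $+\infty$, and in fact if $A_{u,v}(t_0)=+\infty$ then by concavity $A_{u,v}\equiv+\infty$ on a neighborhood, contradicting strict monotonicity; so $A_{u,v}(t_0)<+\infty$ and the strict inequality $\frac{b}{b_0}A_{u,v}(t_0)>A_{u,v}(t_0)$ is genuine. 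This completes the proof.
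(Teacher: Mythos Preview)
Your argument contains a sign error that breaks the proof. You correctly establish the pointwise inequality
\[
\max\Bigl\{u,\tfrac{1}{b}v\Bigr\}\ \le\ \tfrac{b_0}{b}\max\Bigl\{u,\tfrac{1}{b_0}v\Bigr\},
\]
and you correctly state that ``larger weight gives larger $A$''. But then these two facts together yield
\[
A_{\max\{u,\frac{1}{b}v\},v}(t_0)\ \le\ A_{\frac{b_0}{b}\max\{u,\frac{1}{b_0}v\},v}(t_0)\ =\ \tfrac{b}{b_0}A_{u,v}(t_0),
\]
with the inequality going the \emph{opposite} way from what you wrote. This only gives an upper bound $\frac{b}{b_0}A_{u,v}(t_0)$ for $A_{\max\{u,\frac{1}{b}v\},v}(t_0)$, which is useless for showing that the latter strictly exceeds $A_{u,v}(t_0)$. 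There is no obvious way to reverse the pointwise comparison to salvage the approach: the analogous inequality $\max\{u,\frac{1}{b}v\}\ge\frac{b_0}{b}\max\{u,\frac{1}{b_0}v\}$ is simply false (take $v$ very negative so both maxima equal their first terms).

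The paper's proof uses a genuinely different mechanism. It argues by contradiction: if $A_{\max\{u,\frac{1}{b}v\},v}(t_0)=A_{u,v}(t_0)$, then on one hand, since $A_{\max\{u,\frac{1}{b}v\},v}(t)\ge A_{u,v}(t)$ for all $t$ with equality at $t_0$, the left derivative of $A_{\max\{u,\frac{1}{b}v\},v}$ at $t_0$ is at most $b_0<b$. On the other hand, the pointwise bound $\max\{u,\frac{1}{b}v\}\ge\frac{1}{b}v$ forces $A_{\max\{u,\frac{1}{b}v\},v}(t_0+\Delta t)\ge A_{\max\{u,\frac{1}{b}v\},v}(t_0)+b\Delta t$, so the right derivative is at least $b$. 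Concavity (left derivative $\ge$ right derivative) then gives the contradiction. The essential extra ingredient you are missing is this derivative comparison; scaling and monotonicity alone, as used in Lemma~\ref{lem:jialidun0131}, are not enough here.
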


\begin{proof}
	We prove by contradiction:
	if not,
	then $A_{\max\{u,\frac{1}{b}v\},v}(t_{0})=A_{u,v}(t_{0})$ holds for some $b>\lim_{\Delta t\to0-0}\frac{A_{u,v}(t_{0}+\Delta t)-A_{u,v}(t_{0})}{\Delta t}$.
	Note that
	$$A_{\max\{u,\frac{1}{b}v\},v}(t)\geq A_{u,v}(t)$$
	holds for any $t\in(t_{0}-\delta,t_{0}+\delta)$.
	Then
	\begin{equation}
		\label{equ:hanjiangxue_a}
		\begin{split}
			&\lim_{\Delta t\to0-0}\frac{A_{\max\{u,\frac{1}{b}v\},v}(t_{0}+\Delta t)-A_{\max\{u,\frac{1}{b}v\},v}(t_{0})}{\Delta t}
			\\\leq&\lim_{\Delta t\to0-0}\frac{A_{u,v}(t_{0}+\Delta t)-A_{u,v}(t_{0})}{\Delta t}<b.
		\end{split}
	\end{equation}
	Note that
	$$\max\left\{u,\frac{1}{b}v\right\}\geq \frac{1}{b}v,$$
	which implies that
	$$A_{\max\{u,\frac{1}{b}v\},v}(t_{0}+\Delta t)\geq A_{\max\{u,\frac{1}{b}v\},v}(t_{0})+b \Delta t.$$
	Then we obtain
	\begin{equation*}
		\begin{split}
			\lim_{\Delta t\to0+0}\frac{A_{\max\{u,\frac{1}{b}v\},v}(t_{0}+\Delta t)-A_{\max\{u,\frac{1}{b}v\},v}(t_{0})}{\Delta t}\geq b,
		\end{split}
	\end{equation*}
	which contradicts inequality \eqref{equ:hanjiangxue_a}.
\end{proof}

\section{Tian functions and Zhou numbers}
\label{sec:tian function}

Let $\varphi$, $\psi$, $\varphi_{0}$ be plurisubharmonic functions near $o$, and let $f_{0}=(f_{0,1},\ldots,f_{0,m})$ be a vector, where $f_{0,1},\ldots,f_{0,m}$ are holomorphic functions near $o$.

Denote that
\[c_{o}(\varphi,t\psi):=\sup\big\{c:|f_{0}|^{2}e^{-2\varphi_{0}}e^{2t\psi}e^{-2c\varphi} \ \text{is integrable near} \ o\big\},\]
which is a generalization of the jumping number (see \cite{JON-Mus2012,JON-Mus2014}).
Define the Tian function 
$$\mathrm{Tn}(t):=c_{o}(\varphi,t\psi)$$
for any $t\in\mathbb{R}$.

Assume that the following three statements hold

(1) $|f_{0}|^{2}e^{-2\varphi_{0}}$ is integrable near $o$;

(2) There exists integer $N_{0}\gg0$ such that $|f_{0}|^{2}e^{-2\varphi_{0}}|z|^{2N_{0}}e^{-2\mathrm{Tn}(0)\varphi}$ is integrable near $o$;

(3) The Lelong number $\nu(\psi,o)>0$, i.e. there exists $\nu_{0}>0$, such that $\psi\leq\nu_{0}\log|z|+O(1)$ holds (see \cite{demailly-book}).

Theorem \ref{thm:SOC} shows that (2) implies that there exists $\varepsilon>0$,
such that
$$|f_{0}|^{2}e^{-2\varphi_{0}}|z|^{2N_{1}}e^{-2(1+\varepsilon)\mathrm{Tn}(0)\varphi}$$
is integrable near $o$.
Lemma \ref{lem:G_key} shows that
$$|f_{0}|^{2}e^{-2\varphi_{0}}e^{-2\mathrm{Tn}(0)\varphi}-|f_{0}|^{2}e^{-2\varphi_{0}}e^{-2\max\big\{\mathrm{Tn}(0)\varphi,\frac{N_{1}}{\varepsilon}\log|z|\big\}}$$
is integrable near $o$.
Then it follows from the definition of $\Phi_{o,\max}$ that
\begin{equation*}
	\Phi_{o,\max}\geq \max\left\{\mathrm{Tn}(0)\varphi,\frac{N_{1}}{\varepsilon}\log|z|\right\}+O(1)\geq N\log|z|+O(1)
\end{equation*}
near $o$.

The H\"{o}lder inequality shows that $\mathrm{Tn}(t)$ is concave with respect to $t\in(-\infty,+\infty)$.

\subsection{$\mathrm{Tn}(t)$ related to $\varphi$}

In this section, we discuss the derivatives of Tian functions $\mathrm{Tn}(t)$.

We give the	strictly increasing property of Tian functions in the following lemma.
\begin{Lemma}
	\label{lem:strict_decreasing}
	$\mathrm{Tn}(t)$ is strictly increasing near $0$.
\end{Lemma}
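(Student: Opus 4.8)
The plan is to show that $\mathrm{Tn}(t) = c_o(\varphi, t\psi)$ is strictly increasing on a neighborhood of $0$ by combining the concavity of $\mathrm{Tn}$ (already established via H\"older) with a direct argument ruling out that $\mathrm{Tn}$ is locally constant. Since a concave function that is constant on an interval around $0$ must be non-increasing to the right of that interval, and since $\mathrm{Tn}$ is obviously non-decreasing in $t$ (because $\psi \le 0$ near $o$ after shrinking, so larger $t$ makes $e^{2t\psi}$ smaller, hence integrability easier — more precisely $t \mapsto c_o(\varphi,t\psi)$ is monotone increasing), it suffices to show $\mathrm{Tn}$ is \emph{not} constant on any interval $(-\delta,\delta)$.

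First I would recall the defining property of $\Phi_{o,\max}$ and the standing hypotheses: $\mathrm{Tn}(0) = c_o(\varphi,0)$ is the jumping number governing integrability of $|f_0|^2 e^{-2\varphi_0} e^{-2c\varphi}$, and the discussion preceding the lemma shows $\Phi_{o,\max} \ge \mathrm{Tn}(0)\varphi + O(1)$ (in fact $\ge \max\{\mathrm{Tn}(0)\varphi, \tfrac{N_1}{\varepsilon}\log|z|\} + O(1)$), while by Definition \ref{def:max_relat}(2) the function $|f_0|^2 e^{-2\varphi_0} e^{-2\Phi_{o,\max}}$ is \emph{not} integrable near $o$. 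The key point is to exploit the Lelong number hypothesis $\nu(\psi,o) \ge \nu_0 > 0$, i.e. $\psi \le \nu_0 \log|z| + O(1)$. This gives, for any $t > 0$, the pointwise bound $e^{2t\psi} \le |z|^{2t\nu_0} e^{O(1)}$ near $o$, so
\begin{equation*}
	|f_0|^2 e^{-2\varphi_0} e^{2t\psi} e^{-2c\varphi} \le C\, |f_0|^2 e^{-2\varphi_0} |z|^{2t\nu_0} e^{-2c\varphi}.
\end{equation*}
Thus if $|f_0|^2 e^{-2\varphi_0} |z|^{2t\nu_0} e^{-2c\varphi}$ is integrable near $o$, then so is the left-hand side, giving $\mathrm{Tn}(t) \ge c$. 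So I want to show that replacing $c\varphi$ by $c\varphi - t\nu_0 \log|z|$ strictly increases the jumping exponent $c$ for which integrability holds. Concretely, I would argue: at $c = \mathrm{Tn}(0)$ we know (from the displayed computation before the lemma, using strong openness Theorem \ref{thm:SOC} and Lemma \ref{lem:G_key}) that $|f_0|^2 e^{-2\varphi_0} |z|^{2N_1} e^{-2(1+\varepsilon)\mathrm{Tn}(0)\varphi}$ is integrable, hence $|f_0|^2 e^{-2\varphi_0} |z|^{2N_1} e^{-2(1+\varepsilon)\mathrm{Tn}(0)\varphi + 2t\psi}$ is integrable for every $t \ge 0$; but I need integrability \emph{without} the $|z|^{2N_1}$ factor and with exponent strictly bigger than $\mathrm{Tn}(0)$ on $\varphi$.

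The cleanest route, and the one I expect to be the crux, is to use Lemma \ref{lem:G_key} once more with the roles $g^2 \sim |f_0|^2 e^{-2\varphi_0}$, $u \sim \mathrm{Tn}(0)\varphi$, $v \sim \nu_0 \log|z|$: by strong openness there is $\varepsilon>0$ with $|f_0|^2 e^{-2\varphi_0} e^{2 l_1 \nu_0 \log|z| - 2(1+\varepsilon)\mathrm{Tn}(0)\varphi}$ integrable near $o$ for small $l_1>0$ (this follows from hypothesis (2): the $|z|^{2N_0}$ can absorb $e^{2l_1\nu_0\log|z|}$ only if $l_1\nu_0 \le N_0$, which is fine for $l_1$ small, but we actually need the \emph{other} direction — here is where I must be careful, and I would instead directly invoke that $|f_0|^2 e^{-2\varphi_0}|z|^{2N_0} e^{-2(1+\varepsilon)\mathrm{Tn}(0)\varphi}$ is integrable and feed $l_1 = N_0$, $l_2 = \varepsilon$ into Lemma \ref{lem:G_key} to conclude $|f_0|^2 e^{-2\varphi_0} e^{-2\mathrm{Tn}(0)\varphi} - |f_0|^2 e^{-2\varphi_0} e^{-2\max\{\mathrm{Tn}(0)\varphi, \frac{N_0}{\varepsilon}\log|z|\}}$ is integrable, which is exactly the estimate quoted before the lemma giving $\Phi_{o,\max} \ge \max\{\mathrm{Tn}(0)\varphi, \tfrac{N_0}{\varepsilon}\log|z|\}+O(1)$). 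From $\Phi_{o,\max} \ge \mathrm{Tn}(0)\varphi + O(1)$ and non-integrability of $|f_0|^2 e^{-2\varphi_0}e^{-2\Phi_{o,\max}}$, together with the observation that any $c < \mathrm{Tn}(0)$ makes $c\varphi \le \Phi_{o,\max} - $ (something blowing up)$\ + O(1)$ fails — rather, I would argue via the chain of Lemmas \ref{lem:hanjiangxue1}--\ref{lem:jialidun0131} from the concavity section applied to $A_{u,v}(t) := c_o(\varphi, t\psi)$ with $g^2 = |f_0|^2 e^{-2\varphi_0}$, $u = \varphi$, $v = \psi$: those lemmas are stated precisely under the hypothesis "$A_{u,v}$ strictly increasing", so here I instead need the converse flavour — if $A_{u,v}$ were constant near $0$, then for $b$ small $A_{\max\{u,\frac1b v\},v}(0) = A_{u,v}(0)$ would have to fail by Lemma \ref{lem:hanjiangxue3} logic (its left-derivative is $0 < b$), contradiction with the monotonicity forced by $\max\{u,\tfrac1b v\} \ge \tfrac1b v$. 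So the final step is: suppose $\mathrm{Tn} \equiv \mathrm{Tn}(0)$ on $(-\delta,\delta)$; then $\mathrm{Tn}(0)\varphi + t\psi$ has, for all $t \in (0,\delta)$, the property that $|f_0|^2 e^{-2\varphi_0} e^{-2(\mathrm{Tn}(0)\varphi + t\psi)}$ is non-integrable (as $\mathrm{Tn}(t) = \mathrm{Tn}(0)$ means exponent $\mathrm{Tn}(0)$ is exactly the threshold); but $\mathrm{Tn}(0)\varphi + t\psi \le \mathrm{Tn}(0)\varphi \le \Phi_{o,\max} + O(1)$ while $\mathrm{Tn}(0)\varphi + t\psi$ tends to $-\infty$ at $o$ strictly faster than $\mathrm{Tn}(0)\varphi$ along $\psi$'s polar set, so $\mathrm{Tn}(0)\varphi + t\psi \ne \Phi_{o,\max} + O(1)$, yet it satisfies $\ge \Phi_{o,\max}$? — no, it does not dominate $\Phi_{o,\max}$. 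The correct contradiction uses Definition \ref{def:max_relat}(3) the other way: I take $\varphi' = \max\{\mathrm{Tn}(0)\varphi, \tfrac{1}{b}\psi'\}$ for suitable comparison and apply the maximality. I would organize this as: pick $\varphi' := \max\{\Phi_{o,\max}, \tfrac{1}{b}\psi\}$; then $\varphi' \ge \Phi_{o,\max}$; if $\mathrm{Tn}$ is constant near $0$ one shows (via Lemma \ref{lem:hanjiangxue1} with $t_0 = 0$, $u = \varphi$, noting the threshold exponent is unchanged, then transferring back through $\Phi_{o,\max} \ge \mathrm{Tn}(0)\varphi$) that $|f_0|^2 e^{-2\varphi_0} e^{-2\varphi'}$ is non-integrable; maximality forces $\varphi' = \Phi_{o,\max} + O(1)$, i.e. $\tfrac1b \psi \le \Phi_{o,\max} + O(1)$, giving $\psi \le b\,\Phi_{o,\max} + O(1)$ for all small $b > 0$, hence $\psi \le O(1)$ near $o$ (letting the coefficient go to $0$ while $\Phi_{o,\max} \to -\infty$), contradicting $\psi(o) = -\infty$ which is part of hypothesis (3). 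The main obstacle is bookkeeping the transfer between the threshold exponent of $\varphi$ and that of $\Phi_{o,\max}$ and making sure the $O(1)$ errors stay uniform in $b$; the concavity lemmas of Section 3 are precisely designed to handle this, so I would lean on Lemma \ref{lem:hanjiangxue1} applied at $t_0 = 0$ as the engine of the argument.
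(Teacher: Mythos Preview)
Your proposal has a genuine gap, and you actually brush past the correct argument before abandoning it. The paper's proof is entirely direct and never uses $\Phi_{o,\max}$ or the maximality clause of Definition~\ref{def:max_relat}. From strong openness one gets $|f_0|^2e^{-2\varphi_0}|z|^{2N_0}e^{-2(1+\varepsilon_0)\mathrm{Tn}(0)\varphi}$ integrable; Lemma~\ref{lem:G_key} (with $l_1=N_0$, $l_2=\varepsilon_0/2$) then gives that
\[
|f_0|^2e^{-2\varphi_0}\Big(e^{-2(1+\frac{\varepsilon_0}{2})\mathrm{Tn}(0)\varphi}-e^{-2\max\{(1+\frac{\varepsilon_0}{2})\mathrm{Tn}(0)\varphi,\ \tfrac{(2+\varepsilon_0)N_0}{\varepsilon_0}\log|z|\}}\Big)
\]
is integrable. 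Multiply through by $e^{2t\psi}$ (bounded for $t>0$), and in the second term use $e^{2t\psi}\le C|z|^{2t\nu(\psi,o)}$ together with the obvious lower bound on the $\max$ to see that for $t>\tfrac{(2+\varepsilon_0)N_0}{\varepsilon_0\,\nu(\psi,o)}$ the second term is integrable, hence so is the first. That is exactly $\mathrm{Tn}(t)\ge(1+\tfrac{\varepsilon_0}{2})\mathrm{Tn}(0)>\mathrm{Tn}(0)$ for one large $t$, and concavity finishes. You wrote down the ingredients for this (the $\psi\le\nu_0\log|z|+O(1)$ bound and the Lemma~\ref{lem:G_key} application) but then pivoted away.

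The route you pursue instead fails at two points. First, your appeals to Lemmas~\ref{lem:hanjiangxue1}--\ref{lem:hanjiangxue3} are circular: all of them are stated under the standing hypothesis that $A_{u,v}$ is \emph{strictly increasing}, which is precisely the conclusion here; you acknowledge this but the ``converse flavour'' you invoke is never made into an actual argument. Second, and more seriously, your final contradiction does not close. Even if you could show $\psi\le b\,\Phi_{o,\max}+O_b(1)$ for all small $b>0$, this does \emph{not} imply $\psi\le O(1)$ near $o$: the constant may depend on $b$, and at any rate for each fixed $b>0$ the right-hand side still tends to $-\infty$ at $o$ (since $\Phi_{o,\max}(o)=-\infty$), so there is no tension with $\psi(o)=-\infty$. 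Letting $b\to 0$ at a point $z\neq o$ only recovers the trivial fact that $\psi$ is locally bounded above. The argument you need is the direct one above, which shows $\mathrm{Tn}$ jumps strictly at some large $t$, not that it cannot be constant via a comparison to $\Phi_{o,\max}$.
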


\begin{proof}
	Theorem \ref{thm:SOC} implies that
	there exists $\varepsilon_{0}>0$ such that
	$$|f_{0}|^{2}e^{-2\varphi_{0}}|z|^{2N_{0}}e^{-2(1+\varepsilon_{0})\mathrm{Tn}(0)\varphi}$$
	is integrable near $o$.
	Lemma \ref{lem:G_key}  shows that
	$$|f_{0}|^{2}e^{-2\varphi_{0}}e^{-2\big(1+\frac{\varepsilon_{0}}{2}\big)\mathrm{Tn}(0)\varphi}-
	|f_{0}|^{2}e^{-2\varphi_{0}}e^{-2\max\big\{\big(1+\frac{\varepsilon_{0}}{2}\big)\mathrm{Tn}(0)\varphi,\frac{2+\varepsilon_{0}}{\varepsilon_{0}}\log|z|^{N_{0}}\big\}}$$
	is integrable near $o$.
	
	Note that for any $t>0$,
	$$
	|f_{0}|^{2}e^{-2\varphi_{0}}\left(e^{2t\psi}e^{-2\big(1+\frac{\varepsilon_{0}}{2}\big)\mathrm{Tn}(0)\varphi}-
	e^{2t\psi}e^{-2\max\big\{\big(1+\frac{\varepsilon_{0}}{2}\big)\mathrm{Tn}(0)\varphi,\frac{2+\varepsilon_{0}}{\varepsilon_{0}}\log|z|^{N_{0}}\big\}}\right)
	$$
	is integrable near $o$
	and
	\begin{equation*}
		\begin{split}
			&|f_{0}|^{2}e^{-2\varphi_{0}}e^{2t\psi}e^{-2\max\big\{\big(1+\frac{\varepsilon_{0}}{2}\big)\mathrm{Tn}(0)\varphi,\frac{2+\varepsilon_{0}}{\varepsilon_{0}}\log|z|^{N_{0}}\big\}}
			\\\leq&
			C|f_{0}|^{2}e^{-2\varphi_{0}}|z|^{2t\nu(\psi,o)}e^{-2\max\big\{\big(1+\frac{\varepsilon_{0}}{2}\big)\mathrm{Tn}(0)\varphi,\frac{2+\varepsilon_{0}}{\varepsilon_{0}}\log|z|^{N_{0}}\big\}}
			\\\leq&
			C|f_{0}|^{2}e^{-2\varphi_{0}}|z|^{2t\nu(\psi,o)}e^{-2\frac{(2+\epsilon_0)N_{0}}{\varepsilon_{0}}\log|z|}
		\end{split}
	\end{equation*}
	near $o$, then it is clear that for any
	$t>\frac{2+\epsilon_0}{\varepsilon_{0}}\frac{N_0}{\nu(\psi,o)}$,
	\[|f_{0}|^{2}e^{-2\varphi_{0}}e^{2t\psi}e^{-2(1+\frac{\varepsilon_{0}}{2})\mathrm{Tn}(0)\varphi}\]
	is integrable near $o$, which shows
	$\mathrm{Tn}(t)>\big(1+\frac{\varepsilon_{0}}{2}\big)\mathrm{Tn}(0)$ for any $t>\frac{2+\epsilon_0}{\varepsilon_{0}}\frac{N_0}{\nu(\psi,o)}$.
	Then the concavity of $\mathrm{Tn}(t)$ implies that $\mathrm{Tn}(t)$ is strictly increasing near $0$.
\end{proof}

Denote that $b_0:=\lim_{t\rightarrow0+0}\frac{\mathrm{Tn}(0)-\mathrm{Tn}(-t)}{t}$. Lemma \ref{lem:jialidun0131} shows that

\begin{Lemma}
	\label{lem:upper}
	$|f_{0}|^{2}e^{-2\varphi_{0}}e^{-2\max\{\mathrm{Tn}(0)\varphi,\frac{\mathrm{Tn}(0)}{b_{0}}\psi\}}$
	is not integrable near $o$.
\end{Lemma}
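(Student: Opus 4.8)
\textbf{Proof plan for Lemma \ref{lem:upper}.}
The plan is to apply Lemma \ref{lem:jialidun0131} from the real-analysis section with the right dictionary of substitutions, and then unwind what it says about the quantity $\mathrm{Tn}(0)$. First I would set up the translation: take $g^2 \sim |f_0|^2 e^{-2\varphi_0}$, $u \sim \varphi$, $v \sim \psi$, so that the auxiliary function $A_{u,v}(t)$ of Section 3 becomes exactly $\sup\{c : |f_0|^2 e^{-2\varphi_0} e^{2(t\psi - c\varphi)}\text{ is integrable near }o\}$, i.e. $A_{u,v}(t) = c_o(\varphi, t\psi) = \mathrm{Tn}(t)$. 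I should check that the hypotheses of Section 3 are met: $u = \varphi$ and $v = \psi$ are locally bounded above near $o$ (being plurisubharmonic), and $\mathrm{Tn}(t) = A_{u,v}(t)$ lies in $(0,+\infty]$ near $t_0 = 0$ — positivity holds because $|f_0|^2 e^{-2\varphi_0}$ is integrable near $o$ so $\mathrm{Tn}(0) > 0$, and on a small interval around $0$ the values stay positive by monotonicity and concavity.

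Next I would invoke Lemma \ref{lem:strict_decreasing}, which tells us $\mathrm{Tn}(t) = A_{u,v}(t)$ is strictly increasing on an interval $(-\delta,\delta)$ around $t_0 = 0$; this is precisely the standing hypothesis of Lemma \ref{lem:jialidun0131}. With $t_0 = 0$, the quantity $\lim_{\Delta t\to 0-0}\frac{A_{u,v}(\Delta t) - A_{u,v}(0)}{\Delta t}$ appearing in Lemma \ref{lem:jialidun0131} is exactly the left derivative of $\mathrm{Tn}$ at $0$, which equals $\lim_{t\to 0+0}\frac{\mathrm{Tn}(0) - \mathrm{Tn}(-t)}{t} = b_0$ by definition. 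Lemma \ref{lem:jialidun0131} then gives, for all $b \in (0, b_0]$ (the key point being that the endpoint $b = b_0$ is included),
\begin{equation*}
	A_{\max\{u,\frac{1}{b}v\},v}(0) = A_{u,v}(0) = \mathrm{Tn}(0).
\end{equation*}
Applying this with $b = b_0$ and translating back: $A_{\max\{\varphi, \frac{1}{b_0}\psi\}, \psi}(0) = \mathrm{Tn}(0)$, which by the definition of $A$ means $\sup\{c : |f_0|^2 e^{-2\varphi_0} e^{-2c\max\{\varphi, \frac{1}{b_0}\psi\}}\text{ is integrable near }o\} = \mathrm{Tn}(0)$. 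Since at $c = \mathrm{Tn}(0)$ the supremum is attained as a threshold (and not exceeded), $|f_0|^2 e^{-2\varphi_0} e^{-2\mathrm{Tn}(0)\max\{\varphi, \frac{1}{b_0}\psi\}}$ is \emph{not} integrable near $o$; note $\mathrm{Tn}(0)\max\{\varphi,\frac{1}{b_0}\psi\} = \max\{\mathrm{Tn}(0)\varphi, \frac{\mathrm{Tn}(0)}{b_0}\psi\}$, which is exactly the claimed non-integrability.

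The main obstacle is a subtle one at the threshold value $c = \mathrm{Tn}(0)$: the definition of $A_{u,v}$ as a supremum does not immediately say whether integrability holds \emph{at} the supremum. For the function $u = \varphi$ itself, non-integrability at $c = \mathrm{Tn}(0) = A_{u,v}(0)$ is built into the fact that $A_{u,v}$ is strictly increasing at $0$ (there is no $\varepsilon$ room), and this is exactly how it is used inside the proof of Lemma \ref{lem:hanjiangxue1}. So the real content to be careful about is transferring this to $\max\{\varphi,\frac{1}{b_0}\psi\}$: I would argue that $A_{\max\{u,\frac1{b}v\},v}(t)$ is also strictly increasing near $0$ — indeed it dominates $A_{u,v}(t)$ pointwise and equals it at $t = 0$ for $b \le b_0$, and one can bound its slope — so that the same ``no $\varepsilon$ room'' phenomenon forces non-integrability precisely at the threshold $c = \mathrm{Tn}(0)$. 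Alternatively, and more cleanly, I would note that Lemma \ref{lem:jialidun0131} is stated in a form where equality of the two $A$-values at $t_0$ together with strict monotonicity of $A_{u,v}$ already encodes that neither side is integrable at the common threshold, so the conclusion of Lemma \ref{lem:upper} is immediate once the substitution is in place.
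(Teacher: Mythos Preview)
Your approach is exactly the paper's: it simply states ``Lemma \ref{lem:jialidun0131} shows that'' before Lemma \ref{lem:upper}. Your dictionary $g^2\sim |f_0|^2e^{-2\varphi_0}$, $u\sim\varphi$, $v\sim\psi$, $t_0=0$ is correct, and Lemma \ref{lem:strict_decreasing} supplies the strict monotonicity hypothesis.

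The threshold issue you flag is real, but neither of your proposed resolutions is complete. Strict monotonicity of $A_{\max\{u,\frac{1}{b_0}v\},v}$ near $0$ does \emph{not} by itself force non-integrability at the supremum, and your ``alternatively'' paragraph simply asserts the conclusion. The clean fix---which the paper takes for granted throughout---is one application of strong openness (Theorem \ref{thm:SOC}). Write $w:=\max\{\varphi,\frac{1}{b_0}\psi\}$; this is plurisubharmonic and (after subtracting a constant) negative near $o$. Pick any $c_j\downarrow \mathrm{Tn}(0)$; then $\varphi_0+c_jw$ increases to $\varphi_0+\mathrm{Tn}(0)w$. If $|f_0|^2e^{-2\varphi_0-2\mathrm{Tn}(0)w}$ were integrable, Theorem \ref{thm:SOC} would give $(f_{0,l},o)\in\mathcal{I}(\varphi_0+c_jw)_o$ for all $l$ and all large $j$, i.e.\ $|f_0|^2e^{-2\varphi_0-2c_jw}$ integrable with $c_j>\mathrm{Tn}(0)=A_{w,\psi}(0)$, contradicting the definition of $A_{w,\psi}(0)$ as a supremum. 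In short: for plurisubharmonic $w$, the jumping-number supremum is never attained, by strong openness---so $A_{\max\{\varphi,\frac{1}{b_0}\psi\},\psi}(0)=\mathrm{Tn}(0)$ immediately yields the non-integrability claimed.
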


Lemma \ref{lem:hanjiangxue3} shows that

\begin{Lemma}
	\label{lem:lower}
	For any $b>b_{0}$,
	$|f_{0}|^{2}e^{-2\varphi_{0}}e^{-2\max\{\mathrm{Tn}(0)\varphi,\frac{1}{b}\mathrm{Tn}(0)\psi\}}$
	is integrable near $o$.
\end{Lemma}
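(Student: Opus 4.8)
The plan is to obtain Lemma \ref{lem:lower} as a direct specialization of the abstract real–analysis statement Lemma \ref{lem:hanjiangxue3}. Concretely, I would take $g^2 = |f_0|^2 e^{-2\varphi_0}$, $u = \varphi$, $v = \psi$ and $t_0 = 0$ in the notation of Section~3, so that the auxiliary function $A_{u,v}(t)$ becomes exactly the Tian function $\mathrm{Tn}(t) = c_o(\varphi,t\psi)$. Under this identification the number $b_0 := \lim_{t\to 0+0}\frac{\mathrm{Tn}(0)-\mathrm{Tn}(-t)}{t}$ is precisely the left derivative $\lim_{\Delta t\to 0-0}\frac{A_{u,v}(t_0+\Delta t)-A_{u,v}(t_0)}{\Delta t}$ that appears in Lemma \ref{lem:hanjiangxue3} (substitute $\Delta t=-t$). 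Moreover Lemma \ref{lem:strict_decreasing} says $\mathrm{Tn}(t)$ is strictly increasing near $0$, which is the hypothesis required to invoke Lemma \ref{lem:hanjiangxue3}.

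Next I would fix $b>b_0$ and apply Lemma \ref{lem:hanjiangxue3}, which then gives $A_{\max\{\varphi,\frac{1}{b}\psi\},\psi}(0)\neq \mathrm{Tn}(0)$. Since $\max\{\varphi,\frac{1}{b}\psi\}\ge\varphi$, the monotonicity of $A$ in its first argument forces $A_{\max\{\varphi,\frac{1}{b}\psi\},\psi}(0)\ge \mathrm{Tn}(0)$, and hence the strict inequality $A_{\max\{\varphi,\frac{1}{b}\psi\},\psi}(0)> \mathrm{Tn}(0)$. By the definition of $A$ as a supremum of integrability exponents, there is a number $c>\mathrm{Tn}(0)$ for which $|f_0|^2 e^{-2\varphi_0}e^{-2c\max\{\varphi,\frac{1}{b}\psi\}}$ is integrable near $o$. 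Shrinking the neighborhood of $o$ so that $\varphi<0$ and $\psi<0$ (which changes none of the integrability assertions), we have $\max\{\varphi,\frac{1}{b}\psi\}<0$, so lowering the exponent from $c$ down to $\mathrm{Tn}(0)\le c$ only decreases the integrand; combined with $\mathrm{Tn}(0)\ge 0$ (so that $\mathrm{Tn}(0)\max\{\varphi,\frac{1}{b}\psi\}=\max\{\mathrm{Tn}(0)\varphi,\frac{1}{b}\mathrm{Tn}(0)\psi\}$), this yields that $|f_0|^2 e^{-2\varphi_0}e^{-2\max\{\mathrm{Tn}(0)\varphi,\frac{1}{b}\mathrm{Tn}(0)\psi\}}$ is integrable near $o$, as claimed.

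I do not anticipate a genuine obstacle: all the real work is already encapsulated in Lemma \ref{lem:hanjiangxue3}, and the rest is bookkeeping — verifying $A_{\varphi,\psi}=\mathrm{Tn}$, that the two descriptions of $b_0$ agree, that $\mathrm{Tn}(0)\ge 0$ (immediate from assumption~(1), since $c=0$ is an admissible exponent), and that $\mathrm{Tn}(t)$ stays in $(0,+\infty]$ near $0$ so that Lemma \ref{lem:hanjiangxue3} applies. The only mild point is the passage from ``the supremum exceeds $\mathrm{Tn}(0)$'' to ``integrability genuinely holds at the exponent $\mathrm{Tn}(0)$,'' which is handled purely by the monotonicity of $c\mapsto e^{-2c\max\{\varphi,\frac{1}{b}\psi\}}$ on a neighborhood where $\max\{\varphi,\frac{1}{b}\psi\}<0$; in particular no appeal to the strong openness property is needed at this step.
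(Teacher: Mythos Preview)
Your proposal is correct and follows exactly the paper's approach: the paper derives Lemma~\ref{lem:lower} as an immediate consequence of Lemma~\ref{lem:hanjiangxue3} with the same identifications $g^2=|f_0|^2e^{-2\varphi_0}$, $u=\varphi$, $v=\psi$, $t_0=0$, using Lemma~\ref{lem:strict_decreasing} to supply the strict increase hypothesis. The bookkeeping you spell out (matching $b_0$ with the left derivative, using $\max\{\varphi,\tfrac{1}{b}\psi\}\ge\varphi$ to upgrade $\neq$ to $>$, and then lowering the exponent from $c$ to $\mathrm{Tn}(0)$ via negativity of $\max\{\varphi,\tfrac{1}{b}\psi\}$) is exactly what the paper leaves implicit.
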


The combination of Lemma \ref{lem:upper} and Lemma \ref{lem:lower} shows

\begin{Lemma}
	\label{lem:combi}
	$|f_{0}|^{2}e^{-2\varphi_{0}}e^{-2\max\{\mathrm{Tn}(0)\varphi,\frac{1}{b}\mathrm{Tn}(0)\psi\}}$
	is not integrable near $o$ if and only if $b\leq b_{0}$.
\end{Lemma}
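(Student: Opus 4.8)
The plan is to combine the two preceding lemmas into the stated ``if and only if'' by a simple monotonicity argument, using that $b_0$ is precisely the left-hand derivative of $\mathrm{Tn}$ at $0$. The key observation is that the function $b\mapsto \max\{\mathrm{Tn}(0)\varphi,\tfrac{1}{b}\mathrm{Tn}(0)\psi\}$ is monotone in $b$: since $\psi\le 0$ near $o$ (after shrinking the neighborhood and subtracting a constant, which does not affect integrability), for $b_1\le b_2$ we have $\tfrac{1}{b_1}\psi\le \tfrac{1}{b_2}\psi$, hence
\[
\max\left\{\mathrm{Tn}(0)\varphi,\tfrac{1}{b_1}\mathrm{Tn}(0)\psi\right\}\le \max\left\{\mathrm{Tn}(0)\varphi,\tfrac{1}{b_2}\mathrm{Tn}(0)\psi\right\}.
\]
Consequently, the non-integrability of $|f_{0}|^{2}e^{-2\varphi_{0}}e^{-2\max\{\mathrm{Tn}(0)\varphi,\frac{1}{b}\mathrm{Tn}(0)\psi\}}$ is a decreasing property in $b$: if it holds for some $b$, it holds for every smaller positive $b$ as well (larger weight forces non-integrability to persist only in the other direction, so I should be careful here — non-integrability for a \emph{larger} weight does not imply it for a smaller one in general, but here we go the other way: smaller $b$ gives larger weight, and non-integrability of a larger weight follows from non-integrability of a smaller one). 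Let me restate: the weight increases as $b$ decreases, so $e^{-2(\text{weight})}$ increases as $b$ decreases, so the integral increases as $b$ decreases; thus non-integrability at $b$ implies non-integrability at every $b'\le b$.

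With this monotonicity in hand, the proof is immediate. First, Lemma \ref{lem:upper} gives non-integrability at $b=b_0$, so by the monotonicity just described, non-integrability holds for every $b\le b_0$; this is the ``if'' direction. For the ``only if'' direction, suppose non-integrability holds at some $b>b_0$. But Lemma \ref{lem:lower} asserts that for every $b>b_0$ the function $|f_{0}|^{2}e^{-2\varphi_{0}}e^{-2\max\{\mathrm{Tn}(0)\varphi,\frac{1}{b}\mathrm{Tn}(0)\psi\}}$ \emph{is} integrable near $o$, a contradiction. Hence non-integrability forces $b\le b_0$, completing the equivalence.

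I do not anticipate any serious obstacle; the lemma is essentially a packaging statement. The only point requiring a little care is the monotonicity of the weight in $b$ and the direction of the resulting implication — one must ensure $\psi\le 0$ (legitimate after shrinking the neighborhood, since $\psi$ is plurisubharmonic hence locally bounded above, and subtracting a constant from $\psi$ only rescales the exponential by a fixed positive constant when $b$ is fixed, but changing $b$ interacts with the constant, so it is cleanest simply to assume $\psi<0$ on the neighborhood at the outset, as was already done implicitly for $\varphi$ in the earlier lemmas). Once that normalization is fixed, the argument is two lines invoking Lemma \ref{lem:upper} and Lemma \ref{lem:lower}.

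\begin{proof}
	Without loss of generality we may assume $\psi<0$ and $\varphi<0$ on the neighborhood of $o$ under consideration; this does not affect the integrability statements, since multiplying $e^{-2\max\{\mathrm{Tn}(0)\varphi,\frac{1}{b}\mathrm{Tn}(0)\psi\}}$ by a fixed positive constant changes nothing.

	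For $0<b_{1}\le b_{2}$ we have $\frac{1}{b_{1}}\psi\le\frac{1}{b_{2}}\psi$ near $o$ (as $\psi<0$), hence
	\[
	\max\left\{\mathrm{Tn}(0)\varphi,\tfrac{1}{b_{1}}\mathrm{Tn}(0)\psi\right\}\le\max\left\{\mathrm{Tn}(0)\varphi,\tfrac{1}{b_{2}}\mathrm{Tn}(0)\psi\right\}
	\]
	near $o$, and therefore
	\[
	|f_{0}|^{2}e^{-2\varphi_{0}}e^{-2\max\{\mathrm{Tn}(0)\varphi,\frac{1}{b_{1}}\mathrm{Tn}(0)\psi\}}\ge|f_{0}|^{2}e^{-2\varphi_{0}}e^{-2\max\{\mathrm{Tn}(0)\varphi,\frac{1}{b_{2}}\mathrm{Tn}(0)\psi\}}
	\]
	near $o$. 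In particular, if the function associated to $b_{2}$ is not integrable near $o$, then so is the one associated to $b_{1}$ for every $b_{1}\le b_{2}$.

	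By Lemma \ref{lem:upper}, $|f_{0}|^{2}e^{-2\varphi_{0}}e^{-2\max\{\mathrm{Tn}(0)\varphi,\frac{\mathrm{Tn}(0)}{b_{0}}\psi\}}$ is not integrable near $o$. Taking $b_{2}=b_{0}$ in the monotonicity above, we conclude that $|f_{0}|^{2}e^{-2\varphi_{0}}e^{-2\max\{\mathrm{Tn}(0)\varphi,\frac{1}{b}\mathrm{Tn}(0)\psi\}}$ is not integrable near $o$ for every $b\le b_{0}$.

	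Conversely, by Lemma \ref{lem:lower}, for every $b>b_{0}$ the function $|f_{0}|^{2}e^{-2\varphi_{0}}e^{-2\max\{\mathrm{Tn}(0)\varphi,\frac{1}{b}\mathrm{Tn}(0)\psi\}}$ is integrable near $o$. Hence non-integrability near $o$ forces $b\le b_{0}$.

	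Combining the two directions, $|f_{0}|^{2}e^{-2\varphi_{0}}e^{-2\max\{\mathrm{Tn}(0)\varphi,\frac{1}{b}\mathrm{Tn}(0)\psi\}}$ is not integrable near $o$ if and only if $b\le b_{0}$.
\end{proof}
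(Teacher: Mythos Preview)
Your proof is correct and follows the same approach as the paper, which simply states that the lemma follows by combining Lemma~\ref{lem:upper} and Lemma~\ref{lem:lower}. You have made explicit the monotonicity step (smaller $b$ gives a larger weight, hence non-integrability at $b_0$ propagates to all $b\le b_0$) that the paper leaves to the reader; note that the normalization $\psi<0$ near $o$ is justified by assumption~(3) of Section~\ref{sec:tian function}, since $\psi\le\nu_0\log|z|+O(1)$ forces $\psi<0$ on a sufficiently small neighborhood.
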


Using Lemma \ref{lem:combi},
considering $\max\big\{\mathrm{Tn}(0)\varphi,\frac{1}{b_{0}}\mathrm{Tn}(0)\psi\big\}$ instead of $\varphi$ in Remark \ref{rem:max_existence},
we obtain

\begin{Lemma}
	\label{lem:low}
	There exists a local Zhou weight $\Phi_{o,\max}$ related to $|f_{0}|^{2}e^{-2\varphi_{0}}$
	such that
	
	(1) $\sigma(\psi,\Phi_{o,\max})=\frac{b_{0}}{\mathrm{Tn}(0)}$;
	
	(2) $\Phi_{o,\max}\geq \max\big\{\mathrm{Tn}(0)\varphi,\frac{1}{b_{0}}\mathrm{Tn}(0)\psi\big\}+O(1)$ near $o$.
	
\end{Lemma}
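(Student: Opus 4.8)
The plan is to apply Remark \ref{rem:max_existence} with the plurisubharmonic function denoted $\varphi$ there replaced by $\Psi:=\max\big\{\mathrm{Tn}(0)\varphi,\frac{\mathrm{Tn}(0)}{b_{0}}\psi\big\}$, and then extract both conclusions from the resulting weight.

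First I would check the two hypotheses of Remark \ref{rem:max_existence} for $\Psi$. Since $\Psi\ge\mathrm{Tn}(0)\varphi$ we have $|f_{0}|^{2}e^{-2\varphi_{0}}|z|^{2N_{0}}e^{-2\Psi}\le|f_{0}|^{2}e^{-2\varphi_{0}}|z|^{2N_{0}}e^{-2\mathrm{Tn}(0)\varphi}$, which is integrable near $o$ for $N_{0}\gg0$ by assumption $(2)$ preceding this subsection; this is the first hypothesis. For the second hypothesis, Lemma \ref{lem:upper} (equivalently Lemma \ref{lem:combi} with $b=b_{0}$) states exactly that $|f_{0}|^{2}e^{-2\varphi_{0}}e^{-2\Psi}$ is not integrable near $o$, i.e. $(f_{0},o)\notin\mathcal{I}(\varphi_{0}+\Psi)_{o}$. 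Hence Remark \ref{rem:max_existence} yields a local Zhou weight $\Phi_{o,\max}$ related to $|f_{0}|^{2}e^{-2\varphi_{0}}$ with $\Phi_{o,\max}\ge\Psi$ near $o$, which is precisely conclusion $(2)$ of the Lemma.

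Next I would identify $\sigma(\psi,\Phi_{o,\max})$. From $\Phi_{o,\max}\ge\Psi\ge\frac{\mathrm{Tn}(0)}{b_{0}}\psi$ near $o$ we get $\psi\le\frac{b_{0}}{\mathrm{Tn}(0)}\Phi_{o,\max}+O(1)$, hence $\sigma(\psi,\Phi_{o,\max})\ge\frac{b_{0}}{\mathrm{Tn}(0)}$. For the opposite inequality, suppose $\sigma(\psi,\Phi_{o,\max})>\frac{b_{0}}{\mathrm{Tn}(0)}$ and pick $b$ with $b_{0}<b<\mathrm{Tn}(0)\,\sigma(\psi,\Phi_{o,\max})$. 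Since $\Phi_{o,\max}$ may be taken negative near $o$ (up to $O(1)$) and $\psi\le\sigma(\psi,\Phi_{o,\max})\Phi_{o,\max}+O(1)$, we obtain $\psi\le\frac{b}{\mathrm{Tn}(0)}\Phi_{o,\max}+O(1)$, i.e. $\frac{1}{b}\mathrm{Tn}(0)\psi\le\Phi_{o,\max}+O(1)$; combining with $\mathrm{Tn}(0)\varphi\le\Phi_{o,\max}+O(1)$ (from conclusion $(2)$) gives $\max\{\mathrm{Tn}(0)\varphi,\frac{1}{b}\mathrm{Tn}(0)\psi\}\le\Phi_{o,\max}+O(1)$. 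Because $|f_{0}|^{2}e^{-2\varphi_{0}}e^{-2\Phi_{o,\max}}$ is not integrable near $o$ by property $(2)$ of Definition \ref{def:max_relat}, it follows that $|f_{0}|^{2}e^{-2\varphi_{0}}e^{-2\max\{\mathrm{Tn}(0)\varphi,\frac{1}{b}\mathrm{Tn}(0)\psi\}}$ is not integrable near $o$, so Lemma \ref{lem:combi} forces $b\le b_{0}$, a contradiction. Therefore $\sigma(\psi,\Phi_{o,\max})=\frac{b_{0}}{\mathrm{Tn}(0)}$, which is conclusion $(1)$.

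The only genuinely substantive input is the non-membership $(f_{0},o)\notin\mathcal{I}(\varphi_{0}+\Psi)_{o}$, and this has already been isolated as Lemma \ref{lem:upper}/Lemma \ref{lem:combi} through the concavity analysis of $\mathrm{Tn}$; the remainder is bookkeeping with the definitions of relative type and of local Zhou weight. I would be careful to fix a single small neighborhood of $o$ on which all the integrability statements and all the $O(1)$ comparisons hold simultaneously, so that the chain of implications above is uniform.
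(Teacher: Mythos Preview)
Your proposal is correct and follows exactly the approach the paper takes: apply Remark~\ref{rem:max_existence} with $\Psi=\max\{\mathrm{Tn}(0)\varphi,\frac{\mathrm{Tn}(0)}{b_0}\psi\}$, using Lemma~\ref{lem:combi} (or Lemma~\ref{lem:upper}) to supply the non-integrability hypothesis. The paper states this in one line and leaves conclusion~(1) implicit; your contradiction argument for the upper bound on $\sigma(\psi,\Phi_{o,\max})$ simply unpacks what the paper regards as routine from Lemma~\ref{lem:combi} and the definition of local Zhou weight.
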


The following property of Tian functions $\mathrm{Tn}(t)$ will be used in the proof of Theorem \ref{thm:main_value}.

\begin{Proposition}
	\label{p:sidediv}Assume that there exists $N\gg0$ such that $\varphi\geq N\log|z|$ near $o$. The following inequality holds
	\begin{equation*}
		\begin{split}
			&\frac{1}{\mathrm{Tn}(0)}\lim_{t\to0+0}\frac{\mathrm{Tn}(0)-\mathrm{Tn}(t)}{-t}
			\\\leq&
			\liminf_{t_{1}\to+\infty}\frac{1}{2t_{1}}\frac{\int_{\{\mathrm{Tn}(0)\varphi<-t_{1}\}\cap U}|f_{0}|^{2}e^{-2\varphi_{0}}(-2\psi)}{\int_{\{\mathrm{Tn}(0)\varphi<-t_{1}\}\cap U}|f_{0}|^{2}e^{-2\varphi_{0}}}
			\\\leq&
			\limsup_{t_{1}\to+\infty}\frac{1}{2t_{1}}\frac{\int_{\{\mathrm{Tn}(0)\varphi<-t_{1}\}\cap U}|f_{0}|^{2}e^{-2\varphi_{0}}(-2\psi)}{\int_{\{\mathrm{Tn}(0)\varphi<-t_{1}\}\cap U}|f_{0}|^{2}e^{-2\varphi_{0}}}
			\\\leq &\frac{1}{\mathrm{Tn}(0)}\lim_{t\to0+0}\frac{\mathrm{Tn}(0)-\mathrm{Tn}(-t)}{t}.
		\end{split}
	\end{equation*}
\end{Proposition}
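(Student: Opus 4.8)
The plan is to relate the one-sided derivatives of the Tian function $\mathrm{Tn}(t)$ at $t=0$ to the integral averages of $-2\psi$ over the sublevel sets $\{\mathrm{Tn}(0)\varphi<-t_1\}$ by testing the defining integrability of $\mathrm{Tn}$ against these averages. Write $\varphi_{\ast}:=\mathrm{Tn}(0)\varphi$, which satisfies $\varphi_\ast\ge \mathrm{Tn}(0)N\log|z|+O(1)$, and note that by definition of $\mathrm{Tn}(t)=c_o(\varphi,t\psi)$ and Lemma \ref{lem:strict_decreasing} the function $\mathrm{Tn}$ is strictly increasing and concave near $0$, so the two one-sided limits $\lim_{t\to0+0}\frac{\mathrm{Tn}(0)-\mathrm{Tn}(-t)}{t}$ and $\lim_{t\to0+0}\frac{\mathrm{Tn}(0)-\mathrm{Tn}(t)}{-t}$ both exist and the former is $\ge$ the latter. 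For the left-hand inequality (lower bound for the liminf), fix $b<\frac{1}{\mathrm{Tn}(0)}\lim_{t\to0+0}\frac{\mathrm{Tn}(0)-\mathrm{Tn}(t)}{-t}$; then for some small $\delta>0$ the concavity gives $\mathrm{Tn}(\delta)>\mathrm{Tn}(0)+b\,\mathrm{Tn}(0)\,\delta$, so $|f_0|^2e^{-2\varphi_0}e^{2\delta\psi}e^{-2(1+b\delta)\varphi_\ast}$ is integrable near $o$. Rewriting $e^{2\delta\psi}e^{-2(1+b\delta)\varphi_\ast}=e^{2\delta(\psi-b\varphi_\ast)}e^{-2\varphi_\ast}$ and using that $|f_0|^2e^{-2\varphi_0}e^{-2\varphi_\ast}$ is \emph{not} integrable, a layer-cake/Fubini argument on the sets $\{\varphi_\ast<-t_1\}$ forces $\psi\le b\varphi_\ast$ to fail to dominate on a set of positive weighted measure arbitrarily close to $o$; more precisely one shows
\[
\liminf_{t_1\to+\infty}\frac{\int_{\{\varphi_\ast<-t_1\}\cap U}|f_0|^2e^{-2\varphi_0}(-\psi)}{t_1\int_{\{\varphi_\ast<-t_1\}\cap U}|f_0|^2e^{-2\varphi_0}}\ge b,
\]
and letting $b$ increase to the limit gives the first inequality after dividing by $\mathrm{Tn}(0)$ inside and recalling $-2\psi=2(-\psi)$.

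The middle inequality $\liminf\le\limsup$ is trivial. For the right-hand inequality (upper bound for the limsup), fix $b>\frac{1}{\mathrm{Tn}(0)}\lim_{t\to0+0}\frac{\mathrm{Tn}(0)-\mathrm{Tn}(-t)}{t}$; concavity then gives that for all small $\delta>0$, $\mathrm{Tn}(-\delta)<\mathrm{Tn}(0)-b\,\mathrm{Tn}(0)\,\delta$, i.e. $|f_0|^2e^{-2\varphi_0}e^{-2\delta\psi}e^{-2(1-b\delta)\varphi_\ast}$ is \emph{not} integrable near $o$, hence $|f_0|^2e^{-2\varphi_0}e^{2\delta(b\varphi_\ast-\psi)}e^{-2\varphi_\ast}$ is not integrable. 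Now I would combine this with the key estimate of Lemma \ref{lem:JM}, which (applied to $\varphi\sim\varphi_\ast$ or an appropriate shift) gives a two-sided control $C_1 e^{-2t_1}\le\int_{\{\varphi_\ast<-t_1\}\cap U}|f_0|^2e^{-2\varphi_0}\le C_2 e^{-2t_1(1-\varepsilon)}$ for small $\varepsilon$, so that the non-integrability above, re-expressed via the coarea decomposition, yields
\[
\int_{\{\varphi_\ast<-t_1\}\cap U}|f_0|^2e^{-2\varphi_0}e^{2\delta(b\varphi_\ast-\psi)}\ \text{diverges as a series in }t_1,
\]
which forces the weighted average of $b\varphi_\ast-\psi$, i.e. roughly $-bt_1-\overline{\psi}(t_1)$ where $\overline\psi(t_1)$ is the weighted mean of $\psi$ on the shell, to not go to $-\infty$ too fast; unwinding this gives $\limsup_{t_1\to+\infty}\frac{\int(-\psi)}{t_1\int(\cdot)}\le b$. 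Letting $b$ decrease to the limit finishes the proof.

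I expect the main obstacle to be the right-hand inequality: converting the failure of integrability of $|f_0|^2e^{-2\varphi_0}e^{2\delta(b\varphi_\ast-\psi)-2\varphi_\ast}$ into a clean asymptotic bound on the ratio of shell integrals requires an effective comparison between $\int_{\{\varphi_\ast<-t_1\}}|f_0|^2e^{-2\varphi_0}$ and $e^{-2t_1}$ (so that the weight $e^{-2\varphi_\ast}$ on the shell $\{-t_1-1<\varphi_\ast<-t_1\}$ is comparable to the constant $e^{2t_1}$), together with a Chebyshev-type inequality to pass from the divergence of $\sum_{t_1}e^{2t_1}\int_{\text{shell}}|f_0|^2e^{-2\varphi_0}e^{2\delta(b\varphi_\ast-\psi)}$ to a lower bound on $\int_{\text{shell}}|f_0|^2e^{-2\varphi_0}(-\psi)$; controlling the Jensen-type loss in this step (convexity of $e^{x}$ going the wrong way) is the delicate point, and I would handle it by choosing $\delta\to0$ after $t_1\to+\infty$ and using the concavity of $\mathrm{Tn}$ to make the error terms uniform. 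The left-hand inequality is easier because there the relevant function $|f_0|^2e^{-2\varphi_0}e^{2\delta(\psi-b\varphi_\ast)-2\varphi_\ast}$ \emph{is} integrable and one only needs a one-sided (upper) bound on the shell integrals plus the non-integrability of $|f_0|^2e^{-2\varphi_0}e^{-2\varphi_\ast}$, which is exactly the regime where Lemma \ref{lem:JM} and Lemma \ref{lem:jump_asyp_A} apply directly.
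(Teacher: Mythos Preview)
Your outline has the right overall shape but misses the key mechanism, and for the upper bound it heads in a direction that creates the very difficulty you flag.

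The paper's proof is completely symmetric and uses \emph{integrability} on both sides, never non-integrability. For the upper bound $\limsup\le \frac{1}{\mathrm{Tn}(0)}\lim_{t\to0+}\frac{\mathrm{Tn}(0)-\mathrm{Tn}(-t)}{t}$, the paper tests at $s=-t$ with exponent $c=(1-\varepsilon)\mathrm{Tn}(-t)<\mathrm{Tn}(-t)$, so that $|f_0|^2e^{-2\varphi_0}e^{-2t\psi}e^{-2(1-\varepsilon)\mathrm{Tn}(-t)\varphi}$ \emph{is} integrable near $o$. Because $\varphi\ge N\log|z|$, this gives $\int_{\{\mathrm{Tn}(0)\varphi<-t_1\}}|f_0|^2e^{-2\varphi_0}e^{-2(t\psi+(1-\varepsilon)\mathrm{Tn}(-t)\varphi+t_1)}\to 0$, hence after dividing by the normalizer and invoking Lemma~\ref{lem:jump_asyp_C} (which gives the precise asymptotic $\lim_{t_1\to\infty}\frac{-\log\int_{\{\mathrm{Tn}(0)\varphi<-t_1\}}|f_0|^2e^{-2\varphi_0}}{2t_1}=1$), the $\log$ of the ratio has $\limsup\le 0$. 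The bridge to the weighted average of $-2\psi$ is \emph{Jensen's inequality} for the concave function $\log$: it gives $\log(\text{average of }e^X)\ge\text{average of }X$, which here yields directly an upper bound on $\frac{\int(-2t\psi)\,w}{\int w}$ after replacing $\varphi$ by its sublevel bound $-t_1/\mathrm{Tn}(0)$. One then divides by $t$ and lets $\varepsilon\to 0$, $t\to 0+$. The lower bound is obtained by the identical argument with $t$ replaced by $-t$ (so the integrable function is $|f_0|^2e^{-2\varphi_0}e^{2t\psi}e^{-2(1-\varepsilon)\mathrm{Tn}(t)\varphi}$); Jensen again points the right way.

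Your proposal for the upper bound instead chooses $b>\mathrm{Tn}'(0-)/\mathrm{Tn}(0)$ and works with the \emph{non}-integrable function $|f_0|^2e^{-2\varphi_0}e^{-2\delta\psi}e^{-2(1-b\delta)\varphi_\ast}$. This is exactly why you run into ``Jensen going the wrong way'': to convert divergence of an exponential integral into an upper bound on the linear average you would need the reverse inequality, which is false in general, and a Chebyshev substitute loses the sharp constant needed here. There is no need for shell decompositions or for the two-sided estimate $C_1e^{-2t_1}\le\int\le C_2e^{-2t_1(1-\varepsilon)}$; the single asymptotic from Lemma~\ref{lem:jump_asyp_C} together with Jensen handles both inequalities cleanly once you test integrability at $(1-\varepsilon)\mathrm{Tn}(\pm t)$ rather than non-integrability beyond $\mathrm{Tn}(\pm t)$. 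Your ``layer-cake/Fubini'' step for the lower bound is also left unspecified; the missing ingredient there is again Jensen.
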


\begin{proof}
	We prove Proposition \ref{p:sidediv} in two steps.
	
	\textbf{Step 1.}
	Theorem \ref{thm:SOC} shows that $|f_{0}|^{2}e^{-2\varphi_{0}}e^{-2t\psi}$ is local integrable near $o$ for small enough $t>0$.
	Note that there exists $N\gg0$ such that $\varphi\geq N\log|z|$ near $o$,
	then there exists a neighborhood $U$ of $o$ such that
	for any small enough $t>0$ and $\varepsilon>0$,
	$$\limsup_{t_{1}\to+\infty}\int_{\{\mathrm{Tn}(0)\varphi<-t_{1}\}\cap U}|f_{0}|^{2}e^{-2\varphi_{0}}e^{-2\big(t\psi+(1-\varepsilon)\mathrm{Tn}(-t)\varphi\big)}=0,$$
	which implies that
	$$\limsup_{t_{1}\to+\infty}e^{2t_{1}}\int_{\{\mathrm{Tn}(0)\varphi<-t_{1}\}\cap U}|f_{0}|^{2}e^{-2\varphi_{0}}e^{-2\big(t\psi+(1-\varepsilon)\mathrm{Tn}(-t)\varphi+t_{1}\big)}=0.$$
	Then for large enough $t_{1}>0$,
	$$\int_{\{\mathrm{Tn}(0)\varphi<-t_{1}\}\cap U}|f_{0}|^{2}e^{-2\varphi_{0}}e^{-2\big(t\psi+(1-\varepsilon)\mathrm{Tn}(-t)\varphi+t_{1}\big)}<e^{-2t_{1}},$$
	i.e.
	$$\log\left(\int_{\{\mathrm{Tn}(0)\varphi<-t_{1}\}\cap U}|f_{0}|^{2}e^{-2\varphi_{0}}e^{-2\big(t\psi+(1-\varepsilon)\mathrm{Tn}(-t)\varphi+t_{1}\big)}\right)<-2t_{1}.$$
	Combining with Lemma \ref{lem:jump_asyp_C},
	we obtain
	\begin{equation}
		\label{equ:tiandao}
		\begin{split}
			&\limsup_{t_{1}\to+\infty}\frac{1}{2t_{1}}\log\frac{\int_{\{\mathrm{Tn}(0)\varphi<-t_{1}\}\cap U}|f_{0}|^{2}e^{-2\varphi_{0}}e^{-2\big(t\psi+(1-\varepsilon)\mathrm{Tn}(-t)\varphi+t_{1}\big)}}{\int_{\{\mathrm{Tn}(0)\varphi<-t_{1}\}\cap U}|f_{0}|^{2}e^{-2\varphi_{0}}}
			\\=&
			\limsup_{t_{1}\to+\infty}\frac{1}{2t_{1}}\log\int_{\{\mathrm{Tn}(0)\varphi<-t_{1}\}\cap U}|f_{0}|^{2}e^{-2\varphi_{0}}e^{-2\big(t\psi+(1-\varepsilon)\mathrm{Tn}(-t)\varphi+t_{1}\big)}
			\\&-\lim_{t_{1}\to+\infty}\frac{1}{2t_{1}}\log\int_{\{\mathrm{Tn}(0)\varphi<-t_{1}\}\cap U}|f_{0}|^{2}e^{-2\varphi_{0}}
			\\\leq&1-1=0.
		\end{split}
	\end{equation}

	Jensen's inequality and the concavity of logarithm shows that
	\begin{equation}
		\nonumber
		\begin{split}
			&\log\frac{\int_{\{\mathrm{Tn}(0)\varphi<-t_{1}\}\cap U}|f_{0}|^{2}e^{-2\varphi_{0}}e^{-2\big(t\psi+(1-\varepsilon)\mathrm{Tn}(-t)\varphi+t_{1}\big)}}{\int_{\{\mathrm{Tn}(0)\varphi<-t_{1}\}\cap U}|f_{0}|^{2}e^{-2\varphi_{0}}}
			\\\geq&
			\frac{\int_{\{\mathrm{Tn}(0)\varphi<-t_{1}\}\cap U}|f_{0}|^{2}e^{-2\varphi_{0}}\log\left(e^{-2(t\psi+(1-\varepsilon)\mathrm{Tn}(-t)\varphi)+t_{1}}\right)}{\int_{\{\mathrm{Tn}(0)\varphi<-t_{1}\}\cap U}|f_{0}|^{2}e^{-2\varphi_{0}}}
			\\=&
			\frac{\int_{\{\mathrm{Tn}(0)\varphi<-t_{1}\}\cap U}|f_{0}|^{2}e^{-2\varphi_{0}}\big(-2(t\psi+(1-\varepsilon)\mathrm{Tn}(-t)\varphi+t_{1})\big)}{\int_{\{\mathrm{Tn}(0)\varphi<-t_{1}\}\cap U}|f_{0}|^{2}e^{-2\varphi_{0}}}
			\\\geq&
			\frac{\int_{\{\mathrm{Tn}(0)\varphi<-t_{1}\}\cap U}|f_{0}|^{2}e^{-2\varphi_{0}}\big(-2(t\psi+(1-\varepsilon)\mathrm{Tn}(-t)(-t_{1})\frac{1}{\mathrm{Tn}(0)}+t_{1})\big)}{\int_{\{\mathrm{Tn}(0)\varphi<-t_{1}\}\cap U}|f_{0}|^{2}e^{-2\varphi_{0}}}.
		\end{split}
	\end{equation}
	Combining with inequality \eqref{equ:tiandao},
	we obtain that
	$$\limsup_{t_{1}\to+\infty}\frac{1}{2t_{1}}\frac{\int_{\{\mathrm{Tn}(0)\varphi<-t_{1}\}\cap U}|f_{0}|^{2}e^{-2\varphi_{0}}\big(-2(t\psi+(1-\epsilon)\mathrm{Tn}(-t)(-t_{1})\frac{1}{\mathrm{Tn}(0)}+t_{1})\big)}{\int_{\{\mathrm{Tn}(0)\varphi<-t_{1}\}\cap U}|f_{0}|^{2}e^{-2\varphi_{0}}}\leq 0.$$
	Letting $\varepsilon\to0+0$,
	we obtain
	\begin{equation}
		\label{equ:degang20190406c}
		\limsup_{t_{1}\to+\infty}\frac{1}{2t_{1}}\frac{\int_{\{\mathrm{Tn}(0)\varphi<-t_{1}\}\cap U}|f_{0}|^{2}e^{-2\varphi_{0}}(-2\psi)}{\int_{\{\mathrm{Tn}(0)\varphi<-t_{1}\}\cap U}|f_{0}|^{2}e^{-2\varphi_{0}}}\leq \frac{1}{\mathrm{Tn}(0)}\lim_{t\to0+0}\frac{\mathrm{Tn}(0)-\mathrm{Tn}(-t)}{t}.
	\end{equation}
	
	\
	
	\textbf{Step 2.} Theorem \ref{thm:SOC} shows that $|f_{0}|^{2}e^{-2\varphi_{0}}e^{-2t\psi}$ is local integrable near $o$ for small enough $t>0$.
	Note that there exists $N\gg0$ such that $\varphi\geq N\log|z|$ near $o$,
	then there exists a neighborhood $U$ of $o$ such that
	for any small enough $t>0$ and $\varepsilon>0$,
	$$\lim_{t_{1}\to+\infty}\int_{\{\mathrm{Tn}(0)\varphi<-t_{1}\}\cap U}|f_{0}|^{2}e^{-2\varphi_{0}}e^{-2\big(-t\psi+(1-\varepsilon)\mathrm{Tn}(t)\varphi\big)}=0,$$
	which implies that
	$$\lim_{t_{1}\to+\infty}e^{2t_{1}}\int_{\{\mathrm{Tn}(0)\varphi<-t_{1}\}\cap U}|f_{0}|^{2}e^{-2\varphi_{0}}e^{-2\big(-t\psi+(1-\varepsilon)\mathrm{Tn}(t)\varphi+t_{1}\big)}=0.$$
	Then for large enough $t_{1}>0$,
	$$\int_{\{\mathrm{Tn}(0)\varphi<-t_{1}\}\cap U}|f_{0}|^{2}e^{-2\varphi_{0}}e^{-2\big(-t\psi+(1-\varepsilon)\mathrm{Tn}(t)\varphi+t_{1}\big)}<e^{-2t_{1}},$$
	i.e.
	$$\log\left(\int_{\{\mathrm{Tn}(0)\varphi<-t_{1}\}\cap U}|f_{0}|^{2}e^{-2\varphi_{0}}e^{-2\big(-t\psi+(1-\varepsilon)\mathrm{Tn}(t)\varphi+t_{1}\big)}\right)<-2t_{1}.$$
	Combining with Lemma \ref{lem:jump_asyp_C},
	we obtain
	\begin{equation}
		\label{equ:degang20230406}
		\begin{split}
			&\limsup_{t_{1}\to+\infty}\frac{1}{2t_{1}}\log\frac{\int_{\{\mathrm{Tn}(0)\varphi<-t_{1}\}\cap U}|f_{0}|^{2}e^{-2\varphi_{0}}e^{-2\big(-t\psi+(1-\varepsilon)\mathrm{Tn}(t)\varphi+t_{1}\big)}}{\int_{\{\mathrm{Tn}(0)\varphi<-t_{1}\}\cap U}|f_{0}|^{2}e^{-2\varphi_{0}}}
			\\=&
			\limsup_{t_{1}\to+\infty}\frac{1}{2t_{1}}\log\int_{\{\mathrm{Tn}(0)\varphi<-t_{1}\}\cap U}|f_{0}|^{2}e^{-2\varphi_{0}}e^{-2\big(-t\psi+(1-\varepsilon)\mathrm{Tn}(t)\varphi+t_{1}\big)}
			\\&-\lim_{t_{1}\to+\infty}\frac{1}{2t_{1}}\log\int_{\{\mathrm{Tn}(0)\varphi<-t_{1}\}\cap U}|f_{0}|^{2}e^{-2\varphi_{0}}
			\\\leq&1-1=0.
		\end{split}
	\end{equation}

	Jensen's inequality and the concavity of logarithm shows that
	\begin{equation}
		\nonumber
		\begin{split}
			&\log\frac{\int_{\{\mathrm{Tn}(0)\varphi<-t_{1}\}\cap U}|f_{0}|^{2}e^{-2\varphi_{0}}e^{-2\big(-t\psi+(1-\varepsilon)\mathrm{Tn}(t)\varphi+t_{1}\big)}}{\int_{\{\mathrm{Tn}(0)\varphi<-t_{1}\}\cap U}|f_{0}|^{2}e^{-2\varphi_{0}}}
			\\\geq&
			\frac{\int_{\{\mathrm{Tn}(0)\varphi<-t_{1}\}\cap U}|f_{0}|^{2}e^{-2\varphi_{0}}\log\left(e^{-2(-t\psi+(1-\varepsilon)\mathrm{Tn}(t)\varphi+t_{1})}\right)}{\int_{\{\mathrm{Tn}(0)\varphi<-t_{1}\}\cap U}|f_{0}|^{2}e^{-2\varphi_{0}}}
			\\=&
			\frac{\int_{\{\mathrm{Tn}(0)\varphi<-t_{1}\}\cap U}|f_{0}|^{2}e^{-2\varphi_{0}}\big(-2(-t\psi+(1-\varepsilon)\mathrm{Tn}(t)\varphi+t_{1})\big)}{\int_{\{\mathrm{Tn}(0)\varphi<-t_{1}\}\cap U}|f_{0}|^{2}e^{-2\varphi_{0}}}
			\\\geq&
			\frac{\int_{\{\mathrm{Tn}(0)\varphi<-t_{1}\}\cap U}|f_{0}|^{2}e^{-2\varphi_{0}}\big(-2(-t\psi+(1-\varepsilon)\mathrm{Tn}(t)(-t_{1})\frac{1}{\mathrm{Tn}(0)}+t_{1})\big)}{\int_{\{\mathrm{Tn}(0)\varphi<-t_{1}\}\cap U}|f_{0}|^{2}e^{-2\varphi_{0}}}.
		\end{split}
	\end{equation}
	Combining with inequality \eqref{equ:degang20230406},
	we obtain that
	$$\limsup_{t_{1}\to+\infty}\frac{1}{2t_{1}}\frac{\int_{\{\mathrm{Tn}(0)\varphi<-t_{1}\}\cap U}|f_{0}|^{2}e^{-2\varphi_{0}}\big(-2(-t\psi+(1-\epsilon)\mathrm{Tn}(t)(-t_{1})\frac{1}{\mathrm{Tn}(0)}+t_{1})\big)}{\int_{\{\mathrm{Tn}(0)\varphi<-t_{1}\}\cap U}|f_{0}|^{2}e^{-2\varphi_{0}}}\leq 0$$
	Letting $\varepsilon\to0+0$,
	we obtain
	$$\liminf_{t_{1}\to+\infty}\frac{1}{2t_{1}}\frac{\int_{\{\mathrm{Tn}(0)\varphi<-t_{1}\}\cap U}|f_{0}|^{2}e^{-2\varphi_{0}}(-2\psi)}{\int_{\{\mathrm{Tn}(0)\varphi<-t_{1}\}\cap U}|f_{0}|^{2}e^{-2\varphi_{0}}}\geq\frac{1}{\mathrm{Tn}(0)}\frac{\mathrm{Tn}(0)-\mathrm{Tn}(t)}{-t}.$$
	Letting $t\to0+0$,
	we obtain
	\begin{equation}
		\label{equ:20190406b}
		\begin{split}
			\frac{1}{\mathrm{Tn}(0)}\lim_{t\to0+0}\frac{\mathrm{Tn}(0)-\mathrm{Tn}(t)}{-t}
			\leq
			\liminf_{t_{1}\to+\infty}\frac{1}{2t_{1}}\frac{\int_{\{\mathrm{Tn}(0)\varphi<-t_{1}\}\cap U}|f_{0}|^{2}e^{-2\varphi_{0}}(-2\psi)}{\int_{\{\mathrm{Tn}(0)\varphi<-t_{1}\}\cap U}|f_{0}|^{2}e^{-2\varphi_{0}}}.
		\end{split}
	\end{equation}
	
	Combining inequality (\ref{equ:20190406b}) and inequality (\ref{equ:degang20190406c}),
	we obtain Proposition \ref{p:sidediv}.
\end{proof}

\subsection{Zhou numbers related to $\Phi_{o,\max}$}

Let $\Phi_{o,\max}$ be a local Zhou weight related to $|f_0|^2e^{-2\varphi_0}$ near $o$, and let Tian function
$\mathrm{Tn}(t):=c_{o}(\Phi_{o,\max},t\psi)$. The definition of $\Phi_{o,\max}$ shows that $\mathrm{Tn}(0)=1$.

Note that
$|f_{0}|^{2}e^{-2\varphi_{0}}e^{-2\max\{\Phi_{o,\max},\frac{1}{b}\psi\}}$ is not integrable near $o$
if and only if $b\leq \sigma(\psi,\Phi_{o,\max})$,
then Lemma \ref{lem:combi} shows

\begin{Proposition}
	\label{prop:concave_A}
	For any plurisubharmonic function $\psi$ near $o$,
	\begin{equation*}
		\sigma(\psi,\Phi_{o,\max})=\lim_{t\to0+0}\frac{\mathrm{Tn}(0)-\mathrm{Tn}(-t)}{t}.
	\end{equation*}
\end{Proposition}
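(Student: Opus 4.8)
The plan is to pin down both $\sigma(\psi,\Phi_{o,\max})$ and the number $b_{0}:=\lim_{t\to0+0}\frac{\mathrm{Tn}(0)-\mathrm{Tn}(-t)}{t}$ by one and the same condition on a positive real $b$, namely that $|f_{0}|^{2}e^{-2\varphi_{0}}e^{-2\max\{\Phi_{o,\max},\frac{1}{b}\psi\}}$ is not integrable near $o$; since $\mathrm{Tn}(0)=1$, comparing the two characterizations yields $\sigma(\psi,\Phi_{o,\max})=b_{0}$, which is the assertion.

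First I would verify the equivalence: $|f_{0}|^{2}e^{-2\varphi_{0}}e^{-2\max\{\Phi_{o,\max},\frac{1}{b}\psi\}}$ is not integrable near $o$ \emph{if and only if} $b\le\sigma(\psi,\Phi_{o,\max})$. After normalizing $\Phi_{o,\max}\le 0$ and $\psi\le 0$ near $o$: if $b<\sigma(\psi,\Phi_{o,\max})$ then $\psi\le b\,\Phi_{o,\max}+O(1)$ near $o$ (using $\Phi_{o,\max}\le0$), hence $\max\{\Phi_{o,\max},\frac{1}{b}\psi\}=\Phi_{o,\max}+O(1)$, and non-integrability follows from condition $(2)$ of Definition \ref{def:max_relat}; the endpoint $b=\sigma(\psi,\Phi_{o,\max})$ is then reached by applying Theorem \ref{thm:SOC} to the sequence $\max\{\Phi_{o,\max},\frac{1}{b_{k}}\psi\}$, which increases to $\max\{\Phi_{o,\max},\frac{1}{\sigma(\psi,\Phi_{o,\max})}\psi\}$ as $b_{k}\uparrow\sigma(\psi,\Phi_{o,\max})$. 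Conversely, if $b>\sigma(\psi,\Phi_{o,\max})$ and that weight were non-integrable, then since $\max\{\Phi_{o,\max},\frac{1}{b}\psi\}$ is plurisubharmonic and $\ge\Phi_{o,\max}$, condition $(3)$ of Definition \ref{def:max_relat} would force $\max\{\Phi_{o,\max},\frac{1}{b}\psi\}=\Phi_{o,\max}+O(1)$, i.e.\ $\psi\le b\,\Phi_{o,\max}+O(1)$ and $\sigma(\psi,\Phi_{o,\max})\ge b$, a contradiction. (This is exactly the observation recorded just above the statement of the proposition.)

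Then I would apply Lemma \ref{lem:combi} with $\varphi$ there taken to be $\Phi_{o,\max}$: the three standing assumptions from the beginning of Section \ref{sec:tian function} hold because $|f_{0}|^{2}e^{-2\varphi_{0}}$ is integrable near $o$ and $|f_{0}|^{2}e^{-2\varphi_{0}}|z|^{2N_{0}}e^{-2\Phi_{o,\max}}$ is integrable near $o$ (condition $(1)$ of Definition \ref{def:max_relat}), and $\mathrm{Tn}(0)=1$; one keeps the assumption $\nu(\psi,o)>0$ of that section, the case $\nu(\psi,o)=0$ being disposed of separately. Lemma \ref{lem:combi} then says precisely that $|f_{0}|^{2}e^{-2\varphi_{0}}e^{-2\max\{\Phi_{o,\max},\frac{1}{b}\psi\}}$ is not integrable near $o$ if and only if $b\le b_{0}$. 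Combined with the equivalence of the previous paragraph, the two half-lines $\{b>0:b\le\sigma(\psi,\Phi_{o,\max})\}$ and $\{b>0:b\le b_{0}\}$ coincide, whence $\sigma(\psi,\Phi_{o,\max})=b_{0}=\lim_{t\to0+0}\frac{\mathrm{Tn}(0)-\mathrm{Tn}(-t)}{t}$. The only genuinely non-formal steps are the limiting argument at the endpoint $b=\sigma(\psi,\Phi_{o,\max})$ (which is exactly where strong openness, Theorem \ref{thm:SOC}, enters) and checking that the hypotheses of Lemma \ref{lem:combi} transfer verbatim to $\Phi_{o,\max}$; I expect no further obstacle, since the real-analytic core — the concavity and $L^{2}$ estimates behind Lemma \ref{lem:combi} — is already in place.
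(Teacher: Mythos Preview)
Your proposal is correct and follows exactly the paper's approach: the paper records the equivalence ``$|f_{0}|^{2}e^{-2\varphi_{0}}e^{-2\max\{\Phi_{o,\max},\frac{1}{b}\psi\}}$ is not integrable near $o$ iff $b\le\sigma(\psi,\Phi_{o,\max})$'' in the sentence immediately preceding the proposition, and then invokes Lemma \ref{lem:combi} to conclude. You have simply supplied the details behind that one-line observation (including the use of Theorem \ref{thm:SOC} at the endpoint and condition $(3)$ of Definition \ref{def:max_relat} for the converse), and correctly flagged that the standing hypothesis $\nu(\psi,o)>0$ of Section \ref{sec:tian function} is in force.
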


As $\psi\leq \sigma(\psi,\Phi_{o,\max})\Phi_{o,\max}+O(1)$ near $o$,
the definition of $\mathrm{Tn}(t)$ shows that
$\mathrm{Tn}(t)\geq \mathrm{Tn}(0)+\sigma(\psi,\Phi_{o,\max})t$ for any $t>0$.
Note that $\mathrm{Tn}(t)$ is concave,
then $\mathrm{Tn}(t)\leq \mathrm{Tn}(0)+\sigma(\psi,\Phi_{o,\max})t$ for any $t>0$,
which implies that
\begin{Proposition}
	\label{prop:concave_B}The Tian function $\mathrm{Tn}(t)$ is differentiable at $t=0$, and
	\begin{equation*}
		\mathrm{Tn}(t)=\mathrm{Tn}(0)+\sigma(\psi,\Phi_{o,\max})t
	\end{equation*}
	holds for any $t\ge0$.
\end{Proposition}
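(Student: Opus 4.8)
The plan is to trap $\mathrm{Tn}(t)$ for $t>0$ between two affine functions of slope $\sigma(\psi,\Phi_{o,\max})$, and then to deduce differentiability at $0$ from the two one-sided derivatives agreeing. Write $\sigma:=\sigma(\psi,\Phi_{o,\max})$ throughout.

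First I would establish the lower bound $\mathrm{Tn}(t)\ge \mathrm{Tn}(0)+\sigma t$ for every $t>0$. The input is the inequality $\psi\le\sigma\,\Phi_{o,\max}+O(1)$ near $o$, which was obtained above from the strong openness property (the discussion preceding Remark~\ref{rem:sharp_bound}). For $t>0$ this yields a constant $C>0$ with $e^{2t\psi}\le C e^{2t\sigma\Phi_{o,\max}}$ on a fixed small neighborhood of $o$, hence
\[
|f_{0}|^{2}e^{-2\varphi_{0}}e^{2t\psi}e^{-2c\Phi_{o,\max}}\le C\,|f_{0}|^{2}e^{-2\varphi_{0}}e^{-2(c-t\sigma)\Phi_{o,\max}}
\]
near $o$. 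Therefore, whenever $c-t\sigma<\mathrm{Tn}(0)$ the right-hand side is integrable near $o$, so the left-hand side is too; by the definition $\mathrm{Tn}(t)=c_{o}(\Phi_{o,\max},t\psi)$ this forces $\mathrm{Tn}(t)\ge \mathrm{Tn}(0)+t\sigma$.

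For the matching upper bound I would use the concavity of $\mathrm{Tn}$ on $\mathbb{R}$ (which follows from the H\"older inequality, as recorded above) together with Proposition~\ref{prop:concave_A}. Concavity applied to the points $-s<0<t$ gives, for all $s,t>0$,
\[
\frac{\mathrm{Tn}(t)-\mathrm{Tn}(0)}{t}\le \frac{\mathrm{Tn}(0)-\mathrm{Tn}(-s)}{s};
\]
letting $s\to0+$ and invoking Proposition~\ref{prop:concave_A}, the right-hand side tends to $\sigma$, so $\mathrm{Tn}(t)\le \mathrm{Tn}(0)+t\sigma$ for every $t>0$. Combining the two inequalities gives $\mathrm{Tn}(t)=\mathrm{Tn}(0)+\sigma t$ for all $t\ge0$ (the case $t=0$ being trivial). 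Finally, the right derivative of $\mathrm{Tn}$ at $0$ equals $\sigma$ by this equality, while its left derivative equals $\sigma$ by Proposition~\ref{prop:concave_A}; hence $\mathrm{Tn}$ is differentiable at $0$ with $\mathrm{Tn}'(0)=\sigma(\psi,\Phi_{o,\max})$, which completes the proof.

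I do not expect a genuine obstacle here: the substantive content has already been built, namely the sharp bound $\psi\le\sigma\,\Phi_{o,\max}+O(1)$ and the identification of the left-hand derivative in Proposition~\ref{prop:concave_A}, so what remains is short bookkeeping. The only point needing mild care is that the constant $C$ in $e^{2t\psi}\le Ce^{2t\sigma\Phi_{o,\max}}$ be uniform on a fixed neighborhood of $o$, which is exactly what ``$+O(1)$ near $o$'' provides.
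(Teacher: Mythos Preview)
Your proof is correct and follows essentially the same route as the paper: the lower bound $\mathrm{Tn}(t)\ge\mathrm{Tn}(0)+\sigma t$ comes from $\psi\le\sigma\,\Phi_{o,\max}+O(1)$, and the upper bound comes from concavity combined with Proposition~\ref{prop:concave_A} (the left derivative at $0$ equals $\sigma$). The paper states the concavity step more tersely, but your explicit secant-slope inequality is exactly the content being used.
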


Using Proposition \ref{prop:concave_B},
 we give the following property of local Zhou weights.

\begin{Proposition}
	\label{thm:main_value1}
	Let $f=(f_{1},\cdots,f_{m'})$ be a vector, where $f_{1},\cdots,f_{m'}$ are holomorphic functions near $o$.
	Denote that $|f|:=(|f_{1}|^2+\cdots+|f_{m'}|^{2})^{1/2}$.
	Let $\Phi_{o,\max}$ be a local Zhou weight related to $|f_{0}|^{2}e^{-2\varphi_{0}}$ near $o$. Then the following two statements holds:
	
	$(1)$ for any $\alpha>0$,
	$\big(1+\alpha\sigma(\log|f|,\Phi_{o,\max})\big)\Phi_{o,\max}$ is a local Zhou weight related to $|f|^{2\alpha}|f_{0}|^{2}e^{-2\varphi_{0}}$ near $o$;
	
	$(2)$ $\big(1+\sigma(\varphi_{0},\Phi_{o,\max})\big)\Phi_{o,\max}$ is a local Zhou weight related to $|f_{0}|^{2}$.
\end{Proposition}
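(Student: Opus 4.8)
The plan is to verify, for each of the two assertions, the three conditions of Definition \ref{def:max_relat} for the proposed weight, writing $\psi:=\log|f|$ and $\beta:=\sigma(\log|f|,\Phi_{o,\max})$ in part $(1)$, and $\gamma:=\sigma(\varphi_0,\Phi_{o,\max})$ in part $(2)$. The conditions on integrability against $|z|^{2N}$ and on maximality will be handled by elementary pointwise comparisons together with condition $(3)$ for $\Phi_{o,\max}$; the non-integrability at the critical exponent is where the linearity of the Tian function (Proposition \ref{prop:concave_B}) and the strong openness property (Theorem \ref{thm:SOC}) are used. Everywhere I use that $\psi\le\beta\Phi_{o,\max}+O(1)$ and $\varphi_0\le\gamma\Phi_{o,\max}+O(1)$ near $o$ (the relative type bound discussed after Definition \ref{def:max_relat}) and that $\Phi_{o,\max}\ge N\log|z|+O(1)$ for some $N\gg0$.

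For part $(1)$ set $\Psi:=(1+\alpha\beta)\Phi_{o,\max}$. If $f(o)\ne0$ then $\beta=0$, $\Psi=\Phi_{o,\max}$, and $|f|^{2\alpha}$ is bounded above and below near $o$, so the three conditions for $(\Psi,\,|f|^{2\alpha}|f_0|^2e^{-2\varphi_0})$ reduce to those for $(\Phi_{o,\max},\,|f_0|^2e^{-2\varphi_0})$; so I may assume $f(o)=0$, $f\not\equiv0$, whence $\nu(\log|f|,o)>0$ and the assumptions of Section \ref{sec:tian function} hold for $(\Phi_{o,\max},\log|f|)$ (with $\mathrm{Tn}(0)=1$). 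Condition $(1)$: since $|f|^{2\alpha}=e^{2\alpha\psi}\le Ce^{2\alpha\beta\Phi_{o,\max}}$, we get $|f|^{2\alpha}|f_0|^2e^{-2\varphi_0}|z|^{2N}e^{-2\Psi}\le C|f_0|^2e^{-2\varphi_0}|z|^{2N}e^{-2\Phi_{o,\max}}$, integrable for $N\gg0$ by condition $(1)$ for $\Phi_{o,\max}$. Condition $(3)$: for plurisubharmonic $\varphi'\ge\Psi+O(1)$ with $|f|^{2\alpha}|f_0|^2e^{-2\varphi_0}e^{-2\varphi'}$ not integrable, the bound $\psi\le\beta\Phi_{o,\max}+O(1)\le\frac{\beta}{1+\alpha\beta}\varphi'+O(1)$ gives $|f|^{2\alpha}e^{-2\varphi'}\le Ce^{-\frac{2}{1+\alpha\beta}\varphi'}$, hence $|f_0|^2e^{-2\varphi_0}e^{-\frac{2}{1+\alpha\beta}\varphi'}$ is not integrable; as $\frac{1}{1+\alpha\beta}\varphi'\ge\Phi_{o,\max}+O(1)$, condition $(3)$ for $\Phi_{o,\max}$ yields $\frac{1}{1+\alpha\beta}\varphi'=\Phi_{o,\max}+O(1)$, i.e. $\varphi'=\Psi+O(1)$. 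For condition $(2)$ I use $\mathrm{Tn}(t):=c_o(\Phi_{o,\max},t\log|f|)$: by Proposition \ref{prop:concave_B}, $\mathrm{Tn}(t)=1+\beta t$ for $t\ge0$, so $c_o(\Phi_{o,\max},\alpha\log|f|)=1+\alpha\beta$ and $|f|^{2\alpha}|f_0|^2e^{-2\varphi_0}e^{-2c\Phi_{o,\max}}$ is integrable near $o$ for all $c<1+\alpha\beta$; it remains to see this supremum is not attained. If $|f|^{2\alpha}|f_0|^2e^{-2\varphi_0}e^{-2(1+\alpha\beta)\Phi_{o,\max}}$ were integrable, then (shrinking so $\varphi_0\le0$) the strong openness property, applied to the plurisubharmonic weight $\varphi_0+(1+\alpha\beta)\Phi_{o,\max}$ with the multiplier $|f|^{2\alpha}|f_0|^2$, would produce $p>1$ with $|f|^{2\alpha}|f_0|^2e^{-2\varphi_0}e^{-2p(1+\alpha\beta)\Phi_{o,\max}}$ integrable, forcing $\mathrm{Tn}(\alpha)\ge p(1+\alpha\beta)>1+\alpha\beta$, a contradiction; hence $|f|^{2\alpha}|f_0|^2e^{-2\varphi_0}e^{-2\Psi}$ is not integrable near $o$.

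Part $(2)$ is analogous, with $\Psi:=(1+\gamma)\Phi_{o,\max}$ and data $|f_0|^2$ (that is, $\varphi_0$ replaced by $0$). From $\varphi_0\le\gamma\Phi_{o,\max}+O(1)$ one gets $e^{-2\gamma\Phi_{o,\max}}\le Ce^{-2\varphi_0}$, hence $|f_0|^2|z|^{2N}e^{-2\Psi}\le C|f_0|^2e^{-2\varphi_0}|z|^{2N}e^{-2\Phi_{o,\max}}$, integrable for $N\gg0$ (condition $(1)$); and for plurisubharmonic $\varphi'\ge\Psi+O(1)$ with $|f_0|^2e^{-2\varphi'}$ not integrable one checks $\frac{\gamma}{1+\gamma}\varphi'\ge\gamma\Phi_{o,\max}+O(1)\ge\varphi_0+O(1)$, so $|f_0|^2e^{-2\varphi'}\le C|f_0|^2e^{-2\varphi_0}e^{-\frac{2}{1+\gamma}\varphi'}$, and condition $(3)$ for $\Phi_{o,\max}$ applied to $\frac{1}{1+\gamma}\varphi'$ finishes (condition $(3)$). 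For condition $(2)$, apply Proposition \ref{prop:concave_B} with $\psi=\varphi_0$ (assuming, as we may, $\gamma>0$, so that $\nu(\varphi_0,o)>0$): $c_o(\Phi_{o,\max},t\varphi_0)=1+\gamma t$ for $t\ge0$, and at $t=1$ this reads $c^{f_0}_o(\Phi_{o,\max})=\sup\{c:|f_0|^2e^{-2c\Phi_{o,\max}}\text{ integrable near }o\}=1+\gamma$; since $f_0$ is holomorphic and $(1+\gamma)\Phi_{o,\max}$ is plurisubharmonic, the ordinary strong openness theorem shows this supremum is not attained, so $|f_0|^2e^{-2(1+\gamma)\Phi_{o,\max}}$ is not integrable near $o$. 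When $\gamma=0$ (equivalently $\nu(\varphi_0,o)=0$) Proposition \ref{prop:concave_B} is unavailable, but then $\Psi=\Phi_{o,\max}$ and the three conditions for $(\Phi_{o,\max},|f_0|^2)$ follow from those for $(\Phi_{o,\max},|f_0|^2e^{-2\varphi_0})$ together with the strong openness property, because $e^{-2\varphi_0}$ cannot affect the relevant (non)integrability statements.

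The main obstacle is condition $(2)$ of part $(1)$ when $\alpha$ is not an integer: there one must know that the critical exponent in $\mathrm{Tn}(\alpha)=1+\alpha\beta$ is not attained, which requires the strong openness property for the multiplier ideal twisted by the real power $|f|^{2\alpha}$ (for $\alpha\in\mathbb{Z}_{>0}$ this is immediate, since $|f|^{2\alpha}|f_0|^2$ is then the squared norm of a holomorphic tuple and Theorem \ref{thm:SOC} applies verbatim; for general $\alpha$ one passes to the twisted form, using that $\log|f|$ is plurisubharmonic, e.g. via the increasing-sequence version of strong openness). Everything else is bookkeeping with the pointwise estimates $\log|f|\le\beta\Phi_{o,\max}+O(1)$, $\varphi_0\le\gamma\Phi_{o,\max}+O(1)$, $\Phi_{o,\max}\ge N\log|z|+O(1)$, condition $(3)$ for $\Phi_{o,\max}$, and Proposition \ref{prop:concave_B}.
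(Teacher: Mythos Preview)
Your argument is correct and mirrors the paper's proof almost step for step: both verify the three conditions of Definition \ref{def:max_relat} using the relative-type bounds $\log|f|\le\beta\Phi_{o,\max}+O(1)$ and $\varphi_0\le\gamma\Phi_{o,\max}+O(1)$ for conditions (1) and (3), and Proposition \ref{prop:concave_B} together with strong openness for the critical non-integrability in condition (2). Your care about non-integer $\alpha$ (writing $|f|^{2\alpha}=|f|^{2\lceil\alpha\rceil}e^{-2(\lceil\alpha\rceil-\alpha)\log|f|}$ and absorbing the second factor into the plurisubharmonic weight so that Theorem \ref{thm:SOC} applies to a genuine holomorphic tuple) is well placed---the paper simply cites Theorem \ref{thm:SOC} at that point without elaboration.
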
	
\begin{proof}
	Firstly, we prove the statement $(1)$ in Proposition \ref{thm:main_value1}.
	
	Denote
	\[\mathrm{Tn}(t):=\sup\big\{c:|f_0|^2|f|^{2t}e^{-2\varphi_0-2c\Phi_{o,\max}} \ \text{is integrable near} \ o\big\}.\]
	By Proposition \ref{prop:concave_B}, we have $\mathrm{Tn}(\alpha)=1+\sigma(\log|f|,\Phi_{o,\max})\alpha$, which implies that $$|f_0|^2|f|^{2\alpha}e^{-2\varphi_0-2(1+\alpha\sigma(\log|f|,\Phi_{o,\max}))\Phi_{o,\max}}$$ is not integrable near $o$ by Theorem \ref{thm:SOC}. As $\log|f|\le\sigma(\log|f|,\Phi_{o,\max})\Phi_{o,\max}+O(1)$ and there exists $N\gg0$ such that $|f_0|^2|z|^{2N}e^{-2\varphi_0-2\Phi_{o,\max}}$ is integrable near $o$, we know that $$|f_0|^2|f|^{2\alpha}|z|^{2N}e^{-2\varphi_0-2\big(1+\alpha\sigma(\log|f|,\Phi_{o,\max})\big)\Phi_{o,\max}}$$ is integrable near $o$.
	
	Let $\tilde{\varphi}$ be a subharmonic function near $o$ satisfying that $$\tilde\varphi\ge\big(1+\alpha\sigma(\log|f|,\Phi_{o,\max})\big)\Phi_{o,\max}+O(1)$$ and $|f|^{2\alpha}|f_0|^2e^{-2\varphi_0-2\tilde\varphi}$ is not integrable near $o$.
	Note that 
	$$\log|f|\le\sigma(\log|f|,\Phi_{o,\max})\Phi_{o,\max}+O(1),$$
	then 
	\[\tilde{\varphi}\ge\frac{1+\alpha\sigma(\log|f|,\Phi_{o,\max})}{\sigma(\log|f|,\Phi_{o,\max})}\log|f|+O(1)\]
	and
	\begin{equation*}
		\begin{split}
			|f|^{2\alpha}|f_0|^2e^{-2\varphi_0-2\tilde\varphi} & \le C|f_0|^2
			e^{-2\varphi_0}e^{-\frac{2}{1+\alpha\sigma(\log|f|,\Phi_{o,\max})}\tilde{\varphi}}.
		\end{split}
	\end{equation*}
	As $|f|^{2\alpha}|f_0|^2e^{-2\varphi_0-2\tilde\varphi}$ is not integrable near $o$, we know that $$|f_0|^2e^{-2\varphi_0-\frac{2}{1+\alpha\sigma(\log|f|,\Phi_{o,\max})}\tilde\varphi}$$
	is not integrable near $o$. Note that  $\Phi_{o,\max}$ is a local Zhou weight related to $|f_{0}|^{2}e^{-2\varphi_{0}}$. Then we obtain
	$$\tilde\varphi=(1+\alpha\sigma(\log|f|,\Phi_{o,\max}))\Phi_{o,\max}+O(1),$$
	which shows that $\big(1+\alpha\sigma(\log|f|,\Phi_{o,\max})\big)\Phi_{o,\max}$ is a local Zhou weight related to $|f|^{2\alpha}|f_{0}|^{2}e^{-2\varphi_{0}}$.
	
	Next, we give the proof of statement $(2)$, which is similar to the proof of statement $(1)$.
	
	Denote that
	\[\mathrm{Tn}(t):=\sup\big\{c:|f_0|^2e^{-2\varphi_0}e^{2t\varphi_0}e^{-2c\Phi_{o,\max}} \ \text{is integarble near} \ o\big\}.\]
	By Proposition \ref{prop:concave_B}, we have $\mathrm{Tn}(1)=1+\sigma(\varphi_0,\Phi_{o,\max})$, which implies that $$|f_0|^2e^{-2\big(1+\sigma(\varphi_0,\Phi_{o,\max})\big)\Phi_{o,\max}}$$ is not integrable near $o$. As $\varphi_0\le\sigma(\varphi_0,\Phi_{o,\max})\Phi_{o,\max}+O(1)$ and there exists $N\gg0$ such that $|f_0|^2|z|^{2N}e^{-2\varphi_0-2\Phi_{o,\max}}$ is integrable near $o$, we know that $|f_0|^2|z|^{2N}e^{-2\big(1+\sigma(\varphi_0,\Phi_{o,\max})\big)\Phi_{o,\max}}$ is integrable near $o$.
	
	Let $\tilde{\varphi}$ be a subharmonic function near $o$ satisfying that $$\tilde\varphi\ge\big(1+\sigma(\varphi_0,\Phi_{o,\max})\big)\Phi_{o,\max}+O(1)$$ and $|f_0|^2e^{-2\tilde\varphi}$ is not integrable near $o$.
	It follows from $\varphi_0\le\sigma(\varphi_0,\Phi_{o,\max})\Phi_{o,\max}+O(1)$ that
	\begin{equation*}
		\begin{split}
			|f_0|^2e^{-2\tilde\varphi} & =e^{2\varphi_0}|f_0|^2
			e^{-2\varphi_0}e^{-2\tilde{\varphi}} \\
			& \le Ce^{2\sigma(\varphi_0,\Phi_{o,\max})\Phi_{o,\max}}|f_0|^2
			e^{-2\varphi_0}e^{-2\tilde{\varphi}} \\
			& \le C_1|f_0|^2
			e^{-2\varphi_0}e^{2\frac{\sigma(\varphi_0,\Phi_{o,\max})}{1+\sigma(\varphi_0,\Phi_{o,\max})}\tilde{\varphi}}
			e^{-2\tilde{\varphi}}\\
			&= C_1|f_0|^2
			e^{-2\varphi_0}e^{-\frac{2}{1+\sigma(\varphi_0,\varphi)}\tilde{\varphi}}.
		\end{split}
	\end{equation*}
	As $|f_0|^2e^{-2\tilde\varphi}$ is not integrable near $o$, we know that $|f_0|^2e^{-2\varphi_0-\frac{2}{1+\sigma(\varphi_0,\Phi_{o,\max})}\tilde\varphi}$
	is not integrable near $o$. Note that  $\Phi_{o,\max}$ is a local Zhou weight related to $|f_{0}|^{2}e^{-2\varphi_{0}}$. Then we obtain
	$$\tilde\varphi=\big(1+ \sigma(\varphi_0,\Phi_{o,\max})\big)\Phi_{o,\max}+O(1),$$
	which shows that $\big(1+\sigma(\varphi_0,\Phi_{o,\max})\big)\Phi_{o,\max}$ is a local Zhou weight related to $|f_{0}|^{2}$.
\end{proof}

The following proposition gives a comparison result between Zhou weights.
\begin{Proposition}
	\label{prop:gowest}
	Let $\varphi_{i}$ be a local Zhou weight related to $|f_{0,i}|^{2}$, where $i\in\{1,2\}$.
	Assume that
	\begin{equation}
		\label{equ:keluodiya}
		\sigma(\log|f|,\varphi_{1})\leq \sigma(\log|f|,\varphi_{2})
	\end{equation}
	holds for any holomorphic function $f$ near $o$.
	
	Then $\varphi_{1}\leq \varphi_{2}+O(1)$ near $o$.
	Especially,
	``=" in inequality \eqref{equ:keluodiya} holds if and only
	$\varphi_{1}=\varphi_{2}+O(1)$ near $o$.
\end{Proposition}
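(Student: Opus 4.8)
The plan is to prove the main assertion $\varphi_{1}\le\varphi_{2}+O(1)$ near $o$; the ``especially'' part then follows immediately, since interchanging the indices $1$ and $2$ in \eqref{equ:keluodiya} also gives $\varphi_{2}\le\varphi_{1}+O(1)$, hence $\varphi_{1}=\varphi_{2}+O(1)$, while the reverse implication is trivial because relative types are unchanged under bounded perturbations of the weight.

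First I would pass to a global picture. Choose a ball $D:=\mathbb{B}(o,r)$ contained in a common neighborhood of $o$ on which $\varphi_{1}$ and $\varphi_{2}$ are both defined; $D$ is a bounded strictly hyperconvex domain, and being bounded it satisfies $\tilde L_{o}\ne\emptyset$. By the global--local correspondence for Zhou weights recalled in the introduction (Lemma \ref{l:local-glabal}, Remark \ref{r:L tildeL} and Lemma \ref{l:max-loc}), for $i=1,2$ there is a global Zhou weight $\Phi^{D}_{o,\max,i}$ on $D$ related to $|f_{0,i}|^{2}$ with $\Phi^{D}_{o,\max,i}=\varphi_{i}+O(1)$ near $o$. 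Since $\sigma(\log|f|,\cdot)$ depends only on the germ at $o$ and is insensitive to $O(1)$-changes of the weight, $\sigma(\log|f|,\Phi^{D}_{o,\max,i})=\sigma(\log|f|,\varphi_{i})$ for every holomorphic germ $f$ at $o$, so hypothesis \eqref{equ:keluodiya} becomes $\sigma(\log|f|,\Phi^{D}_{o,\max,1})\le\sigma(\log|f|,\Phi^{D}_{o,\max,2})$ for all such $f$.

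Next I would apply the reconstruction formula of Corollary \ref{cor-approximation}: for every $w\in D$ and $i=1,2$,
\[\Phi^{D}_{o,\max,i}(w)=\sup\left\{\frac{\log|f(w)|}{\sigma(\log|f|,\Phi^{D}_{o,\max,i})}:f\in\mathcal{O}(D),\ \sup_{D}|f|\le1,\ f(o)=0,\ f\not\equiv0\right\}.\]
For any admissible $f$ we have $\log|f(w)|\le0$ (since $\sup_{D}|f|\le1$) and $\sigma(\log|f|,\varphi_{1}),\sigma(\log|f|,\varphi_{2})>0$ (as recalled just after Corollary \ref{cor-approximation}, using $\varphi_{i}\ge N\log|z|+O(1)$ and $0<\nu(\log|f|,o)<+\infty$). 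Dividing the nonpositive number $\log|f(w)|$ by the larger positive denominator then yields
\[\frac{\log|f(w)|}{\sigma(\log|f|,\Phi^{D}_{o,\max,1})}\le\frac{\log|f(w)|}{\sigma(\log|f|,\Phi^{D}_{o,\max,2})}\le\Phi^{D}_{o,\max,2}(w),\]
and taking the supremum over all admissible $f$ gives $\Phi^{D}_{o,\max,1}\le\Phi^{D}_{o,\max,2}$ on $D$. Restricting to a neighborhood of $o$ we get $\varphi_{1}=\Phi^{D}_{o,\max,1}+O(1)\le\Phi^{D}_{o,\max,2}+O(1)=\varphi_{2}+O(1)$, as desired.

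The step I expect to be the main obstacle is the globalization in the second paragraph: one must legitimately replace the germ-level objects $\varphi_{i}$ by honest global Zhou weights on the strictly hyperconvex ball $D$, related to the same data $|f_{0,i}|^{2}$ and agreeing with $\varphi_{i}$ up to $O(1)$ near $o$, so that Corollary \ref{cor-approximation} is applicable; this is exactly where the global--local dictionary for Zhou weights is used. Once that is in place, the comparison is the elementary sign manipulation above. (One could instead try to reconstruct $\varphi_{i}$ directly near $o$ as the upper regularization of $\tfrac1m\log|g|$ over $(g,o)\in\mathcal{I}(m\varphi_{i})_{o}$ with $|g|\le1$, exploiting that $\varphi_{i}$ is a tame weight, and then run the same sign argument; but controlling the $O(1)$-errors uniformly in $m$ there is more delicate than quoting Corollary \ref{cor-approximation}.)
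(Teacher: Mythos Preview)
Your argument is correct and there is no circularity: Corollary \ref{cor-approximation} ultimately rests on Theorem \ref{thm-approximation} and Theorem \ref{thm:valu-jump}, neither of which uses Proposition \ref{prop:gowest}. The sign manipulation with the reconstruction formula is clean, and the passage to global Zhou weights on a ball via Lemma \ref{l:local-glabal} is exactly what is needed to make Corollary \ref{cor-approximation} applicable.

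However, your route is genuinely different from the paper's. The paper does \emph{not} invoke the global theory or the approximation machinery of Section \ref{Approximation}. Instead it proves a local lemma (Lemma \ref{lem:gowest}): for generators $\{f_{j}\}$ of $\mathcal{I}(m\varphi_{1})_{o}$, Demailly's approximation (Lemma \ref{l:appro-Berg}) together with Proposition \ref{prop:concave_B} gives the sandwich
\[
\frac{1}{\sigma(\log\sum_{j}|f_{j}|,\varphi_{1})+1}\log\sum_{j}|f_{j}|+O(1)\ \ge\ \varphi_{1}\ \ge\ \frac{1}{\sigma(\log\sum_{j}|f_{j}|,\varphi_{1})}\log\sum_{j}|f_{j}|+O(1),
\]
with $\sigma(\log\sum_{j}|f_{j}|,\varphi_{1})\in[m-1,m]$. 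Using the hypothesis (via $\sigma(\log\sum_{j}|f_{j}|,\varphi_{i})=\min_{j}\sigma(\log|f_{j}|,\varphi_{i})$) one chains to $\varphi_{2}\ge\frac{m+1}{m-1}\varphi_{1}+O(1)$, lets $m\to\infty$ to get $\sigma(\varphi_{2},\varphi_{1})=1$, and concludes $\varphi_{1}\le\varphi_{2}+O(1)$. This is more self-contained, needing only Demailly approximation and the elementary concavity facts of Section \ref{sec:tian function}, whereas your proof imports the full strength of the global approximation theorem (which itself hides Theorem \ref{thm:valu-jump}, boundary behavior of global Zhou weights, and the $D_{j}$-exhaustion argument). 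The trade-off: your argument is a one-line deduction once Corollary \ref{cor-approximation} is on the table, while the paper's argument is logically earlier and independent of the global development.
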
	

Before proving Proposition \ref{prop:gowest}, we give a lemma which will be used in the proof of Proposition \ref{prop:gowest}.

\begin{Lemma}
	\label{lem:gowest}
	Let $\varphi$ be a local Zhou weight related to $|f_{0}|^{2}$ at $o$ on a bounded pseudoconvex domain $D$. Let $\{f_{j}\}_{j=1,\cdots,N}$ be holomorphic functions on $D$ such that
	$\{(f_{j},o)\}_{j}$ generates $\mathcal{I}(m\varphi)_o$.
	Then $$\frac{1}{\sigma(\log\sum_{j}|f_{j}|,\varphi)+1}\log\sum_j|f_{j}|+O(1)\geq \varphi\geq \frac{1}{\sigma(\log\sum_j|f_{j}|,\varphi)}\log\sum_j|f_{j}|+O(1).$$
\end{Lemma}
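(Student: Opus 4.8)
The plan is the following. Write $F:=\sum_{j}|f_{j}|$ and $\psi:=\log F$; since $\max_{j}|f_{j}|\le F\le N\max_{j}|f_{j}|$ we have $\psi=\max_{j}\log|f_{j}|+O(1)$, so $\psi$ is plurisubharmonic up to a bounded term, and set $\sigma:=\sigma(\psi,\varphi)=\sigma(\log\sum_{j}|f_{j}|,\varphi)$. We may assume $\mathcal{I}(m\varphi)_{o}\subsetneq\mathcal{O}_{o}$ (otherwise some $f_{j}$ is a unit, $\psi=O(1)$, $\sigma=0$, and the statement is vacuous); then every $f_{j}$ vanishes at $o$, so $\psi\to-\infty$ near $o$ and $\psi\le\nu\log|z|+O(1)$ with $\nu:=\min_{j}\nu(\log|f_{j}|,o)>0$. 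Since every local Zhou weight satisfies $\varphi\ge N\log|z|+O(1)$ near $o$ (Remark~\ref{rem:max_existence} and the discussion in Section~\ref{sec:tian function}), we get $\sigma\ge\nu/N>0$, so all expressions make sense. The right-hand inequality is then immediate: $\varphi$ is a bounded perturbation of a maximal weight, so Rashkovskii's estimate gives $\max_{j}\log|f_{j}|\le\sigma(\max_{j}\log|f_{j}|,\varphi)\,\varphi+O(1)=\sigma\varphi+O(1)$ near $o$; hence $\psi\le\sigma\varphi+O(1)$, i.e. $\varphi\ge\tfrac{1}{\sigma}\psi+O(1)$.

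For the left-hand inequality, the plan is first to extract a pointwise lower bound for $\psi$ from the generation hypothesis. Using $\varphi\ge N\log|z|+O(1)$ near $o$ together with property $(1)$ of the local Zhou weight $\varphi$ (i.e. $|f_{0}|^{2}|z|^{2N_{1}}e^{-2\varphi}$ integrable near $o$ for $N_{1}\gg0$), I would check that $|f_{0}|^{2}|z|^{2N_{0}}e^{-2m\varphi}$ is integrable near $o$ once $N_{0}$ is large, so that $f_{0}z_{i}^{N_{0}}\in\mathcal{I}(m\varphi)_{o}$ for $i=1,\dots,n$. Because $\{(f_{j},o)\}_{j}$ generates $\mathcal{I}(m\varphi)_{o}$, one can write $f_{0}z_{i}^{N_{0}}=\sum_{j}h_{i,j}f_{j}$ with $h_{i,j}\in\mathcal{O}_{o}$; the $h_{i,j}$ being bounded near $o$, this yields $|f_{0}|\,|z_{i}|^{N_{0}}\le C\sum_{j}|f_{j}|$ near $o$, and summing over $i$ and using the power-mean inequality $\sum_{i}|z_{i}|^{N_{0}}\gtrsim|z|^{N_{0}}$ gives
\begin{equation*}
	\psi=\log\textstyle\sum_{j}|f_{j}|\ \ge\ \log|f_{0}|+N_{0}\log|z|+O(1)\qquad\text{near }o.
\end{equation*}

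To finish, the plan is to feed this into Proposition~\ref{thm:main_value1}$(1)$, which (taking $\alpha=1$, $\varphi_{0}=0$, and noting $\sigma(\log|f|,\varphi)=\sigma$ because $\log|f|=\psi+O(1)$) tells us that $(1+\sigma)\varphi$ is a local Zhou weight related to $|f|^{2}|f_{0}|^{2}=\big(\sum_{j}|f_{j}|^{2}\big)|f_{0}|^{2}$. Using the pointwise bound above together with the defining integrability property $(1)$, the non-integrability property $(2)$, and — crucially — the maximality property $(3)$ of this new Zhou weight (and, in the background, the corresponding properties of $\varphi$ itself), I would upgrade the bound to $(1+\sigma)\varphi\le\psi+O(1)$ near $o$, which rearranges to $\varphi\le\tfrac{1}{\sigma+1}\psi+O(1)$, the left-hand inequality. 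I expect this last upgrade to be the main obstacle: it is exactly the place where "$\{f_{j}\}$ generates $\mathcal{I}(m\varphi)_{o}$" (and not merely "$\{f_{j}\}\subset\mathcal{I}(m\varphi)_{o}$") is essential, and where the sharp loss of precisely $1$ — a Brian\c{c}on--Skoda-type effect — must be produced. A naive attempt, forming $\max\{\varphi,\tfrac{1}{\sigma+1}\psi+\mathrm{const}\}$ and invoking maximality of $\varphi$, does not close the argument directly, because the associated density is a minimum of two non-integrable functions whose integrability is delicate; so the proof must couple the pointwise estimate with the $L^{2}$ integrability data of both $\varphi$ and $(1+\sigma)\varphi$ (and, if needed, the $L^{2}$-division/Ohsawa--Takegoshi estimates of Lemma~\ref{lem:GZ_sharp} underlying the generation hypothesis).
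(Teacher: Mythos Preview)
Your right-hand inequality is fine and matches the paper. The left-hand inequality, however, has a genuine gap exactly where you flag it: you never actually carry out the ``upgrade'' from the crude lower bound $\psi\ge\log|f_{0}|+N_{0}\log|z|+O(1)$ to the sharp one $(1+\sigma)\varphi\le\psi+O(1)$. Invoking Proposition~\ref{thm:main_value1} to say that $(1+\sigma)\varphi$ is a Zhou weight related to $|f|^{2}|f_{0}|^{2}$ is correct but does not by itself produce a pointwise bound of $\varphi$ by $\psi$; the maximality property $(3)$ compares $\varphi$ to plurisubharmonic competitors, not to $\psi$, and your proposed competitor $\max\{\varphi,\tfrac{1}{\sigma+1}\psi+\mathrm{const}\}$ runs into exactly the integrability obstruction you describe. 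So as written the argument does not close.

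The paper avoids this obstacle entirely by a two-line route. First, Demailly's approximation theorem (Lemma~\ref{l:appro-Berg}, the Ohsawa--Takegoshi lower bound on the Bergman kernel) applied to $A^{2}(D,2m\varphi)$ gives directly
\[
m\varphi\ \le\ \log\textstyle\sum_{j}|f_{j}|+O(1)\qquad\text{near }o,
\]
because the orthonormal basis elements lie in $\mathcal{I}(m\varphi)_{o}$ and are therefore germs of combinations of the $f_{j}$. This is the pointwise lower bound on $\psi$ that you are trying to manufacture, and it is sharp: the loss is exactly $0$, not something depending on $N_{0}$. Second, since each $f_{j}\in\mathcal{I}(m\varphi)_{o}$, the Tian function $\mathrm{Tn}(t)=c_{o}(\varphi,t\psi)$ satisfies $\mathrm{Tn}(1)\ge m$, and Proposition~\ref{prop:concave_B} (linearity of $\mathrm{Tn}$ for $t\ge0$) gives $\mathrm{Tn}(1)=1+\sigma$, hence $\sigma+1\ge m$. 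Combining, $\varphi\le\tfrac{1}{m}\psi+O(1)\le\tfrac{1}{\sigma+1}\psi+O(1)$ (remember $\psi<0$ near $o$). The ``sharp loss of precisely $1$'' you anticipated is thus not a Brian\c{c}on--Skoda effect at all; it comes from the gap between $\mathrm{Tn}(1)\ge m$ and $\mathrm{Tn}(0)=1$, i.e.\ from concavity of the Tian function.
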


\begin{proof}
	As  $\varphi$ is a local Zhou weight related to $|f_{0}|^{2}$, we have $$\log\sum_j|f_{j}|\le\sigma\big(\log\sum_j|f_{j}|,\varphi\big)\varphi+O(1).$$
	Thus, it suffices to prove
	$$\frac{1}{\sigma(\log\sum_{j}|f_{j}|,\varphi)+1}\log\sum_j|f_{j}|+O(1)\geq \varphi.$$
	
	Denote $\mathrm{Tn}(t):=c_o(\varphi,t\log\sum_{j}|f_{j}|)$. As $\{(f_{j},o)\}_{j}$ generates $\mathcal{I}(m\varphi)_o$, we have $\mathrm{Tn}(1)\ge m$, which implies that
	\begin{equation}
		\label{eq:0730a}\sigma(\log\sum_j|f_{j}|,\varphi)\ge\lim_{t\rightarrow0+0}\frac{\mathrm{Tn}(t)-1}{t}\ge m-1
	\end{equation}
	by Proposition \ref{prop:concave_A} and the concavity property of $\mathrm{Tn}(t)$.
	By Demailly's approximation theorem (see \cite{demailly2010}, see also Lemma \ref{l:appro-Berg}), we have
	\begin{equation}
		\label{eq:0730b}m\varphi\le\log\sum_j|f_j|+O(1).
	\end{equation}
	Thus, we have $\frac{1}{\sigma(\log\sum_{j}|f_{j}|,\varphi)+1}\log\sum_j|f_{j}|+O(1)\geq \varphi$.
\end{proof}

Now, we prove Proposition \ref{prop:gowest}.

\begin{proof}[Proof of Proposition \ref{prop:gowest}]
	Let $\{f_{j}\}_{j=1,\cdots,N}$ be holomorphic functions on bounded pseudoconvex domain $D$ such that $\{(f_{j},o)\}_{j}$ generates $\mathcal{I}(m\varphi_{1})_o$.
	By definition, we have $\sigma(\log\sum_j|f_{j}|,\varphi_i)=\sigma(\max_j\{\log|f_{j}|\},\varphi_i)=\min_j\{\sigma(\log|f_j|,\varphi_i)\}$ for $i\in\{1,2\}$ (see the proof of statement $(3)$ in Corollary \ref{coro:main}). Then we have
	$$\sigma(\log\sum_j|f_{j}|,\varphi_1)\le\sigma(\log\sum_j|f_{j}|,\varphi_2).$$

	It follows from inequality \eqref{eq:0730a} and \eqref{eq:0730b} that $\sigma(\log\sum_j|f_{j}|,\varphi_1)\in[m-1,m]$. Then we have
	\begin{equation*}
		\begin{split}
			\varphi_{2}&\geq\frac{1}{\sigma(\log\sum_j|f_{j}|,\varphi_{2})}\log\sum_j|f_{j}|+O(1)
			\\&\geq \frac{1}{\sigma(\log\sum_j|f_{j}|,\varphi_{1})}\log\sum_j|f_{j}|+O(1)
			\\&\geq \frac{m+1}{m-1}\frac{1}{\sigma(\log\sum_j|f_{j}|,\varphi_{1})+1}\log\sum_j|f_{j}|+O(1)
			\\&\geq \frac{m+1}{m-1}\varphi_{1}+O(1)
		\end{split}
	\end{equation*}
	near $o$, where the last inequality follows from Lemma \ref{lem:gowest}.
	Hence, we have 
	$$\sigma(\varphi_2,\varphi_1)=1$$
	by letting $m\rightarrow+\infty$. Note that $\varphi_2$ is a local Zhou weight related to $|f_{0,2}|^{2}$, then we have 
	$$\varphi_{1}\leq \sigma(\varphi_2,\varphi_1)\varphi_{2}+O(1)=\varphi_{2}+O(1)$$ near $o$.
\end{proof}

\section{Proofs of Theorem \ref{thm:main_value} and Corollary \ref{coro:main}}

In this section, we prove  Theorem \ref{thm:main_value} and Corollary \ref{coro:main}.

\begin{proof}
	[Proof of Theorem \ref{thm:main_value}]
	It follows from Propositions \ref{p:sidediv}, \ref{prop:concave_A} and \ref{prop:concave_B} that Theorem \ref{thm:main_value} holds. 
\end{proof}

\begin{proof}
	[Proof of Corollary \ref{coro:main}]
	The	statement $(1)$ holds by Theorem \ref{thm:main_value}. 
	
	Denote $a_i:=\sigma(\log|f_i|,\Phi_{o,\max})$ for $i\in\{1,2\}$, then we have $\log|f_i|\le a_i\Phi_{o,\max}+O(1)$ for  $i\in\{1,2\}$, which implies that		
	\begin{equation}
		\nonumber
		\begin{split}
			\log|f_1+f_2|&\le \log(|f_1|+|f_2|)\\
			&\le \log\left(e^{a_1\Phi_{o,\max}}+e^{a_2\Phi_{o,\max}}\right) +O(1)\\
			&\le \min\{a_1,a_2\}\Phi_{o,\max}+O(1).
		\end{split}
	\end{equation}
	Thus, $\sigma(\log|f_1+f_2|,\Phi_{o,\max})\ge \min\big\{\sigma(\log|f_1|,\Phi_{o,\max}),\sigma(\log|f_2|,\Phi_{o,\max})\big\}$.
	
	Denote that $b_i:=\sigma(\psi_i,\Phi_{o,\max})$ for $i\in\{1,2\}$, then we have $\psi_i\le b_i\Phi_{o,\max}+O(1)$ for  $i\in\{1,2\}$, which implies that		
	\begin{equation}
		\nonumber
		\begin{split}
			\max\{\psi_1,\psi_2\}&\le \max\{b_1\Phi_{o,\max},b_2\Phi_{o,\max}\}+O(1)\\
			&=\min\{b_1,b_2\}\Phi_{o,\max}+O(1).
		\end{split}
	\end{equation}
	Thus, $\sigma(\max\{\psi_1,\psi_2\},\Phi_{o,\max})\ge \min\big\{\sigma(\psi_1,\Phi_{o,\max}),\sigma(\psi_2,\Phi_{o,\max})\big\}$. $\max\{\psi_1,\psi_2\}\ge\psi_i$ for $i\in\{1,2\}$ implies $\sigma(\max\{\psi_1,\psi_2\},\Phi_{o,\max})\le \min\big\{\sigma(\psi_1,\Phi_{o,\max}),\sigma(\psi_2,\Phi_{o,\max})\big\}$. Then the statement $(3)$ holds.
\end{proof}

\section{Jumping numbers and Zhou valuations: proof of Theorem \ref{thm:valu-jump}}
Let $f_{0}=(f_{0,1},\cdots,f_{0,m})$ be a vector,
where $f_{0,1},\cdots,f_{0,m}$ are holomorphic functions near $o$.
Denote $|f_{0}|^{2}=|f_{0,1}|^{2}+\cdots+|f_{0,m}|^{2}$.  Let $\Phi_{o,\max}$ be a local Zhou weight related to $|f_{0}|^{2}e^{-2\varphi_0}$ near $o$.
Let $G$	be a holomorphic function on $U$. Denote $$k_{\varphi_0}:=\sigma(\varphi_0,\Phi_{o,\max}).$$ Recall  the definition of jumping number 
$$c^G_o(\Phi_{o,\max}):=\sup\big\{c:|G|^{2}e^{-2c\Phi_{o,\max}}\text{  is integrable near $o$}\big\}$$ (see \cite{JON-Mus2012,JON-Mus2014}). 

In this section, we discuss the relation between the jumping numbers  and the Zhou valuations. The following remark shows that it suffices to consider the case $\varphi\equiv0$.

\begin{Remark}
	\label{rem:reduce to tri weight}
	It follows from Proposition \ref{thm:main_value1} that $(1+k_{\varphi_0})\Phi_{o,\max}$ is a local Zhou weight related to $|f_0|^2$. By definitions, we have 
	$$\sigma \big(\log|G|,(1+k_{\varphi_0})\Phi_{o,\max}\big)=\frac{\nu(G,\Phi_{o,\max})}{1+k_{\varphi_0}}$$
	and
	\begin{equation*}
		\begin{split}
			c^G_o\big((1+k_{\varphi_0})\Phi_{o,\max}\big) & := \sup\big\{c:|G|^{2}e^{-2c(1+k_{\varphi_0})\Phi_{o,\max}}\text{  is integrable near $o$}\big\}\\
			& =\sup\left\{\frac{c_1}{1+k_{\varphi_0}}:|G|^{2}e^{-2c_1\Phi_{o,\max}}\text{  is integrable near $o$}\right\}\\
			&=\frac{1}{1+k_{\varphi_0}}\sup\big\{c:|G|^{2}e^{-2c\Phi_{o,\max}}\text{  is integrable near $o$}\big\}\\
			&=\frac{1}{1+k_{\varphi_0}}c^G_o(\Phi_{o,\max}).
		\end{split}
	\end{equation*}
	Hence when discussing the relation between the jumping number $\nu(G,\Phi_{o,\max})$  and the Zhou valuation $\sigma(\cdot,\Phi_{o,\max})$, it suffices to consider $(1+k_{\varphi_0})\Phi_{o,\max}$, where $(1+k_{\varphi_0})\Phi_{o,\max}$ is a local Zhou weight related to $|f_{0}|^{2}$.
\end{Remark}

Let $\Phi_{o,\max}$ be any local Zhou weight related to $|f_{0}|^{2}$. Denote 
$$k_1:=\sigma(\log|f_0|,\Phi_{o,\max})$$
and 
$$k_2:=\nu(G,\Phi_{o,\max}).$$
We introduce following notations. Let us consider the following Tian functions:
$$\mathrm{Tn}_1(t):=\sup\big\{c:|f_0|^{2t}e^{-2c\Phi_{o,\max}}\text{  is integrable near $o$}\big\},$$
$$\mathrm{Tn}_2(t):=\sup\big\{c:|G|^{2t}e^{-2c\Phi_{o,\max}}\text{  is integrable near $o$}\big\},$$
and
$$\mathrm{Tn}_3(s,t):=\sup\big\{c:|G|^{2s}|f_0|^{2t}e^{-2c\Phi_{o,\max}}\text{  is integrable near $o$}\big\}.$$

Let $c_o(\Phi_{o,\max}):=\sup\big\{c:e^{-2c\Phi_{o,\max}}\text{  is integrable near $o$}\big\}$ be the complex singularity exponent of $\Phi_{o,\max}$ at $o$. It is easy to see that
$\mathrm{Tn}_3(0,t)=\mathrm{Tn}_1(t)$,
$\mathrm{Tn}_3(s,0)=\mathrm{Tn}_2(s)$,
and
$$\mathrm{Tn}_3(0,0)=\mathrm{Tn}_1(0)=\mathrm{Tn}_2(0)=c_o(\Phi_{o,\max}).$$

We note that $\mathrm{Tn}_1(t)$ is concave with respect to $t\in\mathbb{R}$. By the definitions of $\mathrm{Tn}_1(t)$ and $\Phi_{o,\max}$, we know $\mathrm{Tn}_{1}(1)=1$. It follows from Proposition \ref{prop:concave_B} that, when $t\ge 1$, $$\mathrm{Tn}_1(t)=k_1t-k_1+1.$$

We present an estimate for Tian function $\mathrm{Tn}_2(t)$ in the view of Zhou valuation.
\begin{Proposition}
	\label{pro:Jum and val}
	When $t\ge0$, we have $$c_o(\Phi_{o,\max})+k_2t=\mathrm{Tn}_2(0)+k_2t \le \mathrm{Tn}_2(t)\le k_2t-k_1+1,$$ where $k_1=\sigma(\log|f_0|,\Phi_{o,\max})$ and $k_2=\nu(G,\Phi_{o,\max})$.
\end{Proposition}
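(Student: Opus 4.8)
The plan is to prove the two inequalities separately. The equality $c_o(\Phi_{o,\max})+k_2t=\mathrm{Tn}_2(0)+k_2t$ is immediate from the definition, since $\mathrm{Tn}_2(0)=\sup\{c:e^{-2c\Phi_{o,\max}}\text{ is integrable near }o\}=c_o(\Phi_{o,\max})$.

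For the lower bound $\mathrm{Tn}_2(0)+k_2t\le\mathrm{Tn}_2(t)$, I would use that $\sigma(\log|G|,\Phi_{o,\max})=k_2$ implies $\log|G|\le k_2\Phi_{o,\max}+O(1)$ near $o$ (the inequality $\psi\le\sigma(\psi,\Phi_{o,\max})\Phi_{o,\max}+O(1)$ recalled in Section~\ref{sec:1.1}). Hence for $t\ge0$ one has $|G|^{2t}\le C\,e^{2tk_2\Phi_{o,\max}}$ near $o$, so whenever $e^{-2c\Phi_{o,\max}}$ is integrable near $o$ the function $|G|^{2t}e^{-2(c+tk_2)\Phi_{o,\max}}\le C\,e^{-2c\Phi_{o,\max}}$ is integrable near $o$; this gives $\mathrm{Tn}_2(t)\ge c_o(\Phi_{o,\max})+tk_2$.

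For the upper bound $\mathrm{Tn}_2(t)\le k_2t-k_1+1$, I would fix $t\ge0$ and study the function $s\mapsto\mathrm{Tn}_3(t,s)$ on $[0,+\infty)$, which is concave by the H\"older inequality and finite (it is $\ge0$ because $|G|^{2t}|f_0|^{2s}$ is bounded near $o$, and $<+\infty$ because $f_0\not\equiv 0$). The key computation is that for $s\ge1$ one has $\mathrm{Tn}_3(t,s)=k_1s+k_2t-k_1+1$: writing $\psi:=t\log|G|+(s-1)\log|f_0|$, which is plurisubharmonic near $o$ when $t\ge0$ and $s\ge1$, one checks $\mathrm{Tn}_3(t,s)=c_o(\Phi_{o,\max},\psi)$ in the notation of Section~\ref{sec:tian function} (taking there $t=1$, with underlying data $|f_0|^2$, i.e. $\varphi_0\equiv0$); then Proposition~\ref{prop:concave_B}, the normalization $\mathrm{Tn}(0)=1$, and the additivity of Zhou numbers (Corollary~\ref{coro:main}(1)) yield
\[\mathrm{Tn}_3(t,s)=1+\sigma(\psi,\Phi_{o,\max})=1+t\sigma(\log|G|,\Phi_{o,\max})+(s-1)\sigma(\log|f_0|,\Phi_{o,\max})=k_1s+k_2t-k_1+1.\]
(The degenerate case $\nu(\psi,o)=0$, in which $G$ and $f_0$ are units at $o$, is handled by hand: there $k_1=k_2=0$ and $\mathrm{Tn}_3(t,s)=c_o(\Phi_{o,\max})=1$, so the formula persists.) Consequently $s\mapsto\mathrm{Tn}_3(t,s)$ is a concave function on $[0,+\infty)$ which coincides with the affine function $\ell(s):=k_1s+k_2t-k_1+1$ on $[1,+\infty)$; comparing, for $s_0\in[0,1)$ and $s_1>1$, the chord of $\mathrm{Tn}_3(t,\cdot)$ joining $s_0$ and $s_1$ with the value $\mathrm{Tn}_3(t,1)=\ell(1)$ and letting $s_1\to+\infty$ shows $\mathrm{Tn}_3(t,s_0)\le\ell(s_0)$. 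Taking $s_0=0$ gives $\mathrm{Tn}_2(t)=\mathrm{Tn}_3(t,0)\le\ell(0)=k_2t-k_1+1$.

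The main obstacle I expect is the exact evaluation of $\mathrm{Tn}_3(t,s)$ on the half-line $\{s\ge1\}$: one must carefully verify that $\psi=t\log|G|+(s-1)\log|f_0|$ meets the hypotheses under which Proposition~\ref{prop:concave_B} applies (integrability of $|f_0|^2$ and of $|f_0|^2|z|^{2N_0}e^{-2\Phi_{o,\max}}$ for $N_0\gg0$, together with $\nu(\psi,o)>0$), and isolate the few degenerate configurations where $\psi$ is bounded near $o$ and a direct argument is needed. Once the affine formula for $\mathrm{Tn}_3(t,\cdot)$ on $\{s\ge1\}$ is established, the passage to $s=0$ is routine one-variable convex analysis.
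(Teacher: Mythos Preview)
Your proof is correct. The lower bound is exactly the paper's argument (their Lemma~\ref{lem:est of cur} specialized to $F=G$, $H=1$). For the upper bound, however, you take a genuinely different route from the paper.

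The paper argues by contradiction: via Proposition~\ref{thm:main_value1} (itself a consequence of Proposition~\ref{prop:concave_B}) it first establishes the exact formula $\mathrm{Tn}_3(s,m+1)=1+mk_1+k_2s$ along the integer lines $m\in\mathbb{Z}_{\ge0}$ (Lemma~\ref{lem:estimate for A3}), and then uses the monotonicity inequality of Lemma~\ref{lem:est of cur} twice (in the $G$-variable and in the $f_0$-variable) to push a hypothetical violation at some $t_0$ to a contradiction at the integer point $(n+1,1)$. You instead fix $t$ and work directly in the continuous $s$-variable: applying Proposition~\ref{prop:concave_B} with $\psi=t\log|G|+(s-1)\log|f_0|$ together with the additivity of Zhou numbers (Corollary~\ref{coro:main}(1)) yields the affine formula on the whole half-line $\{s\ge1\}$ in one stroke, after which one-variable concavity of $s\mapsto\mathrm{Tn}_3(t,s)$ extrapolates the bound to $s=0$. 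Your argument is a bit more streamlined, since it avoids the rescaled-weight detour through Proposition~\ref{thm:main_value1} and the discrete-to-continuous contradiction step; the paper's route has the advantage of only ever invoking Proposition~\ref{prop:concave_B} with $\psi=\log|G|$ (a single holomorphic log), so the Lelong-number hypothesis $\nu(\psi,o)>0$ needs no case analysis.

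One small imprecision: the degenerate case $\nu(\psi,o)=0$ does not force \emph{both} $G$ and $f_0$ to be units (e.g.\ $t=0$ or $s=1$ also kills the relevant summand). But since $t\ge0$ and $s\ge1$, $\nu(\psi,o)=0$ still implies $\psi$ is bounded near $o$ and $tk_2+(s-1)k_1=0$, so $\mathrm{Tn}_3(t,s)=\mathrm{Tn}_1(1)=1$ and the affine formula persists, exactly as you anticipated.
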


We need the following Lemma \ref{lem:estimate for A3} and \ref{lem:est of cur} in the proof of Proposition \ref{pro:Jum and val}.
\begin{Lemma}
	\label{lem:estimate for A3}Let $m\ge 0$ be fixed,  then we have
	\begin{equation}
		\label{estimate:A3sm2}
		\begin{split}
			\mathrm{Tn}_3(s,m+1)
			=1+mk_1+k_2s,
		\end{split}
	\end{equation}
	holds for any $s\ge0$.
\end{Lemma}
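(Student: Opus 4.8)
The plan is to recognize $\mathrm{Tn}_3(\cdot,m+1)$ as a rescaled Tian function of $\log|G|$ relative to an auxiliary Zhou weight built from $\Phi_{o,\max}$, and then to apply the affine formula of Proposition \ref{prop:concave_B}. Assume first $m\ge 1$. I would apply Proposition \ref{thm:main_value1}(1) with $\varphi_0\equiv 0$, with $f$ taken to be the vector $f_0=(f_{0,1},\dots,f_{0,m})$ (so that $|f|=|f_0|$), and with $\alpha=m$; since $k_1=\sigma(\log|f_0|,\Phi_{o,\max})\ge 0$, this shows that
\[
\Psi:=\big(1+mk_1\big)\Phi_{o,\max}
\]
is a local Zhou weight related to $|f_0|^{2m}\cdot|f_0|^2=|f_0|^{2(m+1)}$, with $1+mk_1>0$. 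Moreover $|f_0|^{2(m+1)}=\sum_{|\beta|=m+1}\binom{m+1}{\beta}|f_0^{\beta}|^2$ is again the squared norm of a holomorphic vector, so the framework of Section \ref{sec:tian function} is available for the pair $(\Psi,\psi)$ with $\psi=\log|G|$ and trivial $\varphi_0$. (If $m=0$ one simply sets $\Psi=\Phi_{o,\max}$, $1+mk_1=1$, and the rest of the argument is unchanged.)

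Next I would use the rescaling $c\,\Phi_{o,\max}=\tfrac{c}{1+mk_1}\Psi$: for $c,s\ge 0$ it shows that $|G|^{2s}|f_0|^{2(m+1)}e^{-2c\Phi_{o,\max}}$ is integrable near $o$ if and only if $|G|^{2s}|f_0|^{2(m+1)}e^{-2c'\Psi}$ is, where $c'=c/(1+mk_1)$. Hence, writing $\widetilde{\mathrm{Tn}}(t):=\sup\{c:|f_0|^{2(m+1)}|G|^{2t}e^{-2c\Psi}\text{ is integrable near }o\}$ for the Tian function attached to $\Psi$ and $\psi=\log|G|$, one gets $\mathrm{Tn}_3(s,m+1)=(1+mk_1)\,\widetilde{\mathrm{Tn}}(s)$ for all $s\ge 0$. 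Here $\widetilde{\mathrm{Tn}}(0)=1$ because $\Psi$ is a local Zhou weight related to $|f_0|^{2(m+1)}$ (the normalization ``$\mathrm{Tn}(0)=1$'' recalled in Section \ref{sec:tian function}). If $G(o)=0$, then $\nu(\log|G|,o)>0$, the hypotheses of Section \ref{sec:tian function} hold, and Proposition \ref{prop:concave_B} gives $\widetilde{\mathrm{Tn}}(s)=1+\sigma(\log|G|,\Psi)\,s=1+\tfrac{k_2}{1+mk_1}\,s$, where $k_2=\nu(G,\Phi_{o,\max})$ and I used the positive homogeneity $\sigma(\cdot,\lambda\varphi)=\lambda^{-1}\sigma(\cdot,\varphi)$ of relative types. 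Multiplying through by $1+mk_1$ yields $\mathrm{Tn}_3(s,m+1)=1+mk_1+k_2s$, which is \eqref{estimate:A3sm2}. If instead $G(o)\ne 0$, then $|G|^{2s}$ is bounded above and below near $o$, so $\mathrm{Tn}_3(s,m+1)=\mathrm{Tn}_1(m+1)=1+mk_1$ and $k_2=0$, and the formula holds as well.

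I do not expect a genuine obstacle here: the argument is essentially bookkeeping of the scaling factor $1+mk_1$ together with two citations, Proposition \ref{thm:main_value1}(1) (which produces $\Psi$ as a Zhou weight related to $|f_0|^{2(m+1)}$) and Proposition \ref{prop:concave_B} (which makes $\widetilde{\mathrm{Tn}}$ affine on $[0,\infty)$). The only subtle points are the endpoint $m=0$, excluded from Proposition \ref{thm:main_value1}(1) but trivial, and the Lelong-number hypothesis $\nu(\log|G|,o)>0$ needed for Proposition \ref{prop:concave_B}, which forces the separate (and easier) treatment of the case $G(o)\ne 0$ by reduction to $\mathrm{Tn}_1$.
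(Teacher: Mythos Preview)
Your proof is correct and follows essentially the same route as the paper: apply Proposition~\ref{thm:main_value1}(1) to recognize $(1+mk_1)\Phi_{o,\max}$ as a local Zhou weight related to $|f_0|^{2(m+1)}$, rescale to identify $\mathrm{Tn}_3(\cdot,m+1)$ with $(1+mk_1)$ times the associated Tian function, and invoke Proposition~\ref{prop:concave_B}. Your explicit handling of the edge cases $m=0$ and $G(o)\neq 0$ is in fact more careful than the paper's write-up, which leaves these implicit.
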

\begin{proof}
	Recall that $k_1:=\sigma(\log|f_0|,\Phi_{o,\max})$. It follows from Proposition \ref{thm:main_value1} that $\Phi^{f_0^{m+1}}_{o,\max}=(1+mk_1)\Phi_{o,\max}$ is a local Zhou weight related to $|f_0|^{2m+2}$ near $o$. Then we have
	\begin{equation}
		\label{estimate:A3sm1}
		\begin{split}
			&	\mathrm{Tn}_3(s,m+1) \\=& \sup\big\{c:|G|^{2s}|f_0|^{2(m+1)}e^{-2c\Phi_{o,\max}}\text{  is integrable near $o$}\big\}\\
			=&\sup\big\{c:|G|^{2s}|f_0|^{2m+2}e^{-2c\frac{1}{1+mk_1}\Phi^{f_0^{m+1}}_{o,\max}}
			\text{  is integrable near $o$}\big\}\\
			=&(1+mk_1)\sup\big\{c:|G|^{2s}|f_0|^{2m+2}e^{-2c\Phi^{f_0^{m+1}}_{o,\max}}
			\text{  is integrable near $o$}\big\}.
		\end{split}
	\end{equation}
	Let $$c_o\big(\Phi^{f_0^{m+1}}_{o,\max},s\log|G|\big):=\sup\big\{c:|G|^{2s}|f_0|^{2m+2}e^{-2c\Phi^{f_0^{m+1}}_{o,\max}}
	\text{  is integrable near $o$}\big\}.$$
	As $\Phi^{f_0^{m+1}}_{o,\max}=(1+mk_1)\Phi_{o,\max}$ and $k_2=\nu(G,\Phi_{o,\max})$, we know that $\nu(G,\Phi^{f_0^{m+1}}_{o,\max})=\frac{k_2}{1+mk_1}$.
	It follows from Proposition \ref{prop:concave_B} that, when $s\ge 0$,
	\begin{equation}
		\label{estimate:2m+2}
		\begin{split}
			c_o(\Phi^{f_0^{m+1}}_{o,\max},s\log|G|)
			=&c_o(\Phi^{f_0^{m+1}}_{o,\max},0)+\nu(G,\Phi^{f_0^{m+1}}_{o,\max})s\\
			=&1+ \frac{k_2}{1+mk_1}s.
		\end{split}
	\end{equation}
	Combining \eqref{estimate:A3sm1} and \eqref{estimate:2m+2}, when $m\ge 0$ is fixed,  we have that
	\begin{equation*}
		\begin{split}
			\mathrm{Tn}_3(s,m+1) &= (1+mk_1)\big(1+ \frac{k_2}{1+mk_1}s\big)\\
			&=1+mk_1+k_2s
		\end{split}
	\end{equation*}
	holds for any $s\ge0$. Lemma \ref{lem:estimate for A3} is proved.
\end{proof}

\begin{Lemma}
	\label{lem:est of cur}Let $t_1\ge t_2$ be two nonnegative integers. Let $H$ be a holomorphic function near $o$. Let $F$ be any holomorphic function near $o$ and $k:=\sigma(\log|F|,\Phi_{o,\max})$. Denote $$\mathrm{Tn}_F(t):=\sup\{c:|F|^{2t}|H|^2e^{-2c\Phi_{o,\max}}\text{  is integrable near $o$ }\}.$$
	Then we have $\mathrm{Tn}_F(t_1)\ge \mathrm{Tn}_F(t_2)+ k(t_1-t_2)$.
\end{Lemma}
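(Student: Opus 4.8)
The plan is to recognize $\mathrm{Tn}_F$ as a Tian function of the type studied in Section~\ref{sec:tian function} (with the roles $f_0\sim H$, $\varphi_0\sim 0$, $\psi\sim\log|F|$ and $\varphi\sim\Phi_{o,\max}$), and then to exploit the single structural fact underlying that section: since $\Phi_{o,\max}$ is, up to a bounded term, a maximal weight and $k=\sigma(\log|F|,\Phi_{o,\max})$, one has the pointwise domination
\[\log|F|\le k\,\Phi_{o,\max}+O(1)\quad\text{near }o\]
(this is the inequality $\psi\le\sigma(\psi,\Phi_{o,\max})\Phi_{o,\max}+O(1)$ recorded in the introduction and obtained there from Theorem~\ref{thm:SOC}). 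First I would fix a small neighborhood $U$ of $o$ on which this holds with an explicit constant, say $\log|F|\le k\Phi_{o,\max}+C'$ on $U$, and also arrange $\Phi_{o,\max}\le 0$ on $U$ by shrinking.

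Next I would carry out the elementary factorization. For $t_1\ge t_2\ge0$ and any $c\in\mathbb{R}$, write
\[|F|^{2t_1}|H|^{2}e^{-2c\Phi_{o,\max}}=\big(|F|^{2t_2}|H|^{2}e^{-2c\Phi_{o,\max}}\big)\cdot|F|^{2(t_1-t_2)}.\]
Because $t_1-t_2\ge0$, multiplying the displayed domination by $2(t_1-t_2)$ and exponentiating gives $|F|^{2(t_1-t_2)}\le C\,e^{2(t_1-t_2)k\,\Phi_{o,\max}}$ on $U$ for a constant $C$, hence
\[|F|^{2t_1}|H|^{2}e^{-2c\Phi_{o,\max}}\le C\,|F|^{2t_2}|H|^{2}e^{-2\left(c-(t_1-t_2)k\right)\Phi_{o,\max}}\quad\text{on }U.\]
Consequently, whenever $|F|^{2t_2}|H|^{2}e^{-2c'\Phi_{o,\max}}$ is integrable near $o$ with $c'=c-(t_1-t_2)k$, so is $|F|^{2t_1}|H|^{2}e^{-2c\Phi_{o,\max}}$.

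Finally I would pass to the supremum. For every $c'<\mathrm{Tn}_F(t_2)$ the function $|F|^{2t_2}|H|^{2}e^{-2c'\Phi_{o,\max}}$ is integrable near $o$ (the set of admissible exponents is an interval unbounded below, since $\Phi_{o,\max}\le0$ near $o$), so the previous step yields $\mathrm{Tn}_F(t_1)\ge c'+(t_1-t_2)k$; letting $c'\uparrow\mathrm{Tn}_F(t_2)$ gives $\mathrm{Tn}_F(t_1)\ge\mathrm{Tn}_F(t_2)+k(t_1-t_2)$. There is no genuinely hard step here: the only point needing a word of justification is the domination $\log|F|\le k\Phi_{o,\max}+O(1)$, which is precisely the defining property of the relative type $\sigma(\cdot,\Phi_{o,\max})$ to a maximal weight; I would also note in passing that the integrality of $t_1,t_2$ is never used (only $t_1\ge t_2\ge 0$), it being relevant only when this lemma is later fed into the proof of Proposition~\ref{pro:Jum and val}.
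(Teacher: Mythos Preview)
Your proof is correct and follows essentially the same argument as the paper: both rely on the domination $\log|F|\le k\,\Phi_{o,\max}+O(1)$ to bound $|F|^{2(t_1-t_2)}$ by a constant times $e^{2k(t_1-t_2)\Phi_{o,\max}}$, and then compare the two integrands directly. Your observation that the integrality of $t_1,t_2$ plays no role in the argument is also correct.
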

\begin{proof}It follows from $k=\sigma(\log|F|,\Phi_{o,\max})$ and the definition of $\sigma(\log|F|,\Phi_{o,\max})$ that
	\begin{equation*}
		\log|F|\le k\Phi_{o,\max}+O(1)
	\end{equation*}
	holds near $o$, which implies that $|F|^{2(t_1-t_2)}e^{-2k(t_1-t_2)\Phi_{o,\max}}\le C_1$ for some positive constant $C_1$ on a neighborhood $U$ of $o$.
	
	Let $c$ be any real number such that $|F|^{2t_2}|H|^2e^{-2c\Phi_{o,\max}}$ is integrable near $o$. Then we have
	\begin{equation*}
		\begin{split}
			&|F|^{2t_1}|H|^2e^{-2k(t_1-t_2)\Phi_{o,\max}}e^{-2c\Phi_{o,\max}} \\
			=& |F|^{2t_2}|F|^{2(t_1-t_2)}|H|^2e^{-2k(t_1-t_2)\Phi_{o,\max}}e^{-2c\Phi_{o,\max}} \\
			\le& C_1 |F|^{2t_2}|H|^2e^{-2c\Phi_{o,\max}}
		\end{split}
	\end{equation*}
	on $U$.
	Hence $|F|^{2t_1}|H|^2e^{-2k(t_1-t_2)\Phi_{o,\max}}e^{-2c\Phi_{o,\max}}$ is integrable near $o$, and $\mathrm{Tn}_F(t_1)\ge c+k(t_1-t_2)$. By the choice of $c$, $\mathrm{Tn}_F(t_1)\ge \mathrm{Tn}_F(t_2)+k(t_1-t_2)$. Lemma \ref{lem:est of cur} is proved.		
\end{proof}

Now we prove Proposition \ref{pro:Jum and val}.
\begin{proof}[Proof of Proposition \ref{pro:Jum and val}]
	Note that $\mathrm{Tn}_2(0)=c_o(\Phi_{o,\max})$. It follows from Lemma \ref{lem:est of cur} that $\mathrm{Tn}_2(t)\ge \mathrm{Tn}_2(0)+k_2t=c_o(\Phi_{o,\max})+k_2t$. Hence the first inequality in Proposition \ref{pro:Jum and val} has been proved.
	
	We now prove the inequality $\mathrm{Tn}_2(t)\le k_2t-k_1+1$ by contradiction.
	If not, there exists a positive real number $t_0$ such that $\mathrm{Tn}_2(t_0)> k_2t_0-k_1+1$.
	
	Let $n=\lfloor t_0\rfloor$ be the largest integer less than or equal to $t_0$. Then it follows from Lemma \ref{lem:est of cur}  that $\mathrm{Tn}_3(n+1,0)=\mathrm{Tn}_2(n+1)\ge \mathrm{Tn}_2(t_0)+k_2(n+1-t_0)> k_2t_0-k_1+1+k_2(n+1-t_0)>k_2(n+1)-k_1+1$. Note that $k_1=\sigma(\log|f_0|,\Phi_{o,\max})$. Again, by Lemma \ref{lem:est of cur}, we have
	\begin{equation}
		\label{estimate:A3n+1,1}
		\begin{split}
			\mathrm{Tn}_3(n+1,1)\ge& \mathrm{Tn}_3(n+1,0)+k_1\\
			>& k_2(n+1)-k_1+1+k_1\\
			=& k_2(n+1)+1.
		\end{split}
	\end{equation}

	However, it follows from equality \eqref{estimate:A3sm2} that $\mathrm{Tn}_3(n+1,1)=k_2(n+1)+1$, which contradicts to inequality \eqref{estimate:A3n+1,1}. Hence $\mathrm{Tn}_2(t)\le k_2t-k_1+1$ holds when $t\in[0,+\infty)$.
	
	Proposition \ref{pro:Jum and val} has been proved.
\end{proof}

Now, we prove Theorem \ref{thm:valu-jump}.
\begin{proof}
	[Proof of Theorem \ref{thm:valu-jump}]
	It follows from Remark \ref{rem:reduce to tri weight} and Proposition \ref{pro:Jum and val} (taking $t=1$) that Theorem \ref{thm:valu-jump} holds.
\end{proof}

\section{Proofs of Theorem \ref{thm:multi-valua}, Corollary \ref{coro:multiplier-valuation} and Theorem \ref{thm:A}}
In this section, we prove Theorem \ref{thm:multi-valua}, Corollary \ref{coro:multiplier-valuation} and Theorem \ref{thm:A}.

Firstly, we consider the case of $u,v\ge N\log|z|$ for $N\gg0$.

\begin{Lemma}
	\label{l:0921-4}Let $u,$ $v$ be two plurisubharmonic functions near $o$ satisfying that there exists $N\gg0$ such that $u\ge N\log|z|$ and $v\ge N\log|z|$ near $o$. Assume that $\sigma(u,\Phi_{o,\max})\le\sigma(v,\Phi_{o,\max})$ for any local Zhou weight $\Phi_{o,\max}$ near $o$, then $\mathcal{I}(tv)_o\subset\mathcal{I}(tu)_o$ for any $t>0$.
\end{Lemma}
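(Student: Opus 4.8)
The plan is to prove the contrapositive: assuming $\mathcal{I}(t_0v)_o\not\subset\mathcal{I}(t_0u)_o$ for some $t_0>0$, I will exhibit a local Zhou weight $\Phi_{o,\max}$ with $\sigma(v,\Phi_{o,\max})<\sigma(u,\Phi_{o,\max})$, contradicting the assumption. First pick a germ $(f_0,o)\in\mathcal{I}(t_0v)_o\setminus\mathcal{I}(t_0u)_o$, so that $|f_0|^2e^{-2t_0v}$ is integrable near $o$ while $|f_0|^2e^{-2t_0u}$ is not. I then apply Remark \ref{rem:max_existence} with $\varphi_0:=0$, the vector $f_0$, and the weight $\varphi:=t_0u$. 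The bound $u\geq N\log|z|$ gives $e^{-2t_0u}\leq|z|^{-2t_0N}$, so $|f_0|^2|z|^{2N_0}e^{-2t_0u}$ is integrable near $o$ once $N_0\geq t_0N$; and $(f_0,o)\notin\mathcal{I}(t_0u)_o=\mathcal{I}(\varphi+\varphi_0)_o$ holds by the choice of $f_0$. Hence Remark \ref{rem:max_existence} yields a local Zhou weight $\Phi_{o,\max}$ related to $|f_0|^2$ (i.e.\ with $\varphi_0=0$) such that $\Phi_{o,\max}\geq t_0u$ near $o$. By Remark \ref{rem:sharp_bound} I may and do replace $\Phi_{o,\max}$ by its negative representative on a small neighborhood $U\ni o$, which changes neither the relative types $\sigma(\cdot,\Phi_{o,\max})$ nor the (non)integrability statements in Definition \ref{def:max_relat}.

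Dividing $t_0u\leq\Phi_{o,\max}$ by $t_0>0$ gives $u\leq(1/t_0)\Phi_{o,\max}+O(1)$, so $\sigma(u,\Phi_{o,\max})\geq 1/t_0$. It remains to show $\sigma(v,\Phi_{o,\max})<1/t_0$. Suppose not. Then $\sigma(t_0v,\Phi_{o,\max})\geq 1$, and the inequality $\psi\leq\sigma(\psi,\Phi_{o,\max})\Phi_{o,\max}+O(1)$ valid for maximal weights (recalled just after Definition \ref{def:max_relat}), applied with $\psi=t_0v$ and combined with $\Phi_{o,\max}<0$ on $U$, gives $t_0v\leq\Phi_{o,\max}+O(1)$ near $o$. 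Consequently $|f_0|^2e^{-2\Phi_{o,\max}}\leq C\,|f_0|^2e^{-2t_0v}$ near $o$ for some constant $C>0$; since $(f_0,o)\in\mathcal{I}(t_0v)_o$, the right-hand side is integrable near $o$, and therefore so is $|f_0|^2e^{-2\Phi_{o,\max}}$. But with $\varphi_0=0$, part $(2)$ of Definition \ref{def:max_relat} says precisely that $|f_0|^2e^{-2\Phi_{o,\max}}$ is \emph{not} integrable near $o$ --- a contradiction. Hence $\sigma(v,\Phi_{o,\max})<1/t_0\leq\sigma(u,\Phi_{o,\max})$, and the lemma follows by contraposition.

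The only non-routine step is setting up the right application of Remark \ref{rem:max_existence}, namely feeding it $\varphi_0=0$ and the weight $\varphi=t_0u$: this is what makes condition $(2)$ of Definition \ref{def:max_relat} read ``$|f_0|^2e^{-2\Phi_{o,\max}}$ is not integrable'', which is exactly what clashes with the integrability of $|f_0|^2e^{-2t_0v}$ once the comparison $t_0v\leq\Phi_{o,\max}+O(1)$ is available. Building $\Phi_{o,\max}$ relative to a weight $\varphi_0$ adapted to $v$ instead would make the corresponding inequality point the wrong way, so that choice would not close the argument. The remaining manipulations are bookkeeping with additive constants and signs. Note that only the lower bound on $u$ is actually used, to verify the integrability hypothesis of Remark \ref{rem:max_existence} and thereby guarantee the existence of the auxiliary Zhou weight; the lower bound on $v$ plays no role in this approach.
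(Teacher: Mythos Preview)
Your proof is correct and follows essentially the same approach as the paper: both construct, via Remark \ref{rem:max_existence} with $\varphi_0=0$ and $\varphi=t_0u$, a local Zhou weight $\Phi_{o,\max}$ related to $|f_0|^2$ dominating $t_0u$, and then derive a contradiction from the integrability of $|f_0|^2e^{-2t_0v}$. The only cosmetic difference is that the paper first reformulates the failure of inclusion via jumping numbers (setting $t_0:=c_o^{f}(u)$ and proving $\sigma(t_0u,\Phi_{o,\max})=1$ exactly), whereas you work directly with an arbitrary $t_0$ at which the inclusion fails and only need the inequality $\sigma(u,\Phi_{o,\max})\ge 1/t_0$; your version is a slight streamlining of the same argument.
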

\begin{proof}
	Theorem \ref{thm:SOC} shows that for any holomorphic function $f$ and plurisubharmonic function $\varphi$ near $o$, $(f,o)\in\mathcal{I}(\varphi)_o$ if and only if $c_o^f(\varphi)>1$.
	We prove Lemma \ref{l:0921-4} by contradiction: if not, there exists a holomorphic function $f$ near $o$ such that 
	$$t_0:=c_o^f(u)<c_o^{f}(v).$$
	Then we have $|f|^2e^{-2t_0u}$ is not integrable near $o$ and $|f|^2|z|^{2t_0N}e^{-2t_0u}$ is  integrable near $o$ for $u\ge N\log|z|$.
	By Remark \ref{rem:max_existence}, there exists a local Zhou weight $\Phi_{o,\max}$ related to $|f|^2$ near $o$ satisfying that
	\begin{equation}
		\label{eq:0921b}\Phi_{o,\max}\ge t_0u
	\end{equation}
	and $|f|^2e^{-2\Phi_{o,\max}}$ is not integrable near $o$. Noting that $t_0u\le \sigma(t_0u,\Phi_{o,\max})\Phi_{o,\max}+O(1)$ near $o$, we have 
	$|f|^2e^{-2\frac{t_0}{ \sigma(t_0u,\Phi_{o,\max})}u}$ is not integrable near $o$. As $c_o^{f}(u)=t_0$, we have $\sigma(t_0u,\Phi_{o,\max})\le 1$. Combining inequality \eqref{eq:0921b}, we have $$\sigma(t_0u,\Phi_{o,\max})= 1.$$
	By the assumption of Lemma \ref{l:0921-4}, we have 
	$$\sigma(t_0v,\Phi_{o,\max})\ge 1,$$
	then $|f|^2e^{-2t_0v}$ is not integrable near $o$
	which contradicts to $c_o^f(v)>t_0$.
	
	Thus, Lemma \ref{l:0921-4} holds.
\end{proof}

Let us recall \emph{Demailly's approximation theorem}.

\begin{Lemma}[see \cite{demailly2010}]\label{l:appro-Berg}
	Let $D\subset\mathbb{C}^n$ be a bounded pseudoconvex domain, and let $\varphi\in \mathrm{PSH}(D)$. For any positive integer $m$, let $\{\sigma_{m,k}\}_{k=1}^{\infty}$ be an orthonormal basis of $A^2(D,2m\varphi):=\big\{f\in\mathcal{O}(D):\int_{D}|f|^2e^{-2m\varphi}d\lambda<+\infty\big\}$. Denote that 
	$$\varphi_m:=\frac{1}{2m}\log\sum_{k=1}^{\infty}|\sigma_{m,k}|^2$$
	on $D$. Then there exist two positive constants $c_1$ (depending only on $n$ and diameter of $D$) and $c_2$ such that 
	\begin{equation}
		\label{eq:0911b}\varphi(z)-\frac{c_1}{m}\le\varphi_m(z)\le\sup_{|\tilde z-z|<r}\varphi(\tilde z)+\frac{1}{m}\log\frac{c_2}{r^n}
	\end{equation}
	for any $z\in D$ satisfying $\{\tilde z\in\mathbb{C}^n:|\tilde z-z|<r\}\subset\subset D$. Especially, $\varphi_m$ converges to $\varphi$ pointwisely and in $L^1_{\mathrm{loc}}$ on $D$.
\end{Lemma}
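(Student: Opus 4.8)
The plan is to recognise $\sum_{k=1}^{\infty}|\sigma_{m,k}(z)|^{2}$ as the restriction to the diagonal of the Bergman kernel $K_{m}(z,w)$ of the weighted Hilbert space $A^{2}(D,2m\varphi)$, endowed with the inner product $\langle f,g\rangle=\int_{D}f\bar g\,e^{-2m\varphi}\,d\lambda$; this sum is independent of the chosen orthonormal basis and satisfies the extremal characterisation
$$K_{m}(z):=K_{m}(z,z)=\sup\Big\{|f(z)|^{2}:f\in\mathcal{O}(D),\ \int_{D}|f|^{2}e^{-2m\varphi}\,d\lambda\le 1\Big\},$$
equivalently $\varphi_{m}(z)=\sup\big\{\tfrac{1}{m}\log|f(z)|:\int_{D}|f|^{2}e^{-2m\varphi}\,d\lambda\le 1\big\}$. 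Granting this, the right-hand inequality in \eqref{eq:0911b} is elementary and the left-hand one is a direct application of the Ohsawa--Takegoshi $L^{2}$ extension theorem \cite{OT87}.

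For the upper bound, fix $z\in D$ and $r>0$ with $\{|\tilde z-z|<r\}\subset\subset D$, and let $f\in\mathcal{O}(D)$ satisfy $\int_{D}|f|^{2}e^{-2m\varphi}\,d\lambda\le 1$. Applying the sub-mean value inequality to the plurisubharmonic function $|f|^{2}$ over the ball $B(z,r)$,
\[
|f(z)|^{2}\le \frac{n!}{\pi^{n}r^{2n}}\int_{B(z,r)}|f|^{2}\,d\lambda\le \frac{n!}{\pi^{n}r^{2n}}\,e^{2m\sup_{B(z,r)}\varphi}\int_{B(z,r)}|f|^{2}e^{-2m\varphi}\,d\lambda\le \frac{n!}{\pi^{n}r^{2n}}\,e^{2m\sup_{B(z,r)}\varphi}.
\]
Taking $\tfrac{1}{2m}\log$ of both sides and then the supremum over all such $f$ gives $\varphi_{m}(z)\le\sup_{|\tilde z-z|<r}\varphi(\tilde z)+\tfrac{1}{m}\log\tfrac{c_{2}}{r^{n}}$ with $c_{2}=\sqrt{n!/\pi^{n}}$.

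For the lower bound, fix $z\in D$ with $\varphi(z)>-\infty$ (at points with $\varphi(z)=-\infty$ the inequality is vacuous). Applying the Ohsawa--Takegoshi $L^{2}$ extension theorem on the bounded pseudoconvex domain $D$ to the $0$-dimensional subvariety $\{z\}$ with weight $2m\varphi$, one obtains $F\in\mathcal{O}(D)$ with $F(z)=1$ and $\int_{D}|F|^{2}e^{-2m\varphi}\,d\lambda\le C_{n}\,(\operatorname{diam}D)^{2n}e^{2m\varphi(z)}$, where $C_{n}$ depends only on $n$. Normalising $F$ so that its weighted $L^{2}$ norm is at most $1$ and inserting it into the extremal characterisation, we obtain $K_{m}(z)\ge e^{-2m\varphi(z)}/\big(C_{n}(\operatorname{diam}D)^{2n}\big)$, hence $\varphi_{m}(z)\ge\varphi(z)-\tfrac{c_{1}}{m}$ with $c_{1}=\tfrac12\log\big(C_{n}(\operatorname{diam}D)^{2n}\big)$, a constant depending only on $n$ and $\operatorname{diam}D$.

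Finally, combining the two bounds yields $\varphi(z)-\tfrac{c_{1}}{m}\le\varphi_{m}(z)\le\sup_{|\tilde z-z|<r}\varphi(\tilde z)+\tfrac{1}{m}\log\tfrac{c_{2}}{r^{n}}$; letting $m\to\infty$ and then $r\to 0^{+}$, and using $\lim_{r\to0^{+}}\sup_{B(z,r)}\varphi=\varphi(z)$ for plurisubharmonic $\varphi$, we get $\varphi_{m}(z)\to\varphi(z)$ for every $z\in D$. Since the functions $\varphi_{m}$ are plurisubharmonic and, by the upper bound, locally uniformly bounded above on $D$, this pointwise convergence upgrades to convergence in $L^{1}_{\mathrm{loc}}(D)$ by the standard compactness properties of plurisubharmonic functions. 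The only substantial input is the lower bound, namely the construction of $F$ with the stated estimate; everything else is routine once the Ohsawa--Takegoshi theorem is invoked.
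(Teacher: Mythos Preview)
The paper does not give its own proof of this lemma; it is stated with the attribution ``see \cite{demailly2010}'' and used as a black box. Your argument is precisely the standard proof from that reference: the extremal characterisation of the Bergman kernel, the sub-mean-value inequality for the upper bound, and the Ohsawa--Takegoshi extension theorem for the lower bound. It is correct and matches the cited source.
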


\begin{Remark}\label{r:0930a}
	Let $(\tau_l)$ be an orthonormal basis of a closed subspace $H$ of the space $A^2(\Omega, 2m\varphi)$, then we can see that
	\[\sum_l|\tau_l(z)|^2=\sup\left\{|f(z)|^2 : f\in H \ \& \ \int_{\Omega}|f|^2e^{-2m\varphi}\le 1\right\}\]
	for any $z\in \Omega$.
\end{Remark}

The following two lemmas will be used in the proofs of Theorem \ref{thm:multi-valua} and Corollary \ref{coro:multiplier-valuation}, and we prove them in Appendix.

\begin{Lemma}
	\label{l:0921-3}
	For any plurisubharmonic function $\varphi$ near $o$ and any holomorphic function $f$ near $o$, we have $\lim_{N\rightarrow+\infty}c_o^f(\max\{\varphi,N\log|z|\})=c_o^f(\varphi)$. 
\end{Lemma}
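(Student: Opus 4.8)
The plan is as follows. Write $\varphi_N:=\max\{\varphi,N\log|z|\}$ and fix a small ball $B=B(o,r)$ on which $\log|z|<0$ and on which $f,\varphi$ are defined. Then $(\varphi_N)_N$ is non-increasing on $B\setminus\{o\}$ with $\varphi_N\downarrow\varphi$ pointwise there, while $\varphi_N\ge\varphi$ and $\varphi_N\ge N\log|z|$; in particular $\varphi_N$ has an isolated singularity at $o$ inside $B$, so for each $c\ge 0$ the function $|f|^2e^{-2c\varphi_N}$ is bounded on $B\setminus B(o,\varepsilon)$ and ``integrable near $o$'' is the same as ``$\in L^1(B)$''. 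From $\varphi_N\ge\varphi$ we get $c^f_o(\varphi_N)\ge c^f_o(\varphi)$, and monotonicity in $N$ makes $c^f_o(\varphi_N)$ non-increasing in $N$; hence $L:=\lim_{N\to\infty}c^f_o(\varphi_N)$ exists and $L\ge c^f_o(\varphi)$. The content is the reverse inequality $L\le c^f_o(\varphi)$, equivalently: for every $c>c^f_o(\varphi)$ there is some $N$ with $|f|^2e^{-2c\varphi_N}\notin L^1(B)$.

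First I would bring in the concavity function $A_{u,v}$ of the section ``Concavity: real analysis'' with $g=|f|$, $u=\varphi$, $v=\log|z|$, namely $A_{u,v}(t)=\sup\{c:|f|^2|z|^{2t}e^{-2c\varphi}\text{ is integrable near }o\}$; it is concave and non-decreasing, $A_{u,v}(0)=c^f_o(\varphi)$, it is positive on a neighbourhood of $0$ (since $|f|^2|z|^{2t}$ is integrable near $o$ for small $|t|$) and hence continuous there, and by the very definition of $\varphi_N$ one has $c^f_o(\varphi_N)=A_{\varphi_N,v}(0)$ with $\varphi_N=\max\{u,\frac1b v\}$, $b=1/N$, exactly the shape occurring in Lemmas \ref{lem:jialidun0131} and \ref{lem:hanjiangxue3}. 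Suppose first that $A_{u,v}$ has strictly positive left derivative $b_0$ at $0$. Since the left derivative of a concave function is non-increasing, $A_{u,v}$ is strictly increasing on every interval contained in $(-\infty,0)$ with left derivative $\ge b_0$ there, so for every $N\ge 1/b_0$ Lemma \ref{lem:jialidun0131} gives $A_{\varphi_N,v}(t_0)=A_{u,v}(t_0)$ for all $t_0<0$. As $A_{\varphi_N,v}$ is also concave (hence continuous) near $0$, letting $t_0\uparrow 0$ yields $A_{\varphi_N,v}(0)=A_{u,v}(0)$, i.e. $c^f_o(\varphi_N)=c^f_o(\varphi)$ for all such $N$, and $L=c^f_o(\varphi)$.

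The delicate case — which I expect to be the main obstacle — is when the left derivative of $A_{u,v}$ at $0$ vanishes; then by concavity and monotonicity $A_{u,v}\equiv c_0:=c^f_o(\varphi)$ on some $[-\delta_0,\infty)$, i.e. multiplying by $|z|^{2t}$, $t>0$, does not raise the complex singularity exponent — the failure of integrability of $|f|^2e^{-2c_0\varphi}$ is located transversally along the polar set $\{\varphi=-\infty\}$ rather than at $o$ alone (the model being $\varphi=\log|z_1|$ in $\mathbb{C}^2$, for which a direct computation gives $c^1_o(\varphi_N)=1+\tfrac1N\to1$, so here $c^f_o(\varphi_N)>c^f_o(\varphi)$ for every finite $N$). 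In this case Lemma \ref{lem:jialidun0131} still pins down $A_{\varphi_N,v}$ only on $(-\infty,-\delta_0]$, so I would instead argue directly on the ``capped'' region $E_N:=\{\varphi<N\log|z|\}$, on which $\varphi_N=N\log|z|$ and hence $\int_{E_N\cap B}|f|^2e^{-2c\varphi_N}=\int_{E_N\cap B}|f|^2|z|^{-2cN}$. If $E_N$ is Lebesgue-null then $\varphi_N=\varphi$ a.e. near $o$ and there is nothing to prove; otherwise one shows that this last integral diverges as soon as $c>c_0+C/N$, where $C$ depends only on $\nu(\varphi,o)$ and on $\nu(\log|f|,o)$, whence $c^f_o(\varphi_N)\le c_0+C/N\to c_0$. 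Finally, once such an $N$ is at hand, one passes back to all of $B$: on the complement $\{\varphi\ge N\log|z|\}$ one has $\varphi_N=\varphi$, so by monotone convergence $\int_{\{\varphi\ge N\log|z|\}\cap B}|f|^2e^{-2c\varphi}=\int_B|f|^2e^{-2c\varphi}-\int_{E_N\cap B}|f|^2e^{-2c\varphi}$, and since the first integral is $+\infty$ (as $c>c^f_o(\varphi)$) it suffices that $\int_{E_N\cap B}|f|^2e^{-2c\varphi}<\infty$ — which for the same range of $c$ follows from the pointwise bounds $|z|^{-2cN}\le e^{-2c\varphi}$ on $E_N$ together with a Hölder interpolation against a fixed $c_1<c^f_o(\varphi)$.

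The crux is thus the quantitative divergence of $\int_{E_N\cap B}|f|^2|z|^{-2cN}$ in the degenerate (``flat'') case: it requires a lower bound on the $(2n-1)$-measure of $E_N\cap\{|z|=\rho\}$ as $\rho\to0^+$, obtained from the fact that in the flat case $\varphi$ is genuinely singular transversally, so $\{\varphi<N\log|z|\}$ contains a tube around $\{\varphi=-\infty\}$ whose transversal width is $\gtrsim|z|^{N/\mu}$ for a local multiplicity $\mu$, combined with the mutual position of $E_N$ and a cone on which $|f(z)|\gtrsim|z|^{\nu(\log|f|,o)}$. This is exactly the mechanism tying the transversal singularity of $\varphi$ near $o$ to the rate $1/N$ at which the cap recedes, and it is where the real work lies; the remaining manipulations (monotone convergence, Hölder's inequality, and the comparison estimates of Lemma \ref{lem:G_key}) are routine.
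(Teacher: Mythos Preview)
Your treatment of the ``non-flat'' case (positive left derivative of $A_{u,v}$ at $0$) via Lemma~\ref{lem:jialidun0131} is a clean observation and is correct: for $b=1/N\le b_0$ you get $A_{\max\{u,\frac1b v\},v}(t_0)=A_{u,v}(t_0)$ for all $t_0<0$ close to $0$, and passing to the limit $t_0\uparrow 0$ by continuity of both concave functions gives $c^f_o(\varphi_N)=c^f_o(\varphi)$ outright for large $N$.

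The gap is in the flat case, and it is exactly the hard case. Your ``tube'' argument presupposes structure that a general plurisubharmonic function does not have: you speak of a ``local multiplicity $\mu$'' and claim $\{\varphi<N\log|z|\}$ contains a tube of transversal width $\gtrsim |z|^{N/\mu}$ around $\{\varphi=-\infty\}$. But the polar set of an arbitrary psh function is merely pluripolar; it need not be analytic, need not have a well-defined multiplicity, and admits no a~priori tube estimate of this kind. Likewise, there is no canonical cone on which $|f|\gtrsim |z|^{\nu(\log|f|,o)}$ in the way you need it to intersect $E_N$. In short, every ingredient of the last paragraph is available only \emph{after} a log resolution and only when $\varphi$ has analytic singularities.

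This is precisely how the paper proceeds: it first proves the lemma for $\varphi$ with analytic singularities (Lemma~\ref{l:0921-1}) using Hironaka's resolution (Theorem~\ref{thm:desing}) to put the situation in monomial form and then invoking the explicit criterion of Lemma~\ref{l:III}; then it reduces the general $\varphi$ to this case by Demailly's approximation (Lemma~\ref{l:0921-2}): one finds $\varphi_m\ge\varphi+O(1)$ with analytic singularities and $c^f_o(\varphi_m)$ close to $c^f_o(\varphi)$, applies the analytic case to $\varphi_m$ to get $N_m$ with $c^f_o(\max\{\varphi_m,N_m\log|z|\})$ close to $c^f_o(\varphi)$, and uses $\varphi_m\ge\varphi+O(1)$ to pass back to $\varphi$. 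Your concavity shortcut handles the easy regime, but the flat regime genuinely needs this reduction-plus-resolution step; the direct geometric estimate you sketch cannot be made rigorous for arbitrary psh $\varphi$.
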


\begin{Lemma}
	\label{l:sigma-analytic}
	Let $u,$ $v$ be two  plurisubharmonic functions near $o$. Assume that $u$ has  analytic singularities near $o$. If $v\le(1-\epsilon)u+O(1)$ near $o$ for any $\epsilon>0$, then $v\le u+O(1)$ near $o$.
\end{Lemma}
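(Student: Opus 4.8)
The plan is to resolve the ideal defining the analytic singularities of $u$, reduce to a normal crossings picture, and then upgrade the family of $(1-\epsilon)$-estimates to a single estimate by a limiting argument resting on the removable singularity theorem for plurisubharmonic functions. We may assume $v\not\equiv-\infty$. Shrinking to a small polydisc $U\ni o$, the analytic singularities hypothesis lets us write $u=c\log|F|^{2}$ with $c>0$ and $|F|^{2}=\sum_{j=1}^{N}|f_{j}|^{2}\le1$ on $U$ (absorbing the bounded term into the constants), so $u\le0$ and, for each $\epsilon\in(0,1)$, $v\le(1-\epsilon)u+C_{\epsilon}$ on $U$ for some constant $C_{\epsilon}$. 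Let $I=(f_{1},\dots,f_{N})$ and take a log resolution $\pi\colon X\to U$ of $I$, isomorphic over $U\setminus V(I)$, with $\pi^{-1}I\cdot\mathcal{O}_{X}=\mathcal{O}_{X}(-D)$ and $D=\sum_{l}a_{l}E_{l}$ a simple normal crossing divisor. Locally $F\circ\pi$ equals a monomial in local equations of $D$ times a vector of holomorphic functions with no common zero, so near $\pi^{-1}(o)$ we have $u\circ\pi=\sum_{l}b_{l}\log|s_{l}|+(\text{bounded plurisubharmonic})$, where $s_{l}$ is a local equation of $E_{l}$ and $b_{l}:=2ca_{l}>0$; pulling the hypothesis back, $v\circ\pi$ is plurisubharmonic near the compact set $\pi^{-1}(o)$ and satisfies $v\circ\pi\le(1-\epsilon)\sum_{l}b_{l}\log|s_{l}|+C'_{\epsilon}$ there for every $\epsilon$.

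The heart of the matter is a local upgrade at a point $p\in\pi^{-1}(o)$. Choose coordinates $w$ on a polydisc $\Delta_{p}\ni p$ of radius $<1$ in which the components of $D$ through $p$ are $\{w_{1}=0\},\dots,\{w_{k}=0\}$, and set $g:=v\circ\pi|_{\Delta_{p}}$, so that $g\le(1-\epsilon)\sum_{l=1}^{k}b_{l}\log|w_{l}|+C'_{\epsilon}$ for all $\epsilon$. For fixed $\epsilon$, the function $g-(1-\epsilon)\sum_{l}b_{l}\log|w_{l}|$ is plurisubharmonic and bounded above on $\Delta_{p}\setminus\{w_{1}\cdots w_{k}=0\}$, hence extends to a plurisubharmonic function $\widetilde\phi_{\epsilon}$ on $\Delta_{p}$ with $\widetilde\phi_{\epsilon}\le C'_{\epsilon}$ (removable singularity across a codimension one analytic set). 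Off the divisor, $\widetilde\phi_{\epsilon}=g-(1-\epsilon)\sum_{l}b_{l}\log|w_{l}|$ increases to $g-\sum_{l}b_{l}\log|w_{l}|$ as $\epsilon\downarrow0$ (because $\sum_{l}b_{l}\log|w_{l}|<0$ there); so, along a sequence $\epsilon_{j}\downarrow0$, $(\widetilde\phi_{\epsilon_{j}})$ is an increasing sequence of plurisubharmonic functions whose limit is finite on the dense open set $\Delta_{p}\setminus\{w_{1}\cdots w_{k}=0\}$. Its upper regularization is therefore plurisubharmonic on $\Delta_{p}$, coincides with $g-\sum_{l}b_{l}\log|w_{l}|$ almost everywhere, and—being plurisubharmonic—is locally bounded above; comparing the relevant plurisubharmonic functions with their sub-mean value averages over shrinking balls then gives $g\le\sum_{l}b_{l}\log|w_{l}|+O(1)=u\circ\pi+O(1)$ in a neighborhood of $p$.

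Finally, covering $\pi^{-1}(o)$ by finitely many such $\Delta_{p}$ yields $v\circ\pi\le u\circ\pi+C$ on $\pi^{-1}(U')$ for some smaller neighborhood $U'\ni o$; since $\pi$ is an isomorphism over $U'\setminus V(I)$ and $V(I)$ has Lebesgue measure zero, $v\le u+C$ holds almost everywhere on $U'$, and then, once more by the sub-mean value property of the plurisubharmonic functions $v$ and $u$, $v(x)\le u(x)+C$ for every $x\in U'$, i.e.\ $v\le u+O(1)$ near $o$. The main obstacle is the local upgrade: each $(1-\epsilon)$-estimate only bounds $g$ up to a constant that may blow up as $\epsilon\to0$, and it is precisely the plurisubharmonicity of $v$ (via the removable singularity theorem and the regularization) that converts this family of weak bounds into a uniform one; one must also keep in mind that $\Delta_{p}$ can be chosen independently of $\epsilon$ only because the estimates downstairs are assumed to hold on a fixed neighborhood of $o$, which is the intended reading of the statement.
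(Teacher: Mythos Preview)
Your proof is correct and follows the same overall architecture as the paper's: reduce via a log resolution of the ideal defining the analytic singularities of $u$ to a simple normal crossings picture, upgrade the $(1-\epsilon)$--estimates to the full estimate locally upstairs, then push down using properness and compactness of the central fiber.

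The only substantive difference is in the local upgrade. The paper dispatches this step in one line by invoking Siu's decomposition theorem: from $v\circ\mu\le(1-\epsilon)\,c_1\log|w^{a}|+O(1)$ for all $\epsilon$ one reads off that the generic Lelong number of $v\circ\mu$ along each $\{w_i=0\}$ is at least $c_1a_i$, whence $v\circ\mu - c_1\log|w^{a}|$ is (the potential of) a positive current and hence locally bounded above. You instead give a hands-on argument: subtract $(1-\epsilon)\sum b_l\log|w_l|$ (pluriharmonic off the divisor), extend across the divisor by the removable singularity theorem for bounded-above psh functions, observe monotonicity in $\epsilon$, and take the upper-regularized increasing limit. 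This is more elementary in that it avoids Siu's theorem entirely, at the cost of some length; the paper's route is shorter but leans on a much deeper result. One minor inaccuracy: the increasing limit need not be finite on \emph{all} of $\Delta_p\setminus\{w_1\cdots w_k=0\}$ (since $g$ may be $-\infty$ on a pluripolar set), but it is finite almost everywhere there, which is all your regularization argument needs. Your closing remark about the neighborhood being fixed independently of $\epsilon$ is well taken and matches the intended reading used in the paper.
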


Now, we prove Theorem \ref{thm:multi-valua}.

\begin{proof}[Proof of Theorem \ref{thm:multi-valua}]
	Note that  $(5)\Rightarrow(4)$ is trivial. Then we only need to prove that $(1) \Rightarrow (3)$, $(3) \Rightarrow (2)$, $(2) \Rightarrow (5)$ and $(4) \Rightarrow (1)$.
	We prove Theorem \ref{thm:multi-valua} in four steps.
	
	\textbf{Step 1}. $(1) \Rightarrow (3)$.
	
	Assume that the statement $(1)$ holds. Then there exists a plurisubharmonic function $\varphi_{0}$ ($\not\equiv-\infty$) near $o$ and two sequences of numbers $\{t_{i,j}\}_{j\in\mathbb{Z}_{\ge0}}$ ($t_{i,j}\rightarrow+\infty$ when $j\rightarrow+\infty$, $i=1,2$) such that $\lim_{j\rightarrow+\infty}\frac{t_{1,j}}{t_{2,j}}=1$ and $$\mathcal{I}(\varphi_{0}+t_{1,j}v)_o\subset\mathcal{I}(\varphi_{0}+t_{2,j}u)_o$$ for any $j$. Let $\{f_{j,1},\ldots,f_{j,m_j}\}$ be the generators set of $\mathcal{I}(\varphi_{0}+t_{1,j}v)_o$. Then it follows from Demailly's approximation theorem (Lemma \ref{l:appro-Berg}) that 
	\begin{equation}
		\label{eq:0921a}
		v+\frac{\varphi_{0}}{t_{1,j}}\le \frac{1}{2t_{1,j}}\log\sum_{1\le l\le m_j}|f_{j,l}|^2+O(1)
	\end{equation}
	near $o$.
	Let $\Phi_{o,\max}$ be any local Zhou weight near $o$. Note that $\mathcal{I}(\varphi_{0}+t_{1,j}v)_o\subset\mathcal{I}(\varphi_{0}+t_{2,j}u)_o$. We have 
	$$\int_U\left(\sum_{1\le l\le m_j}|f_{j,l}|^2\right)e^{-2{t_{2,j}}u-2\varphi_{0}}<+\infty$$
	for a neighborhood $U$ of $o$. Then
	\begin{equation}
		\label{eq:1023a}
		\int_U\left(\sum_{1\le l\le m_j}|f_{j,l}|^2\right)e^{-2t_{2,j}\sigma(u,\Phi_{o,\max})\Phi_{o,\max}-2\varphi_{0}}<+\infty.
	\end{equation}
As $\sup_{U_1}\varphi_{0}<+\infty$ for some neighborhood $U_1$ of $o$, it follows from Theorem \ref{thm:valu-jump} and inequality \eqref{eq:1023a} that
	\begin{equation}
		\label{eq:1023b}
		\sigma\left(\log\sum_{1\le l\le m_j}|f_{j,l}|,\Phi_{o,\max}\right)\ge t_{2,j}\sigma(u,\Phi_{o,\max})-C_1,
	\end{equation}
 where $C_1$ is a constant independent of $j$. Combining inequality \eqref{eq:0921a}, we have 
	$$v+\frac{\varphi_{0}}{t_{1,j}}\le \frac{1}{2t_{1,j}}\log\sum_{1\le l\le m_j}|f_{j,l}|^2+O(1)\le\left(\frac{t_{2,j}}{t_{1,j}}\sigma(u,\Phi_{o,\max})-\frac{C_1}{t_{1,j}}\right)\Phi_{o,\max}+O(1) $$
	near $o$, which shows that 
	\[\sigma(v,\Phi_{o,\max})+\frac{1}{t_{1,j}}\sigma(\varphi_{0},\Phi_{o,\max})=\sigma(v+\frac{\varphi_{0}}{t_{1,j}},\Phi_{o,\max})\ge\frac{t_{2,j}}{t_{1,j}}\sigma(u,\Phi_{o,\max})-\frac{C_1}{t_{1,j}}\]
	by Corollary \ref{coro:main}. 
	As $\lim_{j\rightarrow+\infty}\frac{t_{1,j}}{t_{2,j}}=1$, we get $$\sigma(v,\Phi_{o,\max})\ge \sigma(u,\Phi_{o,\max})$$ by letting $j\rightarrow+\infty$.
	
	Thus, $(1) \Rightarrow (3)$ has been proved.
	
	\textbf{Step 2.} $(3) \Rightarrow (2)$.
	
	Using the statement $(3)$ and Corollary \ref{coro:main}, we get that
	\begin{equation}
		\nonumber
		\begin{split}
			&\sigma(\varphi_0+tu,\Phi_{o,\max})\\
			=&\sigma(\varphi_0,\Phi_{o,\max})+t\sigma(u,\Phi_{o,\max})\\
			\le&\sigma(\varphi_0,\Phi_{o,\max})+t\sigma(v,\Phi_{o,\max})\\
			=&\sigma(\varphi_0+tv,\Phi_{o,\max})
		\end{split}
	\end{equation} 
	holds for any local Zhou weight $\Phi_{o,\max}$ near $o$ and any $t>0$. Then it suffices to prove that: if $\sigma(u,\Phi_{o,\max})\le\sigma(v,\Phi_{o,\max})$ holds for any local Zhou weight $\Phi_{o,\max}$ near $o$, then $\mathcal{I}(v)_o\subset\mathcal{I}(u)_o.$
	
	Note that Theorem \ref{thm:SOC} shows that for any holomorphic function $f$ and plurisubharmonic function $\varphi$ near $o$, $(f,o)\in\mathcal{I}(\varphi)_o$ if and only if $c_o^f(\varphi)>1$.
	For any $N>0$, it follows from the statement $(3)$ and Corollary \ref{coro:main} that 
	\begin{equation}
		\nonumber
		\begin{split}
			&\sigma\big(\max\{u,N\log|z|\},\Phi_{o,\max}\big)\\
			=&\min\big\{\sigma(u,\Phi_{o,\max}),\sigma(N\log|z|,\Phi_{o,\max})\big\}\\
			\le&\min\big\{\sigma(v,\Phi_{o,\max}),\sigma(N\log|z|,\Phi_{o,\max})\big\}\\
			=&\sigma\big(\max\{v,N\log|z|\},\Phi_{o,\max}\big)
		\end{split}
	\end{equation}
	for any  local Zhou weight $\Phi_{o,\max}$ near $o$.
	Using Lemma \ref{l:0921-4}, we have
	\[\mathcal{I}\big(t\max\{v,N\log|z|\}\big)_o\subset\mathcal{I}\big(t\max\{u,N\log|z|\}\big)_o\]
	for any $t>0$ and any $N>0$, which implies that 
	$$c_o^f\big(\max\{u,N\log|z|\}\big)\ge c_o^f\big(\max\{v,N\log|z|\}\big)$$
	for any $N>0$ and any holomorphic function $f$ near $o$.
	By Lemma \ref{l:0921-3}, we have
	$$c_o^f(u)\ge c_o^f(v)$$ for  any holomorphic function $f$ near $o$,  which implies $\mathcal{I}(tv)_o\subset \mathcal{I}(tu)_o$ for any $t>0$.

	\textbf{Step 3}. $(2) \Rightarrow (5)$.	
	
	For any $c<\mathrm{Tn}(t;f,v,\varphi_{0})$, $|f|^{2t}e^{-2cv-2\varphi_{0}}=|f|^{2\lceil t\rceil}e^{-2cv-2\varphi_{0}-2(\lceil t\rceil-t)\log|f|}$ is integrable near $o$, where $\lceil m\rceil:=\min\{n\in \mathbb{Z}:n\geq m\}$. Following from 
	$$\mathcal{I}(cv+\varphi_{0}+(\lceil t\rceil-t)\log|f|)_o\subset \mathcal{I}(cu+\varphi_{0}+(\lceil t\rceil-t)\log|f|)_o,$$ 
	we obtain that
	$|f|^{2t}e^{-2cv-2\varphi_{0}}$ is integrable near $o$, i.e., 
	$$c<\mathrm{Tn}(t;f,u,\varphi_{0}).$$
	Thus, we have
	$$\mathrm{Tn}(t;f,v,\varphi_{0})\le \mathrm{Tn}(t;f,u,\varphi_{0}).$$
	
	\textbf{Step 4}. $(4) \Rightarrow (1)$.	
	
	For any holomorphic function $f$ and plurisubharmonic function $\psi$ near $o,$ $(f,o)\in\mathcal{I}(\psi)_o$ if and only if the jumping number $c_o^{f}(\psi)>1$. Thus, statement $(4)$ can deduce statement $(1)$ by the definition of  Tian functions and by taking $t_{i,j}=j$ for any $i=1,2$ and $j\in\mathbb{Z}_{\ge0}$.

	Thus, Theorem \ref{thm:multi-valua} holds.
\end{proof}

\begin{Remark}
	\label{r:1-3}
	Note that ``inequality \eqref{eq:1023a} $\Rightarrow$ inequality \eqref{eq:1023b}" also holds when replacing $\Phi_{o,\max}$ with any tame maximal weight. Thus, the Step 1 in the above proof shows that statement $(1)$ implies $\sigma(u,\varphi)\le\sigma(v,\varphi)$ for any tame maximal weight $\varphi$.
\end{Remark}

Next, we prove Corollary \ref{coro:multiplier-valuation}.

\begin{proof}[Proof of Corollary \ref{coro:multiplier-valuation}]
	It follows from Theorem \ref{thm:multi-valua} that it suffices to prove $(3)\Rightarrow(1)$. Assume that $\sigma(u,\Phi_{o,\max})\le\sigma(v,\Phi_{o,\max})$ holds for any local Zhou weight $\Phi_{o,\max}$ near $o$. Theorem \ref{thm:multi-valua} shows that $$\mathcal{I}(tv)_o\subset\mathcal{I}(tu)_o$$ for any $t>0$. Let $D$ be a small enough neighborhood of $o$, and let $\{\sigma_{m,k}\}_{k=1}^{\infty}$ be an orthonormal basis of $A^2(D,2mu):=\big\{f\in\mathcal{O}(D):\int_{D}|f|^2e^{-2mu}d\lambda<+\infty\big\}$. Without loss of generality, assume that $e^u$ is smooth on $D$. Denote that 
	$$u_m:=\frac{1}{2m}\log\sum_{k=1}^{\infty}|\sigma_{m,k}|^2$$
	on $D$. Then it follows from Lemma \ref{l:appro-Berg} and  $\mathcal{I}(mv)_o\subset\mathcal{I}(mu)_o$ that
	$$v\le u_m+O(1)$$
	near $o$ and
	\begin{equation}
		\label{eq:0923a}
		u_m(z)\le\sup_{|\tilde z-z|<r}u(\tilde z)+\frac{1}{m}\log\frac{c_2}{r^n}
	\end{equation}
	for any $z\in D$ satisfying $\{\tilde z\in\mathbb{C}^n:|\tilde z-z|<r\}\subset\subset D$.  As $e^{u}$ is smooth on $D$, we have $\sup_{|\tilde z-z|<r}e^{u(\tilde z)}\le e^{u(z)}+Cr$. Taking $r=e^{u(z)}$, inequality \eqref{eq:0923a} implies that 
	$$u_m\le \left(1-\frac{n}{m}\right)u+O(1)$$
	near $o$, which implies $v \le \big(1-\frac{n}{m}\big)u+O(1)$ near $o$ for any $m$. It follows from Lemma \ref{l:sigma-analytic} that 
	$$v\le u+O(1)$$
	near $o$. Thus, Corollary \ref{coro:multiplier-valuation} holds.
\end{proof}

Finally, we prove Theorem \ref{thm:A}.

\begin{proof}[Proof of Theorem \ref{thm:A}]
	Note that $|I|^2e^{-2\Phi_{o,\max}}$ is not integrable near $o$ for any $\Phi_{o,\max}\in V_I$. By definition of $c_o^{I}(\varphi)$, we have 
	$\sigma(c_o^{I}(\varphi)\varphi,\Phi_{o,\max})\le 1$, which implies that
	$$c_o^{I}(\varphi)\le\frac{1}{\sup_{\Phi_{o,\max}\in V_I}\sigma(\varphi,\Phi_{o,\max})}.$$
	
	In the following, we prove $c_o^{I}(\varphi)=\frac{1}{\sup_{\Phi_{o,\max}\in V_I}\sigma(\varphi,\Phi_{o,\max})}$ by contradiction. If not, we have 
	$$c_o^{I}(\varphi)<\frac{1}{\sup_{\Phi_{o,\max}\in V_I}\sigma(\varphi,\Phi_{o,\max})}.$$
	Then
	there exists $p<1$ such that 
	\begin{equation}
		\label{eq:0220a}
		\sigma(c_o^I(\varphi)\varphi,\Phi_{o,\max})<p
	\end{equation}
	for any local Zhou weight $\Phi_{o,\max}$ related to $|I|^2$.
	Using Lemma \ref{l:0921-3}, there exists $N\gg0$ such that
	\begin{equation}
		\label{eq:0220b}
		c_o^I(\max\{\varphi,N\log|z|\})\le\frac{c_o^{I}(\varphi)}{p}.
	\end{equation}
	Note that $|I|^2e^{-2c_o^I(\max\{\varphi,N\log|z|\})\max\{\varphi,N\log|z|\}}$ is not integrable near $o$. Remark \ref{rem:max_existence} shows that there exists a local Zhou weight $\Phi_{o,\max}$ related to $|I|^2$ satisfying that 
	$$\Phi_{o,\max}\ge c_o^I(\max\{\varphi,N\log|z|\})\max\{\varphi,N\log|z|\}+O(1)$$
	near $o$.
	Following from inequality \eqref{eq:0220b}, we have
	$$\varphi\le\max\{\varphi,N\log|z|\}\le \frac{\Phi_{o,\max}}{c_o^I(\max\{\varphi,N\log|z|\})}+O(1)\le\frac{p}{c_o^I(\varphi)}\Phi_{o,\max}+O(1)$$
	near $o$, which contradicts to inequality \eqref{eq:0220a}.
	Thus, we get $c_o^{I}(\varphi)=\frac{1}{\sup_{\Phi_{o,\max}\in V_I}\sigma(\varphi,\Phi_{o,\max})}.$
\end{proof}

\section{Global Zhou weights}

In this section, we discuss the global Zhou weights, and prove Proposition \ref{l:max2} and Proposition \ref{thm:contin}.

\subsection{Some properties of the global Zhou weights}

Let $D$ be a domain in $\mathbb{C}^n$, such that the origin $o\in D$.	Let $f_{0}=(f_{0,1},\cdots,f_{0,m})$ be a vector,
where $f_{0,1},\cdots,f_{0,m}$ are holomorphic functions near $o$.
Denote $|f_{0}|^{2}=|f_{0,1}|^{2}+\cdots+|f_{0,m}|^{2}$.

Let $\varphi_{0}$ be a plurisubharmonic function near $o$,
such that $|f_{0}|^{2}e^{-2\varphi_{0}}$ is integrable near $o$. Let us recall the definition of global Zhou weights.

\begin{Definition}
	We call a negative plurisubharmonic function $\Phi^{f_0,\varphi_0,D}_{o,\max}$ ($\Phi^{D}_{o,\max}$ for short) on $D$ a global Zhou weight related to $|f_0|^2e^{-2\varphi_0}$ if the following statements hold:
	
	$(1)$ $|f_0|^2e^{-2\varphi_0}|z|^{2N_0}e^{-2\Phi^{D}_{o,\max}}$ is integrable near $o$ for large enough $N_0$;
	
	$(2)$ $|f_0|^2e^{-2\varphi_0-2\Phi^{D}_{o,\max}}$ is not integrable near $o$;
	
	$(3)$ for any negative plurisubharmonic function $\tilde\varphi$ on $D$ satisfying that $\tilde\varphi\ge\Phi^{D}_{o,\max}$ on $D$ and $|f_0|^2e^{-2\varphi_0-2\tilde\varphi}$ is not integrable near $o$, $\tilde\varphi=\Phi^{D}_{o,\max}$ holds on $D$.
\end{Definition}

The following remark shows the existence of the global Zhou weights.

\begin{Remark}\label{r:exists}
	Assume that there exists a negative plurisubharmonic function $\varphi$ on $D$ such that $|f_0|^2e^{-2\varphi_0-2\varphi}|z|^{2N_0}$ is integrable near $o$ for large enough $N_0$ and $(f_0,o)\not\in\mathcal{I}(\varphi+\varphi_0)_o.$
	
	Then there exists a global Zhou weight $\Phi^{D}_{o,\max}$ on $D$ related to $|f_0|^2e^{-2\varphi_0}$ such that $\Phi^{D}_{o,\max}\ge\varphi$ on $D$.
	
\end{Remark}

\begin{proof}
	Let $(\varphi_{\alpha})_{\alpha}$ be the negative plurisubharmonic functions on $V$
	such that $\varphi_{\alpha}\geq\varphi$ and $|f_0|^{-2\varphi_0-2\varphi_{\alpha}}$ is not integrable near $o$.
	
	Zorn's Lemma shows that
	there exists $\Gamma$ which is the maximal set such that for any $\alpha,\alpha'\in\Gamma$,
	$\varphi_{\alpha}\leq \varphi_{\alpha'}$ or $\varphi_{\alpha'}\leq \varphi_{\alpha}$ holds on $D$,
	where $(\varphi_{\alpha})$ are negative plurisubharmonic functions on $D$.
	
	Let $u(z)=\sup_{\alpha\in\Gamma}\varphi_{\alpha}(z)$,
	and let $u^{*}(z)=\lim_{\varepsilon\to0}\sup_{\mathbb{B}^{n}(z,\varepsilon)}u$.
	Lemma \ref{lem:Choquet} shows that there exists increasing subsequence $(\varphi_{j})$ of $(\varphi_{\alpha})$ such that
	$(\lim_{j\rightarrow+\infty}\varphi_{j})^{*}=u^{*}$.
	Proposition \ref{pro:Demailly} shows that
	$(\varphi_{j})$ is convergent to $u^{*}$ with respect to $j$ almost everywhere with respect to Lebesgue measure,
	and $u^{*}$ is a plurisubharmonic function on $D$.
	Proposition \ref{p:effect_GZ} (and Remark \ref{rem:effect_GZ})
	shows that $|f_0|^2e^{-2\varphi_0-2u^*}$ is not integrable near $o$.
	In fact, the definition of $u^*$ shows that $\Phi^{D}_{o,\max}:=u^*$ is a global Zhou weight on $D$ related to $|f_0|^2e^{-2\varphi_0}$.
\end{proof}

Denote that 
$$L_w:=\big\{u\in \mathrm{PSH}(D):u<0 \ \& \ \limsup_{z\rightarrow w}(u(z)-\log|z-w|)<+\infty\big\}.$$
If $L_w\not=\emptyset$, the pluricomplex Green function of $D$ with a pole at $w$ is defined as follows:
$$G_D(w,\cdot):=\sup\{u:u\in L_w\}.$$
We give a relation between global Zhou weights $\Phi^{D}_{o,\max}$ and the pluricomplex Green functions $G_D(o,\cdot)$.

\begin{Lemma}
	\label{l:max>green}
	Assume that $L_o\not=\emptyset$. Then
	$\Phi^{D}_{o,\max}\ge NG_D(o,\cdot)$ for some $N\gg0$.
\end{Lemma}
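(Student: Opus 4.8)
The plan is to exploit the defining property $(3)$ of the global Zhou weight $\Phi^D_{o,\max}$ applied to a suitable comparison function built from the pluricomplex Green function $G_D(o,\cdot)$. First I would recall (from Example \ref{exam1} together with Lemma \ref{l:local-glabal}, which relate the local and global pictures) that near $o$ we have the two-sided estimate $\Phi^D_{o,\max} \ge N_1\log|z| + O(1)$ for some $N_1\gg 0$: indeed, condition $(1)$ in the definition of a global Zhou weight forces $|f_0|^2e^{-2\varphi_0}|z|^{2N_0}e^{-2\Phi^D_{o,\max}}$ to be integrable near $o$, and combined with $|f_0|^2e^{-2\varphi_0}$ being integrable this yields, via Lemma \ref{lem:G_key} (taking $v = \log|z|$), that $\Phi^D_{o,\max} \ge \max\{\Phi^D_{o,\max}, \tfrac{N_0}{\varepsilon}\log|z|\} + O(1)$ after the usual strong-openness gain of $\varepsilon$; so $\Phi^D_{o,\max} \ge N\log|z| + O(1)$ near $o$ for some $N\gg 0$. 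On the other hand $G_D(o,\cdot) = \log|z| + O(1)$ near $o$ since $L_o\ne\emptyset$ and $D$ is (locally) bounded, so $NG_D(o,\cdot)$ has exactly the same logarithmic pole, hence $\Phi^D_{o,\max} \ge NG_D(o,\cdot) + O(1)$ \emph{near} $o$.

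Next I would upgrade this from ``near $o$'' to ``on all of $D$'' using the comparison/gluing trick that already appears in the proof of Remark \ref{rem:max_existence}. Concretely: since $\Phi^D_{o,\max}$ is negative on $D$ and $NG_D(o,\cdot) \le \Phi^D_{o,\max} + C$ near $o$ for some constant $C$, consider the function
\[
\tilde\varphi :=
\begin{cases}
\max\{NG_D(o,\cdot) - C,\ \Phi^D_{o,\max}\} & \text{on a small ball } \mathbb{B}(o,\varepsilon),\\[1mm]
\Phi^D_{o,\max} & \text{on } D\setminus \mathbb{B}(o,\varepsilon),
\end{cases}
\]
choosing $\varepsilon$ small and adjusting the additive constant so that $NG_D(o,\cdot) - C < \Phi^D_{o,\max}$ near $\partial\mathbb{B}(o,\varepsilon)$ (possible because $G_D(o,\cdot)$ is bounded away from $o$ while $\Phi^D_{o,\max}$ is too, after renormalizing the constant using $G_D(o,\cdot)\le 0$ and $N\log|z|$ being very negative near $o$ relative to where we cut). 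Then $\tilde\varphi$ is plurisubharmonic on $D$, it is negative (after possibly shrinking by a further constant, using $G_D(o,\cdot)<0$), it satisfies $\tilde\varphi \ge \Phi^D_{o,\max}$ on $D$, and $\tilde\varphi = \Phi^D_{o,\max} + O(1)$ near $o$, so $|f_0|^2e^{-2\varphi_0 - 2\tilde\varphi}$ is still non-integrable near $o$ by condition $(2)$. Property $(3)$ in the definition of the global Zhou weight then forces $\tilde\varphi = \Phi^D_{o,\max}$ on $D$, i.e. $\max\{NG_D(o,\cdot) - C, \Phi^D_{o,\max}\} = \Phi^D_{o,\max}$ on $\mathbb{B}(o,\varepsilon)$, which gives $NG_D(o,\cdot) \le \Phi^D_{o,\max} + C$ on $\mathbb{B}(o,\varepsilon)$ — and on $D\setminus\mathbb{B}(o,\varepsilon)$ we get it for free by enlarging $C$, since there $G_D(o,\cdot)$ is bounded and $\Phi^D_{o,\max}$ is negative... but wait, $\Phi^D_{o,\max}$ need not be bounded below on $D\setminus\mathbb{B}(o,\varepsilon)$.

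The main obstacle is precisely this last point: controlling $NG_D(o,\cdot)$ against $\Phi^D_{o,\max}$ globally, where $\Phi^D_{o,\max}$ may be very negative. The clean way around it is to not prove an $O(1)$ estimate on all of $D$ at all, but rather to run the gluing argument with $NG_D(o,\cdot)$ itself (not shifted by a constant) once we know $N$ is large enough that $NG_D(o,\cdot) \le \Phi^D_{o,\max}$ holds near $\partial\mathbb{B}(o,\varepsilon)$ — which we can arrange since near $o$ the inequality $NG_D(o,\cdot)\le \Phi^D_{o,\max}$ has slack, being equivalent to $N\log|z|+O(1)\le \Phi^D_{o,\max}$, and we may take $N$ as large as we like while keeping $N\log|z|\le \Phi^D_{o,\max}$ valid near $o$ (shrinking the ball). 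Then $\tilde\varphi := \max\{NG_D(o,\cdot),\Phi^D_{o,\max}\}$ is globally a well-defined negative psh function on $D$ (it equals $\Phi^D_{o,\max}$ outside the ball automatically, since there $NG_D \le$ something $\le \Phi^D_{o,\max}$ once $N$ is large — here one uses that $G_D(o,\cdot)<0$ on $D$, so $NG_D \to -\infty$ pointwise as $N\to\infty$, hence eventually below $\Phi^D_{o,\max}$ on the compact set $\partial\mathbb{B}(o,\varepsilon)$, and maximality of $G_D$ or direct comparison pushes this to $D\setminus\mathbb{B}(o,\varepsilon)$). Applying property $(3)$ to this $\tilde\varphi$ gives $\tilde\varphi = \Phi^D_{o,\max}$, i.e. $\Phi^D_{o,\max} \ge NG_D(o,\cdot)$ on all of $D$, which is the claim. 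I expect the delicate bookkeeping to be in choosing $N$ and $\varepsilon$ simultaneously so that the cut-off is consistent on $\partial\mathbb{B}(o,\varepsilon)$; this is routine but needs the local lower bound $\Phi^D_{o,\max}\ge N'\log|z|+O(1)$ established in the first step and the near-$o$ asymptotics of $G_D(o,\cdot)$.
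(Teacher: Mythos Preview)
Your final approach (take $\tilde\varphi=\max\{NG_D(o,\cdot),\Phi^D_{o,\max}\}$ globally and invoke property $(3)$) is correct, but you are making it harder than it needs to be, and the paper's route is shorter.

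The paper applies Lemma~\ref{lem:G_key} directly with $v=G_D(o,\cdot)$ rather than $v=\log|z|$. From condition $(1)$ and strong openness one has $|f_0|^2e^{-2\varphi_0}|z|^{2N_0}e^{-2(1+\varepsilon)\Phi^D_{o,\max}}$ integrable near $o$; since $G_D(o,z)\le\log|z|+O(1)$ near $o$, this gives $|f_0|^2e^{-2\varphi_0}e^{2N_0G_D(o,\cdot)}e^{-2(1+\varepsilon)\Phi^D_{o,\max}}$ integrable near $o$. Lemma~\ref{lem:G_key} then shows that $|f_0|^2e^{-2\varphi_0}e^{-2\max\{\Phi^D_{o,\max},\,(N_0/\varepsilon)G_D(o,\cdot)\}}$ is \emph{not} integrable near $o$. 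Since $\max\{\Phi^D_{o,\max},(N_0/\varepsilon)G_D(o,\cdot)\}$ is already a negative psh function on all of $D$ dominating $\Phi^D_{o,\max}$, property $(3)$ forces equality on $D$, i.e.\ $\Phi^D_{o,\max}\ge(N_0/\varepsilon)G_D(o,\cdot)$. No gluing, no choice of ball, no bookkeeping of constants.

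Your detour through $\log|z|$ is harmless but unnecessary; more importantly, your last paragraph worries about showing $\tilde\varphi=\Phi^D_{o,\max}$ \emph{outside} $\mathbb{B}(o,\varepsilon)$ before invoking $(3)$, and the argument there (``maximality of $G_D$ or direct comparison pushes this to $D\setminus\mathbb{B}(o,\varepsilon)$'') is vague and unneeded. The whole point of property $(3)$ is that it is a \emph{global} statement: once you know $\tilde\varphi\in\mathrm{PSH}^-(D)$, $\tilde\varphi\ge\Phi^D_{o,\max}$, and the weight is non-integrable near $o$, you get $\tilde\varphi=\Phi^D_{o,\max}$ on all of $D$ for free. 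Also note you only need the upper bound $G_D(o,\cdot)\le\log|z|+O(1)$ near $o$ (which follows from $L_o\ne\emptyset$ by comparison with the Green function of a small ball), not the two-sided estimate you wrote, which would require $D$ bounded.
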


\begin{proof}	
	As  $|f_0|^2e^{-2\varphi_0-2\Phi^{D}_{o,\max}}|z|^{2N_0}$ is integrable near $o$, it follows from Theorem \ref{thm:SOC} that  
	$$|f_0|^2e^{-2\varphi_0}|z|^{2N_0}e^{-2(1+\epsilon)\Phi^{D}_{o,\max}}$$
	is integrable near $o$. Note that
	$\sup_{z\in U}\big(G_D(o,z)-\log|z|\big)<+\infty$ for a neighborhood $U$ of $o$, then 
	$$|f_0|^2e^{-2\varphi_0}e^{2N_0G_D(o,\cdot)}e^{-2(1+\epsilon)\Phi^{D}_{o,\max}}$$
	is integrable near $o$. Lemma \ref{lem:G_key} shows that
	$$|f_0|^2e^{-2\varphi_0}e^{-2\Phi^{D}_{o,\max}}-|f_0|^2e^{-2\varphi_0}e^{-2\max\big\{\Phi^{D}_{o,\max},\frac{N_0}{\epsilon}G_D(o,\cdot)\big\}}$$
	is integrable near $o$. Then it follows from the definition of $\Phi^{D}_{o,\max}$ that 
	$$\Phi^{D}_{o,\max}\ge\frac{N_0}{\epsilon}G_D(o,\cdot).$$
	Lemma \ref{l:max>green} has been proved.
\end{proof}

Denote that 
$$\tilde L_o:=\big\{u\in L_o: u\in L^{\infty}_{\mathrm{loc}}(U\backslash\{o\})\text{ for some neighborhood $U$ of $o$}\big\}.$$
\begin{Remark}\label{r:8.4}
	If $D$ is bounded or hyperconvex, then $\tilde L_o\not=\emptyset$.
\end{Remark}
\begin{proof}
	If $D$ is bounded, $\log|z|-C\in L_o$ for large enough $C>0$. Assume that $D$ is hyperconvex, then there exists a continuous exhaustion plurisubharmonic function $\varphi_1<0$ on $D$. For $0<c_1<c_2$ satisfying $\{|z|<c_2\}\subset\subset D$, there exist two constants $a>0$ and $b\in\mathbb{R}$ such that $a\varphi_1+b<\log|z|$ on $\{|z|=c_1\}$ and $a\varphi_1+b>\log|z|$ on $\{|z|=c_2\}$. Let
	\begin{equation*}
		\tilde{\varphi}:=\left\{
		\begin{array}{ll}
			\log|z|-b & \ \text{on} \ \{|z|\le c_1\}, \\
			\max\{a\varphi_1,\log|z|-b\} & \ \text{on} \ \{c_1<|z|<c_2\},\\
			a\varphi_1 & \ \text{on} \ D\backslash\{|z|<c_2\}.
		\end{array}
		\right.
	\end{equation*}
	Then $\tilde\varphi$ is a plurisubharmonic function and $\tilde\varphi\in \tilde L_o\cap L_o$.
\end{proof}

Given any local Zhou weight, we can obtain a global Zhou weight on $D$ by using the following lemma.

\begin{Lemma}\label{l:local-glabal}
	Let $\Phi_{o,\max}$ be a local Zhou weight related to $|f_0|^2e^{-2\varphi_0}$ near $o$, and denote that
	\begin{equation}
		\nonumber
		\begin{split}
			L(\Phi_{o,\max}):=&\big\{\tilde\varphi(z)\in \mathrm{PSH}(D):\tilde\varphi<0\\
			&\ \& \ |f_0|^2e^{-2\varphi_0-2\tilde\varphi}\text{ is not integrable near $o$} \ \& \ \tilde\varphi\ge\Phi_{o,\max}+O(1)\text{ near $o$}\big\}.
		\end{split}
	\end{equation}
	If $L(\Phi_{o,\max})\not=\emptyset$, then 
	\begin{equation}
		\label{eq:0918a}\Phi^D_{o,\max}(z):=\sup\big\{\tilde\varphi(z):\tilde\varphi\in L(\Phi_{o,\max})\big\}, \ \forall z\in D
	\end{equation}
	is a global Zhou weight related to $|f_0|^2e^{-2\varphi_0}$ on $D$ satisfying that 
	$$\Phi^D_{o,\max}=\Phi_{o,\max}+O(1)$$ near $o$.
\end{Lemma}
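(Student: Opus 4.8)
The plan is to follow the template of the proofs of Remark~\ref{rem:max_existence} and Remark~\ref{rem:sharp_bound}: form the upper envelope of the family $L(\Phi_{o,\max})$, pass to its upper regularization, show that this regularization already equals the envelope (so that the envelope is plurisubharmonic), and then verify the three defining properties of a global Zhou weight.

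First I would record two elementary observations about $L(\Phi_{o,\max})$. It is stable under finite maxima: if $\tilde\varphi_1,\tilde\varphi_2\in L(\Phi_{o,\max})$, then $\max\{\tilde\varphi_1,\tilde\varphi_2\}$ is again negative and plurisubharmonic on $D$, it dominates $\tilde\varphi_1$ so $|f_0|^2e^{-2\varphi_0-2\max\{\tilde\varphi_1,\tilde\varphi_2\}}$ remains non-integrable near $o$, and it still satisfies $\max\{\tilde\varphi_1,\tilde\varphi_2\}\ge\Phi_{o,\max}+O(1)$ near $o$. Moreover, fixing some $\varphi_{*}\in L(\Phi_{o,\max})$ (possible since $L(\Phi_{o,\max})\ne\emptyset$) and using $\Phi^D_{o,\max}\ge\varphi_{*}$ we obtain a constant $C$ with $\Phi^D_{o,\max}\ge\Phi_{o,\max}-C$ near $o$; together with $\Phi^D_{o,\max}\le 0$, this shows that $\Phi^D_{o,\max}$ is a non-positive function whose upper regularization $(\Phi^D_{o,\max})^{*}$ is plurisubharmonic on $D$ (Proposition~\ref{pro:Demailly}), non-positive, and satisfies $(\Phi^D_{o,\max})^{*}\ge\Phi_{o,\max}-C$ near $o$.

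Next I would apply Choquet's lemma (Lemma~\ref{lem:Choquet}) to obtain a countable subfamily $(w_k)_k\subset L(\Phi_{o,\max})$ with $\bigl(\sup_k w_k\bigr)^{*}=(\Phi^D_{o,\max})^{*}$, and replace it by the increasing sequence $v_j:=\max\{w_1,\dots,w_j\}$, which lies in $L(\Phi_{o,\max})$ by stability under finite maxima, has the same upper regularization $(\Phi^D_{o,\max})^{*}$, and converges to $(\Phi^D_{o,\max})^{*}$ almost everywhere. Since each $v_j\in L(\Phi_{o,\max})$, the function $|f_0|^2e^{-2\varphi_0-2v_j}$ is non-integrable near $o$, and $v_{j+1}\ge v_j$; hence Remark~\ref{rem:effect_GZ} gives inequality~\eqref{equ:A} and Proposition~\ref{p:effect_GZ} yields that $|f_0|^2e^{-2\varphi_0-2(\Phi^D_{o,\max})^{*}}$ is not integrable near $o$. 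I expect this to be the main obstacle, as it is exactly where the lower-semicontinuity-with-multiplier input (ultimately the strong openness property) enters; the remaining steps are bookkeeping.

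With non-integrability in hand I would conclude as follows. Because $(\Phi^D_{o,\max})^{*}$ is plurisubharmonic near $o$, satisfies $(\Phi^D_{o,\max})^{*}\ge\Phi_{o,\max}+O(1)$ near $o$, and $|f_0|^2e^{-2\varphi_0-2(\Phi^D_{o,\max})^{*}}$ is non-integrable near $o$, property $(3)$ of Definition~\ref{def:max_relat} forces $(\Phi^D_{o,\max})^{*}=\Phi_{o,\max}+O(1)$ near $o$; in particular $(\Phi^D_{o,\max})^{*}\to-\infty$ at $o$, so by the maximum principle (using that $D$ is a domain) $(\Phi^D_{o,\max})^{*}<0$ on $D$. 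Thus $(\Phi^D_{o,\max})^{*}\in L(\Phi_{o,\max})$, which gives $(\Phi^D_{o,\max})^{*}\le\Phi^D_{o,\max}$; combined with $\Phi^D_{o,\max}\le(\Phi^D_{o,\max})^{*}$ we get $\Phi^D_{o,\max}=(\Phi^D_{o,\max})^{*}$, a negative plurisubharmonic function on $D$ with $\Phi^D_{o,\max}=\Phi_{o,\max}+O(1)$ near $o$. Property $(1)$ of a global Zhou weight then follows from property $(1)$ of the local Zhou weight $\Phi_{o,\max}$ together with $\Phi^D_{o,\max}=\Phi_{o,\max}+O(1)$ near $o$; property $(2)$ is the non-integrability just shown; and for property $(3)$, if $\tilde\varphi$ is negative plurisubharmonic on $D$ with $\tilde\varphi\ge\Phi^D_{o,\max}$ on $D$ and $|f_0|^2e^{-2\varphi_0-2\tilde\varphi}$ non-integrable near $o$, then near $o$ we have $\tilde\varphi\ge\Phi^D_{o,\max}=\Phi_{o,\max}+O(1)$, so $\tilde\varphi\in L(\Phi_{o,\max})$ and hence $\tilde\varphi\le\Phi^D_{o,\max}$ on $D$; together with the hypothesis this gives $\tilde\varphi=\Phi^D_{o,\max}$ on $D$, completing the proof.
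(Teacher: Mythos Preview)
Your overall strategy is sound and can be completed, but one step is wrong as written. In your first ``elementary observation'' you argue that $\max\{\tilde\varphi_1,\tilde\varphi_2\}\ge\tilde\varphi_1$ implies $|f_0|^2e^{-2\varphi_0-2\max\{\tilde\varphi_1,\tilde\varphi_2\}}$ is non-integrable near $o$; this inequality goes the wrong way, since a larger weight gives a \emph{smaller} integrand. The correct argument is the one you already use later for $(\Phi^D_{o,\max})^{*}$: each $\tilde\varphi_i\in L(\Phi_{o,\max})$ satisfies $\tilde\varphi_i\ge\Phi_{o,\max}+O(1)$ together with non-integrability, so property~(3) in Definition~\ref{def:max_relat} forces $\tilde\varphi_i=\Phi_{o,\max}+O(1)$ near $o$; hence $\max\{\tilde\varphi_1,\tilde\varphi_2\}=\Phi_{o,\max}+O(1)$ near $o$ as well, and non-integrability follows. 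With this correction your increasing sequence $v_j$ lies in $L(\Phi_{o,\max})$ and the rest of the argument goes through.

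Your route differs from the paper's. The paper does not build the envelope from scratch: it picks some $\tilde\varphi_1\in L(\Phi_{o,\max})$, invokes Remark~\ref{r:exists} to obtain a global Zhou weight $\tilde\varphi_1^{*}\ge\tilde\varphi_1$ on $D$, then uses the same observation as above (every $\tilde\varphi\in L(\Phi_{o,\max})$ equals $\Phi_{o,\max}+O(1)$ near $o$) to see that $\max\{\tilde\varphi,\tilde\varphi_1^{*}\}$ is still non-integrable; the maximality property~(3) of the \emph{global} weight $\tilde\varphi_1^{*}$ then yields $\tilde\varphi_1^{*}\ge\tilde\varphi$ for every $\tilde\varphi\in L(\Phi_{o,\max})$, so $\tilde\varphi_1^{*}=\Phi^D_{o,\max}$ and all the required properties come for free. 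This is shorter because the Choquet/Proposition~\ref{p:effect_GZ} work is packaged inside Remark~\ref{r:exists}. Your approach is more self-contained---it essentially reproves that piece of Remark~\ref{r:exists} in situ and avoids the Zorn's-lemma step there---at the cost of a few extra lines and the maximum-principle detour to get strict negativity.
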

\begin{proof} 
	Choosing any $\tilde\varphi_1\in L(\Phi_{o,\max})$, it follows from Remark \ref{r:exists} that there exists a global Zhou weight $\tilde\varphi_1^*$ related to $|f_0|^2e^{-2\varphi_0}$ on $D$ satisfying that $\tilde\varphi_1^*\ge\tilde\varphi_1$ on $D$ and 
	$$\tilde\varphi_1^*\in L(\Phi_{o,\max}).$$
	As $\Phi_{o,\max}$ is a local Zhou weight related to $|f_0|^2e^{-2\varphi_0}$ near $o$, we have that 
	$$\tilde\varphi=\Phi_{o,\max}+O(1)$$
	near $o$ for any $\tilde\varphi\in L(\Phi_{o,\max})$, which shows that  $|f_0|^2e^{-2\varphi_0}e^{-2\max\{\tilde\varphi,\tilde\varphi_1^*\}}$ is not integrable near $o$. By the definition of global Zhou weight, we have $\tilde\varphi_1^*\ge\tilde\varphi$. Thus, $\tilde\varphi_1^*=\Phi^D_{o,\max}:=\sup\big\{\tilde\varphi:\tilde\varphi\in L(\Phi_{o,\max})\big\}$ is a global Zhou weight related to $|f_0|^2e^{-2\varphi_0}$ on $D$ satisfying that $\Phi^D_{o,\max}=\Phi_{o,\max}+O(1)$ near $o$.
\end{proof}

We give a sufficient and necessary condition for $L(\Phi_{o,\max})\not=\emptyset.$
\begin{Remark}\label{r:L tildeL}
	$L(\Phi_{o,\max})\not=\emptyset$ if and only if $\tilde L_o\not=\emptyset$.
\end{Remark}
\begin{proof}
	For any $\tilde\varphi\in L(\Phi_{o,\max})$, there exists two positive constants $N_1$ and $N_2$ such that $N_2\log|z|+O(1)\ge\tilde\varphi\ge N_1\log|z|+O(1)$ near $o$, which implies that $\tilde L_o\not=\emptyset.$
	
	For any $u\in \tilde L_o$, note that $\Phi_{o,\max}\ge N\log|z|+O(1)\ge Nu+O(1)$
	near $o$ for large enough $N\gg0$. Letting $r>0$ small enough, there exists a constant $C_1$ such that $\Phi_{o,\max}+C_1<Nu$ on a neighborhood of $\{|z|=r\}$. Define
	\begin{equation*}
		\tilde{\varphi}:=\left\{
		\begin{array}{ll}
			\max\big\{\Phi_{o,\max}+C_1,Nu\big\} & \ \text{on} \ \{|z|<r\}, \\
			Nu & \ \text{on} \ D\backslash\{|z|<r\}.
		\end{array}
		\right.
	\end{equation*}
	Then $\tilde\varphi$ is a negative plurisubharmonic function on $D$ satisfying that 
	\begin{equation*}
		\tilde\varphi=\Phi_{o,\max}+O(1),
	\end{equation*}
	which shows that $\tilde\varphi\in  L(\Phi_{o,\max})$. Thus, Remark \ref{r:L tildeL} holds.
\end{proof}

Remark \ref{r:8.4}, Lemma \ref{l:local-glabal} and Remark \ref{r:L tildeL} show that if $D$ is bounded or hyperconvex, given any local Zhou weight, we can obtain a global Zhou weight by using equality \eqref{eq:0918a}. On the other hand, the following lemma shows that any global Zhou weight is also a local Zhou weight.

\begin{Lemma}\label{l:max-loc}
	Let  $\Phi^{D}_{o,\max}$ be a global Zhou weight related to $|f_0|^2e^{-2\varphi_0}$ on $D$. Then $\Phi^{D}_{o,\max}$ is a local Zhou weight related to $|f_0|^2e^{-2\varphi_0}$ near $o$ if  and only if  $\tilde L_o\not=\emptyset$. 
\end{Lemma}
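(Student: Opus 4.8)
The plan is to prove the two implications separately. Throughout, note that being a local Zhou weight is unchanged under adding a bounded function, and that conditions $(1)$ and $(2)$ of Definition \ref{def:max_relat} for $\Phi^{D}_{o,\max}$ are exactly conditions $(1)$ and $(2)$ in the definition of a global Zhou weight; so in the ``$\Leftarrow$'' direction only the local maximality condition $(3)$ needs to be verified, and in the ``$\Rightarrow$'' direction only an element of $\tilde L_o$ needs to be produced.

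\emph{Local Zhou weight $\Rightarrow$ $\tilde L_o\neq\emptyset$.} Assume $\Phi^{D}_{o,\max}$ is a local Zhou weight near $o$. First, starting from condition $(1)$, Theorem \ref{thm:SOC} gives $\varepsilon>0$ with $|f_0|^2e^{-2\varphi_0}|z|^{2N_1}e^{-2(1+\varepsilon)\Phi^{D}_{o,\max}}$ integrable near $o$; by Lemma \ref{lem:G_key} the difference $|f_0|^2e^{-2\varphi_0}e^{-2\Phi^{D}_{o,\max}}-|f_0|^2e^{-2\varphi_0}e^{-2\max\{\Phi^{D}_{o,\max},\frac{N_1}{\varepsilon}\log|z|\}}$ is integrable near $o$, and since the first term is not (condition $(2)$), neither is the second; as $\max\{\Phi^{D}_{o,\max},\frac{N_1}{\varepsilon}\log|z|\}\ge\Phi^{D}_{o,\max}$, the local maximality condition $(3)$ forces $\Phi^{D}_{o,\max}\ge\frac{N_1}{\varepsilon}\log|z|+O(1)=:N\log|z|+O(1)$ near $o$. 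In particular $\Phi^{D}_{o,\max}$ is finite and locally bounded on $B(o,\rho)\setminus\{o\}$ for some $\rho>0$ (bounded above because plurisubharmonic, bounded below on each compact subset of $B(o,\rho)\setminus\{o\}$ by the continuous function $N\log|z|+O(1)$). Second, $\nu_0:=\nu(\Phi^{D}_{o,\max},o)>0$: if $\nu_0=0$, Skoda's integrability theorem gives $c_o(\Phi^{D}_{o,\max})=+\infty$, and combining with $|f_0|^{2q}e^{-2q\varphi_0}\in L^1$ near $o$ for some $q>1$ (valid since $|f_0|^2e^{-2\varphi_0}\in L^1$ near $o$, by Theorem \ref{thm:SOC}) via Hölder's inequality would make $|f_0|^2e^{-2\varphi_0}e^{-2\Phi^{D}_{o,\max}}$ integrable near $o$, contradicting $(2)$. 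Hence $\Phi^{D}_{o,\max}\le\frac{\nu_0}{2}\log|z|+O(1)$ near $o$, so $u:=\frac{2}{\nu_0}\Phi^{D}_{o,\max}$ is a negative plurisubharmonic function on $D$, locally bounded on $B(o,\rho)\setminus\{o\}$, with $\limsup_{z\to o}(u(z)-\log|z|)<+\infty$; that is, $u\in\tilde L_o$.

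\emph{$\tilde L_o\neq\emptyset$ $\Rightarrow$ local Zhou weight.} Fix $u\in\tilde L_o$ and let $U$ be a neighbourhood of $o$ with $u\in L^{\infty}_{\mathrm{loc}}(U\setminus\{o\})$. Since $\tilde L_o\subseteq L_o$, $G_D(o,\cdot)$ is defined with $G_D(o,\cdot)\ge u$ on $D$, and Lemma \ref{l:max>green} gives $\Phi^{D}_{o,\max}\ge N G_D(o,\cdot)\ge Nu$ on $D$ for some $N>0$. To verify local condition $(3)$, let $\varphi'$ be plurisubharmonic near $o$ with $\varphi'\ge\Phi^{D}_{o,\max}+O(1)$ near $o$ and $|f_0|^2e^{-2\varphi_0-2\varphi'}$ not integrable near $o$. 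Choose $r_0>0$ small so that $\varphi'$ is plurisubharmonic on a neighbourhood of $\overline{B}(o,r_0)$, $\overline{B}(o,r_0)\subset\subset U$, and $\varphi'\ge\Phi^{D}_{o,\max}-C_0$ on $\overline{B}(o,r_0)$; after subtracting a constant from $\varphi'$ (which changes none of the relevant statements) assume also $\varphi'\le-1$ on $\overline{B}(o,r_0)$, and enlarge $C_0$ so that $C_0\ge 0$. Fix $0<r_1<r_0$; on the compact set $\{r_1\le|z|\le r_0\}\subset U\setminus\{o\}$ we have $u\ge-c_1$ for some $c_1>0$, hence $\Phi^{D}_{o,\max}\ge Nu\ge-Nc_1$ there, while $\varphi'-Nc_1\le-1-Nc_1<-Nc_1\le\Phi^{D}_{o,\max}$ there. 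Therefore
\[
\hat\varphi:=\begin{cases}\max\{\varphi'-Nc_1,\ \Phi^{D}_{o,\max}\}&\text{on }B(o,r_0),\\ \Phi^{D}_{o,\max}&\text{on }D\setminus B(o,r_0),\end{cases}
\]
is a negative plurisubharmonic function on $D$ (the two expressions coincide, both equal to $\Phi^{D}_{o,\max}$, on $\{r_1\le|z|<r_0\}$, which makes the gluing valid), it satisfies $\hat\varphi\ge\Phi^{D}_{o,\max}$ on $D$, and on $B(o,r_0)$ one has $\varphi'-Nc_1\le\hat\varphi\le\max\{\varphi'-Nc_1,\varphi'+C_0\}=\varphi'+C_0$, i.e. $\hat\varphi=\varphi'+O(1)$ near $o$; hence $|f_0|^2e^{-2\varphi_0-2\hat\varphi}$ is not integrable near $o$. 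By condition $(3)$ in the definition of a global Zhou weight, $\hat\varphi=\Phi^{D}_{o,\max}$ on $D$, so $\varphi'=\Phi^{D}_{o,\max}+O(1)$ near $o$, which is exactly local condition $(3)$.

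The main obstacle is this gluing step in the second implication: the global maximality condition $(3)$ only accepts competitors that are negative and plurisubharmonic on all of $D$ and dominate $\Phi^{D}_{o,\max}$ everywhere, so a germ-level competitor $\varphi'$ must first be promoted to such a function without disturbing its singularity at $o$ beyond $O(1)$. This promotion works precisely because $\Phi^{D}_{o,\max}$ is bounded below on a sphere $\{|z|=r\}$ around $o$, and that boundedness is exactly what $\tilde L_o\neq\emptyset$ supplies, through $\Phi^{D}_{o,\max}\ge Nu$ with $u$ locally bounded away from $o$.
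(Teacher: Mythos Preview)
Your proof is correct. For the $\Leftarrow$ direction you take a genuinely different route from the paper. The paper first invokes Remark \ref{rem:max_existence} (whose proof uses Zorn's lemma) to produce an abstract local Zhou weight $\Phi_{o,\max}\ge\Phi^{D}_{o,\max}$ near $o$, then uses Remark \ref{r:L tildeL} to globalize $\Phi_{o,\max}$ to a negative plurisubharmonic $\tilde\varphi$ on $D$ with $\tilde\varphi=\Phi_{o,\max}+O(1)$ near $o$, and finally applies the global maximality once to $\max\{\tilde\varphi,\Phi^{D}_{o,\max}\}$ to conclude $\Phi^{D}_{o,\max}=\Phi_{o,\max}+O(1)$. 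You instead verify the local maximality condition $(3)$ directly: for each local competitor $\varphi'$ you glue $\varphi'$ with $\Phi^{D}_{o,\max}$ across a spherical shell (where $\Phi^{D}_{o,\max}$ is bounded below thanks to $\tilde L_o\neq\emptyset$ and Lemma \ref{l:max>green}) to obtain a global competitor $\hat\varphi$, and then apply the global maximality. Your argument is more self-contained since it bypasses the Zorn step; the paper's route has the side benefit of explicitly identifying $\Phi^{D}_{o,\max}$ with a local Zhou weight built from Remark \ref{rem:max_existence}.

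For the $\Rightarrow$ direction your argument is actually more careful than the paper's one-line claim ``$\Phi^{D}_{o,\max}\in\tilde L_o$'': membership in $\tilde L_o\subset L_o$ also requires $\limsup_{z\to o}(\Phi^{D}_{o,\max}(z)-\log|z|)<+\infty$, i.e.\ Lelong number $\ge 1$, which is not immediate. Your Skoda/H\"older argument (showing $\nu_0>0$ and then rescaling by $2/\nu_0$) handles this. One small remark on your justification that $|f_0|^{2q}e^{-2q\varphi_0}\in L^1$ near $o$ for some $q>1$: strong openness only gives $|f_0|^{2}e^{-2q\varphi_0}\in L^1$ directly, but since the holomorphic components $f_{0,j}$ are locally bounded one has $|f_0|^{2q}\le C|f_0|^2$ near $o$, which closes the gap.
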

\begin{proof} If $\Phi^{D}_{o,\max}$ is a local Zhou weight related to $|f_0|^2e^{-2\varphi_0}$ near $o$, then $\Phi^{D}_{o,\max}\ge N\log|z|+O(1)$ for large enough $N\gg0$, i.e., $\Phi^{D}_{o,\max}\in\tilde L_o\not=\emptyset$.
	
	In the following, assume that $\tilde L_o\not=\emptyset$.
	By Remark \ref{rem:max_existence}, there exists a local Zhou weight $\Phi_{o,\max}$ related to $|f_0|^2e^{-2\varphi_0}$ near $o$ satisfying that $$\Phi_{o,\max}\ge \Phi_{o,\max}^D$$ near $o$. Remark \ref{r:L tildeL} tells us that there exists a negative plurisubharmonic function $\tilde\varphi$ on $D$ satisfying that $\tilde\varphi\ge \Phi_{o,\max}+O(1)$ near $o$ and $|f_0|^2e^{-2\varphi_0-2\tilde\varphi}$ is not integrable near $o$. Hence,
	\begin{equation}\nonumber
		\tilde\varphi\ge\Phi_{o,\max}+O(1)\ge\Phi_{o,\max}^D+O(1),
	\end{equation}
	and $|f_0|^2e^{-2\varphi_0-2\max\{\tilde\varphi,\Phi_{o,\max}^D\}}$ is not integrable near $o$. 
	By the definition of $\Phi_{o,\max}^D$, we have 
	$$\Phi_{o,\max}^D=\max\big\{\tilde\varphi,\Phi_{o,\max}^D\big\}=\tilde\varphi,$$
	which implies that $\Phi_{o,\max}^D=\Phi_{o,\max}+O(1)$ is a  local Zhou weight related to $|f_0|^2e^{-2\varphi_0}$ near $o$.
\end{proof}

The following lemma gives a global inequality with the Zhou number $\sigma(\cdot,\Phi^{D}_{o,\max})$.

\begin{Lemma}\label{r:value-inequa}
	Let  $\Phi^{D}_{o,\max}$ be a global Zhou weight related to $|f_0|^2e^{-2\varphi_0}$ on $D$, and let $\psi$ be any negative plurisubharmonic function $D$. Then the inequality 
	$$\psi\le\sigma(\psi,\Phi^{D}_{o,\max})\Phi^{D}_{o,\max}$$
	holds on $D$.
\end{Lemma}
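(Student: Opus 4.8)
The plan is to reduce the global statement to the already-established local estimate by exploiting the maximality built into the definition of a global Zhou weight. First I would recall from Lemma \ref{l:max-loc} (applicable since a global Zhou weight $\Phi^D_{o,\max}$ automatically satisfies $\Phi^D_{o,\max}\ge N\log|z|+O(1)$ near $o$, so $\tilde L_o\ne\emptyset$) that $\Phi^D_{o,\max}$ is also a \emph{local} Zhou weight related to $|f_0|^2e^{-2\varphi_0}$ near $o$. In particular $\sigma(\psi,\Phi^D_{o,\max})$ is well-defined, and by the argument reproduced just after Definition \ref{def:max_relat} (using the strong openness property, Theorem \ref{thm:SOC}) we already know the local inequality $\psi\le\sigma(\psi,\Phi^D_{o,\max})\Phi^D_{o,\max}+O(1)$ holds near $o$.

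Next I would promote this to a global inequality on $D$. Write $b:=\sigma(\psi,\Phi^D_{o,\max})$, which we may assume is finite and positive (if $b=0$ the claim is just $\psi\le 0$, which holds since $\psi$ is negative; if $b=+\infty$ one argues directly that $|f_0|^2e^{-2\varphi_0-2\max\{\Phi^D_{o,\max},\frac1b\psi\}}$ is non-integrable for every finite $b$ and passes to the limit as in Remark \ref{rem:sharp_bound}). The key step is to consider the function $\max\{\Phi^D_{o,\max},\tfrac1b\psi\}$ on $D$: it is negative plurisubharmonic on $D$, it dominates $\Phi^D_{o,\max}$ on $D$, and by the local inequality it satisfies $\max\{\Phi^D_{o,\max},\tfrac1b\psi\}=\Phi^D_{o,\max}+O(1)$ near $o$, hence $|f_0|^2e^{-2\varphi_0}e^{-2\max\{\Phi^D_{o,\max},\frac1b\psi\}}$ is non-integrable near $o$. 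Now property $(3)$ in the definition of a global Zhou weight applies verbatim with $\tilde\varphi=\max\{\Phi^D_{o,\max},\tfrac1b\psi\}$, forcing $\max\{\Phi^D_{o,\max},\tfrac1b\psi\}=\Phi^D_{o,\max}$ identically on $D$. This is exactly $\tfrac1b\psi\le\Phi^D_{o,\max}$ on $D$, i.e.\ $\psi\le\sigma(\psi,\Phi^D_{o,\max})\Phi^D_{o,\max}$ on $D$, as claimed.

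The main obstacle I anticipate is the boundary case $b=\sigma(\psi,\Phi^D_{o,\max})=+\infty$ (and, symmetrically, making sure the supremum in the definition of $\sigma$ is actually attained in the relevant sense): there one cannot directly invoke property $(3)$ with a single $\tilde\varphi$, so the cleanest route is to run the argument above for each finite $b'<b$, obtaining $\tfrac1{b'}\psi\le\Phi^D_{o,\max}$ on $D$ for all such $b'$, and then let $b'\to+\infty$ to conclude $\psi\le 0 = \lim_{b'\to\infty}b'\Phi^D_{o,\max}$ pointwise where $\Phi^D_{o,\max}<0$ — which, since $\psi<0$, again gives the desired bound (with the convention $+\infty\cdot\Phi^D_{o,\max}=-\infty$ off the zero set of $\Phi^D_{o,\max}$, and trivially on it). Apart from this bookkeeping, every ingredient — Lemma \ref{l:max-loc}, the local inequality, and property $(3)$ of the global Zhou weight — is already available, so the proof is short.
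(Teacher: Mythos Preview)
Your core argument is the same as the paper's: form $\tilde\varphi=\max\{\Phi^D_{o,\max},\tfrac1b\psi\}$, check it is a competitor in property~(3) of the global Zhou weight, and conclude $\tfrac1b\psi\le\Phi^D_{o,\max}$ on $D$. That part is correct.

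The detour through Lemma~\ref{l:max-loc} is unnecessary and its justification is shaky. You claim that a global Zhou weight ``automatically satisfies $\Phi^D_{o,\max}\ge N\log|z|+O(1)$ near $o$, so $\tilde L_o\ne\emptyset$'', but Lemma~\ref{l:max-loc} says precisely that $\Phi^D_{o,\max}$ is a local Zhou weight \emph{iff} $\tilde L_o\ne\emptyset$; you are assuming the conclusion to verify the hypothesis. The paper avoids this entirely: it never asserts that $\Phi^D_{o,\max}$ is a local Zhou weight. Instead it argues directly from the definition of $\sigma$ that for every $p<\sigma(\psi,\Phi^D_{o,\max})$ one has $\psi\le p\,\Phi^D_{o,\max}+O(1)$ near $o$, hence $|f_0|^2e^{-2\varphi_0}e^{-2\max\{\Phi^D_{o,\max},\frac1p\psi\}}$ is non-integrable near $o$, and then invokes strong openness (Theorem~\ref{thm:SOC}) to pass to $p=\sigma(\psi,\Phi^D_{o,\max})$. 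This is exactly the mechanism you package as ``the local inequality'', but used in the form that needs only conditions (1)--(2) of the weight and SOC, not the local maximality property~(3). With that change your proof becomes the paper's proof verbatim, and the edge cases $b=0$, $b=+\infty$ need no special pleading.
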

\begin{proof}
	For any $p\in\big(0,\sigma(\psi,\Phi^{D}_{o,\max})\big)$, $\psi\le p\Phi^{D}_{o,\max}+O(1)$ near $o$, which shows that $|f_0|^2e^{-2\varphi_0-2\max\{\Phi^{D}_{o,\max},\frac{1}{p}\psi\}}$ is not integrable near $o$. Following from Theorem \ref{thm:SOC}, $|f_0|^2e^{-2\varphi_0-2\max\big\{\Phi^{D}_{o,\max},\frac{1}{\sigma(\psi,\Phi^{D}_{o,\max})}\psi\big\}}$ is not integrable near $o$. Note that $\max\big\{\Phi^{D}_{o,\max},\frac{1}{\sigma(\psi,\Phi^{D}_{o,\max})}\psi\big\}<0$ on $D$, then the definition of $\Phi^{D}_{o,\max}$ shows that 
	$$\Phi^{D}_{o,\max}\ge\max\left\{\Phi^{D}_{o,\max},\frac{1}{\sigma(\psi,\Phi^{D}_{o,\max})}\psi\right\}, $$
	which implies that 
	$$\psi\le\sigma(\psi,\Phi^{D}_{o,\max})\Phi^{D}_{o,\max}$$
	holds on $D$.
\end{proof}

We give a  global Zhou weight on the sublevel set $\{\Phi^{D}_{o,\max}<-t\}$.

\begin{Lemma}\label{l:subset}
	Let  $\Phi^{D}_{o,\max}$ be a global Zhou weight related to $|f_0|^2e^{-2\varphi_0}$ on $D$, then for any $t>0$, $\Phi^{D}_{o,\max}+t$ is a global Zhou weight related to $|f_0|^2e^{-2\varphi_0}$ on $\{\Phi^{D}_{o,\max}<-t\}$.
\end{Lemma}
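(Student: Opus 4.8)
The plan is to verify directly that $\Phi^{D}_{o,\max}+t$ satisfies the three defining properties of a global Zhou weight related to $|f_{0}|^{2}e^{-2\varphi_{0}}$ on the open set $D_{t}:=\{\Phi^{D}_{o,\max}<-t\}$. First note that $\Phi^{D}_{o,\max}+t<0$ on $D_{t}$, so it is a negative plurisubharmonic function there, and that $D_{t}$ accumulates at $o$ (otherwise $e^{-2\Phi^{D}_{o,\max}}$ would be bounded near $o$, contradicting property $(2)$ for $\Phi^{D}_{o,\max}$). Properties $(1)$ and $(2)$ are then immediate: since $e^{-2(\Phi^{D}_{o,\max}+t)}=e^{-2t}e^{-2\Phi^{D}_{o,\max}}$ and $D_{t}\subseteq D$, the integrability of $|f_{0}|^{2}e^{-2\varphi_{0}}|z|^{2N_{0}}e^{-2(\Phi^{D}_{o,\max}+t)}$ near $o$ and the non-integrability of $|f_{0}|^{2}e^{-2\varphi_{0}-2(\Phi^{D}_{o,\max}+t)}$ near $o$ are just rescaled versions of properties $(1)$ and $(2)$ for $\Phi^{D}_{o,\max}$ on $D$.

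The substantive part is property $(3)$. Suppose $\tilde\varphi$ is a negative plurisubharmonic function on $D_{t}$ with $\tilde\varphi\ge\Phi^{D}_{o,\max}+t$ on $D_{t}$ and $|f_{0}|^{2}e^{-2\varphi_{0}-2\tilde\varphi}$ not integrable near $o$; I want to conclude $\tilde\varphi=\Phi^{D}_{o,\max}+t$ on $D_{t}$. The idea is to promote $\tilde\varphi-t$ (which lives on $D_{t}$ and, by hypothesis, dominates $\Phi^{D}_{o,\max}$ there) to a negative plurisubharmonic function $\psi$ on all of $D$ and then invoke property $(3)$ for $\Phi^{D}_{o,\max}$ itself. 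Define
$$\psi:=\begin{cases}\max\{\tilde\varphi-t,\ \Phi^{D}_{o,\max}\}&\text{on }D_{t},\\ \Phi^{D}_{o,\max}&\text{on }D\setminus D_{t}.\end{cases}$$
For any $\zeta\in D\cap\partial D_{t}$ one has $\Phi^{D}_{o,\max}(\zeta)\ge -t$, while $\tilde\varphi-t<-t$ throughout $D_{t}$; hence $\limsup_{D_{t}\ni z\to\zeta}(\tilde\varphi(z)-t)\le -t\le\Phi^{D}_{o,\max}(\zeta)$, so the standard gluing lemma for plurisubharmonic functions (see \cite{demailly-book}) gives $\psi\in\mathrm{PSH}(D)$. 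Since $\tilde\varphi-t\ge\Phi^{D}_{o,\max}$ on $D_{t}$, in fact $\psi=\tilde\varphi-t$ on $D_{t}$ and $\psi=\Phi^{D}_{o,\max}$ on $D\setminus D_{t}$, so $\psi<0$ and $\psi\ge\Phi^{D}_{o,\max}$ on $D$.

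Finally I must see that $|f_{0}|^{2}e^{-2\varphi_{0}-2\psi}$ is not integrable near $o$: on the part of a small neighborhood of $o$ lying in $D\setminus D_{t}$ one has $\psi=\Phi^{D}_{o,\max}\ge -t$, so there $|f_{0}|^{2}e^{-2\varphi_{0}-2\psi}\le e^{2t}|f_{0}|^{2}e^{-2\varphi_{0}}$ is integrable; hence the non-integrability must come from $D_{t}$, where $\psi=\tilde\varphi-t$ and $|f_{0}|^{2}e^{-2\varphi_{0}-2\psi}=e^{2t}|f_{0}|^{2}e^{-2\varphi_{0}-2\tilde\varphi}$, not integrable near $o$ by assumption. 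Property $(3)$ for the global Zhou weight $\Phi^{D}_{o,\max}$ on $D$ now forces $\psi=\Phi^{D}_{o,\max}$ on $D$, and restricting to $D_{t}$ yields $\tilde\varphi-t=\Phi^{D}_{o,\max}$, i.e.\ $\tilde\varphi=\Phi^{D}_{o,\max}+t$, as required. I expect the only point needing care to be the gluing step — specifically, checking that $\tilde\varphi-t$ stays strictly below the boundary value $-t$ of $\Phi^{D}_{o,\max}$ along $\partial D_{t}$, which is exactly where the negativity of $\tilde\varphi$ enters; everything else is routine bookkeeping, with "near $o$" read inside $D_{t}$.
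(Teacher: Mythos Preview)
Your proof is correct and follows essentially the same strategy as the paper's: extend a competitor on the sublevel set $D_t=\{\Phi^D_{o,\max}<-t\}$ to all of $D$ by gluing with $\Phi^D_{o,\max}$, and then invoke the maximality property $(3)$ of $\Phi^D_{o,\max}$ on $D$. The only organizational difference is that the paper first invokes the existence result (Remark \ref{r:exists}) to produce a global Zhou weight $\Psi$ on $D_t$ with $\Psi\ge\Phi^D_{o,\max}+t$, glues $\Psi-t$ with $\Phi^D_{o,\max}$, and deduces $\Psi=\Phi^D_{o,\max}+t$; you instead verify the three defining properties of $\Phi^D_{o,\max}+t$ directly, gluing an arbitrary competitor $\tilde\varphi$. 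Your route is slightly more self-contained since it avoids the appeal to Remark \ref{r:exists}, and your verification of non-integrability (splitting a neighborhood of $o$ into $D_t$ and its complement) is more explicit than the paper's, but the essential content---the gluing step across $\partial D_t$ using $\tilde\varphi-t<-t\le\Phi^D_{o,\max}$ there---is identical.
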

\begin{proof}
	By Remark \ref{r:exists}, there exists a global Zhou weight $\Phi_{o,\max}^{\{\Phi^{D}_{o,\max}<-t\}}$ related to $|f_0|^2e^{-2\varphi_0}$ on  $\{\Phi^{D}_{o,\max}<-t\}$ satisfying that
	\[\Phi_{o,\max}^{\{\Phi^{D}_{o,\max}<-t\}}\ge\Phi^{D}_{o,\max}+t\]
	on $\{\Phi^{D}_{o,\max}<-t\}$. 
	
	Let
	\begin{equation*}
		\phi:=\left\{
		\begin{array}{ll}
			\Phi_{o,\max}^{\{\Phi^{D}_{o,\max}<-t\}}-t & \ \text{on} \ \{\Phi^{D}_{o,\max}<-t\}, \\
			\Phi^{D}_{o,\max} & \ \text{on} \ \{\Phi^{D}_{o,\max}\ge-t\}.
		\end{array}
		\right.
	\end{equation*}
	Then we have $\phi\ge\Phi^{D}_{o,\max}$ on $D$. As $\Phi_{o,\max}^{\{\Phi^{D}_{o,\max}<-t\}}$ is plurisubharmonic on $\{\Phi^{D}_{o,\max}<-t\}$ and $\Phi^{D}_{o,\max}$ is plurisubharmonic on $D$, we have
	\[\limsup_{\tilde z\rightarrow z} \phi(\tilde{z})\le\phi(z), \  \forall z\in D\backslash\partial\{\Phi^{D}_{o,\max}<-t\}.\]
	For any $z\in \partial\{\Phi^{D}_{o,\max}<-t\}\cap D$,  $\phi(z)=\Phi^{D}_{o,\max}(z)\ge-t$, then
	\begin{flalign*}
		\begin{split}
			\limsup_{\tilde z\rightarrow z}\phi(\tilde{z})&=\max\left\{\limsup_{\{\Phi^{D}_{o,\max}<-t\}\ni\tilde z\rightarrow z}\phi(\tilde{z}),\limsup_{(D\backslash\{\Phi^{D}_{o,\max}<-t\})\ni\tilde z\rightarrow z}\phi(\tilde{z})\right\}\\
			&\le-t\le\phi(z).
		\end{split}
	\end{flalign*}
	Thus, $\phi$ is an upper semicontinuous function on $D$. For any $z\in \partial\{\Phi^{D}_{o,\max}<-t\}\cap D$, as $\phi\ge\Phi_{o,\max}^D$ on $D$, we have
	$$\phi(z)=\Phi_{o,\max}^D(z)\le\frac{1}{\lambda(\mathbb{B}(z,r))}\int_{B(z;r)}\Phi^D_{o,\max} \ d\lambda\le\frac{1}{\lambda(\mathbb{B}(z,r))}\int_{B(z;r)}\phi \ d\lambda,$$
	where $\mathbb{B}(z,r)\subset D$ is a ball with a radius of $r$ and $\lambda$ is the Lebesgue measure on $\mathbb{C}^n$. Hence, $\phi$ is a plurisubharmonic function on $D$. As  $\Phi^{D}_{o,\max}$ is a global Zhou weight related to $|f_0|^2e^{-2\varphi_0}$ on $D$, we get $\Phi^{D}_{o,\max}\ge\phi$, which implies that $\Phi_{o,\max}^{\{\Phi^{D}_{o,\max}<-t\}}=\Phi_{o,\max}^D+t$ on $\{\Phi_{o,\max}^D<-t\}$. Lemma \ref{l:subset} has been proved. 
\end{proof}

\subsection{Proofs of Proposition \ref{l:max2} and Proposition \ref{thm:contin}}

In this section, we prove Proposition \ref{l:max2} and Proposition \ref{thm:contin}. 

Firstly, we recall a lemma, which will be used in the proof of Proposition \ref{l:max2}.

\begin{Lemma}[see \cite{Blo-note}, see also \cite{BT76,Blo93}]
	\label{l:max1}Let $\varphi\in \mathrm{PSH}(\Omega)\cap L^{\infty}_{\mathrm{loc}}(\Omega)$ on an open subset $\Omega$ of $\mathbb{C}^n$. If for any $u\in \mathrm{PSH}(\Omega)$ such that $\varphi\ge u$ outside a compact subset of $\Omega$ we have $\varphi\ge u$ on $\Omega$, then $(dd^c\varphi)^n=0$ on $\Omega$.
\end{Lemma}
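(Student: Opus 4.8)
The plan is to prove the lemma by contraposition: assuming $(dd^c\varphi)^n\not\equiv 0$ on $\Omega$, I will construct $u\in\mathrm{PSH}(\Omega)$ with $u\le\varphi$ outside a compact set but $u(z_1)>\varphi(z_1)$ at some point $z_1$, contradicting the hypothesized maximality of $\varphi$. Since $\varphi\in\mathrm{PSH}(\Omega)\cap L^\infty_{\mathrm{loc}}(\Omega)$, the complex Monge--Amp\`ere measure $(dd^c\varphi)^n$ is a well-defined nonnegative Borel measure (Bedford--Taylor); if it is not identically zero, then there is a ball $B=B(z_0,r)$ with $\overline B\subset\Omega$ and $\int_B(dd^c\varphi)^n>0$.

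First I would reduce to continuous boundary data on $\partial B$. Choose a ball $B'$ with $\overline B\subset B'$ and $\overline{B'}\subset\Omega$, and for a standard mollifier $\rho$ set $\varphi_j:=\varphi*\rho_{1/j}$; for $j$ large these are smooth plurisubharmonic functions on $\overline{B'}$ decreasing to $\varphi$. Let $u_j$ be the Perron--Bremermann envelope on $B$ with boundary data $\varphi_j|_{\partial B}$, i.e.
\[
u_j:=\sup\bigl\{v\in\mathrm{PSH}(B):\ \limsup_{z\to\zeta}v(z)\le\varphi_j(\zeta)\ \text{ for all }\zeta\in\partial B\bigr\}.
\]
Because $\partial B$ is smooth and strictly pseudoconvex and $\varphi_j|_{\partial B}$ is continuous, the classical solvability of the homogeneous Dirichlet problem (Bremermann, Walsh, Bedford--Taylor) gives $u_j\in\mathrm{PSH}(B)\cap C(\overline B)$, $u_j=\varphi_j$ on $\partial B$, and $(dd^cu_j)^n=0$ in $B$; moreover $\varphi|_B$ is a competitor in the envelope (its boundary $\limsup$ is $\le\varphi\le\varphi_j$ on $\partial B$), so $u_j\ge\varphi$ on $B$. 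The sequence $(u_j)$ decreases (since $\varphi_j\downarrow\varphi$) and is bounded below by $\inf_{\overline B}\varphi>-\infty$, hence $u_\infty:=\lim_j u_j\in\mathrm{PSH}(B)$ satisfies $u_\infty\ge\varphi$, and by the Bedford--Taylor convergence theorem for decreasing sequences of locally bounded plurisubharmonic functions $(dd^cu_\infty)^n=\lim_j(dd^cu_j)^n=0$ in $B$.

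Then I would glue $u_\infty$ and $\varphi$: set $u:=u_\infty$ on $B$ and $u:=\varphi$ on $\Omega\setminus B$. From $u_\infty\le u_j$ we get $\limsup_{B\ni z\to\zeta}u_\infty(z)\le\varphi_j(\zeta)$ for every $j$, hence $\le\varphi(\zeta)$ for each $\zeta\in\partial B$; since $\varphi$ is upper semicontinuous, $u$ is then upper semicontinuous on $\Omega$. As $u\ge\varphi$ everywhere (with equality off $B$), the sub-mean-value inequality for $u$ at points of $\partial B$ follows from that for $\varphi$, and $u$ is plurisubharmonic in the interiors $B$ and $\Omega\setminus\overline B$; thus $u\in\mathrm{PSH}(\Omega)$ and $u=\varphi$ outside the compact set $\overline B$. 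The maximality hypothesis now forces $\varphi\ge u$ on $\Omega$, so $u\equiv\varphi$, and in particular $\varphi=u_\infty$ on $B$, whence $(dd^c\varphi)^n=(dd^cu_\infty)^n=0$ in $B$ --- contradicting $\int_B(dd^c\varphi)^n>0$. This proves $(dd^c\varphi)^n\equiv 0$ on $\Omega$.

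The expected main obstacle is not a single deep ingredient --- the $(dd^c\cdot)^n$-theory, solvability of the homogeneous Dirichlet problem for continuous data, and decreasing-limit continuity of the Monge--Amp\`ere operator may all be quoted --- but rather the boundary bookkeeping: the envelope $u_\infty$ need not coincide with $\varphi$ near $\partial B$ \emph{a priori} (this only comes out after invoking maximality), so one has to check carefully that the glued function $u$ is genuinely plurisubharmonic across $\partial B$, using only $u\ge\varphi$ together with the one-sided boundary estimate $\limsup_{B\ni z\to\zeta}u_\infty(z)\le\varphi(\zeta)$.
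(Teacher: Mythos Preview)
The paper does not give its own proof of this lemma; it is stated as a known result with citations to \cite{Blo-note}, \cite{BT76}, and \cite{Blo93}. Your argument is correct and is essentially the classical proof from pluripotential theory: localize to a ball $B$, solve the homogeneous Monge--Amp\`ere Dirichlet problem there (handling the possibly discontinuous boundary data $\varphi|_{\partial B}$ by approximating with smooth mollifications $\varphi_j\downarrow\varphi$ and passing to a decreasing limit, using Bedford--Taylor continuity of $(dd^c\cdot)^n$ along decreasing sequences), glue the resulting $u_\infty$ back to $\varphi$ across $\partial B$, and invoke the maximality hypothesis to force $\varphi=u_\infty$ on $B$. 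Your treatment of the gluing step --- checking upper semicontinuity and the sub-mean-value inequality at points of $\partial B$ via $u\ge\varphi$ globally together with $\limsup_{B\ni z\to\zeta}u_\infty(z)\le\varphi(\zeta)$ --- is careful and correct.
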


Now, we prove Proposition \ref{l:max2}.
\begin{proof}[Proof of Proposition \ref{l:max2}]
	As $\tilde L_o\not=\emptyset$, it follows from Lemma \ref{l:max-loc} that $\Phi^{D}_{o,\max}$ is a local Zhou weight related to $|f_0|^2e^{-2\varphi_0}$ near $o$. Thus, $\Phi^{D}_{o,\max}\ge N\log|z|+O(1)$ for large enough $N\gg0$. Then there exist $r>0$ such that
	\[C:=\inf_{\{|z|=r\}}\Phi_{o,\max}^D>-\infty.\]
	Let 
	\begin{equation*}
		\tilde\varphi:=\left\{
		\begin{array}{ll}
			\Phi_{o,\max}^D & \ \text{on} \ \{|z|<r\}, \\
			\max\{\Phi_{o,\max}^D, C-1\} & \ \text{on} \ D\backslash\{|z|<r\}.
		\end{array}
		\right.
	\end{equation*}
	Then $\tilde\varphi$ is a negative plurisubharmonic function on $D$. By the definition of $\Phi_{o,\max}^D$, $\Phi_{o,\max}^D=\tilde\varphi$ on $D$, which implies that $\Phi_{o,\max}^D\in L_{\mathrm{loc}}^{\infty}(D\backslash\{o\})$. Using Lemma \ref{l:max1}, we have $\big(dd^c\Phi^{D}_{o,\max}\big)^n=0$ on $D\backslash\{o\}$.
\end{proof}

Finally, we prove Proposition \ref{thm:contin}.

\begin{proof}[Proof of Proposition \ref{thm:contin}]
	Note that there exists $N_0\gg0$ such that $\Phi^{D}_{o,\max}\ge N_0\log|z|+O(1)$ near $o$. As $D$ is a hyperconvex domain, there exists a continuous exhaustion plurisubharmonic function $\varphi_1<0$ on $D$. Then there exist $r_1>r_2>0$ and $N_1>0$ such that $$\inf_{\{r_2<|z|<r_1\}}\Phi^{D}_{o,\max}(z)\ge  N_1\sup_{\{r_2<|z|<r_1\}}\varphi_1(z).$$
	Let
	\begin{equation*}
		\varphi_2:=\left\{
		\begin{array}{ll}
			\Phi^{D}_{o,\max} & \ \text{on} \ \{|z|<r_1\}, \\
			\max\{\Phi^{D}_{o,\max},N_1\varphi_1\} & \ \text{on} \D\backslash\{|z|<r_1\}.
		\end{array}
		\right.
	\end{equation*}
	Then $\varphi_2$ is a negative plurisubharmonic function on $D$ and $\varphi_2\ge \Phi^{D}_{o,\max}$. By the definition of $\Phi^{D}_{o,\max}$, we have $\varphi_2= \Phi^{D}_{o,\max}$, i.e. $\Phi^{D}_{o,\max}\ge N_1\varphi_1$ on $D\backslash\{|z|<r_1\}$, which implies that $\Phi^{D}_{o,\max}(z)\rightarrow0$ when $z\rightarrow\partial D$.
	
	For any positive integer $m$, let $\{\sigma_{m,k}\}_{k=1}^{\infty}$ be an orthonormal basis of
	\[A^2\big(D,2m\Phi^{D}_{o,\max}\big):=\left\{f\in\mathcal{O}(D):\int_{D}|f|^2e^{-2m\Phi^{D}_{o,\max}}d\lambda<+\infty\right\}.\]
	Denote 
	$$\varphi_m:=\frac{1}{2m}\log\sum_{k=1}^{\infty}|\sigma_{m,k}|^2$$
	on $D$. 
	
	Fixed any $z\in D$, there exists a holomorphic function $f_{z,m}$ on $D$ such that
	\[\int_{D}|f_{z,m}|^2e^{-2m\Phi^{D}_{o,\max}}d\lambda=1 \ \&  \ \frac{1}{2m}\log|f_{z,m}(z)|^2=\varphi_m(z).\]
	For any $t>0$ satisfying $z\in\{\Phi^{D}_{o,\max}<-t\}$, since $\Phi^{D}_{o,\max}(z)\rightarrow0$ when $z\rightarrow\partial D$ and $\int_{D}|f_{z,m}|^2e^{-2m\Phi^{D}_{o,\max}}d\lambda=1$, there exists  $M\gg0$ such that 
	$|f_{z,m}|<1$ on $\{\Phi^{D}_{o,\max}<-t\}$ for any $m>M$. It follows from Lemma \ref{l:subset} and Lemma \ref{r:value-inequa}  that
	$$\varphi_m(z)=\frac{1}{m}\log|f_{z,m}(z)|\le\frac{\sigma\big(\log|f_{z,m}|,\Phi^{D}_{o,\max}\big)}{m}(\Phi^{D}_{o,\max}+t)(z)$$
	for $z\in\{\Phi^{D}_{o,\max}<-t\}$. Note that $c_o^{f_{z,m}}(\Phi^{D}_{o,\max})\ge m$, then  Theorem \ref{thm:valu-jump} shows that $\sigma\big(\log|f_{z,m}|,\Phi^{D}_{o,\max}\big)\ge m+c_3$, where $c_3$ is a constant independent of $m$. Hence, we have
	\begin{equation}
		\label{eq:0911a}
		\frac{\varphi_m(z)}{1+\frac{c_3}{m}}-t\le \Phi^{D}_{o,\max}(z).
	\end{equation}
	Using inequality \eqref{eq:0911b} in Lemma \ref{l:appro-Berg}, we get 
	\begin{equation}
		\label{eq:0911c}
		\Phi^{D}_{o,\max}\le \varphi_m+\frac{c_1}{m}
	\end{equation}
	on $D$. Note that $e^{\varphi_m}$ is smooth on $D$, then it follows from inequality \eqref{eq:0911a} and inequality \eqref{eq:0911c} that $e^{\Phi^{D}_{o,\max}}$ is continuous on $D$.
\end{proof}

\section{Approximations of global Zhou weights: proofs of Theorem \ref{thm-approximation}, Corollary \ref{cor-approximation} and Corollary \ref{cor-approximation2}}\label{Approximation}

In this section, we discuss the approximations of global Zhou weights, and prove Theorem \ref{thm-approximation}, Corollary \ref{cor-approximation} and Corollary \ref{cor-approximation2}.

\subsection{Preparations}

Let $D$ be a hyperconvex domain in $\mathbb{C}^n$ such that the origin $o\in D$. Let $\Phi_{o,\max}^{f_0,\varphi_0,D}$ ($\Phi^D_{o,\max}$ for short in this section) be a global Zhou weight related to some $|f_0|^2e^{-2\varphi_0}$   on $D$ near $o$ defined in this section, where $f_0$ is a holomorphic vector on $D$, and $\varphi_0$ is a plurisubharmonic function near $o$ such that $|f_0|^2e^{-2\varphi_0}$ is integrable near $o$. Denote $\sigma(\cdot, \Phi^D_{o,\max})$ be the Zhou number with respect to $\Phi^D_{o,\max}$.

For any $m\in\mathbb{N}_+$, we recall two compact subsets of $\mathcal{O}(D)$ as follows:
\[\mathscr{E}_m(D):=\big\{f\in\mathcal{O}(D) : \sup_{z\in D}|f(z)|\le 1, (f,o)\in\mathcal{I}(m\Phi^D_{o,\max})_o\big\},\]
\[\mathscr{A}^2_m(D):=\big\{f\in\mathcal{O}(D) : \|f\|_D\le 1, (f,o)\in\mathcal{I}(m\Phi^D_{o,\max})_o\big\},\] 
where $\|f\|_D^2:=\int_D|f|^2$. The compactness of $\mathscr{E}_m(D)$ and $\mathscr{A}^2_m(D)$ is due to the closedness property of coherent sheaves. We also recall the definitions of $\phi_m$ and $\varphi_m$ for any $m$:
\begin{equation}\label{eq-phim}
	\phi_m(z):=\sup_{f\in\mathscr{E}_m(D)} \frac{1}{m}\log |f(z)|, \ \forall z\in D,
\end{equation}

\begin{equation}\label{eq-varphim}
	\varphi_m(z):=\sup_{f\in\mathscr{A}_m^2(D)}\frac{1}{m}\log|f(z)|, \ \forall z\in D.
\end{equation}

Suppose that $D$ is a strictly hyperconvex domain with the function $\varrho$ defined in Definition \ref{def-strhpconvex}. For any $j\in\mathbb{N}_+$, denote $D_j:=\{z\in \Omega : \varrho(z)<1/j\}$, which is a decreasing sequence of bounded hyperconvex domains. Define
\[\Phi^{D_j} _{o,\max}(z):=\sup\bigg\{\phi(z): \phi\in \mathrm{PSH}^-(D_j), \ (f_0,o)\notin \mathcal{I}(\varphi_0+\phi)_o, \  \phi\ge \Phi^D_{o,\max}+O(1)\bigg\}\]
for any $z\in D_j$, where the inequality $\phi\ge \Phi^D_{o,\max}+O(1)$ means that it holds near $o$ (similar inequalities in this section all mean that they hold near $o$). Then $\Phi^{D_j} _{o,\max}$ is a global Zhou weight related to $|f_0|^2e^{-2\varphi_0}$ on $D_j$ (see Lemma \ref{l:local-glabal}).

\begin{Lemma}\label{lem-ImDjImD}
	For any $1\le j<j'$, the following statements hold:
	
	$(1)$
	\[\Phi_{o,\max}^{D_j}\le \Phi_{o,\max}^{D_{j'}} \ \text{on} \ D_{j'},\]
	and
	\[\Phi_{o,\max}^{D_j}\le\Phi_{o,\max}^D \ \text{on} \ D.\]
	
	$(2)$
	\[\Phi_{o,\max}^{D_j}=\Phi_{o,\max}^D+O(1) \ \text{near} \ o.\]
	
	$(3)$ For any $t>0$,
	\[\mathcal{I}\big(t\Phi^{D_j} _{o,\max}\big)_o=\mathcal{I}\big(t\Phi^D_{o,\max}\big)_o.\]
	
	$(4)$ There exists $N>0$, which is independent of $j$, such that
	\[\Phi_{o,\max}^{D_j}(z)\ge N G_{D_j}(o,z), \ \forall z\in D,\]
	where $G_{D_j}(o,\cdot)$ is the pluricomplex Green function of $D_j$ with a pole at $o$.
\end{Lemma}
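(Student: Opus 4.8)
The plan is to reduce everything to Lemma~\ref{l:local-glabal}. Since $D$ is hyperconvex, Remark~\ref{r:8.4} gives $\tilde L_o\neq\emptyset$, so by Lemma~\ref{l:max-loc} the germ of $\Phi^{D}_{o,\max}$ at $o$ is (a bounded perturbation of) a local Zhou weight $\Phi_{o,\max}$ related to $|f_0|^2e^{-2\varphi_0}$. Each $D_j=\{\varrho<1/j\}$ is a bounded hyperconvex domain (an open subset of the bounded set $\Omega$ on which $\varrho$ is a continuous plurisubharmonic exhaustion), so again $\tilde L_o\neq\emptyset$ on $D_j$, hence $L(\Phi_{o,\max})\neq\emptyset$ on $D_j$ by Remark~\ref{r:L tildeL}. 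Now observe that the defining formula for $\Phi^{D_j}_{o,\max}$ in this section is exactly formula~\eqref{eq:0918a} of Lemma~\ref{l:local-glabal} for the domain $D_j$ and the weight $\Phi_{o,\max}$ (the constraint $(f_0,o)\notin\mathcal{I}(\varphi_0+\phi)_o$ being the non-integrability of $|f_0|^2e^{-2\varphi_0-2\phi}$ near $o$, and $\phi\ge\Phi^{D}_{o,\max}+O(1)$ being the same as $\phi\ge\Phi_{o,\max}+O(1)$ near $o$). Hence Lemma~\ref{l:local-glabal} immediately yields that $\Phi^{D_j}_{o,\max}$ is a global Zhou weight related to $|f_0|^2e^{-2\varphi_0}$ on $D_j$ with $\Phi^{D_j}_{o,\max}=\Phi_{o,\max}+O(1)=\Phi^{D}_{o,\max}+O(1)$ near $o$, which is part~$(2)$. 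Part~$(3)$ then follows at once, since $\mathcal{I}(t\,\cdot)_o$ depends on a plurisubharmonic germ only up to a bounded additive function.

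For part~$(1)$ I would argue by restriction of the competing families. For $1\le j<j'$ one has $\{\varrho<1/j'\}\subseteq\{\varrho<1/j\}$, so $D\subseteq D_{j'}\subseteq D_j$; any competitor $\phi$ in the family defining $\Phi^{D_j}_{o,\max}$ restricts to a competitor (with the same three properties) in the family defining $\Phi^{D_{j'}}_{o,\max}$, respectively $\Phi^{D}_{o,\max}$. Taking pointwise suprema gives $\Phi^{D_j}_{o,\max}\le\Phi^{D_{j'}}_{o,\max}$ on $D_{j'}$ and $\Phi^{D_j}_{o,\max}\le\Phi^{D}_{o,\max}$ on $D$; for the last inequality one uses that $\Phi^{D}_{o,\max}$ is itself the supremum of its competing family on $D$, which is again Lemma~\ref{l:local-glabal} (the given global Zhou weight $\Phi^{D}_{o,\max}$ belongs to that family, so it equals the supremum).

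For part~$(4)$ I would re-run the proof of Lemma~\ref{l:max>green} on $D_j$, keeping the constants independent of $j$. By part~$(2)$ there is an integer $N_0$, independent of $j$, with $|f_0|^2e^{-2\varphi_0}|z|^{2N_0}e^{-2\Phi^{D_j}_{o,\max}}$ integrable near $o$; by Theorem~\ref{thm:SOC} there is $\varepsilon>0$ with $|f_0|^2e^{-2\varphi_0}|z|^{2N_0}e^{-2(1+\varepsilon)\Phi^{D_j}_{o,\max}}$ integrable near $o$, and $\varepsilon$ may be taken independent of $j$ since the germ of $\Phi^{D_j}_{o,\max}$ at $o$ is the same (up to a bounded function) for all $j$. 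Since $D_j\subseteq D_1$ we have $G_{D_j}(o,\cdot)\le G_{D_1}(o,\cdot)$, and $G_{D_1}(o,z)\le\log|z|+C_1$ near $o$ for a fixed constant $C_1$ (as $D_1$ is bounded), hence $e^{2N_0G_{D_j}(o,\cdot)}\le e^{2N_0C_1}|z|^{2N_0}$ near $o$ and $|f_0|^2e^{-2\varphi_0}e^{2N_0G_{D_j}(o,\cdot)}e^{-2(1+\varepsilon)\Phi^{D_j}_{o,\max}}$ is integrable near $o$. Lemma~\ref{lem:G_key} then makes $|f_0|^2e^{-2\varphi_0}e^{-2\Phi^{D_j}_{o,\max}}-|f_0|^2e^{-2\varphi_0}e^{-2\max\{\Phi^{D_j}_{o,\max},\frac{N_0}{\varepsilon}G_{D_j}(o,\cdot)\}}$ integrable near $o$; since the first term is not integrable near $o$ (condition~$(2)$ in the definition of the global Zhou weight $\Phi^{D_j}_{o,\max}$), neither is the second. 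As $\max\{\Phi^{D_j}_{o,\max},\frac{N_0}{\varepsilon}G_{D_j}(o,\cdot)\}$ is a negative plurisubharmonic function on $D_j$ dominating $\Phi^{D_j}_{o,\max}$, the maximality (condition~$(3)$) in the definition forces equality, i.e. $\Phi^{D_j}_{o,\max}\ge\frac{N_0}{\varepsilon}G_{D_j}(o,\cdot)$ on $D_j$, and in particular on $D$; so $N:=N_0/\varepsilon$ works.

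The routine points — that each $D_j$ is bounded and hyperconvex, that $G_{D_j}(o,\cdot)$ is negative plurisubharmonic, and the standard estimate $G_{D_1}(o,z)\le\log|z|+O(1)$ near $o$ — I will record without comment. The only real issue is the uniformity in $j$ of the constants $N_0$ and $\varepsilon$ in part~$(4)$; this is not a genuine obstacle, because part~$(2)$ shows the germ of $\Phi^{D_j}_{o,\max}$ at $o$ does not depend on $j$ up to bounded functions, and the inclusion $D_j\subseteq D_1$ provides a $j$-free upper bound for $G_{D_j}(o,\cdot)$ near $o$.
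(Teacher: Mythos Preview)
Your proposal is correct and follows essentially the same route as the paper: part~(1) by restriction of competing families (what the paper calls ``follows from the definitions''), part~(2) via Lemma~\ref{l:local-glabal}/Lemma~\ref{l:max-loc} (the paper says ``global Zhou weights are also local Zhou weights''), part~(3) as an immediate consequence of~(2), and part~(4) by re-running Lemma~\ref{l:max>green} with constants governed by the common local germ. Your treatment of~(4) is in fact more careful than the paper's, since you explicitly supply the $j$-uniform bound $G_{D_j}(o,\cdot)\le G_{D_1}(o,\cdot)\le\log|z|+C_1$ near $o$, whereas the paper tacitly absorbs this into ``local property''.
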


\begin{proof}
	The statement (1) just follows from the definitions of $\Phi^{D_j} _{o,\max}$ and $\Phi^D_{o,\max}$. It shows that $\big( \Phi_{o,\max}^{D_j}\big)_{j\ge 1}$ is an increasing sequence on $D$.
	
	The statement (2) follows from that global Zhou weights are also local Zhou weights.
	
	The	statement (3) is a direct result of the statement (2).
	
	To show the statement (4), let us look back to the proof of Lemma \ref{l:max>green}. We can see that the sufficiently large constant $N>0$ is completely determined by the local property of the global Zhou weight $\Phi_{o,\max}^{D_j}$ near $o$. Thus by the statement (2), the constant $N$ here can be chosen independent of $j$.	
\end{proof}

To simplify the proof of the next lemma (Lemma \ref{lem-DjconvergetoD}), we recall a lemma in \cite{Ni95} about the pluricomplex Green functions on $D_j$.

\begin{Lemma}[\cite{Ni95}]\label{lem-Greenfunction}
	The sequence of pluricomplex Green functions $\big(G_{D_j}(o,\cdot)\big)_{j\ge 1}$ converges uniformly to $G_D(o,\cdot)$ on $\overline{D}$.
\end{Lemma}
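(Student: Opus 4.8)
The plan is to combine an elementary monotonicity observation with a gluing construction that uses the global defining function $\varrho$ of the \emph{strictly hyperconvex} domain $D$; the equality $\lim_j G_{D_j}(o,\cdot)=G_D(o,\cdot)$ is where strict hyperconvexity genuinely enters, and once it is in hand the uniform convergence on $\overline D$ follows either from an explicit rate or from Dini's theorem.

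First I would set the stage. Since $\varrho$ is plurisubharmonic on $\Omega$ and $1/(j+1)<1/j$, the sets $D_j=\{\varrho<1/j\}$ form a decreasing sequence with $D\subset D_{j+1}\subset D_j\subset\subset\Omega$; the connectedness clause in Definition \ref{def-strhpconvex} makes each $D_j$ a domain, and $\varrho-1/j$ is a continuous negative plurisubharmonic exhaustion of $D_j$, so every $D_j$ is a bounded hyperconvex domain. Recall that for a bounded hyperconvex domain the pluricomplex Green function with pole at $o$ is continuous up to the closure and vanishes on the boundary (see \cite{Blo-note}); apply this to $D$ and to each $D_j$. Restricting competitors, the inclusions $D\subset D_{j+1}\subset D_j$ give $G_{D_j}(o,\cdot)\le G_{D_{j+1}}(o,\cdot)\le G_D(o,\cdot)<0$ on $D$, so $g:=\lim_j G_{D_j}(o,\cdot)$ exists on $D$ and $g\le G_D(o,\cdot)$. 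Thus everything reduces to the quantitative lower bound
\[G_{D_j}(o,z)\ge G_D(o,z)-\delta_j\quad\text{on }D,\qquad \delta_j\to 0.\]

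To produce such a $\delta_j$, fix a small $\eta>0$. Boundary continuity makes $K:=\{z\in D:G_D(o,z)\le-\eta\}$ a compact subset of $D$, so $m':=-\max_K\varrho>0$; set $\delta_j:=2\eta/(m'j)$ and $\kappa_j:=(\eta+\delta_j)/m'$ (taking $j$ large). I would then define $w_j$ on $D_j$ by $w_j:=G_D(o,\cdot)-\delta_j$ on $\{G_D(o,\cdot)<-\eta\}$, $w_j:=\max\{G_D(o,\cdot)-\delta_j,\ \kappa_j(\varrho-1/j)\}$ on $\{G_D(o,\cdot)\ge-\eta\}\cap D$, and $w_j:=\kappa_j(\varrho-1/j)$ on $D_j\setminus D$. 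The choice of $\kappa_j$ forces $\kappa_j(\varrho-1/j)\le G_D(o,\cdot)-\delta_j$ on $K$ (hence across the level set $\{G_D(o,\cdot)=-\eta\}$ the first two pieces coincide), while the inequality $\delta_j>(\eta+\delta_j)/(m'j)$ forces $\kappa_j(\varrho-1/j)>G_D(o,\cdot)-\delta_j$ near $\partial D=\{\varrho=0\}$ (hence across $\partial D$ the last two pieces coincide). Therefore $w_j$ is a well-defined plurisubharmonic function on $D_j$, it is negative there (as $\varrho<1/j$ on $D_j$), and near $o$ it equals $G_D(o,\cdot)-\delta_j$, so $\limsup_{z\to o}(w_j(z)-\log|z|)<+\infty$. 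Thus $w_j\in L_o$ for $D_j$, whence $w_j\le G_{D_j}(o,\cdot)$ on $D_j$; restricting to $D$ gives $G_{D_j}(o,\cdot)\ge w_j\ge G_D(o,\cdot)-\delta_j$.

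Letting $j\to\infty$ with $\eta$ fixed yields $g\ge G_D(o,\cdot)$, hence $g=G_D(o,\cdot)$ on $D$; and $0\le G_D(o,\cdot)-G_{D_j}(o,\cdot)\le\delta_j$ on $D$, which extends to $\overline D$ by continuity of both functions up to $\partial D$ (there $G_D(o,\cdot)\equiv0$ and $-\delta_j\le G_{D_j}(o,\cdot)\le0$, the left bound coming from $w_j|_{\partial D}=-\kappa_j/j\ge-\delta_j$). Alternatively, having shown $g=G_D(o,\cdot)$ one may invoke Dini's theorem for the increasing sequence of continuous functions $G_{D_j}(o,\cdot)$ on the compact set $\overline D$ converging to the continuous $G_D(o,\cdot)$. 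The main obstacle is the lower bound in the previous paragraph: one must use strict hyperconvexity in an essential way — namely the single plurisubharmonic $\varrho$ on $\Omega$ whose sublevel sets are exactly the $D_j$ — to build a competitor on $D_j$ that is $O(1/j)$-close to $G_D(o,\cdot)$ while retaining the logarithmic pole at $o$; the delicate point is the simultaneous balancing of $\kappa_j$ and $\delta_j$ so that the three pieces of $w_j$ patch across $\{G_D(o,\cdot)=-\eta\}$ and across $\partial D$, exactly as recorded above.
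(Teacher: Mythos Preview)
The paper does not prove this lemma; it simply cites it from Nivoche \cite{Ni95}, so there is no in-paper argument to compare against. Your reconstruction is correct and is essentially Nivoche's original strategy: exploit the global defining function $\varrho$ of the strictly hyperconvex domain to build a competitor on $D_j$ by gluing $G_D(o,\cdot)-\delta_j$ near the pole with a multiple of $\varrho-1/j$ near and outside $\partial D$, then read off an explicit uniform rate.

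One small inaccuracy worth fixing: the inequality $\kappa_j(\varrho-1/j)\le G_D(o,\cdot)-\delta_j$ does \emph{not} hold on all of $K$ (near $o$ the right-hand side tends to $-\infty$ while the left-hand side stays bounded). What you actually need, and what your computation gives, is this inequality on a neighborhood of the level set $\{G_D(o,\cdot)=-\eta\}$: there $\varrho\le -m'$, so $\kappa_j(\varrho-1/j)\le -\kappa_j m'-\kappa_j/j<-(\eta+\delta_j)=G_D(o,\cdot)-\delta_j$, and continuity extends this to a neighborhood. That is enough for pieces 1 and 2 to patch into a single plurisubharmonic function on $D$ (since on that neighborhood both pieces equal $G_D(o,\cdot)-\delta_j$). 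With this correction the argument goes through as written.
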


\begin{Lemma}
	\label{lem-DjconvergetoD}
	The sequence $\big(\Phi^{D_j} _{o,\max}\big)_{j\ge 1}$ converges uniformly to $\Phi^D_{o,\max}$ on $\overline{D}$, where $\Phi^D_{o,\max}|_{\partial D}$ is defined to be $0$.
\end{Lemma}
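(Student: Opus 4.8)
The plan is to exploit the monotonicity established in Lemma \ref{lem-ImDjImD}(1), which shows that $\big(\Phi^{D_j}_{o,\max}\big)_{j\ge 1}$ is an increasing sequence of negative plurisubharmonic functions on $D$, each bounded above by $\Phi^D_{o,\max}$. Let $\Psi := \lim_{j\to\infty}\Phi^{D_j}_{o,\max} = \sup_j \Phi^{D_j}_{o,\max}$ on $D$; a priori this is only measurable, but its upper regularization $\Psi^*$ is plurisubharmonic on $D$, negative, and satisfies $\Psi \le \Psi^* \le \Phi^D_{o,\max}$ (the last inequality because $\Phi^D_{o,\max}$ is plurisubharmonic and dominates $\Psi$, hence dominates $\Psi^*$). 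I would first show $\Psi^*$ is a competitor in the defining envelope for $\Phi^D_{o,\max}$: by Lemma \ref{lem-ImDjImD}(2) we have $\Phi^{D_j}_{o,\max} = \Phi^D_{o,\max}+O(1)$ near $o$ for every $j$, so $\Psi^* \ge \Phi^D_{o,\max}+O(1)$ near $o$; and since each $\Phi^{D_j}_{o,\max}$ is a global Zhou weight (in particular $|f_0|^2 e^{-2\varphi_0-2\Phi^{D_j}_{o,\max}}$ is non-integrable near $o$) and the sequence increases to $\Psi^*$ a.e., Proposition \ref{p:effect_GZ} together with Remark \ref{rem:effect_GZ} gives that $|f_0|^2 e^{-2\varphi_0-2\Psi^*}$ is non-integrable near $o$, i.e. $(f_0,o)\notin\mathcal{I}(\varphi_0+\Psi^*)_o$. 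By the definition of $\Phi^D_{o,\max}$ as the supremum over exactly such functions, $\Psi^* \le \Phi^D_{o,\max}$ forces $\Psi^* = \Phi^D_{o,\max}$. This establishes pointwise a.e. (hence, by the usual regularization argument, everywhere) convergence $\Phi^{D_j}_{o,\max}\to\Phi^D_{o,\max}$ on $D$; monotone increasing convergence of plurisubharmonic functions to a plurisubharmonic limit is automatically locally uniform away from where it could fail, but to get uniformity up to $\overline D$ I need the boundary behaviour.

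Next I would upgrade to uniform convergence on $\overline{D}$. The key tool is Lemma \ref{lem-ImDjImD}(4): there is $N>0$ independent of $j$ with $\Phi^{D_j}_{o,\max}(z)\ge N\,G_{D_j}(o,z)$ on $D$, and similarly $\Phi^D_{o,\max}\ge N\,G_D(o,\cdot)$ (by Lemma \ref{l:max>green}, after adjusting $N$). Combined with $\Phi^{D_j}_{o,\max}\le\Phi^D_{o,\max}\le 0$, this sandwiches
\[
N\,G_{D_j}(o,z)\ \le\ \Phi^{D_j}_{o,\max}(z)\ \le\ \Phi^D_{o,\max}(z)\ \le\ 0
\]
on $D$, whence
\[
0\ \le\ \Phi^D_{o,\max}(z)-\Phi^{D_j}_{o,\max}(z)\ \le\ N\big(G_D(o,z)-G_{D_j}(o,z)\big)+\big(\Phi^D_{o,\max}(z)-N\,G_D(o,z)\big)
\]
— this crude bound is not yet tight enough near $\partial D$, so instead I would argue as follows near the boundary. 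Since $D$ is bounded strictly hyperconvex, $G_D(o,\cdot)$ extends continuously by $0$ to $\partial D$ and $\Phi^D_{o,\max}$ (being squeezed between $N G_D(o,\cdot)$ and $0$) also tends to $0$ at $\partial D$; likewise each $\Phi^{D_j}_{o,\max}$ is squeezed between $N G_{D_j}(o,\cdot)$ and $0$. Given $\varepsilon>0$, by Lemma \ref{lem-Greenfunction} choose $j_0$ so that $\sup_{\overline D}|G_D(o,\cdot)-G_{D_j}(o,\cdot)|<\varepsilon/(2N)$ for $j\ge j_0$; then on the boundary strip $\{z\in D : G_D(o,z)>-\varepsilon/(2N)\}$ we get $0\le \Phi^D_{o,\max}-\Phi^{D_j}_{o,\max}\le -N G_{D_j}(o,\cdot)< \varepsilon$ directly. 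On the complementary compact set $K_\varepsilon := \{z\in D : G_D(o,z)\le -\varepsilon/(2N)\}\subset\subset D$, the increasing convergence $\Phi^{D_j}_{o,\max}\to\Phi^D_{o,\max}$ together with continuity of the limit (Proposition \ref{thm:contin} gives $e^{\Phi^D_{o,\max}}$ continuous, hence $\Phi^D_{o,\max}$ continuous on $D$) and Dini's theorem yields a $j_1\ge j_0$ with $\sup_{K_\varepsilon}(\Phi^D_{o,\max}-\Phi^{D_j}_{o,\max})<\varepsilon$ for $j\ge j_1$. Combining the two regions gives $\sup_{\overline D}|\Phi^D_{o,\max}-\Phi^{D_j}_{o,\max}|<\varepsilon$ for $j\ge j_1$, which is the claimed uniform convergence.

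The main obstacle I anticipate is the boundary control: making precise that $\Phi^{D_j}_{o,\max}$ does not ``escape'' near $\partial D$ uniformly in $j$, for which the $j$-independence of the constant $N$ in Lemma \ref{lem-ImDjImD}(4) is essential — without it the sandwich argument collapses. A secondary technical point is the legitimacy of applying Dini's theorem, which requires continuity of the limit $\Phi^D_{o,\max}$ on the open set $D$; this is supplied by Proposition \ref{thm:contin} provided $D$ (equivalently each $D_j$, and the strip) is bounded hyperconvex, which holds here. Once these two ingredients are in hand, the rest is the routine envelope/regularization bookkeeping sketched above, relying on Choquet's lemma (Lemma \ref{lem:Choquet}), Proposition \ref{pro:Demailly}, and Proposition \ref{p:effect_GZ} with Remark \ref{rem:effect_GZ} exactly as in the proof of Remark \ref{rem:max_existence}.
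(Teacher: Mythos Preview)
Your pointwise convergence step contains a genuine gap. You verify that $\Psi^*$ is a negative plurisubharmonic function on $D$ with $\Psi^*\ge\Phi^D_{o,\max}+O(1)$ near $o$ and $(f_0,o)\notin\mathcal{I}(\varphi_0+\Psi^*)_o$, so $\Psi^*$ is a competitor in the envelope defining $\Phi^D_{o,\max}$. But this yields only $\Psi^*\le\Phi^D_{o,\max}$, an inequality you already had; being a competitor for a supremum never \emph{forces} equality. What you need is the reverse inequality, i.e.\ a lower bound on $\Phi^{D_j}_{o,\max}$ by something close to $\Phi^D_{o,\max}$, and for that you must produce a competitor for the envelope defining $\Phi^{D_j}_{o,\max}$ \emph{on the larger domain $D_j$}. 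Since $\Phi^D_{o,\max}$ is only defined on $D\subsetneq D_j$, this requires extending it across $\partial D$; the maximality of $\Phi^D_{o,\max}$ on $D$ simply points the wrong way.

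The paper's proof supplies exactly this missing piece. It uses your boundary ingredients (Lemma~\ref{lem-ImDjImD}(4) and Lemma~\ref{lem-Greenfunction}) only to show that $C_j:=\inf_{\partial D}\Phi^{D_j}_{o,\max}\to 0$, then glues $\Phi^{D_j}_{o,\max}$ on $D_j\setminus D$ to $\max\{\Phi^{D_j}_{o,\max},\,\Phi^D_{o,\max}+C_j\}$ on $\overline D$ to obtain a negative plurisubharmonic $\Phi_j$ on $D_j$ with $\sigma(\Phi_j,\Phi^{D_j}_{o,\max})=1$; Lemma~\ref{r:value-inequa} then gives $\Phi_j\le\Phi^{D_j}_{o,\max}$, hence $\Phi^D_{o,\max}+C_j\le\Phi^{D_j}_{o,\max}$ on $\overline D$. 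This single sandwich delivers uniform convergence directly, with no Dini step. Note also that your Dini argument on $K_\varepsilon$ would be delicate at the pole $o\in K_\varepsilon$, where $\Phi^D_{o,\max}(o)=-\infty$: uniform convergence of $e^{\Phi^{D_j}_{o,\max}}$ on $K_\varepsilon$ does not imply uniform convergence of $\Phi^{D_j}_{o,\max}$ itself near $o$.
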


\begin{proof}
	According to Lemma \ref{lem-ImDjImD} and Lemma \ref{lem-Greenfunction}, we have that, restricted on the boundary of $D$,  the sequence $\big(\Phi_{o,\max}^{D_j}\big)$ increasingly converges uniformly to $\Phi_{o,\max}^D$, i.e. to $0$. Denote
	\[C_j:=\inf_{z\in\partial D}\Phi_{o,\max}^{D_j}(z)\in (-\infty,0),\]
	then $\lim_j C_j=0$. For any $j\ge 1$, consider a function $\Phi_j$ defined on $D_j$ by
	\begin{equation*}
		\Phi_j(z):=\left\{
		\begin{array}{ll}
			\Phi_{o,\max}^{D_j}(z) & z\in D_j\setminus D, \\
			\max\left\{\Phi_{o,\max}^{D_j}(z), \Phi_{o,\max}^D(z)+C_j\right\} & z\in\overline{D}.
		\end{array}
		\right.
	\end{equation*}
	Then $\Phi_j\in \mathrm{PSH}^-(D_j)$, and continuous near $\partial D$. In addition, $\Phi_j\ge \Phi_{o,\max}^{D_j}$ on $D_j$. On the other hand, we have
	\[\sigma\big(\Phi_j,\Phi_{o,\max}^{D_j}\big)=\min\big\{\sigma\big(\Phi_{o,\max}^{D_j},\Phi_{o,\max}^{D_j}\big), \sigma\big(\Phi_{o,\max}^D+C_j,\Phi_{o,\max}^{D_j}\big)\big\}=1,\]
	which implies $\Phi_j\le \Phi_{o,\max}^{D_j}$. Consequently, $\Phi_{o,\max}^D+C_j\le \Phi_{o,\max}^{D_j}$ on $\overline{D}$. Thus, we get
	\[\Phi_{o,\max}^{D_j}\le \Phi_{o,\max}^D\le \Phi_{o,\max}^{D_j}-C_j \ \text{on} \ \overline{D}.\]
	Since $\lim_j C_j=0$, the lemma is proved.	
\end{proof}

We also have the following property that shows the approximation of Zhou weights from inside domains. Suppose $D$ is a bounded hyperconvex domain containing $o$. Let $\{\mathscr{D}_l\}_{l\ge 1}$ be an increasing sequence of hyperconvex domains containing $o$  such that $\bigcup_l\mathscr{D}_l=D$. Define
\[\Phi^{\mathscr{D}_l} _{o,\max}(z):=\sup\bigg\{\phi(z): \phi\in \mathrm{PSH}^-(\mathscr{D}_l), \ (f_0,o)\notin \mathcal{I}(\varphi_0+\phi)_o,\ \phi\ge \Phi^D_{o,\max}+O(1)\bigg\}\]
for any $z\in \mathscr{D}_l$. 
\begin{Lemma}\label{lem-approxfrominside}
	The sequence $\big(\Phi^{\mathscr{D}_l} _{o,\max}\big)_{l\ge 1}$ converges on $D$ to $\Phi_{o,\max}^D$ pointwisely.
\end{Lemma}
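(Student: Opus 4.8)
\textbf{Proof proposal for Lemma \ref{lem-approxfrominside}.}

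The plan is to mimic the structure of the proof of Lemma \ref{lem-DjconvergetoD}, but working with an increasing exhaustion $\{\mathscr{D}_l\}$ from the inside rather than a decreasing sequence from the outside, and to settle for pointwise (rather than uniform) convergence since we do not have a uniform boundary estimate available. First I would record the monotonicity: since $\mathscr{D}_l\subset\mathscr{D}_{l+1}\subset D$, the defining sup for $\Phi^{\mathscr{D}_l}_{o,\max}$ is taken over a larger class of competitors as $l$ increases (every $\phi\in\mathrm{PSH}^-(\mathscr{D}_l)$ restricts to a competitor on $\mathscr{D}_{l-1}$, and conversely a competitor on $\mathscr{D}_{l}$ is \emph{not} automatically one on $\mathscr{D}_{l+1}$, so one must argue in the correct direction). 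Concretely, $\Phi^{D}_{o,\max}|_{\mathscr{D}_l}$ is itself a competitor in the sup defining $\Phi^{\mathscr{D}_l}_{o,\max}$ — it is negative on $\mathscr{D}_l$, satisfies $(f_0,o)\notin\mathcal{I}(\varphi_0+\Phi^D_{o,\max})_o$, and trivially $\Phi^D_{o,\max}\ge\Phi^D_{o,\max}+O(1)$ near $o$ — hence
\[
\Phi^{\mathscr{D}_l}_{o,\max}\ge \Phi^D_{o,\max}\quad\text{on }\mathscr{D}_l,
\]
and by the same token $\big(\Phi^{\mathscr{D}_l}_{o,\max}\big)_l$ is increasing in $l$ on any fixed $\mathscr{D}_{l_0}$ once $l\ge l_0$. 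By Lemma \ref{l:local-glabal}, each $\Phi^{\mathscr{D}_l}_{o,\max}$ is a global Zhou weight on $\mathscr{D}_l$ related to $|f_0|^2e^{-2\varphi_0}$, and in particular $\Phi^{\mathscr{D}_l}_{o,\max}=\Phi^D_{o,\max}+O(1)$ near $o$.

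Next I would form the limit. Set $\Psi:=\lim_{l\to\infty}\Phi^{\mathscr{D}_l}_{o,\max}$, a well-defined function on $D$ since the sequence is eventually increasing on each fixed subdomain and bounded above by $0$; moreover $\Psi\le 0$ and, since all the terms agree with $\Phi^D_{o,\max}$ up to $O(1)$ near $o$ uniformly, $\Psi\ge\Phi^D_{o,\max}+O(1)$ near $o$ and in fact $\Psi=\Phi^D_{o,\max}+O(1)$ near $o$. The upper regularization $\Psi^*$ is plurisubharmonic on $D$ by Proposition \ref{pro:Demailly}, equals $\Psi$ almost everywhere, and still satisfies $\Psi^*\ge\Phi^D_{o,\max}$ on $D$ with $\Psi^*=\Phi^D_{o,\max}+O(1)$ near $o$. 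The key point is then to show $|f_0|^2e^{-2\varphi_0-2\Psi^*}$ is not integrable near $o$: this follows from Proposition \ref{p:effect_GZ} together with Remark \ref{rem:effect_GZ}, applied to the increasing sequence $\phi_m:=\Phi^{\mathscr{D}_m}_{o,\max}$ (restricted to a fixed neighborhood of $o$ contained in all $\mathscr{D}_m$ for $m$ large), each term of which has $|f_0|^2e^{-2\varphi_0-2\Phi^{\mathscr{D}_m}_{o,\max}}$ non-integrable near $o$ because $\Phi^{\mathscr{D}_m}_{o,\max}$ is a global Zhou weight on $\mathscr{D}_m$. Hence $\Psi^*$ is a negative plurisubharmonic function on $D$ with $\Psi^*\ge\Phi^D_{o,\max}$ and $|f_0|^2e^{-2\varphi_0-2\Psi^*}$ not integrable near $o$, so the defining property (3) of the global Zhou weight $\Phi^D_{o,\max}$ forces $\Psi^*=\Phi^D_{o,\max}$ on $D$.

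Finally I would upgrade from "$\Psi^*=\Phi^D_{o,\max}$ a.e." to genuine pointwise convergence. Since $\Phi^D_{o,\max}\le\Phi^{\mathscr{D}_l}_{o,\max}$ for every $l$ (on $\mathscr{D}_l$), we have $\Phi^D_{o,\max}\le\Psi\le\Psi^*=\Phi^D_{o,\max}$ on $D$, hence $\Psi=\Phi^D_{o,\max}$ everywhere, which is exactly the asserted pointwise convergence $\lim_l\Phi^{\mathscr{D}_l}_{o,\max}=\Phi^D_{o,\max}$. The main obstacle I anticipate is the non-integrability step: one must be careful that Proposition \ref{p:effect_GZ} / Remark \ref{rem:effect_GZ} apply on a common neighborhood $U$ of $o$ with $U\Subset\mathscr{D}_{l_0}$ for some $l_0$, so that all $\Phi^{\mathscr{D}_l}_{o,\max}$ with $l\ge l_0$ are defined and plurisubharmonic on $U$ and the hypothesis $\inf_m C_{f_0,\varphi_0+\phi_m}(U)>0$ of Proposition \ref{p:effect_GZ} is met — this is where the increasing (rather than decreasing) nature of the exhaustion is genuinely used, since it guarantees the $\phi_m$ form an increasing sequence on $U$ so that Remark \ref{rem:effect_GZ} is directly applicable. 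Everything else is a routine repackaging of the envelope/Choquet machinery already deployed in the proofs of Remark \ref{rem:max_existence} and Lemma \ref{l:local-glabal}.
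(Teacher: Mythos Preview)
You have the monotonicity backwards, and this derails the entire argument. Since $\mathscr{D}_l\subset\mathscr{D}_{l+1}$, every competitor $\phi\in\mathrm{PSH}^-(\mathscr{D}_{l+1})$ for $\Phi^{\mathscr{D}_{l+1}}_{o,\max}$ restricts to a competitor for $\Phi^{\mathscr{D}_l}_{o,\max}$, so the class of competitors \emph{shrinks} as $l$ increases and hence
\[
\Phi^{\mathscr{D}_l}_{o,\max}\ \ge\ \Phi^{\mathscr{D}_{l+1}}_{o,\max}\quad\text{on }\mathscr{D}_l.
\]
The sequence $\big(\Phi^{\mathscr{D}_l}_{o,\max}\big)_l$ is therefore \emph{decreasing}, not increasing. (This is exactly analogous to the pluricomplex Green function: smaller domain, larger envelope.) Your own parenthetical remark already contains the correct observation, but the conclusion you draw from it is reversed.

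With the correct monotonicity the proof is in fact much simpler than the one you propose, and this is what the paper does. The pointwise limit $\varPhi:=\lim_l\Phi^{\mathscr{D}_l}_{o,\max}$ of a decreasing sequence of negative plurisubharmonic functions is automatically plurisubharmonic (or $\equiv-\infty$, ruled out here by $\varPhi\ge\Phi^D_{o,\max}$), with no need for upper regularization or Choquet's lemma. The non-integrability of $|f_0|^2e^{-2\varphi_0-2\varPhi}$ near $o$ is then immediate from $\varPhi\le\Phi^{\mathscr{D}_1}_{o,\max}$, so Proposition~\ref{p:effect_GZ} and Remark~\ref{rem:effect_GZ} are not needed at all --- and indeed Remark~\ref{rem:effect_GZ} would not apply, since its hypothesis requires an increasing sequence. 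Once you know $\varPhi\in\mathrm{PSH}^-(D)$, $\varPhi\ge\Phi^D_{o,\max}$, and $(f_0,o)\notin\mathcal{I}(\varphi_0+\varPhi)_o$, the defining maximality of $\Phi^D_{o,\max}$ gives $\varPhi=\Phi^D_{o,\max}$ directly.
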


\begin{proof}
	Observe that $\big(\Phi^{\mathscr{D}_l} _{o,\max}\big)_{l\ge 1}$ is a decreasing sequence of negative plurisubharmonic functions on $D$, then the pointwise limit $\varPhi:=\lim_l \Phi^{\mathscr{D}_l} _{o,\max}$ exists, and $\varPhi$ is negative and plurisubharmonic on $D$. In addition, we have $\varPhi\ge\Phi_{o,\max}^D$ since $\Phi_{o,\max}^{\mathscr{D}_l}\ge\Phi_{o,\max}^D$ on $\mathscr{D}_l$ for every $l$, and $(f_0, o)\notin \mathcal{I}(\varphi_0+\varPhi)_o$ since $\varPhi\le \Phi_{o,\max}^{\mathscr{D}_1}$. Then according to the definition of $\Phi_{o,\max}^D$, we get $\varPhi=\Phi_{o,\max}^D$ on $D$.
\end{proof}

The following lemma gives a relation between the plurisubharmonic functions $\phi_m$ and $\varphi_m$.

\begin{Lemma}\label{lem-phimlevarphim+}
	Let $D$ be a bounded hyperconvex domain, and for any $m\in\mathbb{N}_+$ let the functions $\phi_m$ and $\varphi_m$ be defined by equation (\ref{eq-phim}) and (\ref{eq-varphim}). Then for any $m\in\mathbb{N}_+$, we have
	
	$(1)$ $\phi_m$ and $\varphi_m$ are continuous and plurisubharmonic functions on $D$ (may taking the value $-\infty$), and $\phi_m$ takes values in $[-\infty, 0)$;
	
	$(2)$
	\begin{equation}\label{eq-phimlevarphim+}
		\phi_m(z)\le\varphi_m(z)+\frac{1}{2m}\log \lambda(D), \ \forall z\in D,
	\end{equation}
	where $\lambda(D)$ is the Lebesgue measure of $D$.
	
\end{Lemma}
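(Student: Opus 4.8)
The plan is to prove the two assertions in order. For statement $(1)$, I would first recall that $\mathscr{E}_m(D)$ and $\mathscr{A}^2_m(D)$ are compact subsets of $\mathcal{O}(D)$ (with respect to the topology of local uniform convergence), as noted right after their definition via the closedness property of coherent sheaves. Given compactness, $\phi_m = \sup_{f\in\mathscr{E}_m(D)}\frac{1}{m}\log|f|$ and $\varphi_m = \sup_{f\in\mathscr{A}^2_m(D)}\frac{1}{m}\log|f|$ are upper envelopes of families of plurisubharmonic functions that are \emph{locally uniformly bounded above} (for $\phi_m$ this is immediate since $\sup_D|f|\le 1$; for $\varphi_m$ one uses the sub-mean-value inequality to bound $|f(z)|$ on a compact set by $C\|f\|_D\le C$). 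Hence the envelopes are already upper semicontinuous, so they equal their own upper regularizations and are plurisubharmonic by Proposition \ref{pro:Demailly} (no regularization needed). Continuity follows from a normal-families argument: if $z_k\to z$, pick $f_k$ nearly attaining the sup at $z_k$; by compactness a subsequence converges locally uniformly to some $f$ in the same class, giving $\limsup_k \phi_m(z_k)\le \frac1m\log|f(z)|\le\phi_m(z)$, which combined with upper semicontinuity (hence $\phi_m(z)\le\liminf$ is not automatic — instead use that $\frac1m\log|f(z)|$ is continuous in $z$ for fixed $f$ to get $\liminf_k\phi_m(z_k)\ge\phi_m(z)$). The fact that $\phi_m<0$ everywhere is just $\sup_D|f|\le 1$ forces $|f(z)|<1$ unless $f$ is a unimodular constant, which is excluded because $(f,o)\in\mathcal{I}(m\Phi^D_{o,\max})_o$ with $m\ge 1$ and $\Phi^D_{o,\max}$ singular at $o$ forces $f(o)=0$; one should note $\phi_m$ may take the value $-\infty$ (e.g. if $\mathscr{E}_m(D)=\{0\}$, or along common zeros).

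For statement $(2)$, the key point is the elementary comparison between the sup-norm and the $L^2$-norm. Fix $f\in\mathscr{E}_m(D)$, so $\sup_D|f|\le 1$ and $(f,o)\in\mathcal{I}(m\Phi^D_{o,\max})_o$. Then
\[
\|f\|_D^2=\int_D|f|^2\,d\lambda\le \big(\sup_D|f|\big)^2\lambda(D)\le \lambda(D),
\]
so $g:=f/\sqrt{\lambda(D)}$ satisfies $\|g\|_D\le 1$ and still lies in $\mathcal{I}(m\Phi^D_{o,\max})_o$; that is, $g\in\mathscr{A}^2_m(D)$. Therefore for every $z\in D$,
\[
\frac1m\log|f(z)| = \frac1m\log|g(z)| + \frac{1}{2m}\log\lambda(D) \le \varphi_m(z) + \frac{1}{2m}\log\lambda(D).
\]
Taking the supremum over $f\in\mathscr{E}_m(D)$ yields $\phi_m(z)\le\varphi_m(z)+\frac{1}{2m}\log\lambda(D)$, which is precisely inequality \eqref{eq-phimlevarphim+}. (If $\mathscr{E}_m(D)$ contains only $0$ the inequality is trivially $-\infty\le\varphi_m(z)+\cdots$.)

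I do not anticipate a genuine obstacle here; the only subtlety worth care is the continuity claim in $(1)$ — specifically the lower semicontinuity of $\phi_m$ and $\varphi_m$, which is not a formal consequence of being a plurisubharmonic envelope and really does use the compactness of the defining families (a Montel-type argument). One must also be slightly careful that the statement only claims these functions are continuous \emph{as maps into $[-\infty,\infty)$}, so continuity at points where the value is $-\infty$ means the function tends to $-\infty$; this again follows from the normal-families argument. Everything else — plurisubharmonicity, the sign of $\phi_m$, and the $L^2$ versus $L^\infty$ comparison in $(2)$ — is routine.
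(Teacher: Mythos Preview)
Your proposal is correct and follows essentially the same approach as the paper: lower semicontinuity from the supremum of continuous functions $\frac{1}{m}\log|f|$, upper semicontinuity from compactness of $\mathscr{E}_m(D)$ and $\mathscr{A}^2_m(D)$ via Montel, and for part~(2) the observation that $f\in\mathscr{E}_m(D)$ implies $(\lambda(D))^{-1/2}f\in\mathscr{A}^2_m(D)$. Your write-up of part~(1) is slightly out of order (you assert upper semicontinuity before giving the Montel argument that actually justifies it), but the content is the same as the paper's proof, and you are in fact more thorough in explaining why $\phi_m<0$.
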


\begin{proof}
	(1) For every $f\in\mathscr{E}_m(D)$ or $\mathscr{A}^2_m(D)$, $(\log|f|)/m$ is continuous on $D$, so $\phi_m$ and $\varphi_m$ are lower-semicontinuous on $D$. Since $\mathscr{E}_m(D)$ and $\mathscr{A}^2(D)$ are compact subsets of $\mathcal{O}(D)$, using Montel's Theorem, we can see that $\phi_m$ and $\varphi_m$ are also upper-semicontinuous on $D$. It follows that $\phi_m$ and $\varphi_m$ are continuous on $D$. 
	
	Now the plurisubharmonicity of $\phi_m$ and $\varphi_m$ is a consequence of the definitions of $\phi_m$ and $\varphi_m$.
	
	(2) Observe that for any $f\in\mathscr{E}_m(D)$, we have $(\lambda(D))^{-1/2}f\in\mathscr{A}^2_m(D)$, which implies inequality (\ref{eq-phimlevarphim+}).
\end{proof}

Now we suppose $D$ is a bounded strictly hyperconvex domain in $\mathbb{C}^n$ with the function $\varrho$ defined in Definition \ref{def-strhpconvex} and $D_j=\{z\in\Omega : \varrho(z)<1/j\}$.

\begin{Lemma}\label{lem-hmDjphim}
	For any $j\ge 1$ and $m\ge 1$, we have
	\[h_{m,D_j}(z)\le \varphi_{m,D_j}(z)\le \phi_m(z)+\frac{\log(c\delta(j)^{-n})}{m}, \ \forall z\in D,\]
	where $c$ is a constant only depending on $n$, $\delta(j)=\mathrm{dist}(\overline{D},\partial D_j)$, and for any $z\in D_j$,
	\[h_{m,D_j}(z):=\sup\left\{\frac{1}{m}\log|f(z)| : f\in \mathcal{O}(D_j), \ \int_{D_j}|f|^2e^{-2m\Phi^{D_j} _{o,\max}}\le 1\right\},\]
	\[\varphi_{m,D_j}(z):=\sup\left\{\frac{1}{m}\log|f(z)| : f\in \mathcal{O}(D_j), \  \int_{D_j}|f|^2\le 1, \ (f,o)\in\mathcal{I}\big(m\Phi_{o,\max}^{D_j}\big)_o\right\}.\]
\end{Lemma}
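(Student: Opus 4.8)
The plan is to prove the two inequalities separately, each by exhibiting a suitable competitor in the relevant extremal problem.

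First I would establish $h_{m,D_j}\le\varphi_{m,D_j}$ on $D$. Take any $f\in\mathcal{O}(D_j)$ with $\int_{D_j}|f|^2e^{-2m\Phi^{D_j}_{o,\max}}\le 1$. Since $\Phi^{D_j}_{o,\max}<0$ on $D_j$ we have $e^{-2m\Phi^{D_j}_{o,\max}}\ge 1$, hence $\int_{D_j}|f|^2\le 1$; moreover the finiteness of $\int_{D_j}|f|^2e^{-2m\Phi^{D_j}_{o,\max}}$ near $o$ forces $(f,o)\in\mathcal{I}\big(m\Phi^{D_j}_{o,\max}\big)_o$. Thus $f$ is admissible in the definition of $\varphi_{m,D_j}$, and taking the supremum over all such $f$ yields $h_{m,D_j}\le\varphi_{m,D_j}$ pointwise on $D_j$, in particular on $D$.

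For the second inequality, fix $z\in D$ and take any $f\in\mathcal{O}(D_j)$ with $\int_{D_j}|f|^2\le 1$ and $(f,o)\in\mathcal{I}\big(m\Phi^{D_j}_{o,\max}\big)_o$. By Lemma \ref{lem-ImDjImD}(3) we have $\mathcal{I}\big(m\Phi^{D_j}_{o,\max}\big)_o=\mathcal{I}\big(m\Phi^D_{o,\max}\big)_o$, so $(f,o)\in\mathcal{I}\big(m\Phi^D_{o,\max}\big)_o$. Since $\delta(j)=\mathrm{dist}(\overline{D},\partial D_j)>0$, for every $w\in\overline{D}$ and every $r<\delta(j)$ the ball $B(w,r)\subset D_j$, so the sub-mean value inequality for the plurisubharmonic function $|f|^2$ gives $|f(w)|^2\le(\lambda_n r^{2n})^{-1}\int_{D_j}|f|^2\le(\lambda_n r^{2n})^{-1}$, where $\lambda_n$ is the volume of the unit ball of $\mathbb{C}^n$; letting $r\to\delta(j)$ we obtain $\sup_{\overline{D}}|f|\le M:=(\lambda_n\delta(j)^{2n})^{-1/2}$. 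Since $\overline{D}\subset D_j$, the function $f/M$ restricts to an element of $\mathcal{O}(D)$ with $\sup_D|f/M|\le 1$, and dividing by the constant $M$ does not affect ideal membership, so $f/M\in\mathscr{E}_m(D)$. Hence $\frac{1}{m}\log|f(z)/M|\le\phi_m(z)$, i.e. $\frac{1}{m}\log|f(z)|\le\phi_m(z)+\frac{1}{m}\log M=\phi_m(z)+\frac{1}{m}\log\big(c\,\delta(j)^{-n}\big)$ with $c=\lambda_n^{-1/2}$ depending only on $n$. Taking the supremum over all admissible $f$ gives $\varphi_{m,D_j}(z)\le\phi_m(z)+\frac{1}{m}\log\big(c\,\delta(j)^{-n}\big)$.

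This argument is essentially routine: the two inequalities are both ``restrict the class of competitors and renormalize'' type estimates. The only points that genuinely require care are the identification of the two multiplier ideals on $D_j$ and on $D$ through Lemma \ref{lem-ImDjImD}(3) (which is where the strict hyperconvexity of $D$ and the local nature of global Zhou weights enter), the observation that $f\in\mathcal{O}(D_j)$ restricts to $\mathcal{O}(D)$ because $\overline{D}\subset D_j$, and the bookkeeping that keeps the constant $c$ dependent only on the dimension $n$.
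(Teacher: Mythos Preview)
Your proof is correct and follows essentially the same approach as the paper: both inequalities are handled exactly as you describe, using the negativity of $\Phi^{D_j}_{o,\max}$ for the first, and the mean value inequality together with Lemma~\ref{lem-ImDjImD}(3) for the second. Your write-up is in fact slightly more explicit than the paper's about the normalization $f\mapsto f/M$ and the identification of the constant $c$.
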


\begin{proof}
	Let $f\in \mathcal{O}(D_j)\cap L^2(D_j)$ with $(f,o)\in\mathcal{I}\big(m\Phi_{o,\max}^{D_j}\big)_o=\mathcal{I}\big(m\Phi_{o,\max}^{D}\big)_o$ (by Lemma \ref{lem-ImDjImD}(3)), then there exists some $w\in \overline{D}$ such that $f(w)=\sup_{\overline{D}}|f|$. According to the mean value inequality applied to the plurisubharmonic function $|f|^2$, we have
	\begin{equation*}
		|f(w)|^2\le\frac{c^2}{\delta(j)^{2n}}\int_{B(w,\delta(j))}|f(z)|^2\le\frac{c^2}{\delta(j)^{2n}}\int_{D_j}|f(z)|^2.
	\end{equation*}
	Thus
	\[\varphi_{m,D_j}(z)\le \phi_m(z)+\frac{\log(c\delta(j)^{-n})}{m}, \ \forall z\in D.\]
	
	Since $\Phi_{o,\max}^{D_j}$ is negative on $D_j$, for any $f\in A^2\big(D_j, 2m\Phi_{o,\max}^{D_j}\big)$, we have $f\in \mathcal{O}(D_j)\cap L^2(D_j)$, $(f,o)\in \mathcal{I}\big(m\Phi^{D_j} _{o,\max}\big)_o$ and
	\[\int_{D_j}|f|^2\le\int_{D_j}|f|^2e^{-2m\Phi_{o,\max}^{D_j}}.\]
	Thus we obtain $h_{m,D_j}\le\varphi_{m,D_j}$.
\end{proof}

For fixed $z\in D$, let $k$ be a negative integer with $|k|$ sufficiently large such that $z\in D_{k}:=\{z\in D : \varrho(z)<1/k\}$. Note that $D_{k}$ is a bounded strictly hyperconvex domain and $D_k\subset\subset D$ for $k<0$. Denote
\[\Phi^{D_k} _{o,\max}(w):=\sup\bigg\{\phi(w) : \phi\in \mathrm{PSH}^-(D_k), \ (f_0,o)\notin \mathcal{I}(\varphi_0+\phi)_o, \ \phi\ge \Phi^D_{o,\max}+O(1)\bigg\}\]
for $w\in D_k$. Then $\Phi_{o,\max}^{D_k}$ is a global Zhou weight related to $|f_0|^2e^{-\varphi_0}$ on $D_k$ near $o$ satisfying that $\Phi_{o,\max}^{D_k}=\Phi_{o,\max}^D+O(1)$ near $o$ (see Lemma \ref{l:local-glabal} and Lemma  \ref{l:max-loc}), and
\[\Phi^{D} _{o,\max}(w'):=\sup\bigg\{\phi(w') : \phi\in \mathrm{PSH}^-(D), \ (f_0,o)\notin \mathcal{I}(\varphi_0+\phi)_o, \ \phi\ge \Phi^{D} _{o,\max}+O(1)\bigg\}\]
for any $w'\in D$. Replacing $D$ by $D_k$ and $\varrho$ by $\varrho-1/k$ in Lemma \ref{lem-hmDjphim}, we get
\begin{Lemma}\label{lem-phimDk}
	For fixed $z\in D$ and $k<0$ with $z\in D_k$, we have
	\[\varphi_{m}(z)\le \phi_{m,D_k}(z)+\frac{\log(c\delta(k)^{-n})}{m}, \ \forall m\ge 1,\]
	where $c$ is a constant only depending on $n$, $\delta(k)=\mathrm{dist}(\overline{D_k},\partial D)$, and for any $w\in D_k$,
	\[\phi_{m,D_k}(w):=\sup\left\{\frac{1}{m}\log|f(w)| : f\in\mathcal{O}(D_k), \ \sup_{D_k}|f|\le 1, \ (f,o)\in\mathcal{I}(m\Phi^{D_k} _{o,\max})_o\right\}.\]
\end{Lemma}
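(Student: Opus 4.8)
The plan is to obtain this lemma by a direct application of Lemma~\ref{lem-hmDjphim}, with $D_k$ playing the role of the ``outer'' strictly hyperconvex domain and $D$ playing the role of the ``inner'' domain. Recall that $D_k=\{z\in\Omega:\varrho(z)<1/k\}$ with $k<0$. Setting $\varrho_k:=\varrho-1/k$, we have $D_k=\{z\in\Omega:\varrho_k(z)<0\}$, and after rescaling $\varrho_k$ by a suitable positive constant to normalize its range to $(-\infty,1)$ (which affects none of the sets below), $\varrho_k$ exhibits $D_k$ as a bounded strictly hyperconvex domain in the sense of Definition~\ref{def-strhpconvex}. For the associated exhausting family of $D_k$, the member with index $j$ is $\{z\in\Omega:\varrho_k(z)<1/j\}=\{z\in\Omega:\varrho(z)<1/j+1/k\}$; choosing $j=-k$, which is a positive integer since $|k|$ is a (large) positive integer, this member is exactly $\{\varrho<0\}=D$. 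This is the ``replacing $D$ by $D_k$ and $\varrho$ by $\varrho-1/k$'' substitution, with index $-k$.

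First I would match up the objects in the conclusion of Lemma~\ref{lem-hmDjphim} under this substitution. The inner domain of that lemma becomes $D_k$ and the outer domain $D_j$ becomes $D$, so the sup-norm-normalized function $\phi_m$ of Lemma~\ref{lem-hmDjphim} becomes $\phi_{m,D_k}$, the $L^2$-normalized function $\varphi_{m,D_j}$ (with its multiplier-ideal constraint at $o$) becomes $\varphi_m$, and $\delta(j)=\mathrm{dist}(\overline{D},\partial D_j)$ becomes $\delta(k)=\mathrm{dist}(\overline{D_k},\partial D)$. Here one uses that the multiplier ideals $\mathcal{I}(m\Phi^{D_k}_{o,\max})_o$ and $\mathcal{I}(m\Phi^{D}_{o,\max})_o$ coincide, since global Zhou weights are local Zhou weights (Lemma~\ref{l:max-loc}) and $\Phi^{D_k}_{o,\max}=\Phi^D_{o,\max}+O(1)$ near $o$ (Lemma~\ref{l:local-glabal}); in particular the ``outer'' global Zhou weight $\Phi^{D_j}_{o,\max}$ occurring in Lemma~\ref{lem-hmDjphim} becomes $\Phi^D_{o,\max}$ itself, by the one-to-one correspondence between global and local Zhou weights.

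With these identifications, the middle inequality of Lemma~\ref{lem-hmDjphim},
\[
\varphi_{m,D_j}(z)\le \phi_m(z)+\frac{\log\!\big(c\,\delta(j)^{-n}\big)}{m},\qquad z\in D,
\]
reads precisely
\[
\varphi_m(z)\le \phi_{m,D_k}(z)+\frac{\log\!\big(c\,\delta(k)^{-n}\big)}{m},\qquad z\in D_k,
\]
which is the asserted estimate for the fixed $z\in D$ (chosen with $|k|$ large enough that $z\in D_k$). The constant $c$ still depends only on $n$, as the only analytic ingredient of Lemma~\ref{lem-hmDjphim} is the sub-mean-value inequality $|f(w)|^2\le c^2\delta^{-2n}\int_{B(w,\delta)}|f|^2$ applied to the plurisubharmonic function $|f|^2$ at a point $w\in\overline{D_k}$ with $B(w,\delta(k))\subset D$, whose constant is the reciprocal volume of the unit ball in $\mathbb{C}^n$. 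The only step needing care is the bookkeeping in the first paragraph — verifying that the rescaled $\varrho_k$ together with the index $j=-k$ really turns the exhausting family of $D_k$ into one whose $(-k)$-th member is $D$ — but this is purely formal, with no new analytic content beyond Lemma~\ref{lem-hmDjphim}.
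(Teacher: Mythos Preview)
Your proposal is correct and follows exactly the paper's approach, which is the one-line substitution ``replace $D$ by $D_k$ and $\varrho$ by $\varrho-1/k$ in Lemma~\ref{lem-hmDjphim}.'' One small slip: in your opening sentence you swap the labels ``inner'' and ``outer'' (it is $D_k$ that plays the role of the inner, strictly hyperconvex base domain and $D$ that plays the role of the larger $D_j$), but you tacitly correct this in the second paragraph and the identifications you make there---$\phi_m\rightsquigarrow\phi_{m,D_k}$, $\varphi_{m,D_j}\rightsquigarrow\varphi_m$, $\delta(j)\rightsquigarrow\delta(k)$, together with the equality $\mathcal{I}(m\Phi^{D_k}_{o,\max})_o=\mathcal{I}(m\Phi^{D}_{o,\max})_o$---are the right ones.
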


\begin{Lemma}\label{lem-phimlePhimaxD}
	If $D$ is a bounded hyperconvex domain, then there exists a constant $\mathsf{C}$ independent of $m$ such that
	\begin{equation*}
		\phi_m\le \frac{m-\mathsf{C}}{m}\Phi^D_{o,\max}, \ \forall m\in\mathbb{N}_+.
	\end{equation*}
\end{Lemma}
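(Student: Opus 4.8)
The plan is to reduce the asserted inequality to a uniform lower bound on the Zhou valuation $\nu(f,\Phi^D_{o,\max})$ for each competitor $f\in\mathscr{E}_m(D)$, and then feed that bound into the global comparison inequality of Lemma \ref{r:value-inequa}.

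First I would observe that, since $D$ is a bounded hyperconvex domain, Remark \ref{r:8.4} gives $\tilde L_o\neq\emptyset$, so by Lemma \ref{l:max-loc} the global Zhou weight $\Phi^D_{o,\max}$ is also a local Zhou weight related to $|f_0|^2e^{-2\varphi_0}$ near $o$; hence Theorem \ref{thm:valu-jump} is applicable to it. Fix $m\in\mathbb{N}_+$ and $f\in\mathscr{E}_m(D)$ with $f\not\equiv0$ (the case $f\equiv0$ is trivial, since then $\frac1m\log|f|\equiv-\infty$). By definition of $\mathscr{E}_m(D)$ we have $(f,o)\in\mathcal{I}(m\Phi^D_{o,\max})_o$, i.e.\ $|f|^2e^{-2m\Phi^D_{o,\max}}$ is integrable near $o$, so the jumping number satisfies $c^f_o(\Phi^D_{o,\max})\ge m$. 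The upper bound in Theorem \ref{thm:valu-jump}, rearranged, then yields
\[
\nu(f,\Phi^D_{o,\max})\ \ge\ c^f_o(\Phi^D_{o,\max})+\sigma(\log|f_0|,\Phi^D_{o,\max})-1-\sigma(\varphi_0,\Phi^D_{o,\max})\ \ge\ m-\mathsf{C},
\]
where $\mathsf{C}:=1+\sigma(\varphi_0,\Phi^D_{o,\max})-\sigma(\log|f_0|,\Phi^D_{o,\max})$ depends only on $\Phi^D_{o,\max}$, $f_0$ and $\varphi_0$, not on $m$.

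Next I would use that $\sup_D|f|\le1$, so $\frac1m\log|f|$ is a negative plurisubharmonic function on $D$; applying Lemma \ref{r:value-inequa} together with the homogeneity of $\sigma$ (Corollary \ref{coro:main}(1)) gives, on $D$,
\[
\frac1m\log|f|\ \le\ \sigma\!\left(\tfrac1m\log|f|,\Phi^D_{o,\max}\right)\Phi^D_{o,\max}\ =\ \frac1m\,\nu(f,\Phi^D_{o,\max})\,\Phi^D_{o,\max}.
\]
Since $\Phi^D_{o,\max}\le0$ and $\nu(f,\Phi^D_{o,\max})\ge m-\mathsf{C}$, the right-hand side is $\le\frac{m-\mathsf{C}}{m}\Phi^D_{o,\max}$. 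Taking the supremum over $f\in\mathscr{E}_m(D)$ (the bound being independent of $f$) gives $\phi_m\le\frac{m-\mathsf{C}}{m}\Phi^D_{o,\max}$ on $D$.

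I do not anticipate a serious obstacle; the only delicate points are checking that $\Phi^D_{o,\max}$ really is a local Zhou weight so that Theorem \ref{thm:valu-jump} applies, and noting that the sign of $m-\mathsf{C}$ is immaterial — the inequality $\nu(f,\Phi^D_{o,\max})\ge m-\mathsf{C}$ combined with $\Phi^D_{o,\max}\le 0$ forces $\nu(f,\Phi^D_{o,\max})\,\Phi^D_{o,\max}\le(m-\mathsf{C})\Phi^D_{o,\max}$ in all cases, and the degenerate case $f\equiv0$ must be discarded separately.
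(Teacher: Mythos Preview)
Your proof is correct and follows essentially the same route as the paper's own argument: both derive $\nu(f,\Phi^D_{o,\max})\ge m-\mathsf{C}$ from Theorem \ref{thm:valu-jump} via $c^f_o(\Phi^D_{o,\max})\ge m$, and then feed this into the global inequality $\log|f|\le\sigma(\log|f|,\Phi^D_{o,\max})\Phi^D_{o,\max}$ (Lemma \ref{r:value-inequa}) to conclude. Your version is slightly more explicit in justifying that $\Phi^D_{o,\max}$ is a local Zhou weight and in handling the sign of $m-\mathsf{C}$ and the degenerate case $f\equiv0$, but the argument is the same.
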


\begin{proof}
	Let $f\in\mathscr{E}_m(D)$, then $\log|f|$ is negative on $D$ and $c_{o}^{f}(\Phi_{o,\max}^D)\ge m$. Theorem \ref{thm:valu-jump} shows that
	\[\sigma(\log|f|,\Phi_{o,\max}^D)\ge m-\mathsf{C},\]
	where $\mathsf{C}$ is a constant independent of $f$ and $m$. Now we get
	\[\log|f|\le \sigma(\log|f|,\Phi_{o,\max}^D)\Phi_{o,\max}^D\le (m-\mathsf{C})\Phi_{o,\max}^D, \ m>\mathsf{C}\]
	(the case $m\le \mathsf{C}$ is trivial), which implies
	\[\phi_m\le \frac{m-\mathsf{C}}{m}\Phi_{o,\max}^D, \forall m\in\mathbb{N}_+.\]
\end{proof}

\subsection{Proofs of Theorem \ref{thm-approximation}, Corollary \ref{cor-approximation} and Corollary \ref{cor-approximation2}}

Now we give the proofs of Theorem \ref{thm-approximation}, Corollary \ref{cor-approximation} and Corollary \ref{cor-approximation2}.

\begin{proof}[Proof of Theorem \ref{thm-approximation}]
	
	Firstly, we prove the statement (2) in Theorem \ref{thm-approximation}.
	It follows from Lemma \ref{lem-ImDjImD}(2), Lemma \ref{l:appro-Berg}, Remark \ref{r:0930a} and Lemma \ref{lem-hmDjphim} that
	\[\phi_m\ge h_{m,D_j}+O(1)\ge \Phi^{D_j} _{o,\max}+O(1)=\Phi^D_{o,\max}+O(1)\]
	near $o$. We also have $\varphi_m\ge\Phi^D_{o,\max}+O(1)$ near $o$ by Lemma \ref{lem-phimlevarphim+}. Thus,
	\[\sigma(\phi_m,\Phi^D_{o,\max})\le 1, \ \sigma(\varphi_m,\Phi_{o,\max}^D)\le 1.\]	
	In addition, we get
	\[\sigma(\phi_m, \Phi_{o,\max}^D)\ge 1-\frac{\mathsf{C}}{m}, \ \forall m\in\mathbb{N}_+\]
	according to Lemma \ref{lem-phimlePhimaxD} and
	\[\sigma(\varphi_m, \Phi_{o,\max}^D)\ge 1-\frac{\mathsf{C}}{m}, \ \forall m\in\mathbb{N}_+\]
	according to Lemma \ref{lem-phimDk} for some constant $\mathsf{C}$ independent of $m$.

	Next, we prove
	\[\lim_{m\to\infty}\phi_m(z)=\Phi_{o,\max}^D(z), \ \forall z\in D.\]
	According to Lemma \ref{l:appro-Berg} and Lemma \ref{lem-hmDjphim}, for any $z\in D$ we have
	\[\Phi^{D_j} _{o,\max}(z)-\frac{C_1}{m}\le h_{m,D_j}(z)\le \phi_m+\frac{\log(c\delta(j)^{-n})}{m},\]
	where $C_1, c$ are constants independent of $m$. Letting $m\to\infty$, we deduce that
	\[\Phi_{o,\max}^{D_j}(z)\le \liminf_{m\to\infty}\phi_m(z).\]
	Letting $j\to\infty$ in the above inequality, by Lemma \ref{lem-DjconvergetoD}, we get
	\[\Phi_{o,\max}^D(z)\le\liminf_{m\to\infty}\phi_m(z).\]
	On the other hand, letting $m\to\infty$ in Lemma \ref{lem-phimlePhimaxD}, we obtain
	\[\limsup_{m\to\infty}\phi_m(z)\le\Phi_{o,\max}^D(z).\]
	Consequently,
	\[\lim_{m\to\infty}\phi_m(z)=\Phi_{o,\max}^D(z), \ \forall z\in D.\]
	
	Finally, we prove that for any $z\in D$,
	\[\lim_{m\to\infty}\varphi_m(z)=\Phi_{o,\max}^D(z).\]
	Lemma \ref{lem-phimlevarphim+} gives
	\[\liminf_{m\to\infty}\varphi_m(z)\ge\lim_{m\to\infty}\phi_m(z)=\Phi_{o,\max}^D(z).\]
	On the other hand, for $k<0$ such that $z\in D_k=\{\varrho<1/k\}$, by Lemma \ref{lem-phimDk},
	\[\limsup_{m\to\infty}\varphi_m(z)\le\liminf_{m\to\infty}\phi_{m,D_k}(z).\]
	It follows the previous result of this proof that
	\[\lim_{m\to\infty}\phi_{m,D_k}(z)=\Phi_{o,\max}^{D_k}(z),\]
	thus
	\[\limsup_{m\to\infty}\varphi_m(z)\le\Phi_{o,\max}^{D_k}(z)\]
	for all $k<0$ with $|k|\gg1$. Letting $k\to -\infty$ and using Lemma \ref{lem-approxfrominside}, we obtain
	\[\limsup_{m\to\infty}\varphi_m(z)\le\Phi_{o,\max}^D(z).\]
	Thus,
	\[\lim_{m\to\infty}\varphi_m(z)=\Phi_{o,\max}^D(z), \ \forall z\in D,\]
	which completes the proof.
\end{proof}

In the following, we prove Corollary \ref{cor-approximation} and Corollary \ref{cor-approximation2}.

\begin{proof}[Proof of Corollary \ref{cor-approximation}]
	Denote
	\[\mathscr{E}(D):=\big\{f\in\mathcal{O}(D) : \sup_D|f|\le 1, \ f(o)=0, \ f\not\equiv 0\big\}.\]
	Then for any $f\in \mathscr{E}(D)$, $\log|f|$ is negative on $D$. As $\Phi_{o,\max}^D$ is a global Zhou weight, we have
	\[\frac{\log|f(w)|}{\sigma(\log|f|, \Phi^D_{o,\max})}\le \Phi_{o,\max}^D(w),\]
	for any $f\in\mathscr{E}(D)$ and any $w\in D$.
	
	On the other hand, Theorem \ref{thm-approximation} gives
	\[\lim_{m\to\infty}\phi_m(w)=\Phi^D_{o,\max}(w), \ \forall w\in D.\]
	By the compactness of $\mathscr{E}_m(D)$ and Montel's theorem, for fixed $w\in D$, there exists $F_{m,w}\in\mathscr{E}_m(D)\setminus\{0\}\subset\mathscr{E}(D)$ such that
	\[\phi_m(w)=\frac{1}{m}\log|F_{m,w}(w)|.\]
	Since $c_o^{F_{m,w}}(\Phi_{o,\max}^D)\ge m$, Theorem \ref{thm:valu-jump} shows that
	\[\sigma(\log|F_{m,w}|,\Phi_{o,\max}^D)\ge m-\mathsf{C},\]
	where $\mathsf{C}$ is a constant independent of $m$. Thus
	\[\limsup_{m\to\infty}\frac{\log|F_{m,w}(w)|}{\sigma(\log|F_{m,w}|,\Phi_{o,\max}^D)}\ge \limsup_{m\to\infty}\frac{m}{m-\mathsf{C}}\phi_m(w)=\Phi_{o,\max}^D(w),\]
	which completes the proof.
\end{proof}

\begin{proof}[Proof of Corollary \ref{cor-approximation2}]
	Denote
	\[\mathcal{S}\big(\Phi_{o,\max}^D\big):=\big\{\phi\in\mathrm{PSH}^-(D): \phi\sim_{\mathcal{I}}\Phi^D_{o,\max} \ \text{at} \ o\big\}.\]
	Then $\Phi_{o,\max}^D\in \mathcal{S}\big(\Phi_{o,\max}^D\big)$. Take any $\varphi\in\mathcal{S}\big(\Phi_{o,\max}^D\big)$. For any $w\in D$, denote
	\[\tilde{\varphi}_m(w):=\sup\left\{\frac{1}{m}\log|f(w)| : f\in\mathcal{O}(D), \ \|f\|_D\le 1, \ (f,o)\in\mathcal{I}(m\varphi)_o\right\}.\]
	Since $\varphi\sim_{\mathcal{I}}\Phi_{o,\max}^D$ at $o$, using Theorem \ref{thm-approximation}, we can get
	\[\lim_{m\to\infty}\tilde{\varphi}_m(w)=\Phi_{o,\max}^D(w), \ \forall w\in D.\]
	Since $\varphi$ is negative, we can see
	\[\tilde{\varphi}_m(w)\ge h_m(w), \ \forall w\in D,\]
	where
	\[h_{m}(w):=\sup\left\{\frac{1}{m}\log|f(w)| : f\in \mathcal{O}(D), \ \int_{D}|f|^2e^{-2m\varphi}\le 1\right\}.\]
	Lemma \ref{l:appro-Berg} gives
	\[\tilde{\varphi}_m(z)\ge h_m(z)\ge\varphi(z)-\frac{C_1}{m}\]
	for some $C_1>0$ independent of $m$. Letting $m\to\infty$, we deduce that $\Phi_{o,\max}^D(z)\ge \varphi(z)$ for any $z\in D$. Thus we complete the proof.
\end{proof}

\section{Appendix: proofs of Lemma \ref{l:0921-3} and Lemma \ref{l:sigma-analytic}}
\label{sec:app}

We prove Lemma \ref{l:0921-3} and Lemma \ref{l:sigma-analytic} in this section. Firstly, we present some lemmas.

\begin{Lemma}
	\label{l:0921-2}Let $\varphi$ be a plurisubharmonic function near $o$,  and let $f$ be a holomorphic function near $o$. Assume that $c_o^{f}(\varphi)=1$, then $\lim_{m\rightarrow+\infty}c_o^f(\varphi_m)=1$, where
	\[\varphi_m:=\frac{1}{2m}\log\sum_{1\le l\le k_m}|f_{m,l}|^2,\]
	and $\{f_{m,1},\ldots,f_{m,k_m}\}$ is the generators set of $\mathcal{I}(m\varphi)_o$. 
\end{Lemma}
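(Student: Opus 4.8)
The plan is to establish the two bounds $\liminf_{m\to+\infty}c_o^f(\varphi_m)\ge 1$ and $\limsup_{m\to+\infty}c_o^f(\varphi_m)\le 1$ separately, after first comparing the ``generator'' approximation $\varphi_m=\frac{1}{2m}\log\sum_l|f_{m,l}|^2$ with the Bergman-kernel (orthonormal basis) approximation $\psi_m:=\frac{1}{2m}\log\sum_k|\sigma_{m,k}|^2$ of Lemma \ref{l:appro-Berg}. On a small pseudoconvex neighborhood $U\Subset\Delta^n$ of $o$ I would show $\psi_m=\varphi_m+O(1)$ on a smaller neighborhood $U'\Subset U$ (the bounded error allowed to depend on $m$): for ``$\psi_m\ge\varphi_m+O(1)$'' use that each $f_{m,l}/\|f_{m,l}\|_{2m\varphi}$ lies in the unit ball of $A^2(U,2m\varphi)$ together with Remark \ref{r:0930a}; for ``$\psi_m\le\varphi_m+O(1)$'' use that any $g$ in that unit ball is a germ of $\mathcal I(m\varphi)_o=(f_{m,1},\dots,f_{m,k_m})$, hence $g=\sum_l h_lf_{m,l}$ on $U'$ with the $h_l$ satisfying a sup-bound on $U'$ that is uniform over $g$ (by coherence, the Banach open mapping theorem, and the elementary sup--$L^2$ estimate using $\sup_U\varphi<+\infty$), so $|g(z)|^2\le k_m M^2\sum_l|f_{m,l}(z)|^2$ on $U'$. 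Since $c_o^f$ is unchanged under adding a bounded function, it then suffices to prove $\lim_m c_o^f(\psi_m)=1$.

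For the lower bound, Lemma \ref{l:appro-Berg} gives $\psi_m\ge\varphi-c_1/m$ pointwise, so $e^{-2c\psi_m}\le e^{2cc_1/m}e^{-2c\varphi}$; hence for every $c<1$ the integrability of $|f|^2e^{-2c\varphi}$ (valid since $c<c_o^f(\varphi)=1$) forces that of $|f|^2e^{-2c\psi_m}$, so $c_o^f(\psi_m)\ge1$ for all $m$, and therefore $\liminf_m c_o^f(\varphi_m)\ge1$.

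For the upper bound, I would note first that $c_o^f(\varphi)=1$ together with the strong openness property (Theorem \ref{thm:SOC}) gives $(f,o)\notin\mathcal I(\varphi)_o$. The goal is to show that, for $m$ large, $|f|^2e^{-2(1+\tfrac{n}{m})\varphi_m}$ is not integrable near $o$, i.e.
\[
(f,o)\notin\mathcal I\big((1+\tfrac{n}{m})\varphi_m\big)_o ,
\]
which would yield $c_o^f(\varphi_m)\le 1+\tfrac{n}{m}$ and hence $\limsup_m c_o^f(\varphi_m)\le1$. This follows from the inclusion
\[
\mathcal I\big((1+\tfrac{n}{m})\varphi_m\big)_o\subseteq\mathcal I(\varphi)_o ,
\]
a quantitative version of Demailly's approximation theorem which one extracts from the Skoda / Briançon--Skoda type estimates for the ideals $\mathcal I(m\varphi)_o=(f_{m,l})$ (see \cite{demailly2010}); combined with $(f,o)\notin\mathcal I(\varphi)_o$ it gives the contradiction. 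Steps above then give $\lim_m c_o^f(\varphi_m)=1$.

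I expect \textbf{this last inclusion to be the main obstacle}. A naive pointwise comparison is not enough: only $\varphi_m\ge\varphi-c_1/m$ holds pointwise, while $\varphi_m$ may genuinely be less singular than $\varphi$ and the only available upper estimate for $\varphi_m$ is the $\sup$-over-small-balls bound of Lemma \ref{l:appro-Berg}, so one really has to argue at the level of multiplier ideals. Two routes seem available: (i) work directly with the generators $(f_{m,l})$ via Skoda's division theorem; or (ii) use the subadditivity $\mathcal I(2m\varphi)_o\subseteq\mathcal I(m\varphi)_o^{2}$ to see that $\varphi_{2^k}$ decreases to $\varphi$, and then obtain a uniform bound on $\int_U|f|^2e^{-2(1+\epsilon)\varphi_{2^k}}$ from the Ohsawa--Takegoshi/effectiveness machinery (Lemma \ref{lem:GZ_sharp}, Proposition \ref{p:effect_GZ}, Remark \ref{rem:effect_GZ}), after which monotone convergence contradicts $c_o^f(\varphi)=1$. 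Either way, the finiteness hypothesis $c_o^f(\varphi)=1<+\infty$ is what prevents the degenerate behaviour (where $\varphi_m$ could fail to reflect the singularity of $\varphi$) and must be used essentially here.
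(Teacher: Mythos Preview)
Your lower bound is correct and matches the paper's. The detour through the Bergman kernel $\psi_m$ is unnecessary but harmless.

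The real issue is the upper bound. The inclusion $\mathcal I\big((1+\tfrac{n}{m})\varphi_m\big)_o\subseteq\mathcal I(\varphi)_o$ does \emph{not} follow from Skoda or Brian\c con--Skoda in the way you suggest: since $e^{-2(1+n/m)\varphi_m}=\big(\sum_l|f_{m,l}|^2\big)^{-(m+n)/m^2}$, the exponent on $\sum_l|f_{m,l}|^2$ is only $(m+n)/m^2\ll 1$, far below the threshold $\min(n,k_m-1)+1$ needed for Skoda's division theorem, and below anything Brian\c con--Skoda can exploit. Route~(ii) also stalls: subadditivity $\mathcal I(2m\varphi)\subseteq\mathcal I(m\varphi)^2$ only gives $\varphi_{2m}\le\varphi_m+O_m(1)$ with an $m$-dependent constant, so $\varphi_{2^k}$ is not genuinely decreasing, and there is no evident uniform control on $\int|f|^2e^{-2(1+\epsilon)\varphi_{2^k}}$ from Proposition~\ref{p:effect_GZ} (which in any case goes in the opposite monotone direction).

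The paper bypasses all of this with a one-line computation in the spirit of Lemma~\ref{lem:G_key}. By definition of the generators, $\sum_l|f_{m+1,l}|^2e^{-2(m+1)\varphi}$ is integrable near $o$; and on the set $\{\tfrac{m+1}{m}\varphi_{m+1}>\varphi\}=\{(m+1)\varphi_{m+1}>m\varphi\}$ one has $e^{-2\varphi}\le e^{2(m+1)\varphi_{m+1}-2(m+1)\varphi}=\sum_l|f_{m+1,l}|^2e^{-2(m+1)\varphi}$. Hence
\[
\int_U\Big(e^{-2\varphi}-e^{-2\max\{\varphi,\tfrac{m+1}{m}\varphi_{m+1}\}}\Big)\le\int_U\sum_l|f_{m+1,l}|^2e^{-2(m+1)\varphi}<+\infty.
\]
Since $|f|$ is bounded and $|f|^2e^{-2\varphi}$ is not integrable (strong openness, Theorem~\ref{thm:SOC}), $|f|^2e^{-2\max\{\varphi,\tfrac{m+1}{m}\varphi_{m+1}\}}$ is not integrable either, giving $c_o^f\big(\tfrac{m+1}{m}\varphi_{m+1}\big)\le 1$, i.e.\ $c_o^f(\varphi_{m+1})\le\tfrac{m+1}{m}$. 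This is exactly the quantitative upper bound you were after, obtained without any division theorem.
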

\begin{proof}As $\{f_{m,1},\ldots,f_{m,k_m}\}$ is the generators set of $\mathcal{I}(m\varphi)_o$, there exists a neighborhood $U$ of $o$ such that
	\begin{equation}
		\nonumber
		\begin{split}
			&\int_{U}e^{-2\varphi}-e^{-2\max\big\{\varphi,\frac{m+1}{m}\varphi_{m+1}\big\}}\\
			\le&\int_{U\cap\big\{\frac{m+1}{m}\varphi_{m+1}>\varphi\big\}}e^{-2\varphi}\\
			\le&\int_U e^{2(m+1)\varphi_{m+1}-2(m+1)\varphi}\\
			=&\int_U\sum_{1\le l\le k_{m+1}}|f_{m+1,l}|^2e^{-2(m+1)\varphi}\\
			<&+\infty.
		\end{split}
	\end{equation}
	Theorem \ref{thm:SOC} shows that $|f|^2e^{-2\varphi}$ is not integrable near $o$. Then $|f|^2e^{-2\max\{\varphi,\frac{m+1}{m}\varphi_{m+1}\}}$ is not integrable near $o$, i.e., $c_o^{f}\big(\max\{\varphi,\frac{m+1}{m}\varphi_{m+1}\}\big)\le 1$, which implies that $$c_o^{f}\left(\frac{m+1}{m}\varphi_{m+1}\right)\le 1.$$
	It follows from Demailly's approximation theorem (Lemma \ref{l:appro-Berg}) that $\varphi_m\ge\varphi+O(1)$ near $o$, which implies that $$c_o^{f}(\varphi_m)\ge c_o^{f}(\varphi)=1$$
	for any $m$.  Thus, we have $\lim_{m\rightarrow+\infty}c_o^f(\varphi_m)=1$.
\end{proof}

The following lemma will be used in the proof of Lemma \ref{l:0921-1}.
\begin{Lemma}
	[see \cite{guan-remapprox}]\label{l:III}
	$(z_1^{\alpha_1}\cdots z_n^{\alpha_n},o)\in \mathcal{I}\big(\log\max\{|z_1|^{b_1},\ldots,|z_n|^{b_n}\}\big)_o$ if and only if $\sum\limits_{1\le j\le n}\frac{\alpha_j+1}{b_j}>1$.
\end{Lemma}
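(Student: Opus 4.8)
The plan is to reduce the statement to an elementary real integral and evaluate it directly. By definition, $(z_1^{\alpha_1}\cdots z_n^{\alpha_n},o)\in\mathcal I(\varphi)_o$ with $\varphi=\log\max\{|z_1|^{b_1},\ldots,|z_n|^{b_n}\}$ if and only if
$$|z^{\alpha}|^2e^{-2\varphi}=\frac{\prod_{1\le j\le n}|z_j|^{2\alpha_j}}{\max_{1\le j\le n}\{|z_j|^{b_j}\}^{2}}$$
is integrable on a small polydisc $\Delta_{\varepsilon}^n$ around $o$ (working "near $o$" lets us restrict to such a polydisc from the start; note $\varphi$ is plurisubharmonic as a maximum of the harmonic-where-finite functions $b_j\log|z_j|$). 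First I would pass to polar coordinates $z_j=r_je^{i\theta_j}$; the angular integration contributes a harmless finite factor $(2\pi)^n$, so the question becomes the finiteness of
$$\int_{[0,\varepsilon]^n}\frac{\prod_{1\le j\le n} r_j^{2\alpha_j+1}}{\max_{1\le j\le n}\{r_j^{b_j}\}^{2}}\,dr_1\cdots dr_n.$$

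Next I would substitute $r_j=\varepsilon\,e^{-s_j/b_j}$, so that $s_j$ ranges over $[0,\infty)$; then $r_j^{b_j}=\varepsilon^{b_j}e^{-s_j}$, hence $\max_j\{r_j^{b_j}\}\asymp e^{-\min_j s_j}$ with constants depending only on $\varepsilon$ and the $b_j$, while $r_j^{2\alpha_j+1}\,dr_j$ is comparable to $e^{-c_js_j}\,ds_j$ with $c_j:=\tfrac{2(\alpha_j+1)}{b_j}>0$. Up to positive multiplicative constants the integral therefore equals
$$\int_{[0,\infty)^n}\exp\Big(2\min_{1\le j\le n}s_j-\sum_{1\le j\le n} c_js_j\Big)\,ds_1\cdots ds_n.$$
Then I would split $[0,\infty)^n=\bigcup_{k=1}^n R_k$ with $R_k:=\{s_k\le s_j\text{ for all }j\}$, and on $R_k$ change variables by $s_k=m$ and $s_j=m+t_j$ for $j\ne k$ (with $t_j\ge0$), whose Jacobian is $1$. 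On $R_k$ the exponent becomes $\big(2-\sum_j c_j\big)m-\sum_{j\ne k}c_jt_j$, so the integral over $R_k$ factors as $\int_0^\infty e^{(2-\sum_j c_j)m}\,dm\cdot\prod_{j\ne k}\int_0^\infty e^{-c_jt_j}\,dt_j$. Each $t_j$-integral is finite since $c_j>0$, and the $m$-integral is finite if and only if $\sum_j c_j>2$, i.e.\ if and only if $\sum_{1\le j\le n}\tfrac{\alpha_j+1}{b_j}>1$. Summing over $k$, the whole integral is finite exactly under that condition, which is the claim.

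There is no serious obstacle in this argument; the only points needing a little care are that the substitution only controls $\max_j\{r_j^{b_j}\}$ up to the constants $\varepsilon^{b_j}$ (handled by the comparison $\max_j\{\varepsilon^{b_j}e^{-s_j}\}\asymp e^{-\min_j s_j}$) and the bookkeeping in the region decomposition $R_k$, where one should note the overlaps $R_k\cap R_{k'}$ have measure zero so summing the contributions is legitimate.
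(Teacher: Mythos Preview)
Your argument is correct. The substitution $r_j=\varepsilon e^{-s_j/b_j}$ turns the question into the finiteness of $\int_{[0,\infty)^n}\exp\big(2\min_j s_j-\sum_j c_js_j\big)\,ds$ with $c_j=2(\alpha_j+1)/b_j$, and the region decomposition according to which $s_j$ realizes the minimum reduces everything to the single exponential integral in $m$, which converges exactly when $\sum_j c_j>2$. The minor points you flag (the comparability $\max_j\{\varepsilon^{b_j}e^{-s_j}\}\asymp e^{-\min_j s_j}$ and the null overlaps of the $R_k$) are handled correctly.

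As for comparison with the paper: the paper does not prove this lemma at all; it simply cites \cite{guan-remapprox}. Your computation is therefore a self-contained replacement for that citation. The argument in \cite{guan-remapprox} (and the folklore proof) proceeds essentially the same way, sometimes phrased via the level-set/Fubini identity $\int g\,e^{-2\varphi}=\int_{-\infty}^{0}2e^{-2t}\big(\int_{\{\varphi<t\}}g\big)\,dt$ applied to $g=|z^\alpha|^2$, which after computing the volumes $\int_{\{\varphi<t\}}|z^\alpha|^2$ gives the same exponential criterion. Your change of variables is just a repackaging of that computation.
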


We recall the following desingularization theorem due to Hironaka.
\begin{Theorem}[\cite{Hironaka}, see also \cite{BM-1991}]\label{thm:desing}
	Let $X$ be a complex manifold, and $M$ be an analytic sub-variety in $X$.  Then there is a local finite sequence of blow-ups $\mu_j:X_{j+1}\rightarrow X_j$ $(X_1:=X,j=1,2,...)$ with smooth centers $S_j$ such that:
	
	$(1)$ Each component of $S_j$ lies either in $(M_j)_{\rm{sing}}$ or in $M_j\cap E_j$, where $M_1:=M$, $M_{j+1}$ denotes the strict transform of $M_j$ by $\mu_j$, $(M_j)_{\rm{sing}}$ denotes the singular set of $M_j$, and $E_{j+1}$ denotes the exceptional divisor $\mu^{-1}_j(S_j\cup E_j)$;
	
	$(2)$ Let $M'$ and $E'$ denote the final strict transform of $M$ and the exceptional divisor respectively. Then:
	
	\quad$(a)$ The underlying point-set $|M'|$ is smooth;
	
	\quad$(b)$ $|M'|$and $E'$ simultaneously have only normal crossings.
\end{Theorem}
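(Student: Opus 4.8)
The plan is to follow the standard characteristic-zero approach to embedded resolution --- Hironaka's theorem, in the streamlined algorithmic form due to Bierstone--Milman and Encinas--Villamayor: reduce the statement to a \emph{principalization} problem for a coherent ideal sheaf, solve that by induction on the dimension of the ambient manifold using \emph{maximal contact}, and carry the exceptional locus along throughout so that the normal-crossings conclusion $(2)(b)$ is automatic. Concretely, I would first replace $M$ by the ideal sheaf $\mathcal{I}_M\subset\mathcal{O}_X$ and, more generally, set up the notion of a \emph{marked ideal} (basic object) $(X,\mathcal{I},E,d)$ consisting of a complex manifold $X$, a coherent ideal $\mathcal{I}$, a simple normal crossings divisor $E$ on $X$, and an integer $d\ge1$; the goal attached to such a datum is a locally finite sequence of admissible blow-ups --- centers smooth and having only normal crossings with the current exceptional divisor --- after which the (weak) transform of $\mathcal{I}$ has order everywhere $<d$. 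Embedded resolution of $M$ is then recovered by iterating principalization: once $\mathcal{I}_M$ has been made locally principal and supported on a normal crossings divisor together with $E$, one more controlled sequence of blow-ups separates the strict transform $M'$ from $E'$ and smooths $|M'|$, which gives $(2)(a)$ and $(2)(b)$.

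The inductive engine is the following. On the open, nonempty locus where the order function $x\mapsto\operatorname{ord}_x(\mathcal{I})$ attains its maximum $\mu$ (this function is upper semicontinuous, which must be checked), one chooses locally a smooth hypersurface $H$ of \emph{maximal contact}: in characteristic zero one may take $H=\{\partial^{\alpha}f=0\}$ for a suitable derivative of a well-chosen order-$\mu$ generator $f$, obtained via Weierstrass preparation and the Tschirnhausen transformation, so that the maximal-order locus of $\mathcal{I}$ lies in $H$ and stays inside the strict transform of $H$ under the blow-ups used. One then forms the \emph{coefficient ideal} $\mathcal{C}(\mathcal{I})$ on $H$, a specific normalized combination of restrictions of derivatives of generators of $\mathcal{I}$, designed so that dropping the order of $\mathcal{C}(\mathcal{I})$ below the correct threshold on $H$ forces the order of $\mathcal{I}$ on $X$ below $\mu$. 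Since $\dim H=\dim X-1$, the induction hypothesis resolves $(H,\mathcal{C}(\mathcal{I}),E\cap H,\,\cdot\,)$; the resulting blow-ups are lifted to $X$, and one repeats, each stage either lowering $\mu$ or lowering the working dimension. Termination is controlled by the \emph{resolution invariant}: a finite tuple $(\mu_1,\text{(exceptional-multiplicity data)},\mu_2,\dots)$, ordered lexicographically, which one shows strictly decreases under every blow-up of the algorithm and takes values in a well-ordered set.

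The divisor bookkeeping is precisely what yields conclusion $(1)$ and the normal-crossings part of $(2)$: every center $S_j$ is selected inside the current maximal-order locus --- hence, by maximal contact, inside the lifted hypersurfaces --- and, by splitting the invariant into a part ignoring $E_j$ and a part recording contact with $E_j$, one arranges $S_j$ to meet $E_j$ transversally and to lie in $(M_j)_{\mathrm{sing}}\cup(M_j\cap E_j)$. After principalization, $|M'|$ is smooth and $|M'|\cup E'$ has only normal crossings because at each blow-up the ``old'' exceptional components and the ``new'' one are kept transverse, and the strict transform is eventually disjoint from the centers. Globalization --- promoting the local maximal-contact choices to a single (canonical, in fact functorial) sequence of blow-ups on all of $X$ --- is handled by proving that the coefficient-ideal construction commutes with étale/analytic localization and with passage to normal bundles, so that the invariant and the center it defines are independent of the auxiliary choices and the local data patch.

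The hard part --- and it is genuinely hard, this being Hironaka's theorem --- is twofold: (i) constructing $\mathcal{C}(\mathcal{I})$ and proving the \emph{maximal contact / order-descent} statement, namely that controlling $\mathcal{C}(\mathcal{I})|_H$ really controls $\mathcal{I}$, which is exactly where the Tschirnhausen transformation and the hypothesis of characteristic zero enter; and (ii) showing the resolution invariant strictly decreases and has a well-ordered value set, so that the algorithm halts locally after finitely many blow-ups. I would isolate these as the two main lemmas and treat the rest --- semicontinuity of $\operatorname{ord}(\mathcal{I})$, admissibility and the transformation rules for blow-ups, stability of normal crossings --- as the (lengthy but routine) infrastructure. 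Given the depth of the result, a self-contained argument cannot be carried out here; the above is the skeleton along which the full proof of Bierstone--Milman or Encinas--Villamayor would be filled in, and it is that body of work which is invoked for Theorem \ref{thm:desing}.
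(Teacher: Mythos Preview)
The paper does not prove this theorem at all: it is quoted as a classical result with references to Hironaka and Bierstone--Milman, and is used as a black box in the proofs of Lemma \ref{l:sigma-analytic} and Lemma \ref{l:0921-1}. Your outline of the Bierstone--Milman/Encinas--Villamayor algorithmic approach is a faithful summary of what those references contain, and you correctly conclude that the result is to be invoked rather than reproved; so your treatment matches the paper's, which is simply to cite it.
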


The $(b)$ in the above theorem means that, local, there is a coordinate system in which $E'$ is a union of coordinate hyperplanes and $|M'|$ is a coordinate subspace.

We prove Lemma \ref{l:sigma-analytic} by using Theorem \ref{thm:desing}.

\begin{proof}[Proof of Lemma \ref{l:sigma-analytic}]
	Let $D$ be a small neighborhood of $o$. Using Theorem \ref{thm:desing}, there is a proper holomorphic map $\mu:\widetilde D\rightarrow D$, which is local a finite composition of blow-ups with smooth centers, such that 
	$$u\circ\mu(w)=c_1\log|w^a|+\tilde u(w)$$
	on $W$,
	where  $a=(a_1,\ldots,a_n)\in\mathbb{Z}_{\ge0}^n$ and $(W;w_1,\ldots,w_n)$ is a coordinate ball centered at $\tilde z\in\widetilde{D}$ satisfying that $W\Subset\widetilde{D}$.  As $v\circ\mu\le(1-\epsilon)u\circ\mu+O(1)$ near $\tilde z$ for any $\epsilon>0$, it follows from Siu's decomposition theorem (see \cite{siu74,demailly-book}) that 
	$$v\circ\mu\le c_1\log|w^a|+O(1)=u\circ\mu+O(1)$$
	near $\tilde z$, which implies that $v\le u+O(1)$ near $o$ as $\mu$ is proper. 
	Thus, Lemma \ref{l:sigma-analytic} holds.
\end{proof}

We prove a special case of Lemma \ref{l:0921-3}.

\begin{Lemma}\label{l:0921-1}
	Let $\varphi:=c\log\sum_{1\le j\le m}|f_{j}|^2$ near $o$, where $c>0$ and $f_j$ is a holomorphic function near $o$ for any $1\le j\le m$.  Then
	\[\lim_{N\rightarrow+\infty}c_o^f\big(\max\{\varphi,N\log|z|\}\big)=c_o^{f}(\varphi)\]
	for any holomorphic function $f$ near $o$.
\end{Lemma}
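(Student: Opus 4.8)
The plan is to reduce the statement to a purely monomial computation via Hironaka's desingularization (Theorem \ref{thm:desing}) and then to carry out the limit $N\to\infty$ explicitly on each chart, using Lemma \ref{l:III} as the combinatorial engine. Since $\varphi=c\log\sum_{1\le j\le m}|f_j|^2$ has analytic singularities, its singular locus is the analytic variety $M=\{f_1=\cdots=f_m=0\}$, and by Theorem \ref{thm:desing} there is a proper modification $\mu:\widetilde D\to D$ (locally a finite composition of blow-ups) so that $\mu^{-1}(M)$ together with the exceptional divisor is a normal crossings divisor. On a suitable finite cover of $\mu^{-1}(\overline{\mathbb B}(o,r))$ by coordinate balls $(W;w_1,\dots,w_n)$ we have
\[
\varphi\circ\mu = c\log|w^a|+O(1),\quad
\log|z|\circ\mu = \log|w^b|+O(1),\quad
f\circ\mu = w^e\cdot(\text{unit}),
\]
where $a,b,e\in\mathbb Z_{\ge0}^n$ and the Jacobian of $\mu$ contributes a monomial weight $w^k$, $k\in\mathbb Z_{\ge0}^n$, to the change-of-variables formula. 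The key point is that $\max\{\varphi,N\log|z|\}\circ\mu = \max\{c\log|w^a|, N\log|w^b|\}+O(1)$ is (up to bounded error) the toric weight $\log\max_j|w_j|^{m_j^{(N)}}$ on each monomial cone where one of $ca$, $Nb$ dominates — more precisely, since both are monomial, $\max\{c\log|w^a|,N\log|w^b|\}$ differs by a bounded amount from $\log\prod_j |w_j|^{\min\{ca_j, Nb_j\}}$ on $W$ when $|w|$ is small (this uses $\sum$-versus-$\max$ comparisons and that the two monomials are comparable to their coordinatewise minimum up to a constant on a small polydisc — care is needed here, see below).

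First I would show the ``$\le$'' direction, which is immediate: $\max\{\varphi,N\log|z|\}\ge\varphi$ gives $|f|^2e^{-2t\max\{\varphi,N\log|z|\}}\le |f|^2e^{-2t\varphi}$ near $o$, hence $c_o^f(\max\{\varphi,N\log|z|\})\ge c_o^f(\varphi)$ — wait, the inequality of exponents goes the other way for integrability, so in fact $c_o^f(\max\{\varphi,N\log|z|\})\ge c_o^f(\varphi)$ and the sequence $N\mapsto c_o^f(\max\{\varphi,N\log|z|\})$ is non-increasing in $N$ (larger $N$ makes the weight larger, hence harder to integrate against), and bounded below by $c_o^f(\varphi)$; so the limit exists and is $\ge c_o^f(\varphi)$. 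For the reverse inequality I would pull everything back by $\mu$: writing $c=c_o^f(\varphi)$, for any $\epsilon>0$ the function $|f|^2e^{-2(c+\epsilon)\varphi}$ is non-integrable near $o$, so on some chart $W$ the monomial integral $\int |w^{2e+2k}| |w^{-2(c+\epsilon)a}|\,d\lambda$ diverges, i.e. by Lemma \ref{l:III} (or just the elementary monomial integrability criterion) $\sum_j \frac{e_j+k_j+1}{(c+\epsilon)a_j}\le 1$ for some coordinate with $a_j>0$ in that chart. I would then check that for $N$ large enough the corresponding exponent $\min\{(c+\epsilon)a_j, Nb_j\}$ in $\max\{\varphi,N\log|z|\}\circ\mu$ equals $(c+\epsilon)a_j$ on the cone carrying the divergence (because $b_j\ge 1$ wherever $\log|z|\circ\mu$ is a genuine divisor, so $Nb_j\to\infty$ while $(c+\epsilon)a_j$ is fixed), so the same monomial integral against $\max\{\varphi,N\log|z|\}$ still diverges, giving $c_o^f(\max\{\varphi,N\log|z|\})\le c+\epsilon$ for all large $N$. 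Letting $N\to\infty$ then $\epsilon\to 0$ yields $\lim_N c_o^f(\max\{\varphi,N\log|z|\})\le c_o^f(\varphi)$, completing the proof.

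The main obstacle, I expect, is the passage from $\max$ of two logarithms of monomials to the toric weight $\log\prod_j|w_j|^{\min\{ca_j,Nb_j\}}$ with only bounded error, and ensuring this bounded error is uniform in $N$. One has $\max\{\log|w^{p}|,\log|w^q|\}\ge \log|w^{\min(p,q)}|$ trivially, but the reverse inequality $\max\{\log|w^p|,\log|w^q|\}\le \log|w^{\min(p,q)}| + C$ fails in general unless one restricts to a suitable subdivision of the $w$-polydisc into sectors where one monomial dominates the other; on each such sector $\{|w^p|\ge|w^q|\}$ one has $\max = \log|w^p|$ exactly, and there one compares $\log|w^p|$ with $\log|w^{\min(p,q)}|$ directly. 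So rather than fight for a global toric comparison, I would instead argue chart-by-chart and sector-by-sector: on the sector where $\varphi\circ\mu$ dominates, $\max\{\varphi,N\log|z|\}\circ\mu=\varphi\circ\mu+O(1)$, and non-integrability of $|f|^2e^{-2(c+\epsilon)\varphi}$ localized to that sector (which must hold for at least one sector in at least one chart, by the pigeonhole and the fact that non-integrability is detected on arbitrarily small neighborhoods) transfers verbatim. For $N$ large the relevant sector is nonempty near the part of $\mu^{-1}(o)$ where $a_j>0$ but $b_j$ can be taken $\ge 1$, since the total transform of $\{z=0\}$ has all exceptional components appearing with positive multiplicity — this is where I would invoke that $\log|z|$ pulls back to a weight $\ge c'\,(\text{exceptional divisor})$ for some $c'>0$, a standard fact about the total transform under a composition of blow-ups centered over $o$. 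Once the right sector is identified the computation is routine via Lemma \ref{l:III}.
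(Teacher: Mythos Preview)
Your overall strategy---Hironaka resolution followed by a monomial computation on a chart, using Lemma \ref{l:III}---matches the paper's. The easy direction and the monotonicity in $N$ are correctly identified. The gap is in the hard direction, and it is genuine.

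Your first comparison, bounding $\max\{c\log|w^a|,N\log|w^b|\}$ above by $\log|w^{\min(ca,Nb)}|$, is too weak to conclude: the bad direction $j_0$ (the coordinate where $e_{j_0}+k_{j_0}+1\le (c_0+\epsilon)\,c\,a_{j_0}$) can have $b_{j_0}=0$, namely when $\{w_{j_0}=0\}$ is the strict transform of a component of $V(f_1,\dots,f_m)$ rather than an exceptional divisor over $o$. In that case $\min(ca_{j_0},Nb_{j_0})=0$ and no divergence is detected. (Example: $n=2$, $\varphi=\log|z_1|$; after one blow-up, at the intersection of the exceptional curve with the strict transform of $\{z_1=0\}$ the bad direction is the strict-transform one, where $b=0$.) Your assertion that ``$b_j\ge 1$ wherever $\log|z|\circ\mu$ is a genuine divisor'' is precisely what fails here. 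Your fallback sector argument has a separate flaw: the sector $S_1=\{\varphi\circ\mu\ge N\log|z|\circ\mu\}$ depends on $N$, and the pigeonhole claim that $\int_{S_1}|f|^2e^{-2(c_0+\epsilon)\varphi}$ diverges for all small $\epsilon>0$ is false in general---in the same example one finds divergence on $S_1$ only for $\epsilon\ge C/N$. So a genuine computation tracking $N$ is required, and Lemma \ref{l:III} (which concerns full polydiscs) does not apply to a sector directly.

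The paper avoids both issues with one clean trick. It does \emph{not} principalize $\log|z|$; it uses only $|\mu(w)|\le C|w|$ near $\tilde z\in\mu^{-1}(o)$, so $N\log|z|\circ\mu\le N\log|w|+C_N$. Combining $c\log|w^a|\le c\,a_{j_0}\log|w_{j_0}|$ and $N\log|w|\le N\max_j\log|w_j|+C$, and absorbing $N\log|w_{j_0}|\le c\,a_{j_0}\log|w_{j_0}|$ for $N\ge c\,a_{j_0}$, one gets the one-sided bound
\[
\max\{\varphi,N\log|z|\}\circ\mu\;\le\;\log\max\big\{|w_{j_0}|^{c\,a_{j_0}},\,|w_{j}|^{N}\ (j\ne j_0)\big\}+C_N.
\]
Now Lemma \ref{l:III} applies directly to this toric weight: non-integrability of $|w^{\hat\alpha+k}|^2$ against it reads $\dfrac{\hat\alpha_{j_0}+k_{j_0}+1}{c\,a_{j_0}}+\sum_{j\ne j_0}\dfrac{\hat\alpha_j+k_j+1}{N}\le 1$; the first term is $\le c_1<1$ from the original divergence, and the rest is $O(1/N)$. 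The key idea you are missing is to keep only the single bad coordinate of $\varphi$ and throw all other directions into the $N$-weight, rather than trying to model $\max\{\varphi,N\log|z|\}$ globally.
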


\begin{proof}
	Without loss of generality, assume that $c_o^f(\varphi)<1$. Let $D$ be a small neighborhood of $o$, and 
	$$Y:=\bigcap_j\big\{z\in D:f_j(z)=0\big\}$$ be an analytic sub-variety in $D$.  Using Theorem \ref{thm:desing}, we get a proper holomorphic map $\mu:\widetilde D\rightarrow D$ (local a finite composition of blow-ups with smooth centers), which satisfies that, locally, there is a coordinate system in which the strict transform $\tilde Y$ of $Y$ is a coordinate hyperplane and the exceptional divisor $E'$  is a  union of coordinate hyperplanes. Denote that $X=\mu^{-1}(o)$.
	As $c_o^f(\varphi)<1$, there exists	 $\tilde z\in X$ and $c_1<1$ such that
	\begin{equation}
		\label{eq:0921c}
		\int_{U_{\tilde z}}\mu^*\big(|f|^2e^{-2c_1\varphi}\wedge_{1\le j\le n}\sqrt{-1}dz_j\wedge d\bar z_j\big)=+\infty
	\end{equation} 
	for any neighborhood $U_{\tilde z}$ of $\tilde z$.
	Let $(W;w_1,\ldots,w_n)$ be a coordinate ball centered at $\tilde z$ satisfying that $W\Subset\tilde M$, $w^b=0$ is the zero divisor of the Jacobian $J_{\mu}$ (of $\mu$) and
	$$\varphi\circ\mu(w)=c_2\log|w^a|^2+\tilde u(w)$$
	on $W$,
	where $\tilde u\in\mathcal{C}^{\infty}(\overline W)$, $w^a:=\prod_{j=1}^nw_j^{a_j}$ and $w^b:=\prod_{j=1}^nw_j^{b_j}$. 
	Note that $\mu^*(f)=\sum_{\alpha\in\mathbb{Z}_{\ge0}^n}a_{\alpha}w^{\alpha}$ near $\tilde z$. 
	It follows from	inequality \eqref{eq:0921c}  that there exists $a_{\hat\alpha}\not=0$ such that 
	$$|w^{\hat\alpha+b}|^2e^{-2c_1\varphi\circ\mu}$$
	is not integrable near $\tilde z$, which implies that there exists $j_0\in\{1,\ldots,n\}$ (without loss of generality, assume that $j_0=1$) such that 
	\begin{equation}
		\nonumber
		\hat\alpha_1+b_1+1\le c_1c_2a_1.
	\end{equation}
	Noting that $c_1<1$,  there exists $N\gg0$ such that 
	$$\frac{	\hat\alpha_1+b_1+1}{c_2a_1}+\sum_{2\le j\le n}\frac{\hat\alpha_j+b_j+1}{N}<1,$$
	then it follows from Lemma \ref{l:III} that 
	$$|w^{\hat\alpha+b}|^2e^{-2\log\max\{|w_1|^{c_2a_1},|w_2|^{N},|w_3|^N,\ldots,|w_n|^N\}}$$
	is not integrable near $\tilde z$. Denote that $U_0:=\{|w|<r\}$ ($r>0$ is sufficiently small). We have 
	\begin{equation}
		\nonumber
		\begin{split}
			&\int_{U_0}\mu^*\big(|f|^2e^{-2\max\{\varphi,N\log|z|\}}\big)|J_{\mu}|^2\\
			\ge& C_1\int_{U_0}|w^{\hat\alpha+b}|^2e^{-2\max\{c_2\log|w^a|^2,N\log|w|\}}\\
			\ge& C_2\int_{U_0}|w^{\hat\alpha+b}|^2e^{-2\log\max\{|w_1|^{c_2a_1},|w_2|^{N},|w_3|^N,\ldots,|w_n|^N\}}\\
			=&+\infty,
		\end{split}
	\end{equation} 
	which implies that $|f|^2e^{-2\max\{\varphi,N\log|z|\}}$ is not integrable near $o$, i.e., 
	$$c_o^{f}\big(\max\{\varphi,N\log|z|\}\big)\le 1.$$
	Now, we get that: $c_o^{f}(\varphi)<1$ implies that there exists $N\gg0$ such that 
	$$c_o^{f}\big(\max\{\varphi,N\log|z|\}\big)\le 1.$$ Note that $c_o^{f}\big(\max\{\varphi,N\log|z|\}\big)\ge c_o^{f}(\varphi)$, then we have $$\lim_{N\rightarrow+\infty}c_o^f\big(\max\{\varphi,N\log|z|\}\big)=c_o^{f}(\varphi)$$
	for any holomorphic function $f$ near $o$.
\end{proof}

Now, we prove Lemma \ref{l:0921-3}.

\begin{proof}[Proof of Lemma \ref{l:0921-3}]
	Let $f\not\equiv0$ be a holomorphic function near $o$.
	If $c_o^{f}(\varphi)=0$, then $\varphi\equiv-\infty$ near $o$ and Lemma \ref{l:0921-3} holds clearly. In the following, we assume that $c_o^{f}(\varphi)=1$. 
	
	Lemma \ref{l:0921-2} tells us that $$\lim_{m\rightarrow+\infty}c_o^f(\varphi_m)=1,$$ where $\varphi_m:=\frac{1}{2m}\log\sum_{1\le l\le k_m}|f_{m,l}|^2$ and $\{f_{m,1},\ldots,f_{m,k_m}\}$ is the generators set of $\mathcal{I}(m\varphi)_o$.
	Note that $\varphi_m\ge\varphi+O(1)$ near $o$ by Lemma \ref{l:appro-Berg}, which shows that 
	$$c_o^{f}(\varphi_m)\ge c_o^{f}(\varphi)=1$$
	for any $m$. For any $\epsilon>0$, there exists $m>0$ such that $$c_o^{f}(\varphi_m)\in[1,1+\epsilon),$$ then it follows from Lemma \ref{l:0921-1} that there exists $N_m>0$ such that
	$$c_o^{f}\big(\max\{\varphi_m,N_m\log|z|\}\big)\in[1,1+2\epsilon).$$
	As $\varphi_m\ge\varphi+O(1)$ near $o$, we have 
	$$c_o^{f}\big(\max\{\varphi,N_m\log|z|\}\big)\in[1,1+2\epsilon).$$
	Thus, we have
	$$\lim_{N\rightarrow+\infty}c_o^{f}\big(\max\{\varphi,N\log|z|\}\big)=1=c_o^f(\varphi).$$
	Lemma \ref{l:0921-3} has been proved.
\end{proof}

\vspace{.1in} {\em Acknowledgements}.
	The authors would like to thank Professor Xiangyu Zhou for helpful discussions	and sincerely encouragements.
The second author would like to thank Professor Mattias Jonsson for helpful discussions, and
Professor Chenyang Xu for sharing his recent work and helpful discussions.

The second author was supported by National Key R\&D Program of China 2021YFA1003100, NSFC-12425101, NSFC-11825101, NSFC-11522101 and NSFC-11431013,
and the National Science Foundation Grant No. DMS-163852 and the Ky Fan and Yu-Fen Fan Membership Fund. The third author was supported by the Talent Fund of Beijing Jiaotong University 2024-004. The fourth author was supported by China Postdoctoral Science Foundation BX20230402 and 2023M743719.

\bibliographystyle{references}
\bibliography{xbib}

\begin{thebibliography}{100}
	
	\bibitem{BT76}E. Bedford and B.A. Taylor, The Dirichlet problem for a complex Monge-Ampère operator, Invent. Math. 37 (1976) 1-44. 
	
\bibitem{berndtsson13}B. Berndtsson, The openness conjecture for plurisubharmonic functions, arXiv:1305.5781.

\bibitem{BM-1991}  E. Bierstone and P.D. Milman, A simple constructive proof of canonical resolution of singularities, pp. 11–30 in Effective methods in algebraic geometry (Castiglioncello,1990), edited by T. Mora and C. Traverso, Progr. Math.94, Birkhäuser, Boston, 1991.

\bibitem{Blo93}Z. Błocki, 
Estimates for the complex Monge-Ampère operator,
Bull. Polish Acad. Sci. Math. 41 (1993), no. 2, 151–157 (1994).

\bibitem{Blo-note}Z. Błocki, The complex Monge-Ampère operator in pluripotential theory,  electronically accessible at http://gamma.im.uj.edu.pl/~blocki/publ/ln/index.html

\bibitem{BFJ08}S. Boucksom, C. Favre and M. Jonsson,
Valuations and plurisubharmonic singularities,
Publ. Res. Inst. Math. Sci. 44 (2008), no. 2, 449–494.



\bibitem{demailly82}J.-P., Demailly, 
Sur les nombres de Lelong associés à l'image directe d'un courant positif fermé. (French) [On the Lelong numbers associated with the direct image of a closed positive current]
Ann. Inst. Fourier (Grenoble) 32 (1982), no. 2, ix, 37-66.

\bibitem{demailly85}J.-P. Demailly, Mesures de Monge-Ampère et caractérisation géométrique des variétés algébriques affines, (French) [Monge-Ampère measures and geometric characterization of affine algebraic varieties]
Mém. Soc. Math. France (N.S.) No. 19 (1985), 124 pp.

\bibitem{demailly87}J.-P. Demailly,
Nombres de Lelong généralisés, théorèmes d'intégralité et d'analyticité. (French) [Generalized Lelong numbers, integrality and analyticity theorems]
Acta Math. 159 (1987), no. 3-4, 153-169.

\bibitem{demailly-note2000}J.-P. Demailly, Multiplier ideal sheaves and analytic methods in algebraic geometry, School on Vanishing Theorems and Effective Results in Algebraic Geometry (Trieste, 2000), 1-148, ICTP Lect. Notes, 6, Abdus Salam Int. Cent. Theoret. Phys., Trieste, 2001.

\bibitem{demailly2010}J.-P. Demailly, Analytic Methods in Algebraic Geometry, Higher Education Press, Beijing, 2010.

\bibitem{demailly-book}J.-P. Demailly, Complex analytic and differential geometry, electronically accessible
at
https://www-fourier.ujf-grenoble.fr/\textasciitilde demailly/manuscripts/agbook.pdf

\bibitem{DEL00}J.-P. Demailly, L. Ein, and R. Lazarsfeld, A subadditivity property of multiplier
ideals, Michigan Math. J. 48 (2000), 137-156.

\bibitem{D-K01}J.-P. Demailly and J. Koll\'{a}r,
Semi-continuity of complex singularity exponents and K\"{a}hler-Einstein metrics on Fano orbifolds,
Ann. Sci. \'{E}cole Norm. Sup. (4) 34 (2001), no. 4, 525--556.

\bibitem{D-P03}J.-P. Demailly and T. Peternell, A Kawamata-Viehweg vanishing theorem on compact K\"{a}hler manifolds, J. Differential Geom.  63  (2003),  no. 2, 231-277.

\bibitem{FM05i}C. Favre and M. Jonsson,
Valuative analysis of planar plurisubharmonic functions,
Invent. Math. 162 (2005), no. 2, 271-311.

\bibitem{FM05j}C. Favre and M. Jonsson, Valuations and multiplier ideals, J. Amer. Math. Soc. 18 (2005), no. 3, 655-684.

\bibitem{Hironaka} H. Hironaka, Resolution of singularities of an algebraic variety over a field ofcharacteristic zero: I, II,  Ann. of Math. (2) 79 (1964), 109-203, 205-326.

\bibitem{JON-Mus2012}M. Jonsson and M. Musta\c{t}\u{a}, Valuations and asymptotic invariants for sequences of ideals, Ann. Inst. Fourier
(Grenoble) 62 (2012), no. 6, 2145-2209.

\bibitem{JON-Mus2014} M. Jonsson and M. Musta\c{t}\u{a}, An algebraic approach to the openness conjecture of Demailly and Koll\'{a}r, J. Inst.Math. Jussieu 13 (2014), no. 1, 119–144.

\bibitem{Grau-Rem84}H. Grauert and R. Remmert, Coherent analytic sheaves. Grundlehren der Mathematischen Wissenschaften [Fundamental Principles of Mathematical Sciences], 265. Springer-Verlag, Berlin, 1984. xviii+249 pp.

\bibitem{guan-effect}Q.A. Guan, A sharp effectiveness result of Demailly's strong openness conjecture, Adv. Math. 348 (2019), 51-80.

\bibitem{guan-remapprox}Q.A. Guan, Decreasing equisingular approximations with analytic singularities, J. Geom. Anal. 30 (2020), no. 1, 484-492.

\bibitem{GY-concavity4}Q.A. Guan and Z. Yuan, Concavity property of minimal $L^2$ integrals with Lebesgue measurable gain \uppercase\expandafter{\romannumeral4}: product of open Riemann surfaces, Peking Math. J. 7 (2024), no. 1, 91-154. 

\bibitem{guan-zhou13p}Q.A. Guan and X.Y. Zhou, Optimal constant in an $L^2$ extension problem and a proof of a conjecture of Ohsawa,  Sci. China Math., 2015, 58(1): 35-59.

\bibitem{guan-zhou13ap}Q.A. Guan and X.Y. Zhou, A solution of an $L^{2}$ extension problem with an optimal estimate and applications,
Ann. of Math. (2) 181 (2015), no. 3, 1139-1208.

\bibitem{GZopen-c}Q.A. Guan and X.Y. Zhou, A proof of Demailly's strong openness conjecture,
Ann. of Math. (2) 182 (2015), no. 2, 605-616. See also arXiv:1311.3781.

\bibitem{GZopen-effect}Q.A. Guan and X.Y. Zhou, Effectiveness of Demailly's strong openness conjecture and related problems, Invent. Math. 202 (2015), no. 2, 635-676.

\bibitem{Hiep2014}P.H. Hiep, The weighted log canonical threshold, C. R. Math. Acad. Sci. Paris 352 (2014), no. 4, 283-288.

\bibitem{kim}D. Kim,
Equivalence of plurisubharmonic singularities and Siu-type metrics,
Monatsh. Math. 178 (2015), no. 1, 85-95.

\bibitem{Kli85}M. Klimek,
Extremal plurisubharmonic functions and invariant pseudodistances,
Bull. Soc. Math. France 113 (1985), no. 2, 231-240.

\bibitem{Laz04}R. Lazarsfeld, Positivity in algebraic geometry. I. Classical setting: line bundles and linear series;
II. Positivity for vector bundles, and multiplier ideals. Ergebnisse der Mathematik und ihrer Grenzgebiete. 3. Folge.
A Series of Modern Surveys in Mathematics, 48, 49. Springer-Verlag, Berlin, 2004.

\bibitem{lelong57}P. Lelong, 
Intégration sur un ensemble analytique complexe, (French)
Bull. Soc. Math. France 85 (1957), 239-262.

\bibitem{lelong69}P. Lelong, Plurisubharmonic functions and positive  differential forms, Gordon and Breach, New-York, and Dunod, Paris (1969).

\bibitem{lelong77}
P. Lelong,
Sur la structure des courants positifs fermés. (French) Séminaire Pierre Lelong (Analyse) (année 1975/76); Journées sur les Fonctions Analytiques (Toulouse, 1976), pp. 136-156,
Lecture Notes in Math., Vol. 578, Springer, Berlin-New York, 1977.


\bibitem{Lempert14}L. Lempert, Modules of square integrable holomorphic germs, Analysis meets geometry, 311-333, Trends Math., Birkh\"{a}user/Springer, Cham, 2017.

\bibitem{Nadel90}A. Nadel, Multiplier ideal sheaves and K\"{a}hler-Einstein metrics of positive scalar curvature,
Ann. of Math. (2) 132 (1990), no. 3, 549-596.

\bibitem{Ni95}
S. Nivoche, The pluricomplex Green function, capacitative notions, and approximation problems in $\mathbb{C}^n$, Indiana Univ. Math. J. 44(2), 489-510 (1995). 

\bibitem{OT87}T. Ohsawa and K. Takegoshi, On the extension of $L^2$ holomorphic functions, Math. Z. 195 (1987), 197–204.

\bibitem{Rash06}A. Rashkovskii,
Relative types and extremal problems for plurisubharmonic functions,
Int. Math. Res. Not. 2006, Art. ID 76283, 26 pp.

\bibitem{Rash10}A. Rashkovskii, An extremal problem for generalized Lelong numbers,
Math. Z. 266 (2010), no. 2, 345-362.

\bibitem{Rash13}
A. Rashkovskii, Analytic approximations of plurisubharmonic singularities, Math. Z. (2013) 275, 1217-1238.

\bibitem{Si}N. Sibony, Quelques probl\`emes de prolongement de courants en
analyse complexe. (French) [Some extension problems for currents in
complex analysis] Duke Math. J. 52 (1985), no. 1, 157-197.

\bibitem{siu74}Y.T. Siu, Analyticity of sets associated to Lelong numbers and the extension of closed positive currents, Invent. Math. 27 (1974), 53-156.

\bibitem{siu96}Y.T. Siu, The Fujita conjecture and the extension theorem of Ohsawa-Takegoshi,
Geometric Complex Analysis, Hayama. World Scientific (1996),
577-592.

\bibitem{siu05}Y.T. Siu, Multiplier ideal sheaves in complex and algebraic geometry. Sci. China Ser. A 48
(2005), suppl., 1-31.

\bibitem{siu09} Y.T. Siu, Dynamic multiplier ideal sheaves and the construction of rational curves in Fano
manifolds, Complex analysis and digital geometry, 323-360, Acta Univ. Upsaliensis Skr.
Uppsala Univ. C Organ. Hist., 86, Uppsala Universitet, Uppsala, 2009.

\bibitem{tian87}G. Tian, On K\"{a}hler-Einstein metrics on certain K\"{a}hler manifolds with $C_{1}(M)>0$, Invent. Math.  89  (1987),  no. 2, 225-246.

\bibitem{tian90}G. Tian, On Calabi's conjecture for complex surfaces with positive first Chern class,
Invent. Math. 101 (1990), no. 1, 101-172.

\bibitem{xu2019}C.Y. Xu, A minimizing valuation is quasi-monomial, Ann. of Math. (2) 191 (2020), no. 3, 1003-1030.

\end{thebibliography}

\end{document}